\documentclass{amsart}
\usepackage{amssymb}
\usepackage[utf8]{inputenc}
\usepackage[dvipsnames]{xcolor}
\usepackage{tikz-cd}
\usepackage{fullpage}
\usepackage{mathtools}
\usepackage{stmaryrd}
\usepackage[shortlabels]{enumitem}
\usepackage{bbm}
\numberwithin{equation}{section}
\usepackage{graphicx}
\usepackage{array}
\usepackage[export]{adjustbox}
\usepackage{pgfplots}
\pgfplotsset{compat=1.13}
\usepackage{esvect}
\usepackage{hyperref}

\usetikzlibrary{calc,intersections,through,backgrounds}

\makeatletter
\newcommand{\xMapsto}[2][]{\ext@arrow 0599{\Mapstofill@}{#1}{#2}}
\def\Mapstofill@{\arrowfill@{\Mapstochar\Relbar}\Relbar\Rightarrow}
\makeatother

\newtheorem{thm}{Theorem}[section]
\newtheorem{thmx}{Theorem}

\newtheorem{prop}[thm]{Proposition}

\newtheorem{lem}[thm]{Lemma}

\newtheorem{lemma}[thm]{Lemma}
\newtheorem{cor}[thm]{Corollary}

\newtheorem{defprop}[thm]{Definition/Proposition}
\theoremstyle{definition}
\newtheorem{definition}[thm]{Definition}

\newtheorem{notation}[thm]{Notation}
\newtheorem{assumption}[thm]{Assumption}
\newtheorem{condition}[thm]{Condition}
\newtheorem{claim}[thm]{Claim}

\theoremstyle{remark}

\newtheorem{remark}[thm]{Remark}

\newtheorem{ex}[thm]{Example}

\newcommand{\trop}{\mathrm{trop}}
\newcommand{\tree}{\tau}
\newcommand{\TV}{Y}
\newcommand{\kap}{\Psi}
\newcommand{\facet}{\kappa}

\renewcommand{\bar}[1]{\overline{#1}}
\newcommand{\bfT}{\mathbf{T}}
\newcommand{\bbT}{\mathbb{T}}

\DeclareMathOperator{\Hom}{Hom}
\DeclareMathOperator{\len}{len}

\DeclareMathOperator{\Spec}{Spec}

\DeclareMathOperator{\edges}{Edges}

\DeclareMathOperator{\val}{val}
\DeclareMathOperator{\Trop}{Trop}
\DeclareMathOperator{\FW}{FW}
\DeclareMathOperator{\relint}{relint}

\newcommand{\Newt}{\mathrm{Newt}}

\DeclareMathOperator{\Gr}{Gr}
\DeclareMathOperator{\Plucker}{Pl}

\DeclareMathOperator{\length}{len}
\DeclareMathOperator{\EI}{EI}
\DeclareMathOperator{\mult}{mult}
\DeclareMathOperator{\init}{in}
\DeclareMathOperator{\Star}{Star}

\DeclareMathOperator{\id}{id}
\newcommand{\Z}{\mathbb{Z}}
\newcommand{\C}{\mathbb{C}}

\newcommand{\A}{\mathbb{A}}
\newcommand{\R}{\mathbb{R}}
\newcommand{\X}{\mathcal{X}}
\newcommand{\m}{\mathfrak{m}}
\renewcommand{\k}{\mathbf{k}}

\renewcommand{\P}{\mathbb{P}}
\newcommand{\M}{\mathcal{M}}
\newcommand{\TropInt}{\Sigma_r\cap\bigcap_{q=1}^r \Trop(H_q)}
\newcommand{\TropIntPDFString}{Sigma\_r \textbackslash cap (\textbackslash bigcap q=1 to r of Trop(H\_q))} % uses Unicode symbols U+2229 (?) and U+22C2 (?) for intersections
\newcommand{\rel}{\thickspace\mathrm{rel}\thickspace}

\newcommand{\abs}[1]{\left\lvert#1\right\rvert}

\newcommand{\Mbar}{\overline{\mathcal{M}}}
\newcommand{\into}{\hookrightarrow}
\newcommand{\K}{K}

\subjclass[2020]{14T20,14C17,14H10,14N10,14M25}

\newcommand{\MbarZeroN}{\overline{\mathcal{M}}_{0,n}}

\title{Degenerations in tropical compactifications and tropical intersection theory of \texorpdfstring{$\protect\MbarZeroN$}{M\_{0,n}-bar}}

\author[Griffin]{Sean T. Griffin}
\address{Department of Mathematics, University of North Texas, Denton, TX, USA, 76203}
\email{sean.griffin@unt.edu}
\thanks{Griffin was supported by ERC grant ``Refined invariants in combinatorics, low-dimensional topology and geometry of moduli spaces'' No.~101001159.}

\author[Levinson]{Jake Levinson}
\address{
   D\'epartement de math\'ematiques et de statistique, Universit\'e de Montr\'eal, Montr\'eal, QC, Canada, H3T 1J4
}
\email{jake.levinson@umontreal.ca}
\author[Ramadas]{Rohini Ramadas}
\address{
   Rohini Ramadas, Department of Mathematics, Emory University, 400 Dowman Drive, Atlanta, GA, 30322
}
\thanks{Levinson was partially supported by NSERC grant DG RGPIN-2021-04169 ``Combinatorics and geometry of moduli spaces''.}
\email{rohini.ramadas@emory.edu}
\thanks{Ramadas was partially supported by EPSRC New Investigator Award (EP/X026612/1). 
}
\author[Silversmith]{Rob Silversmith}
\address{
   Rob Silversmith, Department of Mathematics, Emory University, 400 Dowman Drive, Atlanta, GA, 30322
}
\email{rob.silversmith@emory.edu}

\begin{document}

\begin{abstract}
The main result of this paper is a formula for the limit cycle of a 1-parameter family of subvarieties of a tropical compactification, expressed in terms of tropical intersections. Our theorem generalizes results of Dickenstein-Feichtner-Sturmfels and Katz to the case of tropical compactifications.

In the second part of the paper, we apply our formula to the moduli space $\overline{\mathcal{M}}_{0, n}$ of stable marked rational curves. We describe the tropicalization of the Kapranov maps $\overline{\mathcal{M}}_{0,n}\to \mathbb{P}^{n-3}$, whose hyperplane pullbacks are the $\psi$-classes, with respect to a suitable choice of torus. We introduce tropical $\psi$-hypersurfaces (in genus zero). These are different from the standard definition of Mikhalkin and Kerber-Markwig, and may be of independent interest. We demonstrate our main result by giving a ``firework algorithm'' that computes limits of intersections of $\psi$-hypersurfaces.
\end{abstract}

\maketitle

\section{Introduction}

This paper has two parts. In the first part, described in Section \ref{sec:TropicalResultSummary} of the introduction, we generalize theorems of Dickenstein-Feichtner-Sturmfels and Katz \cite{DickensteinFeichtnerSturmfels2007,Katz2009} for computing limit cycles in toric varieties via tropical intersection theory. We prove Theorem \ref{thm:intro1}, which computes limit cycles in \emph{tropical compactifications} as introduced by Tevelev \cite{Tevelev2007}. Our generalization is analogous to Osserman-Payne's generalization of tropical intersection theory \cite{OssermanPayne2013}, from intersections in tori to intersections in subvarieties of tori. 

In the second part, described in Section \ref{sec:ApplicationSummary} of the introduction, we introduce a notion of tropical $\psi$-hypersurfaces on the moduli space $\M_{0,n}^{\trop}$ of genus-zero tropical curves, which is different from, and in a sense `dual to', the standard definition of Mikhalkin and Kerber-Markwig \cite{Mikhalkin2006,KerberMarkwig2009}. We apply Theorem \ref{thm:intro1} to compute limit cycles of intersections of these hypersurfaces. We will see that in this class of computations, the underlying tropical geometry is intricate, while still being well-behaved enough to allow us to conclude all the geometric information we need from the tropical picture. In this case, the formula for the limit cycle is combinatorially described by a ``firework algorithm'' in $\M_{0,n}^{\trop},$ described in Section \ref{sec:IntersectionInM0nIntro}. (See also Figure \ref{fig:FireworkPicture}.)

\subsection{Background: Tropical intersection theory} \label{sec:IntroTropicalIntersectionTheory} Tropical intersection theory was developed in the early 2000s, with the goal of computing intersections in algebraic geometry (e.g. in the Chow ring of a variety) using the geometry of polyhedral complexes in $\R^n$ \cite{FultonSturmfels1997,Mikhalkin2006ICM,BogartJensenSpeyerSturmfelsThomas2007,GathmannKerberMarkwig2009,Katz2009,AllermanRau2010,OssermanPayne2013}. The theory has been subsequently applied in many examples, see e.g. \cite{Mikhalkin2005,KerberMarkwig2009,HuhKatz2012,BrugalleShaw2015,BakerLenMorrisonPfluegerRen2016,Gross2018,Goldner2021,Rau2023,CuetoMarkwig2023,PuentesMarkwigPauliRohrle2024}. To an $m$-dimensional subvariety $X\subseteq\bfT$ of an $n$-dimensional algebraic torus, one associates an $m$-dimensional weighted polyhedral complex $\Trop(X)\subseteq\R^n$ called its \emph{tropicalization}, and one can recover information about the intersections of subvarieties in $\bfT$ by intersecting their tropicalizations.
\begin{ex}\label{ex:BezoutExample}
    Consider the curves $X_1=V(x^2y^2+x^3+y^3+1)$ and $X_2=V(x^2y+xy^2+x+y)$ in $\bfT=(\C^*)^2.$ The tropicalizations $\Trop(X_1),\Trop(X_2)$, where each 1-dimensional polyhedron has weight 1 unless otherwise specified, are shown on the left in Figure \ref{fig:BezoutExample}. Taking a small translate of $\Trop(X_2)$ and intersecting yields the picture on the right. Here the intersection points are labeled with their \emph{tropical intersection multiplicities}, which are defined as indices of certain sublattices of the integer lattice in $\R^2$, times the weights of the intersecting cones, see \cite[Sec. 3.6]{MaclaganSturmfels2015}. By \cite[Thm. 3.6.1]{MaclaganSturmfels2015}, possibly after acting on $X_2$ by a general element of $\bfT$, $X_1\cap X_2$ consists of ten points counted with multiplicity.
    
    The limit of the above ``intersection-after-generic-perturbation'', as the perturbation goes to zero, is called the \emph{stable tropical intersection} of $\Trop(X_1)$ and $\Trop(X_2)$ --- in this case, 10 points concentrated at the origin. The fact that the stable intersection is independent of perturbation follows from the fact that $\Trop(X_1)$ and $\Trop(X_2)$ are \emph{balanced polyhedral complexes} (see Section \ref{sec:BackgroundPolyhedralGeometry}) with respect to their weightings --- that is, the weighted sum of the primitive integer vectors along the rays is zero.

    \begin{figure}
        \centering
        \begin{tikzpicture}[scale=1.5]
            \draw[red,very thick] (0,0)--(1,0);
            \draw[red] (1,0) node[right] {$3$};
            \draw[red,very thick] (0,0)--(0,1);
            \draw[red] (0,1) node[above] {$3$};
            \draw[red,very thick] (0,0)--(-1,-.5);
            \draw[red,very thick] (0,0)--(-.5,-1);
            \draw[blue,very thick,densely dashed] (-1,-1)--(1,1);
            \draw[blue,very thick,densely dashed] (-1,1)--(1,-1);
            \draw[red] (-1.5,-0.6) node {$\Trop(X_1)$};
            \draw[blue] (1.5,-0.9) node {$\Trop(X_2)$};
        \end{tikzpicture}
        \hspace{1in}
        \begin{tikzpicture}[scale=1.5]
            \draw[red,very thick] (0,0)--(1,0);
            \draw[red] (1,0) node[right] {$3$};
            \draw[red,very thick] (0,0)--(0,1);
            \draw[red] (0,1) node[above] {$3$};
            \draw[red,very thick] (0,0)--(-1,-.5);
            \draw[red,very thick] (0,0)--(-.5,-1);
            \draw[blue,very thick,densely dashed] (-1,-.6)--(.6,1);
            \draw[blue,very thick,densely dashed] (-1,.4)--(.4,-1);
            \draw (0,.4) node {$\bullet$} node[left] {$3$};
            \draw (-.2,-.4) node {$\bullet$} node[right] {$3$};
            \draw (-.4,-.2) node {$\bullet$} node[below] {$3$};
            \draw (-.8,-.4) node {$\bullet$} node[below] {$1$};
        \end{tikzpicture}
        \caption{Translating $\Trop(X_2)$ yields transverse intersection points, from Example \ref{ex:BezoutExample}.}
        \label{fig:BezoutExample}
    \end{figure}
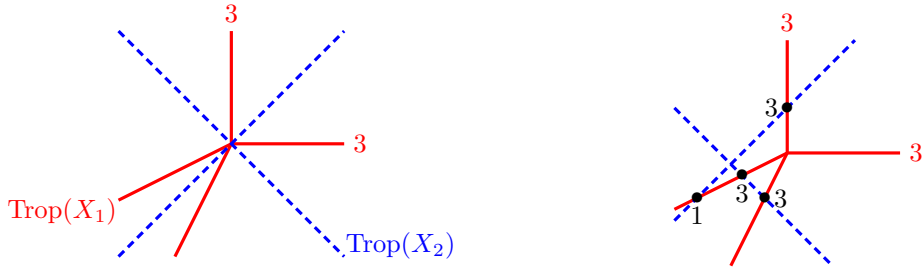
\end{ex}
As in usual intersection theory, notions of transversality play a large role in arguments like the above --- we would say that the intersection on the right in Figure \ref{fig:BezoutExample} is ``tropically transverse'', as every intersection point is locally the transverse intersection of affine-linear spaces.

More generally, Osserman-Payne showed that one can use tropical intersection theory to compute not only intersection products of subvarieties $X_1,X_2\subseteq\bfT$ in an ambient torus, but also to compute intersection products of subvarieties $X_1,X_2\subseteq Y\subseteq\bfT$ in an ambient closed subvariety $Y$ of a torus. To do so, one calculates the stable tropical intersection $\Trop(X_1)\cap\Trop(X_2)$ inside the ambient polyhedral complex $\Trop(Y)$. This result requires $\Trop(X_1) \cap \Trop(X_2)$ to be transverse ``in $\Trop(Y)$'', and in particular to lie in the locus where $\Trop(Y)$ is locally affine-linear and of weight $1$ (a tropical analog of the requirement in ordinary intersection theory that the ambient space be smooth).

\subsection{Background: Tropical intersection theory in the context of degenerations onto boundary strata} In \cite[Thm. 2.2]{DickensteinFeichtnerSturmfels2007}, Dickenstein-Feichtner-Sturmfels introduced a rather different way to use tropical intersection theory. In their setup, one wishes to compute the \emph{limit of a family} of $m$-dimensional subvarieties $X_t\subseteq\P^n,$ parametrized by some variable $t$, such that as $t\to0$, $X_t$ degenerates to a subscheme $X_0\subseteq\P^n$ supported on the union of the coordinate $m$-planes. As a cycle, $[X_0]$ is a sum of coordinate $m$-planes, each with a nonnegative integer coefficient, and \cite[Thm. 2.2]{DickensteinFeichtnerSturmfels2007} expresses these coefficients via tropical intersection theory; specifically, a coordinate $m$-plane $L_\sigma$ corresponds to a codimension-$m$ cone $\sigma$ in the fan $\Sigma$ of $\P^n$, and the coefficient of $[L_\sigma]$ in $[X_0]$ is the tropical intersection number of $\Trop(X_t)$ with the (complementary-dimensional) cone $\sigma$, computed using the same tropical intersection multiplicities as in Example \ref{ex:BezoutExample}. That is, $[X_0]$ is entirely determined by the way $\Trop(X_t)$ intersects the ``$m$-dimensional skeleton'' $\Sigma_m$ of the fan $\Sigma$.
\begin{ex}\label{ex:P2Case}
    Consider the family of curves $X_t=V(x_0^3+t^{-2}x_0x_1x_2+tx_0x_2^2+tx_1x_2^2+t^{2}x_2^3)\subseteq\P^2,$ and let $X_0$ denote the limit of the cycles $[X_t]$ as $t\to0.$ The tropicalization $\Trop(X_t)$ is the subset shown in red in Figure \ref{fig:P2Case} (where we have dehomogenized with respect to $x_2$, see Example \ref{ex:P2Newton}). The 1-dimensional cones of the fan $\Sigma$ of $\P^2$ are shown in black. Since $\Trop(X_t)$ intersects all three rays of $\Sigma$ with multiplicity 1, we conclude that $[X_0]=L_0+L_1+L_2$. Indeed, multiplying the equation of $X_t$ by $t^2$ and setting $t\to0$ yields $x_0x_1x_2.$

    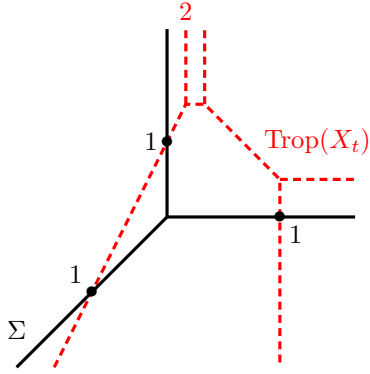
\begin{figure}
        \centering
        \begin{tikzpicture}[scale=.5]
            \draw[red,very thick,densely dashed] (-3,-4)--(.5,3)--(1,3)--(3,1)--(3,-4);
            \draw[red] (.5,5) node[above] {$2$};
            \draw[red,very thick,densely dashed] (.5,3)--(.5,5);
            \draw[red,very thick,densely dashed] (1,3)--(1,5);
            \draw[red,very thick,densely dashed] (3,1)--(5,1);
            \draw[very thick] (0,0)--(0,5);
            \draw[very thick] (0,0)--(5,0);
            \draw[very thick] (0,0)--(-4,-4);
            \draw (0,2) node {$\bullet$} node[left] {$1$};
            \draw (3,0) node {$\bullet$} node[below right] {$1$};
            \draw (-2,-2) node {$\bullet$} node[above left] {$1$};
            \draw[red] (4,2) node {$\Trop(X_t)$};
            \draw (-4,-3) node {$\Sigma$};
        \end{tikzpicture}
        \caption{Computing the cycle $[X_0]$ by intersecting $\Trop(X_t)$ with cones of the fan $\Sigma$ of $\P^2$, from Example \ref{ex:P2Case}.}
        \label{fig:P2Case}
    \end{figure}
\end{ex}
Katz \cite{Katz2009} generalized this idea to general toric varieties in the natural way. Let $X_t$ be a family of $m$-dimensional subschemes of a torus $\bfT$, and let $\bar X_t$ denote the closure in a toric variety $Y_\Sigma$ with dense torus $\bfT$. Assume that, as $t\to0$, $\bar X_t$ degenerates to a subscheme $(\bar X)_0$ supported on the union of $m$-dimensional torus orbit closures in $Y_\Sigma.$ Then Katz showed \cite[Thm. 10.1]{Katz2009} that in the decomposition of the limiting cycle $[(\bar X)_0]$ as a sum $[(\bar X)_0]=\sum_{\sigma}a_\sigma[Y_\sigma]$ of $m$-dimensional torus orbit closures, the coefficient $a_\sigma$ is the tropical intersection number of $\Trop(X_t)$ with the codimension-$m$ cone $\sigma.$

\subsection{Part \ref{part:MainTheorem}: Degenerations in tropical compactifications}\label{sec:TropicalResultSummary}
Our main theorem generalizes Katz's results to describe ``degenerating subvarieties of stratified varieties'', in which the subvariety degenerates into a union of boundary strata. We make this idea precise as follows.

Let $\bfT$ be an algebraic torus over an algebraically closed field $\k$ of characteristic zero, and let $\M\subseteq\bfT$ be a closed irreducible subvariety. Tevelev \cite{Tevelev2007} defined the notion of a \emph{tropical compactification} $\Mbar\subseteq Y_\Sigma$: the closure of $\M$ in a toric variety $Y_\Sigma\supseteq\bfT$ with certain special properties (see Section \ref{sec:BackgroundTropicalCompactifications}), including:
\begin{itemize}
    \item $\Mbar$ intersects torus orbits in $Y_\Sigma$ in the expected dimension, and 
    \item The fan $\Sigma$ of $Y_\Sigma$ has support equal to $\Trop(\M)$.
\end{itemize}
The tropical compactification $\Mbar$ inherits a stratification $\Mbar=\bigcup_{\sigma}\Mbar_\sigma$ from $Y_\Sigma$. We fix such a tropical compactification and further assume that for every cone $\sigma\subseteq\Sigma$, corresponding to a torus orbit $Y_\sigma,$ the closed stratum $\Mbar_\sigma:=\Mbar\cap \overline{Y_\sigma}$ is irreducible and generically reduced. 

The setup for our main theorem involves a ``degenerating family $X$ of subvarieties of $\M$". More precisely, let $K$ be a complete valued field extension\footnote{The reader who is not used to the formalism of valued fields may take $(K, R, \m)=(\C((t)), \C[[t]], (t))$. Our theorem is vacuous if $K$ is trivially valued.} of $\k$, with valuation ring $R$ and maximal ideal $\m$, such that $R/\m \cong \k$.
Let $X\subseteq\M\times_{\Spec(\k)}\Spec(K)$ be a closed subscheme of codimension $r$. The ``$t=0$ limit'' $(\bar X)_0$ described above is, in this language, obtained by taking the closure $\X$ of $X$ in $\Mbar\times_{\Spec(\k)}\Spec(R)$, and letting $$(\bar X)_0=\X\times_{\Spec(R)}\Spec(\k).$$ 

Our main theorem describes the decomposition of the cycle $[(\bar X)_0]$ as a sum of boundary strata, in terms of the intersection of $\Trop(X)$ with the $r$-skeleton $\Sigma_r$ of $\Sigma$, treating $\Sigma = \Trop(\M)$ as the ambient space. Crucially, the intersection of $\Trop(X)$ with $\Sigma_r$ is \textbf{not} in general transverse in $\Sigma$ in the sense of \cite{OssermanPayne2013}, because $\Sigma$ is in general not locally affine at a point of this intersection, which lies in $\Sigma_r$. Nonetheless, we introduce a new definition of tropical intersection multiplicity $\mult_{\Gamma}(\Trop(X),\Sigma_r;\Sigma)$ of $\Trop(X)$ with $\Sigma_r$, at an isolated point $\Gamma\in\Trop(X)\cap\Sigma_r$. Roughly, this definition involves perturbing $\Sigma_r$ only near the intersection point, and only into a nearby facet of $\Trop(\M)$; the step of perturbing only into $\Trop(\M)$ resembles \cite{OssermanPayne2013}. (See Definition \ref{def:MultIndependentOfChoice-Intro}/\ref{def:MultIndependentOfChoice}, and also Section \ref{sec:TropicalMultiplicityIntro} for an overview and discussion of the definition.) We prove: 
\begin{thmx}[Thm. \ref{thm:mainthm}]\label{thm:intro1}
    Suppose $\Trop(X)$ intersects $\Sigma_r$  in finitely many points, each of them in the relative interior of an $r$-dimensional cone. Then the cycle $[(\bar X)_0]$ is supported on the union of codimension-$r$ boundary strata of $\Mbar$, and the coefficient of a stratum $[\Mbar_{\sigma}]$ (corresponding to an $r$-dimensional cone $\sigma\subseteq\Sigma$) in $[(\bar X)_0]$ is equal to \begin{align}\label{eq:MultSumIntro}
    \sum_{\Gamma\in\Trop(X)\cap\sigma}\mult_{\Gamma}(\Trop(X),\Sigma_r;\Sigma).
    \end{align}
\end{thmx}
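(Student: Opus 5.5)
The plan is to reduce to Katz's toric result \cite[Thm.~10.1]{Katz2009}, applied inside the ambient toric variety $Y_\Sigma$, and then to correct for the fact that $\Mbar$ is only a subvariety of $Y_\Sigma$.

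\textbf{Step 1: support.} We first show that $(\bar X)_0$ lies in the union of codimension-$r$ strata. Tropical compactifications are tropically proper, so for each cone $\tau$ the intersection $(\bar X)_0\cap Y_\tau$ is governed by $\Trop(X)\cap\relint(\tau)$; concretely, by the Tevelev/Gubler description of the special fiber via initial degenerations. Since $\Trop(X)$ meets $\Sigma_r$ only in finitely many points in relative interiors of $r$-cones, it meets the relative interior of no cone of dimension greater than $r$. We then combine this with the dimension count: $\X$ is flat over $R$, so each component of $(\bar X)_0$ has dimension $\dim\M-r$, and each $\Mbar_\tau$ has dimension $\dim\M-\dim\tau$. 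Together these show that every component of $(\bar X)_0$ lies in some $\Mbar_\sigma$ with $\dim\sigma=r$, and is therefore equal to $\Mbar_\sigma$, since that stratum is irreducible of the same dimension. So $[(\bar X)_0]=\sum_\sigma a_\sigma[\Mbar_\sigma]$.

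\textbf{Step 2: localize.} Fix an $r$-cone $\sigma$. The coefficient $a_\sigma$ is the length of $\mathcal O_{(\bar X)_0}$ at the generic point of $\Mbar_\sigma$, divided by the appropriate multiplicity of $\Mbar_\sigma$, which is $1$ by generic reducedness. We compute it on the open affine toric chart $U_\sigma\subseteq Y_\sigma$. Étale-locally near $Y_\sigma$ we have $U_\sigma\cong Y_\sigma\times \A_{\sigma}$, where $\A_\sigma$ is the affine toric variety of $\sigma$ in $N_\sigma$. Tropical compactness and transversality of $\Mbar$ to orbits give a local product structure $\Mbar\cap U_\sigma\approx \Mbar_\sigma\times Z$, with $\Trop$ of the transverse slice $Z$ equal to $\Star_\sigma(\Sigma)$. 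Following the strategy of \cite{OssermanPayne2013}, we then reduce to a point-count: intersect with a generic complete intersection of $\dim\M-r$ general hypersurfaces pulled back from the orbit direction $Y_\sigma$, or equivalently pass to the generic point of $\Mbar_\sigma$. This cuts $X$ to a zero-dimensional family whose tropicalization near each $\Gamma\in\Trop(X)\cap\sigma$ is the local picture of $\Trop(X)$, and reduces $a_\sigma$ to a count of $K$-points of $X$ specializing to the generic point of $\Mbar_\sigma$.

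\textbf{Step 3: match with the multiplicity.} The remaining task, and the main obstacle, is to identify this local count with $\sum_\Gamma\mult_\Gamma(\Trop(X),\Sigma_r;\Sigma)$. Since $\Sigma$ is not locally affine at $\Gamma$, neither Katz nor Osserman–Payne applies directly. The plan is to use the definition of the multiplicity, which perturbs $\Sigma_r$ near $\Gamma$ into an adjacent facet $F$ of $\Trop(\M)$. On the algebraic side this corresponds to replacing $\sigma$ by a refined cone lying in $F$, which we realize by a toric modification or a base change $K'\supseteq K$ with a translated torus point. Over the relative interior of $F$, $\Trop(\M)$ is locally linear of weight $1$ because of generic reducedness. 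There Osserman–Payne's transverse intersection formula, or Katz's formula applied in the torus of the slice, computes the count as the lattice-index multiplicity. Finally we must check that moving from $\sigma$ to the perturbed cone does not change the count. The tool for this is flatness of $\X$ together with balancing of $\Trop(X)$ inside $\Sigma$, which is exactly why the multiplicity is independent of the choice of facet (Definition~\ref{def:MultIndependentOfChoice}). Summing over $\Gamma\in\sigma$ then gives \eqref{eq:MultSumIntro}.
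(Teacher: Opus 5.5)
There is a genuine gap, and it is at the step you yourself call the main obstacle. Steps 1--2 broadly parallel the paper: Proposition~\ref{prop:Set-Theoretic} for the support, and the slice $W=\rho_\sigma^{-1}(\underline W)$ plus specialization of cycles in the proof of Lemma~\ref{lem:mainlem}. Two corrections there. First, in Step 1 the inequality is reversed. $\Trop(X)$ is $(m-r)$-dimensional and certainly meets relative interiors of cones of dimension $>r$. What the hypothesis gives, and what your dimension count needs, is that it avoids $\relint(\tau)$ for $\dim\tau<r$. Second, the product decomposition in Step 2 is neither justified nor needed; only the equivariant projection $\rho_\sigma:U_\sigma\to Y_\sigma$ is used.

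Step 3 then contains no actual argument. You do not say what algebraic object tropicalizes to $v_\zeta+\sigma$. It is not a cone of any refinement of $\Sigma$, and translating $X$ by a torus point moves it off $\M^K$. So it is unclear which intersection number Osserman--Payne would compute, or how it relates back to $a_\sigma$. Moreover, the tool you name for invariance, balancing, cannot give it. At $\Gamma$, the quantity computed in facet $\zeta$ is the local degree of $\Star_\Gamma\Trop(X)\to\Star_\Gamma\Sigma/L_\R(\sigma)$ over a generic point of $\zeta$. Balancing makes the pushforward a balanced cycle on $\Trop(\M_\sigma)$. But balanced fans need not be tropically irreducible, so balancing alone allows this degree to differ between facets. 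Constancy is an algebraic fact, coming from irreducibility of $\M_\sigma$.

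The missing idea is to route through a degree over the trivially valued field $\k$. Flatness of $\bfT\times\Mbar\to Y_\Sigma$ gives $\init_\Gamma\M=\rho_\sigma^{-1}(\M_\sigma)$ (Lemma~\ref{lem:InitialIdealInStratum}). Hence $\rho_\sigma:\init_\Gamma X\to\M_\sigma$ is generically finite, of some degree $d_\Gamma$. On your slice, the zero-dimensional scheme $Z$ of points specializing to $P\in\M_\sigma$ splits as $\bigsqcup_\Gamma Z_\Gamma$ by tropicalization. Rescaling by $t^{-\Gamma}\in\bfT_\sigma$, which preserves $W$, turns $Z_\Gamma$ into the generic fiber of a finite flat family whose special fiber is $(\rho_\sigma|_{\init_\Gamma X})^{-1}(P)$. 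Hence $\len_K Z_\Gamma=d_\Gamma$ (Lemma~\ref{lem:ProjectionAntiRescue} and Claims~1--2 in the paper). Only after this does tropical geometry enter. Sturmfels--Tevelev (Corollary~\ref{cor:Sturmfels-Tevelev-New}) computes $d_\Gamma$ as the perturbed intersection in an arbitrary facet $\zeta$ (Theorem~\ref{thm:MultIndependentOfChoice}), and this is the real source of independence from $\zeta$ and $v_\zeta$. Without some version of this, your Step 3 neither produces the decomposition over $\Gamma$ nor identifies each piece with $\mult_\Gamma(\Trop(X),\Sigma_r;\Sigma)$.
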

\begin{remark}
    A striking --- and useful --- feature of Theorem \ref{thm:intro1} is that it does not require much information about the geometry of $\Trop(X)$. Rather, the multiplicities $\mult_{\Gamma}(\Trop(X),\Sigma_r;\Sigma)$, and hence the limit cycle $[(\bar{X})_0]$, are determined completely by the local geometry of $\Trop(X)$ near the finite set $\Trop(X)\cap\Sigma_r$. Indeed, in Part \ref{part:Application}, we use Theorem \ref{thm:intro1} to compute $[(\bar{X})_0]$ in a situation where we know very little about the global geometry of $\Trop(X).$
\end{remark}

If $\M=\bfT$, then $\mult_{\Gamma}(\Trop(X),\Sigma_r;\Sigma)$ coincides with the usual tropical intersection multiplicity of $\Trop(X)$ and $\Sigma_r$ at $\Gamma$, and Theorem \ref{thm:intro1} specializes to Katz's result \cite[Thm. 10.1]{Katz2009} mentioned above. Our proof is similar in spirit to Katz's --- the main extra ingredient is a result of Sturmfels-Tevelev \cite[Thm. 1.2]{SturmfelsTevelev2008} on elimination theory for tropical varieties.

In the special case that $X\subseteq\M$ arises as a sufficiently transverse intersection of the form $X=\widetilde{X}\cap\M$, for a subvariety $\widetilde X\subseteq Y$ of a larger ambient space, we prove (see Theorem \ref{thm:XTilde} in Section \ref{sec:TropicalMultiplicityIntro}) that our ``new" tropical intersection multiplicity of $\Trop(X)$ and $\Sigma_r$ agrees with the usual tropical intersection multiplicity of $\Trop(\widetilde{X})$ and $\Sigma_r$ in $\Trop(Y)$.

\subsection{Part \ref{part:Application}: Application to \texorpdfstring{$\Mbar_{0,n}$}{M\_{0,n}-bar}}\label{sec:ApplicationSummary}

Perhaps the most well-studied example of a tropical compactification is $\Mbar_{0,n},$ the moduli space of stable rational $n$-marked curves, whose tropicalization is the moduli space $\M_{0,n}^{\trop}$ of stable $n$-marked metric trees. Recall that $\Mbar_{0,n}$ has tautological \emph{cotangent classes} $\psi_1,\ldots,\psi_n\in A^1(\Mbar_{0,n})$, which are pullbacks of hyperplane classes along certain birational morphisms $\Psi_1,\ldots,\Psi_n:\Mbar_{0,n}\to\P^{n-3}$ known as the Kapranov maps. (See Section \ref{sec:Kapranov}.) 

\subsubsection{Tropical Kapranov maps} The target $\P^{n-3}$ of $\Psi_i$ does not have a natural choice of coordinates, and therefore does not have a natural choice of dense torus. However, the choice of an auxiliary marking $j\in[n]\setminus \{i\}$ determines natural coordinates on $\P^{n-3}$ and therefore specifies a dense torus. Having made this choice, the morphism $\Psi_i$ is a map from a subvariety $\Mbar_{0,n}$ of a toric variety to a toric variety $\P^{n-3}$. We show that this morphism is toric, and describe its tropicalization explicitly as follows. Let $\mathbb{R}^{n-2}$ have coordinates indexed by $[n] \setminus \{i, j\}$ and let $\R^{n-2}/\R$ be the quotient by the all-ones vector $\mathbbm{1} \in \R^{n-2}$.

\begin{definition}[cf. Definition \ref{def:TropicalPsi}]\label{def:TropicalPsiIntro}
    The \emph{$i$-th tropical Kapranov map relative to $j$} is
    \[\Psi_{i \rel j}^\trop : \M_{0,n}^\trop\to \R^{n-2}/\R,\] defined as follows. Let $\Gamma \in \M_{0, n}^\trop$ be a stable $n$-marked metric tree. Choose any isometry between $\R$ and the convex hull of legs $i$ and $j$ in $\Gamma$, oriented from $i$ to $j$. For each $\ell \in [n] \setminus \{i, j\}$, let $d_\ell$ be the coordinate of the nearest point to the $\ell$-th leg in this convex hull. Then
    \begin{equation}\label{eqn:PsiCoordsIntro}
    \Psi_{i \rel j}^\trop(\Gamma) := (d_\ell : \ell \in [n] \setminus \{i, j\}),
    \end{equation}
    considered up to translation by $\mathbbm{1}$, which accounts for the choice of isometry.
    \end{definition}

\begin{ex}\label{ex:TropicalKapranov} 
    We can think of $\kap_{i \rel j}^{\trop}(\Gamma)$ as the result of pulling the legs $i$ and $j$ into a straight-line path, then measuring how far along this path each other marked point falls:
    \begin{center}
    \begin{tikzpicture}
    \node at (0, -1) {$\Gamma =$};
    \begin{scope}[shift = {(2, 0)}]
        \draw (0, 0)
            -- node[above left, midway] {$a$} (-0.7, -0.7) node {$\bullet$};
        \draw  (-1.1, -0.95) node[below left] {$4$} 
            -- (-0.7, -0.7) 
            -- (-1.1, -0.45) node[above left] {$8$};
        \draw (-0.35, -2.35) node [below left] {$5$}
            -- (0, -2)
            -- (0, -2.5) node [below] {$6$};
        \draw (1, -0.7)
            -- node[above right, midway] {$d$} (1.7, -1.5) node {$\bullet$}
            -- node[above right, midway] {$e$} (2, -2) node {$\bullet$}
            -- (1.75, -2.4) node[below left] {$7$};
        \draw (2, -2) node {$\bullet$}
            -- (2.25, -2.4) node[below right] {$9$};
        \draw (1.7, -1.5)
            -- (1.45, -1.9) node[below left] {$3$};
        \draw [red] (0, 0.5) node[above] {$1$}
            -- (0, 0) node {$\bullet$}
            -- node[above right, midway] {$b$} (1, -0.7) node {$\bullet$}
            -- node[above left, midway] {$c$} (0, -2) node {$\bullet$}
            -- (-0.5, -2) node [left] {$2$};
    \end{scope}
    
    \draw[->,decorate, decoration={snake,amplitude=.4mm,segment length=2mm,post length=1mm},thick] (5,-1)--(6.25,-1) node [pos=0.4, above=1mm] {$\kap_{1 \rel 2}^\trop$};
        
    \begin{scope}[shift={(8, -1.65)}]
        \draw[dashed] (0.5, 0)
            -- node [left, midway] {$a$} (0.5, 1) node {$\bullet$};
        \draw (0.25, 1.4) node [above left] {$4$}
            -- (0.5, 1)
            -- (0.75, 1.4) node [above right] {$8$};
        \draw[dashed] (1.7, 0)
            -- node [left, midway] {$d$} (1.7, 1.05) node {$\bullet$}
            -- node [left, midway] {$e$} (1.7, 1.65) node {$\bullet$};
        \draw (1.7, 1.05)
            -- (2.15, 1.05) node[right] {$3$};
        \draw (1.45, 2.05) node [above left] {$7$}
            -- (1.7, 1.65)
            -- (1.95, 2.05) node [above right] {$9$};
        \draw (3.1, 0.4) node [above left] {$5$}
            -- (3.35, 0)
            -- (3.6, 0.4) node [above right] {$6$};
        \node at (1.925, -1) {$\kap_{1 \rel 2}^\trop(\Gamma) = (b, 0, b+c, b+c, b, 0, b)$,};
        \draw [red] (-1, 0) node [left] {$1$}
            -- (0.5, 0) node {$\bullet$}
            -- node [below, midway] {$b$} (1.7, 0) node {$\bullet$}
            -- node [below, midway] {$c\phantom{b}$} (3.35, 0) node {$\bullet$}
            -- (4.85, 0) node [right] {$2$};
        \node [below, red] at (-0.7, 0) {$-\infty$};
        \node [below, red] at (4.5, 0) {$+\infty$};
    \end{scope}
    \end{tikzpicture}
    \end{center}
    where we have chosen to normalize so that the attachment point of leg $1$ is at $0$. Note that the edges outside the convex hull of the legs $1$ and $2$, shown as dashed lines, can be ignored or even contracted.
\end{ex}

    Recall also that for $S\subseteq[n]$ with $\abs{S}\ge3$, there is a forgetful morphism $\pi_S:\Mbar_{0,n}\to\Mbar_{0,\abs{S}}$ that forgets all marked points not in $S$. (See Section \ref{sec:M0nBar}.) There is a tropical forgetful map $\pi_S^\trop : \M_{0, n}^\trop \to \M_{0, |S|}^\trop$, where $\pi_S(\Gamma)$ is the convex hull in $\Gamma$ of the legs marked by elements of $S$. It is straightforward to describe the tropicalization of the composition $\Psi_i\circ\pi_S:\Mbar_{0,n}\to \P^{\abs{S}-3}$. 

\subsubsection{Tropical \texorpdfstring{$\psi$}{psi}-hypersurfaces} \label{subsec:tropicalpsiintro} We next use the tropical Kapranov map to define tropical $\psi$-classes, in a different way from the existing definition of tropical $\psi$-classes due to Mikhalkin and Kerber--Markwig \cite{Mikhalkin2006, KerberMarkwig2009}. Specifically, we describe certain tropical hypersurfaces $H^{\trop}=\Trop(H)\subset\M_{0,n}^{\trop}$, where $H\subset\M_{0,n}$ is a hypersurface whose closure $\overline{H}$ in $\Mbar_{0,n}$ represents $\psi_i$. The hypersurfaces $H$ are defined over a non-trivially valued field and represent degenerating hypersurfaces in $\Mbar_{0, n}$; as such, $H^{\trop}$ is not a sub-fan of $\M_{0,n}^{\trop}$, in contrast to the Mikhalkin/Kerber-Markwig tropical $\psi$-class. Our hypersurfaces $H$ are pulled back from sufficiently general hyperplanes $\underline{H}$ in the target $\P^{n-3}$ of $\Psi_i$. The specific condition on $\underline{H}$ (following \cite{GillespieGriffinLevinson2022}) is that the coefficients of its defining equation should all have different valuations, which ensures that it is  transversal to the exceptional locus of $\Psi_i$. Again, we generalize all of this to describe tropical hypersurfaces that represent pullbacks of $\psi$-classes along forgetful maps.  

\subsubsection{Intersecting tropical \texorpdfstring{$\psi$}{psi}-hypersurfaces}\label{sec:IntersectionInM0nIntro} Given $m\le n-3$ subsets $S_q\subset [n]$ and $i_q\in S_q$, we choose hypersurfaces $H_q$ representing $\pi_{S_q}^*(\psi_{i_q})$ as in Section \ref{subsec:tropicalpsiintro}. Roughly speaking, the valuations of the coefficients of the defining equations of $\underline{H}_q$ decrease successively in order of magnitude as $q$ ranges from $1$ to $m$. These choices follow \cite{GillespieGriffinLevinson2022}, which does not use tropical techniques, but this approach is common in tropical intersection theory, see for example \cite{Goldner2020Generalizing}. As we describe, these choices ensure that the hypersurfaces $H_q^{\trop}$ (and $H_q$) intersect sufficiently nicely. We prove

\begin{thmx}[Prop. \ref{prop:RelativeInteriors2}, Prop. \ref{prop:FWTropX}, Cor. \ref{cor:TropMultIs1}]\label{thm:introthmabouttropicalintersection}
    For $r\in\{1,\ldots, m\}$, the intersection $\Sigma_r\cap\Trop(\bigcap_{q=1}^r H_q)$ is finite and contained in the relative interior of $\Sigma_r$. At each point of the intersection, the tropical intersection multiplicity (in the sense of Theorem \ref{thm:intro1}) equals $1$. 
\end{thmx}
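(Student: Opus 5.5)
The plan is to induct on $r$, using an explicit description of each $H_q^\trop$ obtained from the tropical Kapranov map, and to organize the intersection points as a ``firework'' of trees.

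\textbf{Step 1: describe $H_q^\trop$ explicitly.} Since $H_q = (\Psi_{i_q}\circ\pi_{S_q})^{-1}(\underline{H}_q)$ and, as I will show, $\Psi_{i_q}$ is toric with tropicalization $\Psi^\trop_{i_q\rel j}$, one expects
\[
H_q^\trop=(\Psi^\trop_{i_q \rel j}\circ\pi^\trop_{S_q})^{-1}\bigl(\Trop(\underline{H}_q)\bigr)
\]
at least set-theoretically. Here $\Trop(\underline H_q)$ is a tropical hyperplane in $\R^{|S_q|-2}/\R$ with apex determined by the valuations of the coefficients. Because these valuations are pairwise distinct, the apex is a point with distinct coordinates. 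A tree $\Gamma$ therefore lies in $H_q^\trop$ exactly when the minimum of $d_\ell(\Gamma)+\val(a_\ell)$ over $\ell\in S_q\setminus\{i_q,j\}$ is attained at least twice. I will justify the equality of $\Trop(H_q)$ with this pullback using that $H_q=\widetilde H\cap\M_{0,n}$ is a transverse pullback from the ambient torus. This transversality comes from the genericity of the coefficients, in the spirit of \cite{GillespieGriffinLevinson2022}.

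\textbf{Step 2: finiteness and relative interiors by induction on $r$.} The inclusion $\Trop(\bigcap_{q\le r}H_q)\subseteq\bigcap_{q\le r}H_q^\trop$ always holds. It therefore suffices to show that $\Sigma_r\cap\bigcap_{q\le r}H_q^\trop$ is finite and avoids lower-dimensional cones. Consider a tree $\Gamma$ in an $r$-dimensional cone with edge lengths $x_1,\dots,x_r$. Each condition ``the minimum is attained twice'' becomes a linear equation in the $x_e$, with constant term a difference of valuations. The valuations for successive $q$ are of successively decreasing orders of magnitude. Processing $q=1,\dots,r$ in order, each new equation then pins down one new edge length, as a positive quantity dominated by the scale of $H_q$. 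This is the firework: at each stage one new edge ``bursts'' out at the new scale.

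Two things must be checked. First, the resulting linear system is triangular with unit diagonal entries. Second, the solution has all $x_e>0$. Positivity gives relative interior, and because there are only finitely many cones and choices of tied pairs, the intersection is finite. The key combinatorial lemma I need is that coordinates $d_\ell$ are affine-linear in edge lengths, with coefficients $0$ or $1$ along the $i$–$j$ path. On a degenerate (lower-dimensional) cone, the equations would then be overdetermined and inconsistent, because of the genericity of the valuation scales.

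\textbf{Step 3: multiplicity one.} I will use Theorem \ref{thm:XTilde} to replace the new multiplicity with the ordinary tropical intersection multiplicity in the ambient $\Trop(Y)$. Locally near $\Gamma$, each $H_q^\trop$ is a hyperplane with a primitive normal vector, which is a difference of two $0/1$ path-indicator functionals. The multiplicity is then the lattice index of the span of these normals together with the cone $\sigma$. By the unitriangular structure from Step 2, this index is $1$. I also need the weights of $\Trop(\bigcap H_q)$ at these points to be $1$. For this I plan to argue that the local intersections are transverse, so that by Osserman--Payne the tropicalization of the intersection equals the stable intersection with weight $1$.

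\textbf{Main obstacle.} I expect the hardest step to be showing that near each point of $\Sigma_r$ the actual $\Trop(\bigcap_{q\le r}H_q)$ coincides with the intersection of the $H_q^\trop$, with weight $1$. Tropicalization does not commute with intersection in general, and transversality inside $\M^\trop_{0,n}$ fails precisely along $\Sigma_r$, where the fan is not locally linear. I would first try to establish transversality one step into an adjacent facet, mirroring the definition of $\mult_\Gamma$. Failing that, I would work in the larger torus via Theorem \ref{thm:XTilde}, where the ambient space is locally linear.
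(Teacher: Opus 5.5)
\textbf{Gap in Step 2.} Step 2 carries all the combinatorial content, and its key claims are asserted rather than proved. You say that, after a suitable ordering of the edges, the $r$ path equations are unit triangular. You also say that on lower-dimensional cones they are ``overdetermined and inconsistent because of the genericity of the valuation scales.'' The lemma you name as key (each $\underline d_\ell$ is affine in edge lengths with $0/1$ coefficients) implies neither claim. The constants $(\ell_q-k_q)B^{n-3-q}$ are specific integers, not generic. In your forward processing, equation $q$ also involves shorter edges that are not yet determined. So nothing you have written prevents $P_q$ from being covered by edges used in other equations.

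The missing idea is a quantitative domination estimate, run in reverse order. We have $\len(P_q)=(\ell_q-k_q)B^{n-3-q}\ge B^{n-3-q}$. On the other hand, $\sum_{a>q}\len(P_a)\le \tfrac{n}{B-1}B^{n-3-q}\le\tfrac12 B^{n-3-q}$ for $B\ge 2n+1$. Hence each $P_q$ contains an edge $e_q$ lying on none of $P_{q+1},\dots,P_r$. Such edges are automatically distinct, so any $\Gamma\in\Sigma_r\cap\bigcap_q\Trop(H_q)$ has exactly $r$ edges. That \emph{is} the relative-interior statement; no separate positivity or degenerate-cone analysis is needed. It also makes the path matrix unit upper triangular in the order $e_1,\dots,e_r$, which gives finiteness.

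\textbf{Consequence for Step 3.} Apply the same estimate to the pairs $(\alpha,k_q)$ and $(\alpha,\ell_q)$ for a hypothetical third minimizer $\alpha$. It would put $e_q$ on both connecting paths, which is impossible because $e_q$ separates $k_q$ from $\ell_q$. So the minimum in \eqref{eq:Min} is attained \emph{exactly} twice at $\Gamma$. Your Step 3 silently needs this. Without it, $\Trop(\widetilde H_q)$ is not a single hyperplane near $\Gamma$, and neither your normal-vector computation nor hypothesis (3) of Theorem \ref{thm:XTilde} is available.

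\textbf{Comparison with the paper.} Granting these lemmas, routing Step 3 through Theorem \ref{thm:XTilde} with $Y=\bfT$ and $\widetilde X=\bigcap_q\widetilde H_q$ is a legitimate alternative; the paper mentions it in a remark. In that route, however, the weight you need is that of $\Trop(\widetilde X)$ in the torus, not that of $\Trop(\bigcap_q H_q)$. You must also address the Cohen--Macaulay hypothesis on $\widetilde X$. The paper instead stays inside $\M_{0,n}$. On each maximal cone $\zeta\supseteq\sigma$, whose interior points are simple, it applies Osserman--Payne inductively to get $\Trop(X_q)=V_q$ with weight $1$ near $\Gamma$. It then reads off multiplicity $1$ from the unimodular matrix $\begin{bmatrix}A_\tau&*\\ \mathbf{0}&I\end{bmatrix}$. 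This route also yields $\Sigma_r\cap\Trop(X_r)=\Sigma_r\cap\bigcap_q\Trop(H_q)$, which is needed later.
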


 For the proof of Theorem \ref{thm:introthmabouttropicalintersection}, we first describe $\TropInt$, as well as each $H_q^{\trop}$ locally near a point of $\TropInt$. This part (Prop. \ref{prop:RelativeInteriors2}) is purely tropical. We then use Osserman-Payne's results on lifting tropical intersections in order to prove (Prop. \ref{prop:intersectionandtropicalization}) that tropicalization and intersection commute in this setting; specifically that $\bigcap_{q=1}^r \Trop(H_q)$ agrees with $\Trop(\bigcap_{q=1}^r H_q)$, locally near $\Sigma_r$. 
 We also prove a version of Theorem \ref{thm:introthmabouttropicalintersection} that holds at the boundary of the extended tropical moduli space $\Mbar_{0,n}^{\trop}$. Using these two versions of Theorem \ref{thm:introthmabouttropicalintersection} and applying Theorem \ref{thm:intro1}, we obtain:

\begin{thmx}[Cor. \ref{cor:XbarHasCodimensionR}, Prop. \ref{prop:NoIrreducibleComponentsInBoundary}, Thm. \ref{thm:LimitCycleFW}, Cor. \ref{cor:InjectivityCorollary}]\label{thm:introapplypartonetoparttwo}\hfill
\begin{enumerate}
    \item The intersection $\bigcap_{q=1}^r \overline{H}_q$ is of pure codimension $r$ and has no irreducible components on the boundary of $\Mbar_{0,n}$. 
    \item In particular, $\overline{\bigcap_{q=1}^r H_q}$ represents the product $\prod_{q=1}^{r}\pi_{S_q}^*(\psi_{i_q})$. 
    \item The flat limit of $\overline{\bigcap_{q=1}^r H_q}$ is supported on a union of codimension-$r$ boundary strata, and its cycle is the corresponding sum of codimension-$r$ boundary strata, each with coefficient $1$. 
\end{enumerate}
   \end{thmx}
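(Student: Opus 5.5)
The plan is to deduce all three parts from Theorem \ref{thm:intro1}, applied with $\M=\M_{0,n}$, $\Mbar=\Mbar_{0,n}$ (a tropical compactification whose closed strata are irreducible and smooth) and $X=\bigcap_{q=1}^r H_q$, with Theorem \ref{thm:introthmabouttropicalintersection} supplying the hypotheses. The argument proceeds in three steps: first show $\bigcap_q \overline{H}_q$ has the expected dimension, then that it has no boundary components, and finally read off the limit cycle.

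\textbf{Step 1: expected dimension.} We prove codimension $r$ of $\bigcap_q\overline H_q$ by induction on $r$, applying the boundary version of Theorem \ref{thm:introthmabouttropicalintersection} stratum by stratum. Each open stratum $\M_\sigma$ of $\Mbar_{0,n}$ is a product of smaller $\M_{0,n_v}$'s. Its tropicalization is the corresponding face at infinity of $\Mbar_{0,n}^\trop$, namely a product of $\M_{0,n_v}^\trop$'s. The restriction of each $\overline H_q$ to $\overline{\M_\sigma}$ should again be a (pullback of a) tropically generic $\psi$-type hypersurface, or all of $\M_\sigma$ in degenerate cases that the combinatorics of the $S_q$ controls.
\begin{itemize}
\item Finiteness of $\Sigma_r\cap\Trop(X)$, applied also to each face at infinity, shows that $\Trop(X\cap \M_\sigma)$ has the expected dimension.
\item By the Bieri--Groves theorem (dimension of a variety equals the dimension of its tropicalization), $X\cap\M_\sigma$ has codimension $r$ in $\M_\sigma$.
\item Summing over strata gives that $\bigcap_q\overline H_q$ has pure codimension $r$ in $\Mbar_{0,n}$. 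Purity is automatic from Krull, since the intersection is cut out locally by $r$ equations on a smooth variety.
\end{itemize}

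\textbf{Step 2: no boundary components, and part (2).} Any component lying in a boundary stratum $\overline{\M_\sigma}$ with $\sigma\neq 0$ would have dimension at most $\dim \M_\sigma - r<\dim\Mbar_{0,n}-r$, contradicting purity. Hence $\bigcap_q\overline H_q=\overline{X}$ set-theoretically, and each component meets the interior. A proper intersection of Cartier divisors on a smooth variety has fundamental class equal to the product of their classes. Since $[\overline H_q]=\pi_{S_q}^*\psi_{i_q}$, this gives (2). Generic reducedness of $X$ in the interior follows because the tropical multiplicities are $1$, via the Osserman--Payne lifting used in Prop.~\ref{prop:intersectionandtropicalization}. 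That argument gives multiplicity $1$ at each component's tropical image.

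\textbf{Step 3: the limit cycle, part (3).} Apply Theorem \ref{thm:intro1} to $X$. Its hypothesis, that $\Trop(X)\cap\Sigma_r$ is finite and lies in relative interiors of $r$-cones, is exactly Theorem \ref{thm:introthmabouttropicalintersection}. Therefore $[(\bar X)_0]$ is supported on codimension-$r$ strata, with the coefficient of $[\Mbar_\sigma]$ equal to the number of points of $\Trop(X)\cap\sigma$, each counted with multiplicity $1$. To conclude that every coefficient is $0$ or $1$, we must show that each $r$-cone $\sigma$ contains at most one intersection point. I would get this from the explicit firework description of $\TropInt$ in Prop.~\ref{prop:FWTropX}: the successive orders of magnitude of the valuations determine the point in $\sigma$ uniquely from the combinatorial type. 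This injectivity is the expected content of Cor.~\ref{cor:InjectivityCorollary}.

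\textbf{Main obstacle.} The hardest step is Step 1 on the boundary. We must verify that the restriction of each $\overline H_q$ to a boundary stratum is again of the controlled form, so that the boundary version of Theorem \ref{thm:introthmabouttropicalintersection} applies. This requires understanding how the Kapranov maps restrict to boundary strata, where $\psi_i$ restricts to a $\psi$-class on one factor of the product. I would first check this through the tropical Kapranov map: on a face at infinity, the infinite-length edges separate the tree into vertex components, and $\Psi^\trop_{i\rel j}$ factors through the component containing the path from $i$ to $j$.
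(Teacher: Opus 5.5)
Your architecture matches the paper's: a dimension count on $\M_{0,n}$ and on each boundary stratum, then Theorem \ref{thm:intro1} fed by Theorem \ref{thm:introthmabouttropicalintersection}, then injectivity. But the step that carries Step 1 is unsupported. You assert that finiteness of $\Sigma_r\cap\Trop(X)$ ``shows'' $\Trop(X\cap\M_\sigma)$ has the expected dimension, and then apply Bieri--Groves. Finiteness of the intersection with the $r$-skeleton is not, on its own, a bound on $\dim\Trop(X)$. A global properness input is needed, and the hypothesis that actually does the work is $\Trop(X)\cap\Sigma_{r-1}=\emptyset$.

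The paper's argument (Prop.~\ref{prop:Set-Theoretic}, Cor.~\ref{cor:NoTooBigComponents}) is algebraic. Suppose a component of the flat limit $(\bar X)_0$ had its generic point in a stratum $\M_{\sigma'}$ with $\dim\sigma'<r$. Its divisorial valuation on the function field of $\X$ would tropicalize to a point of $\Trop(X)\cap\relint(\sigma')\subseteq\Trop(X)\cap\Sigma_{r-1}=\emptyset$. Hence $(\bar X)_0$ lies in the codimension-$r$ strata. Because $\Mbar_{0,n}$ is proper, a component of $X$ of codimension $<r$ would then have a nonempty limit of too large dimension. For the boundary, the same argument is run on each $\M_\tau\cong\prod_v\M_{0,n_v}$ (whose closure is proper), with Prop.~\ref{prop:ExtendedFireworkAlgorithm} supplying the hypothesis; this is the proof of Prop.~\ref{prop:NoIrreducibleComponentsInBoundary}. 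With Step 1 replaced by this, Bieri--Groves is unnecessary and your Steps 2--3 go through. The generic-reducedness remark is not needed for (2), since a proper intersection of Cartier divisors on a smooth variety already has cycle equal to the product.

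Your justification of the coefficient-$1$ claim in (3) is also not an argument. Theorem \ref{thm:intro1} makes the coefficient of $[\Mbar_\sigma]$ the number of points of $\TropInt$ in $\sigma$. A tree $\tau$ with $r$ edges may carry several data $(\vec e,\vec k,\vec\ell)$ satisfying $(*)$, each producing a metric on $\tau$. ``Orders of magnitude'' do not tell you which edge of $\tau$ receives which order, nor which $k_q,\ell_q$ are the minimizers.

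The missing idea, used in Prop.~\ref{prop:Injective}, is that for $\Gamma'\in\FW_{r+1}$ the shortest edge is determined by the underlying tree alone. It is the first edge on the path from $i_{r+1}$ to $j_{r+1}$ in the convex hull of $S_{r+1}$. So the parent tree is determined by the child tree, while distinct allowable insertions at a common parent give distinct trees. Induction on $r$ then gives at most one point per cone. Note also that Prop.~\ref{prop:FWTropX} is only the identity $\TropInt=\Sigma_r\cap\Trop(X_r)$; the firework recursion is Cor.~\ref{cor:TropIntRecursion}.
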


We prove that as $r$ varies, the collection of subsets $\Sigma_r\cap\Trop(\bigcap_{q=1}^r H_q)\subset\M_{0,n}^{\trop}$ satisfies a nice recursion: each point of $\Sigma_{r+1}\cap\Trop(\bigcap_{q=1}^{r+1} H_q)$ can be located via a ``nearby" point of $\Sigma_r\cap\Trop(\bigcap_{q=1}^r H_q)$.

\begin{thmx}[See Lem. \ref{lem:GammaInTropIntCombinatorics}, Rem. \ref{rmk:rounding-lengths}, Cor. \ref{cor:ContractingSendsTropIntToTropInt}, Prop. \ref{prop:tropintafteredgeinsertion}]\label{thm:introfirework}
    Let $\Gamma'\in \Sigma_{r+1}\cap\Trop(\bigcap_{q=1}^{r+1} H_q)$. Then $\Gamma'$ has exactly $r+1$ edges, whose lengths are ``all different orders of magnitude''.
    
    Moreover, there is a unique $\Gamma \in \Sigma_r\cap\Trop(\bigcap_{q=1}^r H_q)$, whose underlying tree is obtained by contracting the shortest edge $e$ of $\Gamma'$, and whose other edges differ in length from those of $\Gamma'$ by quantities of order of magnitude $\len(e)$. Conversely, given $\Gamma$, all nearby $\Gamma'$ can be generated by a combinatorial criterion.
\end{thmx}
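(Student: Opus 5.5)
The plan is to work entirely in the explicit coordinates supplied by the tropical Kapranov maps, and to induct on $r$. First I record what membership in $\Trop(H_q)$ means. By the description of $H_q$ as the pullback under $\Psi_{i_q}\circ\pi_{S_q}$ of a hyperplane $\underline{H}_q$ whose coefficients have pairwise distinct valuations, $\Trop(H_q)$ is the pullback under $\Psi^\trop_{i_q\rel j_q}\circ\pi^\trop_{S_q}$ of a tropical hyperplane. Concretely, $\Gamma\in\Trop(H_q)$ if and only if the minimum of $d^{(q)}_\ell(\Gamma)+c_{q,\ell}$ over $\ell\in S_q\setminus\{i_q,j_q\}$ is attained at least twice. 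Here the $c_{q,\ell}$ are the valuations of the coefficients and $d^{(q)}_\ell$ are the coordinates of Definition~\ref{def:TropicalPsiIntro}. The chosen valuations make the gaps $c_{q,\ell}-c_{q,\ell'}$ all distinct, and of a characteristic size $\varepsilon_q$ with $\varepsilon_1\gg\varepsilon_2\gg\cdots\gg\varepsilon_m$.

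Next I take $\Gamma'\in\Sigma_{r+1}\cap\bigcap_{q\le r+1}\Trop(H_q)$, using Prop.~\ref{prop:intersectionandtropicalization} to replace $\Trop(\bigcap H_q)$ by $\bigcap\Trop(H_q)$ near $\Sigma_{r+1}$. Since $\Gamma'$ lies in the relative interior of an $(r+1)$-dimensional cone by Theorem~\ref{thm:introthmabouttropicalintersection}, its tree has exactly $r+1$ bounded edges. Each $d^{(q)}_\ell$ is a signed sum of edge lengths along the path from leg $i_q$ to leg $j_q$. Hence each tie condition is a linear equation expressing a signed sum of edge lengths as a gap $c_{q,\ell}-c_{q,\ell'}$, which is of order $\varepsilon_q$. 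Ordering the $r+1$ equations by $q$ gives a triangular system, and solving it shows there is one edge of each order of magnitude $\varepsilon_1,\dots,\varepsilon_{r+1}$, up to corrections of smaller order. In particular the shortest edge $e$ has length of order $\varepsilon_{r+1}$, and it is the edge singled out by the $(r+1)$st equation.

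Contracting $e$ gives a tree $T$ in an $r$-dimensional cone. I look for $\Gamma$ with underlying tree $T$ satisfying the first $r$ tie conditions. The key point is that the same pairs $(\ell,\ell')$ realize the minimum for each $q\le r$. This holds because contracting $e$ changes every $d^{(q)}_\ell$ by at most $\len(e)\sim\varepsilon_{r+1}$, while by distinctness and genericity the competing values differ by amounts of order $\varepsilon_q\gg\varepsilon_{r+1}$. Hence the triangular linear system for $T$ has the same shape, with a unique solution, and its solution differs from the remaining lengths of $\Gamma'$ by $O(\len(e))$; this is the rounding of Rem.~\ref{rmk:rounding-lengths}. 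Uniqueness of $\Gamma$ follows from invertibility of the triangular system. Positivity of all lengths follows because the leading terms are of distinct orders.

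For the converse, I start from $\Gamma$ and consider all ways of inserting a new edge by splitting a vertex $v$ into two vertices, with a chosen partition of the half-edges at $v$. For each such split the $(r+1)$st condition is a single linear equation in the new length. I keep exactly those splits where this equation has a positive solution of order $\varepsilon_{r+1}$, and where the minimizing pair for $H_{r+1}$ is genuinely tied rather than beaten by a third index. After the rounding adjustment of the other lengths, the first $r$ conditions continue to hold, by the stability argument of the previous paragraph run in reverse. This yields the combinatorial criterion.

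The main obstacle I expect is the stability step: showing rigorously that the minimizing pair for each $H_q$, $q\le r$, is unchanged under contraction and under edge insertion. This requires a quantitative version of ``orders of magnitude'' rather than a heuristic one. I would first try to fix explicit separation constants, such as $\varepsilon_{q+1}<\varepsilon_q/(Cn)$ for a constant $C$ bounding the number of edges on any path. With such constants, the perturbation of each $d^{(q)}_\ell$ is provably smaller than half of every relevant gap.
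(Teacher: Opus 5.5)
Your outline follows the paper's route: the ``minimum attained twice'' description, an upper unitriangular path matrix, contraction with readjustment, and edge insertion. However, the step that carries the first assertion is missing. ``Ordering the $r+1$ equations by $q$ gives a triangular system'' is not automatic. Triangularity requires an ordering of the edges with $e_q\in P_q$ and $e_q\notin P_a$ for all $a>q$. You cannot read this off from the edge lengths, since those are what the system is supposed to determine. Lemma~\ref{lem:GammaInTropIntCombinatorics} gets it from the right-hand sides alone: $\len(P_q)=(\ell_q-k_q)B^{n-3-q}\ge B^{n-3-q}$, whereas $\sum_{a>q}\len(P_a)\le \tfrac{n}{B-1}B^{n-3-q}\le\tfrac12 B^{n-3-q}$. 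So $P_q$ contains an edge lying on no later path. This one inequality gives triangularity, the bounds $\tfrac12B^{n-3-q}\le\len(e_q)\le nB^{n-3-q}$, and exactly $r+1$ edges.

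Two further steps you wave through also need real arguments. First, you claim non-minimizing indices lose by a margin of order $\varepsilon_q$. This does not follow from ``distinctness and genericity'': the valuations $\ell B^{n-3-q}$ are not generic, and the gaps $(\ell-k)B^{n-3-q}$ are not even distinct. You must pick $m\in\{k_q,\ell_q\}$ whose path to $\alpha$ avoids $e_q$. Then either an edge $e_{q'}$ with $q'<q$ dominates, or the label term $(\alpha-m)B^{n-3-q}$ does (Proposition~\ref{prop:MetricSatisfiesPathEquations}). Second, your ``$O(\len(e))$'' readjustment is really $-A_\tau^{-1}E\cdot(\ell_{r+1}-k_{r+1})B^{n-4-r}$. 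The entries of $A_\tau^{-1}E$ are in general only bounded by $2^{r-q}$ (Lemma~\ref{lem:ContractRelationshipBetweenMetrics}). A separation constant proportional to the number of edges on a path does not account for this factor. You need $B$ of size about $n^2 2^{n+1}$ (Remark~\ref{rmk:rounding-lengths}), or the paper's dominant-term comparison, which works for $B\ge 2n+1$.

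Your converse is not yet a combinatorial criterion. Enumerating all vertex splits and keeping those where $H_{r+1}$ ties is just a membership test. Calling the $(r+1)$st condition ``a single linear equation in the new length'' already presupposes that you know the minimizing pair and that $P_{r+1}$ contains no old edge. There is also an ordering problem. You test the $H_{r+1}$ tie and then readjust, but readjustment moves old edges by amounts of order $\varepsilon_{r+1}$, exactly the scale of the $H_{r+1}$ comparisons. Your stability argument covers only $q\le r$.

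The missing structural fact is about where the split occurs. It must be at the vertex $v$ where $i_{r+1}$ attaches to the convex hull of $S_{r+1}$. The branch $I$ and some other $S_{r+1}$-branch go on one side, and $J,K$ on the other, so that $P_{r+1}=\{e_{r+1}\}$. Then $\underline d^{\,r+1}_\alpha$ is exactly $0$ for $\alpha$ in $\mathcal B_1\setminus\{I\}$. It is exactly $\len(e_{r+1})=(\ell_{r+1}-k_{r+1})B^{n-4-r}$ for $\alpha$ in $\mathcal B_2\setminus\{J\}$, independently of the readjustment. Legs of $S_{r+1}$ in $J$ lie beyond an old edge of length at least $\tfrac12B^{n-3-r}$. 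The tie is then decided purely by label minimality. That is what produces the explicit rule of Definition~\ref{def:edgeinsertionfromtropint}, including the empty output when $I=J$.
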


We use this to give an algorithm that finds all the points of $\Sigma_r\cap\Trop(\bigcap_{q=1}^r H_q)$; see Section \ref{subsec:firework}. We name it the \emph{firework algorithm} because it produces the finite set $\FW_r= \Sigma_r\cap\Trop(\bigcap_{q=1}^r H_q)$ as the $r$-th step of a recursive procedure. At the $0$-th step, $\FW_0$ is the cone point of $\M_{0,n}^{\trop}$, a singleton set. At step $r=1$, we ``shoot out a large distance in many directions'' from the cone point, along various rays of $\M_{0,n}^{\trop}$, giving a finite subset $\FW_1$. At the $(r+1)$-st step, each point $\Gamma\in\FW_r$ is contained in an $r$-dimensional cone, and we ``shoot out from each $\Gamma$ in several directions'' into adjacent $(r+1)$-dimensional cones, traveling distances an order of magnitude smaller than those traveled at step $r$ (Figure \ref{fig:FireworkPicture}).

In fact, for all $r$, $\FW_r$ consists of at most 1 point in each cone of $\M_{0,n}^{\trop}$ (Proposition \ref{prop:Injective}).

\begin{figure}
\includegraphics[height=2.5in]{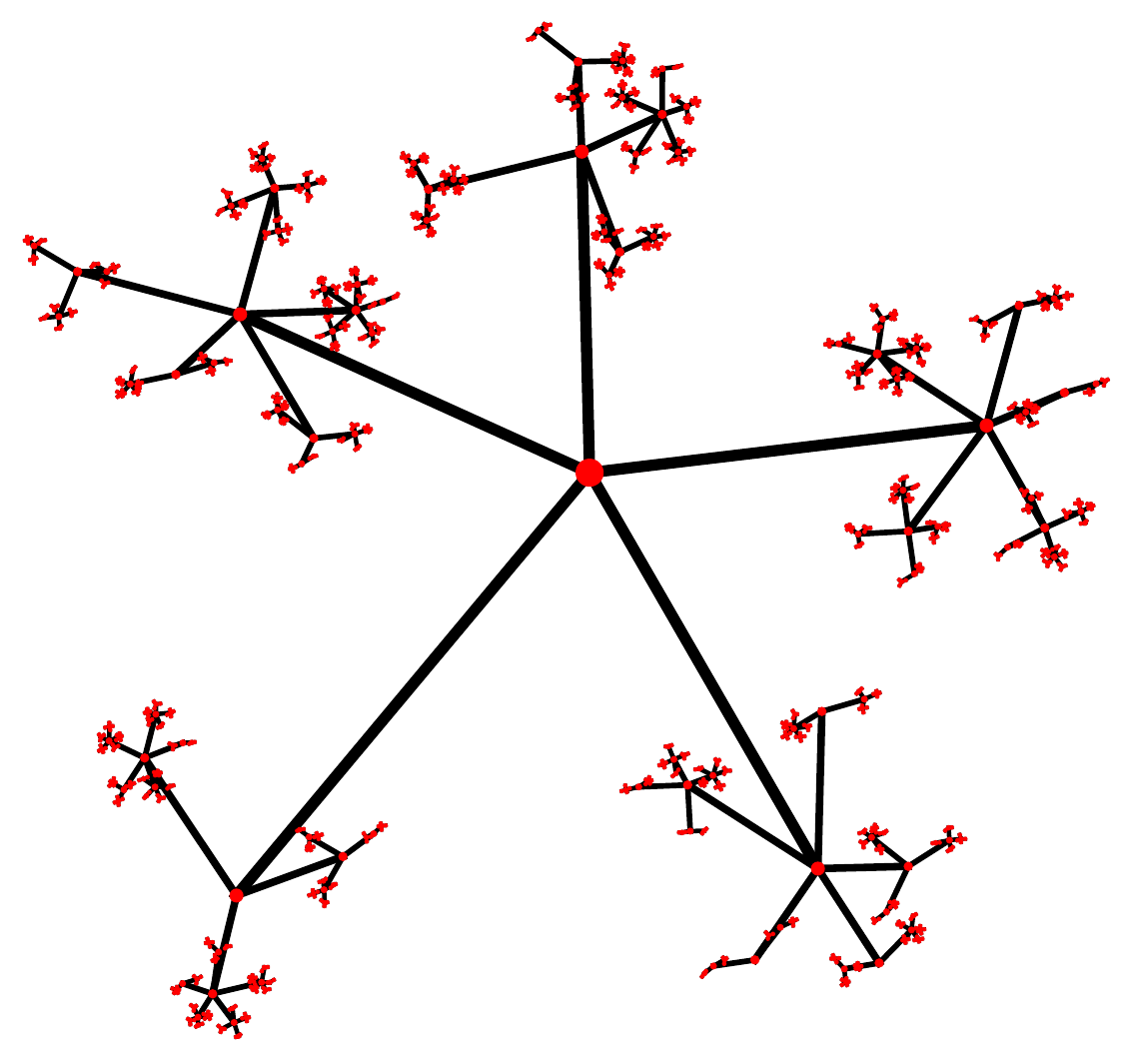}
    \caption{A schematic picture of a firework. The central point is the cone point of $\M_{0,n}^{\trop},$ the outgoing rays from this point are all along different rays of $\M_{0,n}^{\trop}$, the secondary offshoots are all in different 2-dimensional cones on $\M_{0,n}^{\trop},$ and so on.}
    \label{fig:FireworkPicture}
\end{figure}

\subsubsection{Relation of this paper to \cite{GillespieGriffinLevinson2022}} \label{sec:limits-and-intersections}
It is straightforward (using standard techniques for working with the Chow ring of $\Mbar_{0,n}$, see Section \ref{sec:KapranovDegreesIntro}) to express products of classes of the form $\pi_S^*(\psi_i)$ as sums of boundary strata of $\Mbar_{0,n}$ in various ways.  This calculation in Chow does not, however, imply that any particular such expression is ``geometric'' in the strong sense of arising as the cycle of a complete intersection of hypersurfaces, each representing one $\pi_S^*(\psi_i)$ factor. In \cite{GillespieGriffinLevinson2022}, Gillespie--Griffin--Levinson give expressions for products of $\omega$-classes (certain special classes of the form $\pi_S^*(\psi_i)$) as sums of boundary strata, which are geometric in the weaker sense of arising as cycles of \emph{limits} of complete intersections. Explicitly, they find degenerating $1$-parameter families of hypersurfaces in $\Mbar_{0,n}$, each representing one $\omega$ factor, such that the generic intersection is transverse, and such that the limit cycle of the $1$-parameter family of intersections is the corresponding sum of boundary strata. This paper arose as an attempt to make a precise connection between their results and tropical intersection theory. Specifically, we realized that 
\begin{enumerate}
\item the degenerating hypersurfaces chosen by \cite{GillespieGriffinLevinson2022} could be used to define tropical $\psi$-classes \textit{with non-trivial valuations}, as in Section \ref{sec:oldandnewpsi}, 
\item the choices made in \cite{GillespieGriffinLevinson2022} were essentially tropical in nature, and implied that $r$ such tropical hypersurfaces intersected ``nicely" \textit{along the $r$-skeleton $\Sigma_r$ of $\M_{0,n}^{\trop}$} (in the sense of Theorem \ref{thm:introthmabouttropicalintersection}),
\item the algorithm in \cite{GillespieGriffinLevinson2022} admitted an elegant metric/tropical augmentation that computed the finitely many intersection points of the $r$ tropical hypersurfaces with $\Sigma_r$, and 
\item everything generalized without issue from $\omega$-classes to all classes of the form $\pi_S^*(\psi_i)$. 
\end{enumerate}
Carrying out this calculation led us to develop the general tropical technique of
Part \ref{part:MainTheorem} (Theorem \ref{thm:intro1}) to efficiently prove algebraic results analogous to those in \cite{GillespieGriffinLevinson2022}, in particular to conclude Theorem \ref{thm:introapplypartonetoparttwo} from Theorem \ref{thm:introthmabouttropicalintersection}.

\subsection{Relationship between the ``new'' tropical \texorpdfstring{$\psi$}{psi}-hypersurfaces and the standard tropical \texorpdfstring{$\psi$}{psi}-classes} \label{sec:oldandnewpsi}

Early in the history of tropical geometry, Mikhalkin and Kerber-Markwig defined the tropical $\psi$-class $\psi_i^{\trop}$, as a codimension-$1$ balanced sub-fan of $\M_{0,n}^{\trop}$. Implicit in this definition is the fact that $\psi_i^{\trop}$ is actually the tropicalization of \textbf{any} sufficiently general hypersurface in $\M_{0,n}$, defined over a \textbf{trivially valued field}, whose closure represents $\psi_i$. This $\psi_i^{\trop}$ encodes the class $\psi_i$ in terms of all its intersection numbers:
the weight assigned by $\psi_i^{\trop}$ to a codimension-$1$ cone of $\M_{0,n}^{\trop}$ equals the intersection number of $\psi_i$ with the corresponding $1$-dimensional boundary stratum of $\Mbar_{0,n}$. There are well-known formulas that express the class $\psi_i$ as a positive sum of boundary divisors, but these expressions are not encoded by the Mikhalkin/Kerber-Markwig $\psi_i^{\trop}$. 

In contrast, each of our tropical $\psi$-hypersurfaces $H^{\trop}$ is the tropicalization of a sufficiently general hypersurface $H$ in $\M_{0,n}$, defined over a \textbf{non-trivially valued field}, whose closure in $\Mbar_{0,n}$ represents $\psi_i$. Any such sufficiently general hypersurface $H$ has flat limit some union of boundary divisors --- exactly which union depends on the valuations of the coefficients of the defining equation. Correspondingly, the tropical hypersurface $H^{\trop}$ depends on the valuations of the coefficients of the defining equation of $H$. The intersection $H^{\trop}\cap\Sigma_1$ encodes the union of boundary divisors that $H$ limits to; it accordingly recovers the known expressions for $\psi_i$ as an effective sum of boundary divisors.

The original tropical $\psi$-classes do not intersect tranversely, being all subfans of $\M_{0,n}^{\trop}$. 
Kerber and Markwig used tropical intersection theory to compute the stable intersection of $r$ tropical  $\psi$-classes as a balanced codimension-$r$ weighted subfan of $\M_{0,n}^{\trop}$ --- again, the weight assigned to a given codimension-$r$ cone equals the intersection number of the corresponding product of algebraic $\psi$-classes with the corresponding $r$-dimensional boundary stratum. They proved in particular that the resulting $0$-dimensional tropical intersections corroborated the well-known formula
\[\int_{\Mbar_{0,n}}\psi_1^{a_1}\cdots\psi_n^{a_n}=\binom{n-3}{a_1,\ldots,a_n}.\]
The stable intersection of $r$ tropical $\psi$-classes does not in any direct way express the corresponding product of algebraic $\psi$-classes as a sum of codimension-$r$ boundary strata. In contrast to Kerber-Markwig's setup, our tropical $\psi$-hypersurfaces, when sufficiently well-chosen, quite likely do intersect transversely, although we do not describe the $r$-fold tropical intersection as a whole. What we do is prove that the intersection of the $r$ hypersurfaces is transverse \emph{along the $r$-skeleton of $\M_{0,n}^{\trop}$}, and we compute all the (finitely many) points of intersection of the $r$ tropical hypersurfaces with the $r$-skeleton. This information encodes the limit cycle of the $r$-fold algebraic intersection and expresses it as an effective sum of boundary strata. (We note that the resulting expression for the product of $\psi$-classes in cohomology can be deduced more or less directly from Keel's presentation for the Chow ring of $\Mbar_{0, n}$; its content, particularly from a combinatorial perspective, is not new. It is noteworthy mainly in that it is ``geometric'' in the sense of Section \ref{sec:limits-and-intersections}.)

\textbf{We see therefore that these two approaches --- over trivially versus non-trivially valued fields --- encode ``dual" information.}

\begin{remark}
     Our tropical $\psi$-hypersurfaces are specific to the setting of genus $0$. There is recent work of Cavalieri--Gross--Markwig \cite{CavalieriGrossMarkwig2023tropical} that defines tropical $\psi$-classes in arbitrary genus, and work of Cavalieri--Gross \cite{cavalieri2024tropicalization} that shows that under certain conditions, the tropical $\psi$-classes arise as tropicalizations of algebraic $\psi$-classes. 
\end{remark}

\subsection{Intersection theory of \texorpdfstring{$\Mbar_{0,n}$}{M\_{0,n}-bar}, \texorpdfstring{$\psi$}{psi}-classes, and combinatorics}\label{sec:KapranovDegreesIntro} The intersection theory of $\Mbar_{0,n}$ is well-studied, and is an important ingredient in many enumerative problems involving rational curves, see e.g. \cite{KockVainsencher}. Keel and Kontsevich-Manin gave presentations \cite{Keel1992,KontsevichManin1994} for the Chow ring $A^*(\Mbar_{0,n})$, as a result of which any intersection product is computable via certain basic combinatorial rules. Nonetheless, $A^*(\Mbar_{0,n})$ has an extraordinary combinatorial richness, and decades later, its combinatorics are still of significant interest. The specific case of intersection numbers of pullbacks of $\psi$-classes, or \emph{Kapranov degrees}, have appeared in many recent works \cite{GalletGraseggerSchicho2020,CavalieriGillespieMonin2021,Goldner2021,Silversmith2022,GillespieGriffinLevinson2022,GillespieGriffinLevinson2023,Silversmith2024,ReinkeSilversmith2024,BrakensiekEurLarsonLi2023}, with connections to many well-known combinatorial objects (rigid graphs, parking functions, matching theory of bipartite graphs, triangulations of polygons, mixed volumes, chromatic polynomials of graphs).

A byproduct of Part \ref{part:Application} is an algorithm for computing intersection products of pullbacks of $\psi$-classes. As noted above, however, this computation in Chow/cohomology is not the point of the present paper.

\subsection{More about the tropical intersection multiplicity} \label{sec:TropicalMultiplicityIntro} We now describe the new notion of tropical intersection multiplicity $\mult_{\Gamma}(\Trop(X),\Sigma_r;\Sigma)$ required in Theorem \ref{thm:intro1}.

Recall the setup: $\Gamma$ is an isolated point of $\Trop(X)\cap\Sigma_r$, which lies in the relative interior of $\Sigma_r$, hence in a unique maximal cone $\sigma$ of $\Sigma_r$. 

\begin{definition}[Def. \ref{def:MultIndependentOfChoice}] \label{def:MultIndependentOfChoice-Intro}
Choose any maximal cone $\zeta$ of $\Sigma$ whose closure contains $\Gamma$ and, in a small neighborhood $U$ of $\Gamma$, choose a generic translation $\tilde\sigma$ of $\sigma$ into $\zeta$. (See Figure \ref{fig:LocallyPerturbSigma}.) We define
\[
\mult_{\Gamma}(\Trop(X),\Sigma_r;\Sigma) = \frac{1}{m_\zeta} \cdot i(\Gamma,\Trop(X)\cdot\tilde\sigma;\zeta),
\]
where the right-hand side is the tropical intersection number of $\tilde\sigma$ with $\Trop(X)$, computed inside ambient space $U\cap\zeta$ using the usual notion of tropical intersection multiplicity (and divided by the weight $m_\zeta$). For the purposes of this computation, $\tilde\sigma$ is assigned weight 1.
\end{definition}
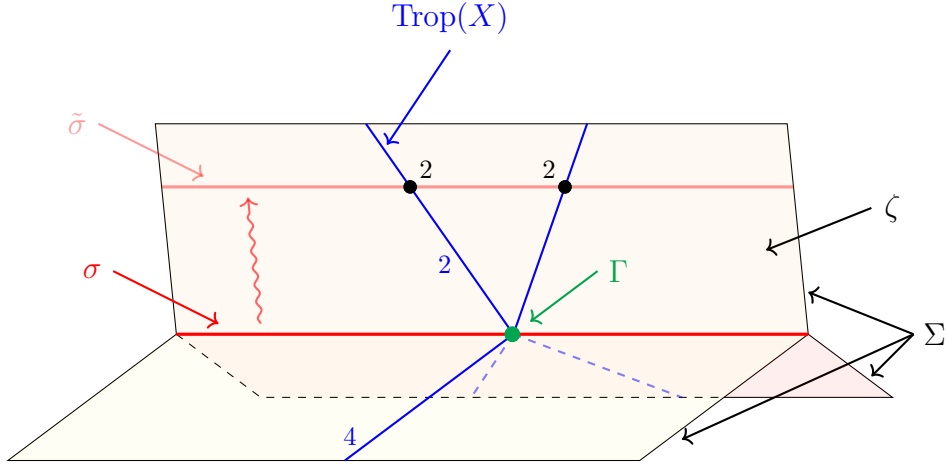
\begin{figure}
        \centering
        \begin{tikzpicture}[scale=2.8]
            \draw (0,0)--++(-.1,1)--++(3,0)--++(.1,-1);
            \draw (0,0)--++(-.8,-.6)--++(3,0)--++(.8,.6);
            \draw[dashed] (0,0)--++(.4,-.3)--++(2.2,0);
            \draw (2.6,-.3)--++(.8,0)--++(-.4,.3);
            \draw (3.6,0) node {\Large $\Sigma$};
            \draw[thick,->] (3.5,0)--(3.3,-.2);
            \draw[thick,->] (3.5,0)--(3,.2);
            \draw[thick,->] (3.5,0)--(2.4,-.5);
            \draw (3.4,0.6) node {\Large $\zeta$};
            \draw[thick,->] (3.3,0.6)--(2.8,.4);

            \filldraw[orange,opacity=0.05] (0,0)--++(-.1,1)--++(3,0)--++(.1,-1)--cycle;
            \filldraw[yellow,opacity=0.04] (0,0)--++(-.8,-.6)--++(3,0)--++(.8,.6)--cycle;
            \filldraw[red,opacity=0.03] (0,0)--++(.4,-.3)--++(2.2,0)--(3,0)--cycle;
            \filldraw[red,opacity=0.07] (3,0)--(2.6,-.3)--++(.8,0)--cycle;

            \draw[very thick,red] (0,0)--++(3,0);
            \draw[red] (-.4,0.3) node {\Large $\sigma$};
            \draw[red,thick,->] (-.3,.3)--(.2,.05);
            
            \draw[thick,blue] (1.6,0)--++(-.8,-.6);
            \draw[blue] (.9,-.4) node[below left] {$4$};
            \draw[thick,blue] (1.6,0)--++(-.7,1);
            \draw[blue] (1.35,.33) node[left] {$2$};
            \draw[thick,blue] (1.6,0)--++(.35,1);
            \draw[thick,blue,opacity=0.5,dashed] (1.6,0)--++(-.2,-.3);
            \draw[thick,blue,opacity=0.5,dashed] (1.6,0)--++(.8,-.3);
            \draw[blue] (1.3,1.5) node {\Large $\Trop(X)$};
            \draw[blue,thick,->] (1.3,1.35)--(1,.9);
            
            \filldraw[Green] (1.595,0) circle(0.035);
            \draw[Green] (2.1,.3) node {\Large $\Gamma$};
            \draw[thick,->,Green] (2,.3)--(1.68,.06);

            \draw[very thick,red,opacity=0.4] (-.07,0.7)--++(3,0);
            \filldraw (1.6,0)++(-.49,0.7) circle(0.03) node[above right] {$2$};
            \filldraw (1.6,0)++(.245,0.7) circle(0.03) node[above left] {$2$};
            \draw[red,opacity=0.4] (-.4,0.3)++(-.07,0.7) node {\Large $\tilde\sigma$};
            \draw[red,thick,->,opacity=0.4] (-.3,.3)++(-.07,0.7)--(.13,.75);
            \draw[red, decorate, decoration={snake,amplitude=.3mm,segment length=2.25mm,post length=1mm},thick, opacity=0.6,->] (0.4,0.05) --++ (-0.06, 0.6); 
        \end{tikzpicture}
        \caption{A schematic illustration of the multiplicity $\mult_{\Gamma}(\Trop(X),\Sigma_r;\Sigma)$ appearing in Theorem \ref{thm:intro1}. The point $\Gamma$ ({\color{ForestGreen}green}) is an isolated intersection point of $\Trop(X)$ ({\color{Blue}blue}) with the $1$-skeleton ($r=1$) of $\Sigma$, lying in the relative interior of the cone $\sigma$ ({\color{red}red}) of dimension $r=1$. The local geometry of $\Sigma$ near $\Gamma$ is shown. The multiplicity $\mult_{\Gamma}(\Trop(X),\Sigma_r;\Sigma)=4$ is found by translating $\sigma$ into a nearby maximal cone $\zeta$ of $\Sigma$, then computing the usual tropical intersection number of $\Trop(X)$ with the translate $\tilde\sigma$ ({\color{Salmon}pink}). The result is independent of the choices of $\zeta$ and $\tilde \sigma$. (Note that the multiplicities cannot be seen from the picture; they depend on certain lattice indices, cf. Section \ref{sec:BackgroundTropicalIntersectionTheory}.) 
        The map $\Sigma\to\Sigma/\sigma$ is the horizontal projection, which sends $\Trop(X)$ to $\Sigma/\sigma$ via a ``degree-4 tropical map.''}
        \label{fig:LocallyPerturbSigma}
    \end{figure}

It is nontrivial that $\mult_{\Gamma}(\Trop(X),\Sigma_r;\Sigma)$ is well-defined (see Theorem \ref{thm:MultIndependentOfChoice} and Definition/Proposition \ref{prop:GenericallyFinite}), as the definition involved making two choices: the maximal cone $\zeta,$ and the perturbation $\tilde\sigma.$ This phenomenon is of a flavor that is familiar in tropical intersection theory, and like all such phenomena, it ultimately boils down to the balancing condition for tropical varieties. (See Example \ref{ex:BezoutExample}.) More directly, the reader should compare to the theorem of Sturmfels-Tevelev that the degree of a ``tropical generically finite map'' $\Trop(X_1)\to\Trop(X_2)$ is the cardinality (counted with multiplicity) of the preimage of a general point of \emph{any} polyhedron of $\Trop(X_2)$ \cite[Thm 3.12]{SturmfelsTevelev2008}. Indeed, we use this theorem to prove that $\mult_{\Gamma}(\Trop(X),\Sigma_r;\Sigma)$ is well-defined (Theorem \ref{thm:MultIndependentOfChoice}). To see roughly why it applies, observe that in Figure \ref{fig:LocallyPerturbSigma}, the quotient map $\Sigma\to\Sigma/\sigma$ appears to send $\Trop(X)$ to $\Sigma/\sigma$ as a finite-to-one cover, whose degree is the same over each polyhedron of $\Sigma/\sigma$.

\begin{remark}
    It is worth highlighting again the contrast between what we are doing and the tropical intersection theory of Osserman-Payne \cite{OssermanPayne2013}. There, they consider a subvariety $Y\subseteq T$ of a torus and two subvarieties $X,X'\subseteq Y$. They define a tropical intersection multiplicity of $X$ and $X'$ in $Y$, \emph{provided $X\cap X'$ lies generically in the locus where $Y$ is locally affine-linear and of weight $1$}. We have defined a stable intersection of $\Trop(X)$ and $\Sigma_r$, provided their intersection is finite and lies in the relative interior of $\Sigma_r$, even if the ambient variety $\Trop(\M)=\Sigma$ is \emph{not} locally affine-linear along the intersection --- indeed $\Sigma$ is essentially never locally affine-linear along $\Sigma_r$ in our application. It would be interesting to know whether this idea has other applications/generalizations beyond Theorem \ref{thm:intro1}.
\end{remark}

\begin{ex}\label{ex:Degeneration}
    Let $\M=V(x+y+z+1)\subseteq\bfT=(\C^*)^3$ with $Y_\Sigma=\P^3\setminus Z$ where $Z$ is the set of four coordinate points, which are the $\bfT$-fixed points under the action scaling the first three homogeneous coordinates. Thus, $\Mbar$ is isomorphic to a copy of $\P^2$ sitting inside of $Y_\Sigma$. It can be checked that $\Mbar$ is a tropical compactification of $\M$, where $\Trop(\M)=|\Sigma|$ is the $2$-skeleton of the fan in $\R^3$ associated to the $\bfT$-action on $\P^3$, and each facet of $\Trop(\M)$ has weight $1$.

    Let $X = V(x^2yz+t,x+y+z+1)\subseteq \M\times_{\Spec(\C)}\Spec(K)$ where $K = \C\{\!\{t\}\!\}$. Then we see that the class of the ``$t=0$ limit'' is $[X_0] = 2[\Mbar_{\sigma_1}] + [\Mbar_{\sigma_2}] + [\Mbar_{\sigma_3}]\in A^\bullet(\Mbar)$, where $\sigma_i = \R_{>0} \langle e_i\rangle$ for $i=1,2,3$. Indeed, let us check that Theorem~\ref{thm:intro1} agrees with this computation.

    A simple calculation (using tropical transversality for example) shows that $\Trop(X)$ is the intersection of $\Trop(x+y+z+1)$ with the plane $2x+y+z=1$ in $\R^3$, see Figure~\ref{fig:Degeneration Example}.
    Then $\Trop(X)$ intersects $\Sigma_1$ at $e_1/2$, $e_2$, and $e_3$ which are in the relative interior of their respective cones. 

    \begin{figure}
    \centering
    \begin{tikzpicture}[scale=2]
            \pgfmathsetmacro{\xconst}{.5}
            \pgfmathsetmacro{\yconst}{sqrt(1-\xconst^2)}
            \pgfmathsetmacro{\tscale}{0.2}
            \pgfmathsetmacro{\tscaletwo}{1.6}
            \filldraw[yellow,opacity=0.1] (0,0)--(-3,0)--(0,2)--cycle;
            \filldraw[fill={rgb:red,4;green,1;yellow,1},opacity=0.12] (0,0)--(-3,0)--({2*\xconst},{-2*\yconst})--cycle;
            \filldraw[orange,opacity=0.2] (0,0)--(0,2)--({2*\xconst},{-2*\yconst})--cycle;
            \draw[->] (0,0)--(0,2);
            \draw[->] (0,0)--({2*\xconst},{-2*\yconst});
            \draw[->] (0,0)--(-3,0);
            \draw[blue, thick] (-0.5,0)--(0,1) node[pos=0.25,right] {$r_2$};
            \draw[blue, thick] (\xconst,-\yconst)--(0,1);
            \draw[blue, thick] (-0.5,0)--(\xconst,-\yconst) node[pos=0.15, below] {$r_1$};
            \draw[blue,dashed,opacity=.3, thick] (0,1)--({\tscale*(1-\xconst)},{\tscale*\yconst+(3*\tscale+1)});
            \draw[blue,dashed,opacity=.3, thick] (\xconst,-\yconst)--({\tscale+\xconst*(3*\tscale+1)},{-\yconst*(3*\tscale+1)-\tscale});
            \draw[blue, dashed,opacity=0.3, thick] (-0.5,0)--({-.5-0.6*\tscaletwo-0.6*\tscaletwo*\xconst},{0.6*\tscaletwo*\yconst-0.6*\tscaletwo}) node[pos=0.25, below] {$r_3$};
            \draw (0,1) node {$\bullet$} node[left] {$e_3$};
            \draw (\xconst,-\yconst) node {$\bullet$} node[right] {$e_2$};
            \draw (-1,0) node {$\bullet$} node[above] {$e_1$};
            \draw[name path=pathr3, blue] (-0.5,0) node {$\bullet$} node[above,xshift=-2] {$\Gamma$};
            \draw[blue,opacity=.3] ({-1.5-\xconst},{\yconst-1}) node {$\bullet$} node[below] {$\Gamma'$};
            \coordinate (Gamma) at (-.5,0);
            \coordinate (endr3) at ({-.5-\tscaletwo-\tscaletwo*\xconst},{\tscaletwo*\yconst-\tscaletwo});
            \coordinate (xaxisend) at (-3,0);
            \coordinate (yaxisend) at ({2*\xconst},{-2*\yconst});
            \coordinate (xyint) at (intersection of  Gamma--endr3 and xaxisend--yaxisend);
            \coordinate (diagtip) at ({3*(1-\xconst)},{3*(\yconst-1)-.5});
            \filldraw[gray,opacity=0.1] (0,0)--(-3,0)--(diagtip)--cycle;
            \filldraw[gray,opacity=0.1] (0,0)--(yaxisend)--(diagtip)--cycle;
            \filldraw[gray,opacity=0.1] (0,0)--(0,2)--(diagtip)--cycle;
        \end{tikzpicture}
    \caption{The tropicalization $\Trop(X) \subseteq \mathbb{R}^3$ of the variety in Example~\ref{ex:Degeneration}.}
    \label{fig:Degeneration Example}
    \end{figure}
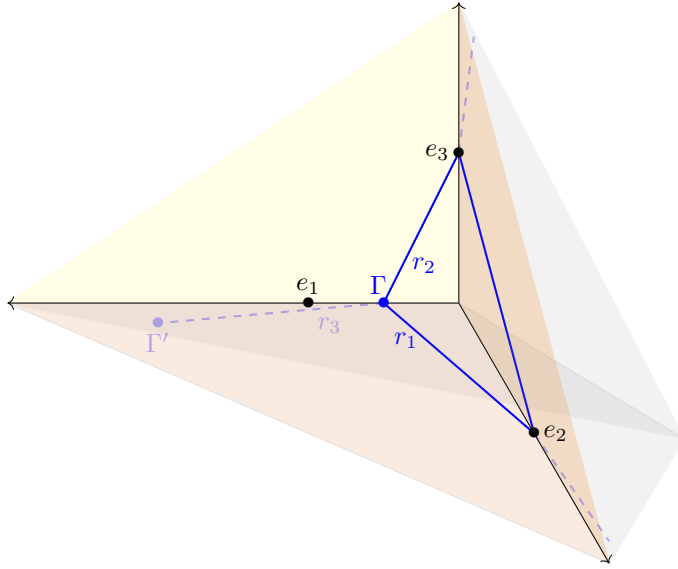

    Let $\Gamma = e_1/2$, and let $r_1,r_2,r_3$ be the $1$-dimensional cells of $\Trop(X)$ whose closures contain $\Gamma$ and are parallel to $\langle -1,2,0\rangle$, $\langle -1,0,2\rangle$, and $\langle 1,-1,-1\rangle$, respectively. It is easy to check $r_1$ and $r_2$ have weight $1$. For the ray $r_3$, let us take $\Gamma'=(3/2,-1,-1)$ in its relative interior. Then $\init_{\Gamma'}(I) = \langle x^2yz+1,y+z\rangle = \langle x^2y^2-1,y+z\rangle\subseteq \C[x^\pm,y^\pm,z^\pm]$, whose vanishing set is a union of two reduced copies of $\C^*$. Hence, the weight of $r_3$ is $2$.

    Observe that the unique cone of $\Sigma$ containing $\Gamma$ is $\sigma_1$. Generically translating $\sigma_1$ to $\tilde\sigma_1$ in the facet $\zeta=\R_{>0}\langle e_1,e_2\rangle$, then $\tilde\sigma_1$ intersects $r_2$ at a unique point. Thus, the formula \eqref{eq:MultSumIntro} predicts the coefficient of $[\Mbar_{\sigma_1}]$ to be
    \[
    \frac{1}{m_\zeta}m_{r_1}[N_\zeta:L_\Z(\langle -1,2,0\rangle) + L_\Z(\langle 1,0,0\rangle)] =  \frac{1}{1}\cdot 1 \cdot 2 = 2,
    \]
    which indeed matches our prediction.

    Alternatively, we may generically translate $\sigma_1$ to $\tilde\sigma_1'$ in the facet $\zeta' = \R_{>0}\langle e_1,-e_1-e_2-e_3\rangle$ of $\Sigma$. In this case, $\tilde\sigma_1'$ intersects $r_3$ in a unique point, and the formula \eqref{eq:MultSumIntro} predicts the coefficient of $[\Mbar_{\sigma_1}]$ to be 
    \[
    \frac{1}{m_\zeta}m_{r_3}[N_{\zeta'}:L_\Z(\langle 1,-1,-1\rangle) + L_\Z(\langle 1,0,0\rangle)] = \frac{1}{1}\cdot 2\cdot 1 = 2,
    \]
    which again matches our prediction.
\end{ex}

Finally, a special case of Theorem \ref{thm:intro1} is when $X$ is of the form $\widetilde X\cap\M$, for some subscheme $\widetilde X$ of a larger ambient space $Y$ (such as $\bfT$), such that $\Trop(\widetilde{X})$ meets $\Sigma_r$ transversely in $\Trop(Y)$. In this case, using \cite{OssermanPayne2013}, we prove that $\mult_{\Gamma}(\Trop(X),\Sigma_r;\Sigma)$ coincides with a tropical intersection multiplicity in the sense of Osserman-Payne:
\begin{thmx}[Thm. \ref{thm:XTilde2-OP}]\label{thm:XTilde}
    Suppose $\M$ is Cohen-Macaulay and contained in a subvariety $Y \subseteq \bfT$ of dimension $n'$. Let $\widetilde X\subseteq Y^K$ be a Cohen-Macaulay subscheme, and let $X = \widetilde X \cap \M$ be the scheme-theoretic intersection.
    
    Let $\Gamma$ be an isolated point of $\Trop(\widetilde X)\cap \Sigma_r$ that is moreover in the relative interior of:
    \begin{enumerate}
        \item a facet of $\Trop(Y)$ of weight $1$,
        \item a facet $\sigma$ of $\Sigma_r$, and
        \item an $(n'-r)$-dimensional facet of $\Trop(\widetilde X)$,
    \end{enumerate}
    Then $\Gamma$ is an isolated point of $\Trop(X) \cap \sigma$ and \begin{align*}
        \mult_{\Gamma}(\Trop(X),\sigma;\Sigma) = i(\Gamma, \Trop(\widetilde{X}) \cdot\sigma;\Trop(Y)),
    \end{align*}
    where on the right-hand side, $\sigma$  is taken to have weight 1 (as in Definition \ref{def:MultIndependentOfChoice-Intro}), and $i(\Gamma, \Trop(\widetilde{X}) \cdot\sigma;\Trop(Y))$ is the local tropical intersection multiplicity of \cite{OssermanPayne2013}.
\end{thmx}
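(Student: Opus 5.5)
The plan is to reduce both sides to local intersection multiplicities computed inside a single maximal cone $\zeta$ of $\Sigma$ near $\Gamma$, and then compare them using the Osserman--Payne theory for $\widetilde X$ and $\M$ inside $Y$.

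First, I will show that near $\Gamma$ we have $\Trop(X)=\Trop(\widetilde X)\cap\Sigma$ as weighted complexes. Hypothesis (1) says $\Trop(Y)$ is locally affine of weight $1$ near $\Gamma$; hence near $\Gamma$ it coincides with a neighborhood in an affine space $A$ of dimension $n'$. Since $\Gamma\in\sigma\subseteq\Sigma=\Trop(\M)\subseteq\Trop(Y)$, the complex $\Sigma$ also lies in $A$ near $\Gamma$. Consider $\Trop(\widetilde X)$ near $\Gamma$: by (3) it is a single affine piece $F$ of dimension $n'-r$. By (2) and isolatedness, $F$ meets $\sigma$ transversely in $A$, so $F+\mathrm{span}(\sigma)=A$.

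Next I perturb. Choose any facet $\zeta$ of $\Sigma$ containing $\Gamma$ in its closure, of dimension $\dim\M$. Because $F$ is transverse to $\sigma\subseteq\bar\zeta$, it meets every cone of $\Sigma$ near $\Gamma$ properly in $A$ (expected dimension). Then Osserman--Payne (transverse intersection in the weight-one locus, using the Cohen--Macaulay hypotheses on $\M$ and $\widetilde X$ so that the intersection is proper and the multiplicities equal the algebraic ones) gives the following: locally $\Trop(X)=F\cdot\Sigma$, with the weight on the facet $F\cap\zeta$ equal to $m_\zeta\cdot[N_A:N_F+N_\zeta]$. Isolatedness of $\Gamma$ in $\Trop(X)\cap\sigma$ is then immediate, since $F\cap\sigma=\{\Gamma\}$ locally.

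Now I compute $\mult_\Gamma$ via Definition \ref{def:MultIndependentOfChoice-Intro} with this $\zeta$. A generic translate $\tilde\sigma$ of $\sigma$ into $\zeta$ meets $F\cap\zeta$ in one point. The multiplicity inside $\zeta$ is
\[
m_\zeta[N_A:N_F+N_\zeta]\cdot[N_\zeta:(N_F\cap N_\zeta)+N_\sigma],
\]
and after dividing by $m_\zeta$ it remains to prove the lattice identity
\[
[N_A:N_F+N_\zeta]\,[N_\zeta:(N_F\cap N_\zeta)+N_\sigma]=[N_A:N_F+N_\sigma].
\]
This should follow from the standard index multiplicativity for the chain $N_F+N_\sigma\subseteq N_F+N_\zeta\subseteq N_A$. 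One checks that $(N_F+N_\zeta)/(N_F+N_\sigma)\cong N_\zeta/((N_F\cap N_\zeta)+N_\sigma)$ via the second isomorphism theorem, using $N_\sigma\subseteq N_\zeta$ and saturation of the lattices. Here $N_F+N_\zeta$ may fail to be saturated, and that is exactly why the index factor appears. The right-hand side is the Osserman--Payne multiplicity $i(\Gamma,\Trop(\widetilde X)\cdot\sigma;\Trop(Y))$, since $\sigma$ has weight $1$ and $\Trop(Y)$ has weight $1$.

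The main obstacle I expect is the first step: justifying that $\Trop(X)$ equals the stable tropical intersection $\Trop(\widetilde X)\cdot\Trop(\M)$ with the correct weights near $\Gamma$. The difficulty is that $\Gamma$ lies on a non-smooth locus of $\Sigma$, whereas Osserman--Payne needs transversality at points lying in facets. I would apply their result on the open facets $\zeta$ near $\Gamma$, where $F\cap\zeta$ is transverse and $\Sigma$ is locally affine; the Cohen--Macaulay hypotheses then rule out embedded or excess components, so that no further pieces of $\Trop(X)$ appear near $\Gamma$ beyond $\bigcup_\zeta F\cap\zeta$. This last point uses that $\Trop(X)\subseteq F\cap\Sigma$ always holds.
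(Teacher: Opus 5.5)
This is essentially the paper's proof: you localize at $\Gamma$ inside the weight-one facet of $\Trop(Y)$, use Osserman--Payne (Cohen--Macaulay, simple point, proper intersection) to see that $F\cap\zeta$ is a facet of $\Trop(X)$ with an explicit weight, compute the perturbed intersection inside a single facet $\zeta$, and finish with the same modular-law and second-isomorphism index identity. The paper applies Osserman--Payne's Corollary 5.1.3 directly near $\Gamma$, which only needs $\Gamma$ to be a simple point of $\Trop(Y)$, so the obstacle you anticipate does not really arise; your detour through the open facets is also fine.

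One correction is needed. The Osserman--Payne weight on $F\cap\zeta$ is $m_{\Trop(\widetilde X);F}\cdot m_\zeta\cdot[N_A:N_F+N_\zeta]$, and the right-hand side is $m_{\Trop(\widetilde X);F}\cdot[N_A:N_F+N_\sigma]$, because hypothesis (3) does not force $\Trop(\widetilde X)$ to have weight $1$ at $\Gamma$. You dropped this factor in both places. The two omissions cancel, so the final equality survives, but both intermediate formulas are wrong as stated when that weight exceeds $1$.
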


See Remark \ref{rem:discussion-of-thm-E} for additional discussion.

\subsection{Outline of the paper} Part \ref{part:MainTheorem} consists of Section \ref{sec:BackgroundTropical}, which contains background material on tropical geometry, Section \ref{sec:StratificationOfTropicalCompactification}, which contains some results on tropical compactifications, and Section \ref{sec:DegenerationsInTropicalCompactifications}, which contains the proof of Theorems \ref{thm:intro1} and \ref{thm:XTilde}. Part \ref{part:Application} begins with Section \ref{sec:BackgroundModuli}, which contains background material on the moduli spaces $\Mbar_{0,n}$ and $\Mbar_{0,n}^{\trop}$ of (algebraic and tropical) marked genus-0 stable curves. In Section \ref{sec:TropicalKapranov} we define tropical Kapranov maps and tropical $\psi$-hypersurfaces. In Section \ref{sec:TropIntisnice}, we introduce a collection of $\psi$-hypersurfaces $H_q$, and prove Theorem \ref{thm:introthmabouttropicalintersection}. In Section \ref{sec:RelateToAlgebraic}, we apply the results of Part \ref{part:MainTheorem} to prove Theorem \ref{thm:introapplypartonetoparttwo}. In Section \ref{sec:RecursionandFirework}, we develop the recursive Firework algorithm that computes $\Sigma_r\cap\Trop(\bigcap_{q=1}^r H_q)$.

\subsection{Acknowledgements}The authors are grateful to Renzo Cavalieri for the initial observation that the results of \cite{GillespieGriffinLevinson2022} seemed to be tropical in nature, and to Diane Maclagan, Hannah Markwig, Sam Payne, and Bal\'azs Szendr\H{o}i for enlightening discussions. Ramadas and Silversmith thank SLMath for their hospitality during Spring 2022, when this project was initiated. Griffin, Ramadas, and Silversmith thank ICERM for their hospitality in Summer 2023, when part of this work was conducted.

\part{Tropical intersection theory and degenerating families of subvarieties}\label{part:MainTheorem}
\section{Background 1: Tropical geometry}\label{sec:BackgroundTropical}
Unless otherwise specified, see \cite{MaclaganSturmfels2015} for further details on everything in this section.

\subsection{Polyhedral geometry}\label{sec:BackgroundPolyhedralGeometry}
We review some of the basic terminology in polyhedral geometry. (See \cite[Sec. 2.3]{MaclaganSturmfels2015}.) A \textbf{polyhedron} $\sigma$ in $\R^n$ is a (possibly unbounded) subset that is the intersection of finitely many closed half-spaces. A polyhedron has \textbf{faces} of various dimensions. The \textbf{affine span} of $\sigma$ is the smallest affine subspace of $\R^n$ containing $\sigma$ --- it is a translate of the \textbf{linear space parallel to $\sigma$}, which we denote $L_{\R}(\sigma)$. The \textbf{dimension} $\dim(\sigma)$ of $\sigma$ is the dimension of $L_{\R}(\sigma)$ as a real vector space. The \textbf{relative interior} $\relint(\sigma)$ of $\sigma$ is the interior of $\sigma$ inside its affine span. A \textbf{polyhedral complex} in $\R^n$ is a set $\Sigma$ of polyhedra in $\R^n$ that are ``glued along faces''; precisely, a set such that (1) $\Sigma$ is closed under taking faces, and (2) if $\sigma_1,\sigma_2\in\Sigma$ are polyhedra, then $\sigma_1\cap\sigma_2$ is empty or is a face of both $\sigma_1$ and $\sigma_2$. The elements $\sigma\in\Sigma$ are called \textbf{cells} (or $d$-cells, where $d=\dim(\sigma)$), and the maximal cells with respect to inclusion are called \textbf{facets}. $\Sigma$ is called \textbf{pure dimension} $m$ if all facets have dimension $m$, and in this case we write $\dim(\Sigma)=m$. The \textbf{$r$-skeleton} of $\Sigma$ is the polyhedral complex $\Sigma_r=\{\sigma\in\Sigma:\dim(\sigma)\le r\}.$ The union $\abs{\Sigma}\subseteq\R^n$ of all cells $\sigma\in\Sigma$ is called the \textbf{support} of $\Sigma.$ For polyhedral complexes $\Sigma_1,\Sigma_2,$ we say $\Sigma_1$ \textbf{refines} $\Sigma_2$ if $\abs{\Sigma_1}=\abs{\Sigma_2}$ and every cell $\sigma\in\Sigma_2$ is a union of cells of $\Sigma_1.$ The union of the relative interiors of all facets of $\Sigma$ is called the \textbf{relative interior} $\relint(\Sigma)$ of $\Sigma$. The \textbf{lineality space} of $\Sigma$ is the largest linear subspace of $\R^n$ with respect to which $\Sigma$ is translation-invariant. If all cells of $\Sigma$ are \textbf{cones} (i.e. stable under multiplication by $\R_{\ge0}$), then $\Sigma$ is called a \textbf{fan}. 

All polyhedral complexes $\Sigma$ in this paper will be \textbf{rational}, i.e. for every cell $\sigma\in\Sigma,$ the linear space $L_{\R}(\sigma)$ is the span of integer vectors. (Note that the affine span of $\sigma$ need not be a translate of $L_{\R}(\sigma)$ by a vector with rational entries.) In this case, each cell $\sigma$ has an \textbf{associated lattice} $L_{\Z}(\sigma)=L_{\R}(\sigma)\cap\Z^n$.

Our polyhedral complexes will also be \textbf{weighted}, i.e. have a positive integer weight $m_{\Sigma;\zeta}$ assigned to each facet $\zeta\in\Sigma$. For a weighted rational polyhedral complex $\Sigma$, there is a notion called \textbf{balancedness}, see \cite[Def. 3.3.1]{MaclaganSturmfels2015}. 
A rational fan $\Sigma$ pure of dimension $d$  is \textbf{balanced} if for every cone $\tau$ of dimension $d-1$, $\sum_{\sigma \supsetneq \tau} m_\sigma v_\sigma = 0$ where $v_\sigma$ is the primitive vector of the ray $(\sigma + L_\mathbb{Z}(\tau))/L_\mathbb{Z}(\tau)$. A rational polyhedral complex $\Sigma$ pure of dimension $d$ is balanced if its star at every point is a balanced fan. (See just below for the definition of the star of $\Sigma$ at a point.)
In the simplest case, suppose $\Sigma$ is pure 1-dimensional, let $\sigma\in\Sigma$ be a 0-cell. Then $\Sigma$ is balanced at $\sigma$ if $$\sum_{\zeta}m_\zeta\cdot v_\zeta=0\in\R^n,$$ where $\zeta$ runs over 1-cells in $\Sigma$ that contain $\sigma$ --- i.e. rays based at $\sigma$ --- and $v_\zeta$ is the generator of $L_{\Z}(\zeta)$ pointing outward from $\sigma$ along $\zeta$.

\begin{ex}
     The reader may verify that $\Trop(X)$ in Figure \ref{fig:P2Case} is balanced, noting that the leftmost ray has slope 2.
\end{ex}
\begin{remark}\label{rem:RelativeInteriorOfSupport}
    We will often encounter subsets $W\subseteq\R^n$ that are the support of a non-unique rational polyhedral complex. In this case, by a slight abuse of notation, we define the \textbf{relative interior} $\relint(W)$ to be the set of points $\Gamma\in W$ such that there exists a neighborhood $U\subseteq\R^n$ of $\Gamma$ and an affine subspace $\mathbf{A}\subseteq\R^n$ such that $\mathbf{A}\cap U=W\cap U$, i.e. points where $W$ is locally linear. If $\Sigma$ is a polyhedral complex with support $W$, and $\Gamma\in\relint(W)$ is in the intersection of maximal cones $\sigma_1,\sigma_2$, then we must have $L_{\R}(\sigma_1)=L_{\R}(\sigma_2)$ and $L_{\Z}(\sigma_1)=L_{\Z}(\sigma_2)$. Thus $\Gamma$ defines a \textbf{linear space} $L_{\R}(W;\Gamma):=L_{\R}(\sigma_1)$, and an \textbf{associated lattice} $L_{\Z}(W;\Gamma):=L_{\Z}(\sigma_1)$.
\end{remark}
\begin{remark}\label{rem:RelativeInteriorOfSupportWeights}
    If $\Sigma$ is a balanced weighted rational polyhedral complex and $\Gamma\in\relint(\abs{\Sigma})$ (see Remark \ref{rem:RelativeInteriorOfSupport}), then balancing implies that each maximal cone of $\Sigma$ containing $\Gamma$ has the same weight. Thus we may think of a weighting as a locally constant function $\relint(\abs{\Sigma})\to\Z_{>0}.$ We may recover the weight $m_{\Sigma;\zeta}$ of a maximal cell $\zeta\in\Sigma$ as the weight assigned to any point in the interior of $\zeta.$ The subsets $W$ in Remark \ref{rem:RelativeInteriorOfSupport} will typically have a natural weight function $\relint(W)\to\Z_{>0}$, such that for any choice of polyhedral complex structure on $W$, the associated weighted rational polyhedral complex is balanced. We may therefore refer to the \textbf{weight} $m_{W;\Gamma}$ of any \emph{point} $\Gamma\in\relint(W)$.
\end{remark}

If $\Sigma$ is a (balanced weighted) polyhedral complex in $\R^n$, and $\Gamma\in\abs{\Sigma}$ is a point in the support of $\Sigma$, the \textbf{star} of $\Sigma$ at $\Gamma$ is the (balanced weighted) fan $$\Star_\Gamma\Sigma:=\{\R_{>0}\cdot(\sigma-\Gamma):\sigma\in\Sigma,\sigma\ni\Gamma\}.$$ That is, $\Star_\Gamma\Sigma$ is obtained by translating $\Sigma$ so that $\Gamma$ is at the origin, and taking the positive spans of all cells $\sigma\in\Sigma$ containing $\Gamma.$ We write $\Star_\Gamma\sigma:=\R_{>0}\cdot(\sigma-\Gamma)$ for these positive spans. This is the natural notion of the ``local picture near a point'' for polyhedral complexes. The lineality space of $\Star_\Gamma\Sigma$ contains $L_{\R}(\sigma)$ for $\sigma\in\Sigma$ the unique cell containing $\Gamma$ in its relative interior. In particular, if $\Gamma\in\relint(\zeta)$ for $\zeta\in\Sigma$ a facet, then $\Star_\Gamma\Sigma=L_{\R}(\zeta)$.

Remarks \ref{rem:RelativeInteriorOfSupport} and \ref{rem:RelativeInteriorOfSupportWeights} are compatible with taking stars, as follows. If $W\subseteq\R^n$ is the support of a rational polyhedral complex, as in Remark \ref{rem:RelativeInteriorOfSupport}, and $\Gamma\in W,$ then there is a well-defined subset $\Star_\Gamma W\subseteq\R^n$ that is the support of a rational polyhedral fan. Furthermore, a balanced weighting function $\relint(W) \to \Z_{>0}$ as in Remark \ref{rem:RelativeInteriorOfSupport} induces a balanced weighting function on $\Star_\Gamma(W)$ for any choice of fan structure.

\subsection{Basic tropical geometry} \label{sec:BackgroundTropicalBasics}
\begin{notation}
    In this section, we use the following notation.
    \begin{itemize}
    \item Let $K$ be a complete valued field (possibly with the trivial valuation) with valuation $\val : K^* \to \R$. Let $\alpha\mapsto t^{\alpha}$ be a splitting $\val(K^*)\to K^*$ of the valuation map. Let $R$ be the valuation ring of $K$, with maximal ideal $\m$ and residue field $\k=R/\m$. We assume that $\mathrm{char}(\k)=0$.
    \item Let $\bfT^R$ be an $n$-dimensional torus over $R$.
    Let $\bfT^K$ be the associated torus over $K$ and $\bfT$ the associated torus over $\k$.

    \item We have a canonical identification of the character and cocharacter lattices of $\bfT^K$ and $\bfT$; we write $M$ for the character lattice and $N$ for the cocharacter lattice. Let $N_{\R}=N\otimes\R=\R^n.$ 
    \end{itemize}
    Recall that taking cocharacter lattices gives an equivalence of categories from algebraic tori over an algebraically closed field to finite-rank free $\Z$-modules; in particular, there is a bijection between rational subspaces of $N_{\R}$ and subtori of $\bfT^K$ (or $\bfT$).
\end{notation}

\begin{definition}
    Let $\Gamma\in N_\R$. Given $f\in K[x_1^{\pm},\dots, x_n^{\pm}]$ with expansion $f = \sum_{u\in \mathbb{Z}^n} c_u x^u$, its \textbf{initial form} is
    \[
    \init_\Gamma(f) \coloneqq \sum_{u:\val(c_u) + \Gamma\cdot u= m} \overline{t^{-\val(c_u)}c_u}x^u\in\k[x_1^{\pm},\dots, x_n^{\pm}],
    \]
    where $m= \min\{\val(c_u) + \Gamma\cdot u\}$ and $\overline{\cdot}$ denotes taking the image in the residue field $\k$.
    
    The \textbf{initial ideal} of an ideal $I\subseteq K[x_1^{\pm},\dots, x_n^{\pm}]$ is $\init_\Gamma(I) \coloneqq \langle \init_\Gamma(f): f\in I\rangle\subseteq \k[x_1^\pm,\dots, x_n^\pm]$. The \textbf{initial subscheme} of $X$ is $\init_\Gamma X \coloneqq V(\init_\Gamma I(X))$, a subscheme of $\bfT$.
\end{definition}

\begin{definition}
    Let $X\subseteq\bfT^K$ be a (closed) subvariety of pure dimension $m$. Then the \textbf{tropicalization} of $X$, denoted by $\Trop(X)$, is a subset of $N_{\R}=\R^n$ with the following equivalent characterizations (\cite[Thm.~3.2.3]{MaclaganSturmfels2015}):
\begin{enumerate}
    \item $\Trop(X)$ is the set of vectors $\Gamma\in N_\R$ such that $$\init_{\Gamma}I(X)\ne\langle1\rangle=\k[x_1^{\pm},\ldots,x_n^{\pm}],$$
    or equivalently $\init_\Gamma X\ne\emptyset.$\label{it:char1}
    \item $\Trop(X)$ is the set of vectors in $N_\R$ arising as $$\Trop(y)=(\val(y_1),\ldots,\val(y_n))$$ for some $\widetilde K$-valued point $y=(y_1,\ldots,y_n):\Spec \widetilde K\to X$, where $\widetilde K$ is a valued extension of $K$ whose valuation extends the valuation on $K$.
\end{enumerate}
\end{definition}

Note that tropicalizations are stable under base extension.

\begin{remark}\label{rem:RescueDegeneration}
    It is helpful to keep in mind the following geometric intuition behind Characterization \eqref{it:char1}. Take $K=\C((t))$ with the $t$-adic valuation, so that $\Spec(K)$ is the ``formal punctured disk''. A vector $\Gamma\in N_\R$ corresponds (at least, if its entries are integers) to a cocharacter (i.e. 1-parameter subgroup) $\Spec(K[x^{\pm}])\to\bfT^K.$ We also have a map $\Spec(K)\to\Spec(K[x^{\pm}])$ given by $x\mapsto t,$ and composing gives a $K$-valued point $t^\Gamma:\Spec(K)\to\bfT^K$, i.e. a ``family of torus elements over the formal punctured disk''. Consider the translation $t^{-\Gamma}\cdot X$ of $X$ by $t^{-\Gamma}$. Then the initial subscheme $\init_\Gamma X$ can be characterized as the ``limit as $t\to0$ of $t^{-\Gamma}\cdot X$'' --- precisely, $\init_\Gamma X=(\overline{t^{-\Gamma}\cdot X})\times_{\Spec(R)}\Spec(\k)$, where $\overline{t^{-\Gamma}\cdot X}$ denotes closure in $\bfT^R.$ So $\Trop(X)$ contains $\Gamma$ if and only if this limit is nonempty.
\end{remark}
\begin{ex}
    We illustrate Remark \ref{rem:RescueDegeneration}. Consider $X_1=V(x^2y^2+x^3+y^3+1)$ from Example \ref{ex:BezoutExample}. We can see that $(-1,-2)\in\Trop(X_1)\subseteq\R^2$, because translating $X_1$ along the 1-parameter subgroup $(t^1,t^2)\in(\C^*)^2$ and taking the limit as $t\to0$ yields $$\lim_{t\to0}V(t^{-6}x^2y^2+t^{-3}x^3+t^{-6}y^3+1)=\lim_{t\to0}V(x^2y^2+t^3x^3+y^3+t^6)=V(x^2y^2+y^3)=V(x^2+y)\subseteq(\C^*)^2,$$ which is nonempty, as expected.
\end{ex}

The ``structure theorem for tropical varieties'' \cite[Thm. 3.3.5]{MaclaganSturmfels2015} asserts that $\Trop(X)$ is the support of a rational polyhedral complex of pure dimension $m$. Furthermore, there is a natural weight function $\relint(\Trop(X))\to\Z_{>0}$ (see Remark \ref{rem:RelativeInteriorOfSupportWeights}), with respect to which any rational polyhedral complex structure on $\Trop(X)$ is balanced.

\begin{definition}[{\cite[Remark 3.4.4]{MaclaganSturmfels2015}}]
    Given $\Gamma\in \relint(\Trop(X))$, $V(\init_\Gamma(I(X)))\subseteq\bfT$ is a subscheme supported on a disjoint union of copies of subtori. Then the \textbf{weight} of $\Gamma$ is the sum of the multiplicities of these subtori.
\end{definition}

We note two important special cases:
\begin{enumerate}
    \item If $K$ is trivially valued, then $\Trop(X)$ has the structure of a \emph{fan} \cite[Cor. 3.5.5]{MaclaganSturmfels2015}.
    \item If $X=V(f)\subseteq\bfT^K$ is a hypersurface, with defining equation $f=\sum_{\mathbf u}c_{\mathbf{u}}\mathbf{x}^{\mathbf{u}}\in K[x_1^{\pm},\ldots,x_n^{\pm}]$, the tropicalization $\Trop(X)$ is combinatorially equivalent to the (negative of the) dual polyhedral complex to the regular subdivision of the Newton polygon $\Newt(f)$ defined by the height function $\mathbf u\mapsto\val(c_{\mathbf{u}})$. (See \cite[Def. 2.3.8]{MaclaganSturmfels2015} and \cite[Prop. 3.1.6]{MaclaganSturmfels2015}.) In this case, the maximal polyhedra of $\Trop(X)$ correspond to edges of the regular subdivision, and the weight of a maximal cone is equal to the lattice length of the corresponding edge. We illustrate further with Example \ref{ex:P2Case}.
\end{enumerate}

\begin{ex}\label{ex:P2Newton}
    We illustrate the calculation of weights of maximal cones of a tropical hypersurface. In Example \ref{ex:P2Case}, the polynomial $$f=x_0^3+t^{-2}x_0x_1x_2+tx_0x_2^2+tx_1x_2^2+t^{2}x_2^3,$$ 
    after dehomogenizing with respect to $x_2$, has Newton polygon $\Newt(f)$ shown on the left in Figure \ref{fig:P2Newton}, where we have labeled each lattice point in $\Newt(f)$ with the valuation of its coefficient in $f$. Viewing these valuations as a height function on the lattice points, we get a collection of points in $\R^3$, and we take the ``upper convex hull'' $P$, i.e. the set of points in or above the convex hull (Figure \ref{fig:P2Newton}, center). Note that 
    excluding the point $(2,0,+\infty)$ does not affect the \emph{upper} convex hull. $P$ is an unbounded polyhedron, whose ``lower faces'' (i.e. those whose outer normal vector has negative last coordinate) project down to define a subdivision of $\Newt(f)$ (Figure \ref{fig:P2Newton}, right) --- this is the regular subdivision induced by the given height function. We have drawn the dual complex (weighted by lattice length of edges) of the subdivision in red. Note that after taking the negative (i.e. $180^\circ$ rotation), and up to translation and edge lengths, this complex is equal to $\Trop(X)$ as drawn in Figure \ref{fig:P2Case}.

    \begin{figure}
        \centering
        \raisebox{-25pt}{\begin{tikzpicture}
            \draw (0,0) node {$\bullet$} node[below] {$2$};
            \draw (1,0) node {$\bullet$} node[below] {$1$};
            \draw (2,0) node {$\bullet$} node[below] {$+\infty$};
            \draw (3,0) node {$\bullet$} node[below] {$0$};
            \draw (0,1) node {$\bullet$} node[above] {$1$};
            \draw (1,1) node {$\bullet$} node[above] {$-2$};
            \draw (0,0)--(3,0)--(1,1)--(0,1)--cycle;
        \end{tikzpicture}}
        \quad\quad$\leadsto$\quad\quad
        \raisebox{-45pt}{\begin{tikzpicture}
            \filldraw[fill=black!5,thick] (0,2)--(1.5,0)--(.5,2.1)--cycle;
            \filldraw[fill=black!20,thick] (0,2)--(1,1.5)--(1.5,0)--cycle;
            \filldraw[fill=black!10,thick] (3,1)--(1,1.5)--(1.5,0)--cycle;
            \draw[very thin] (0,2)--(0,4);
            \draw[very thin] (1,1.5)--(1,4);
            \draw[very thin] (3,1)--(3,4);
            \draw[very thin] (.5,2.1)--(.5,4);
            \draw[very thin] (1.5,1.375)--(1.5,4);
            \draw[dashed] (1.5,0)--(1.5,1.375);
            \draw (0,2) node {$\bullet$} node[below] {$2$};
            \draw (1,1.5) node {$\bullet$} node[left] {$1$};
            \draw (3,1) node {$\bullet$} node[below] {$0$};
            \draw (.5,2.1) node {$\bullet$} node[above] {$1$};
            \draw (1.5,0) node {$\bullet$} node[below] {$-2$};
            \draw[dashed] (.5,2.1)--(1.5,0);
        \end{tikzpicture}}
        \quad\quad$\leadsto$\quad\quad
        \raisebox{-45pt}{\begin{tikzpicture}
            \draw (0,0) node {$\bullet$} node[below] {$2$};
            \draw (1,0) node {$\bullet$} node[below] {$1$};
            \draw (2,0) node {$\bullet$} node[below] {$+\infty$};
            \draw (3,0) node {$\bullet$} node[below] {$0$};
            \draw (0,1) node {$\bullet$} node[above] {$1$};
            \draw (1,1) node {$\bullet$} node[above] {$-2$};
            \draw (0,0)--(3,0)--(1,1)--(0,1)--cycle;
            \draw(0,0)--(1,1)--(1,0);
            \draw[red] (.25,2)--(.25,.75)--(.75,.25)--(1.25,.25)--(2.125,2);
            \draw[red] (.25,.75)--(-1,.75);
            \draw[red] (.75,.25)--(.75,-1);
            \draw[red] (1.25,.25)--(1.25,-1) node[below] {$2$};
        \end{tikzpicture}}
        \caption{Assigning each lattice point in $\Newt(f)$ a height according to the valuation of its coefficient, then projecting the lower faces of the upper convex hull to get a subdivision of $\Newt(f)$, as in Example \ref{ex:P2Newton}.}
        \label{fig:P2Newton}
    \end{figure}
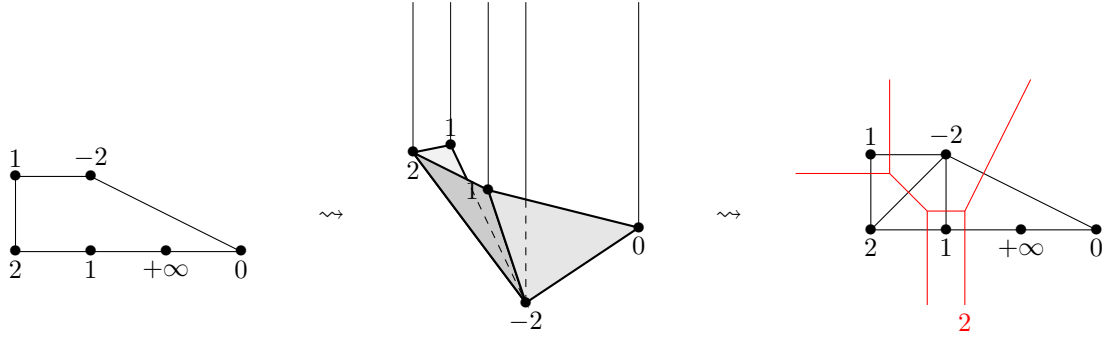
\end{ex}

We will also need the following standard facts:
\begin{lem}[{\cite[Lem. 3.3.6]{MaclaganSturmfels2015}}]\label{lem:InitialTropicalization}
    For any $\Gamma\in \Trop(X)$ we have $\Trop(\init_\Gamma X)=\Star_\Gamma\Trop(X)$.
\end{lem}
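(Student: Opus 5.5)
We have to prove that for $\Gamma\in\Trop(X)$, $\Trop(\init_\Gamma X)=\Star_\Gamma\Trop(X)$, using the characterization of tropicalization by initial ideals. The idea is to compare the initial ideals on both sides through a single identity for initial forms. The key identity is that for $w\in N_\R$ and $\epsilon>0$ sufficiently small (depending on $w$), one has
\[
\init_w\bigl(\init_\Gamma I\bigr)=\init_{\Gamma+\epsilon w}(I),
\]
where the left side is taken with respect to the trivial valuation on $\k$. This is the valued-field analogue of the Gröbner fan fact \cite[Cor. 2.4.10]{MaclaganSturmfels2015}.

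The proof of this identity proceeds in two stages.

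\textbf{Polynomials.} For a single Laurent polynomial $f=\sum c_ux^u$, the terms achieving the minimum of $\val(c_u)+(\Gamma+\epsilon w)\cdot u$ for small $\epsilon$ are exactly those that first minimize $\val(c_u)+\Gamma\cdot u$ and then, among these, minimize $w\cdot u$. This gives $\init_w(\init_\Gamma f)=\init_{\Gamma+\epsilon w}f$ for all $\epsilon<\epsilon_0(f)$.

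\textbf{Ideals.} To pass from polynomials to ideals, we need a uniform $\epsilon$. We use a finite Gröbner-type generating set: there is a finite polyhedral complex (the Gröbner complex of $I$ restricted to a neighborhood of $\Gamma$) on whose open cells $\init_{\Gamma'}I$ is constant, and $\init_\Gamma I$ has finitely many cones in its Gröbner fan. Choosing $\epsilon$ so that $\Gamma+\epsilon w$ lies in the cell of the Gröbner complex meeting $\Gamma$ in direction $w$, we take a finite set of $f\in I$ whose initial forms generate both sides. From these we get both inclusions: the inclusion $\supseteq$ comes from the polynomial identity, and equality follows since both sides are initial ideals with the same standard monomials (homogenize, or compare Hilbert functions after a Gröbner degeneration). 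This uniformity step is the main technical obstacle. It is precisely where the finiteness of the Gröbner complex, i.e. the structure theorem \cite[Thm. 3.3.5]{MaclaganSturmfels2015}, enters.

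With the identity in hand, the rest is formal. Since $\init_\Gamma X$ is defined over the trivially valued field $\k$, the set $\Trop(\init_\Gamma X)$ consists of the $w$ with $\init_w(\init_\Gamma I)\neq\langle1\rangle$. By the identity, this holds iff $\Gamma+\epsilon w\in\Trop(X)$ for all small $\epsilon>0$. Because $\Trop(X)$ is the support of a finite polyhedral complex, this is exactly the condition $w\in\Star_\Gamma\Trop(X)$. Finally, if the statement is meant to include weights, we compare multiplicities as follows. For $w$ in the relative interior of the star, the identity gives $\init_w(\init_\Gamma I)=\init_{\Gamma+\epsilon w}I$, so the weights defined via these initial schemes coincide.
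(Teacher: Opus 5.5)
Your outline is the standard proof of \cite[Lem.~3.3.6]{MaclaganSturmfels2015}, which the paper cites without giving an argument. You reduce to $\init_w(\init_\Gamma I)=\init_{\Gamma+\epsilon w}(I)$ for small $\epsilon>0$, which is \cite[Cor.~2.4.10]{MaclaganSturmfels2015} and is invoked by the paper itself in the proof of Corollary~\ref{cor:ExtendedTropicalStratum}. You then use that $\Trop(X)$ is the support of a finite polyhedral complex. Your single-polynomial computation, the deduction of the star from the identity, and the comparison of weights are all correct.

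The paragraph passing from polynomials to ideals is, however, wrong as written. Only one inclusion follows from the polynomial identity, namely $\init_w(\init_\Gamma I)\subseteq\init_{\Gamma+\epsilon w}(I)$. The argument is as follows. The left side does not depend on $\epsilon$, so by Noetherianity it is generated by finitely many $\init_w(g_i)$ with $g_i\in\init_\Gamma I$. Each nonzero $g_i$ equals $\init_\Gamma(f_i)$ for some $f_i\in I$; this holds once $\Gamma\in\val(K^*)^n$, which you can arrange by enlarging $K$ to have value group $\R$. Then any $\epsilon<\min_i\epsilon_0(f_i)$ works.

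The reverse inclusion does not follow this way, because generators of $\init_{\Gamma+\epsilon w}(I)$ depend on $\epsilon$. Placing $\Gamma+\epsilon w$ in a cell of the Gr\"obner complex only makes the ideal $\init_{\Gamma+\epsilon w}(I)$ independent of $\epsilon$. It does not control the initial forms, or the thresholds $\epsilon_0(f)$, of individual elements of $I$. So it does not give you a finite set of $f$ whose initial forms generate both sides.

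What closes the argument is the Hilbert-function comparison you mention only in parentheses. After homogenizing, the corresponding homogeneous initial ideals both have the Hilbert function of the homogenization of $I$. An inclusion of homogeneous ideals with equal Hilbert functions is an equality.

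This step also has nothing to do with the Structure Theorem \cite[Thm.~3.3.5]{MaclaganSturmfels2015}. In \cite{MaclaganSturmfels2015} that theorem is proved using the present lemma, so appealing to it here would be circular. Finiteness is needed only in your final step, where it suffices that $\Trop(X)$ is the support of a finite polyhedral complex; this is available beforehand from the Gr\"obner complex. The simplest repair is to cite \cite[Cor.~2.4.10]{MaclaganSturmfels2015} instead of reproving it.
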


\begin{lemma}\label{lem:HypersurfacePullback}
    Let $\varphi:\bfT\to\bfT'$ be a monomial map of tori, such that the corresponding map $\widehat\varphi$ of lattices is surjective. For any hypersurface $V(f)\subseteq \bfT'$, 
    \begin{equation}\label{eq:PullbackProduct}
    \Trop(V(\varphi^*(f))) = (\widehat\varphi)^{-1}(\Trop(V(f))),
    \end{equation}
    and moreover the weight function on $\Trop(V(\varphi^*(f)))$ is pulled back from the weight function on $\Trop(V(f))$.
\end{lemma}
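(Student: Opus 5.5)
We prove Lemma \ref{lem:HypersurfacePullback} by reducing to the case where $\varphi$ is a projection of a split torus and then comparing initial forms directly.

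Since $\widehat\varphi:N\to N'$ is surjective onto a free module, it splits. Choose a basis of $N$ adapted to this splitting, so that $N\cong N'\oplus\ker\widehat\varphi$. Dually, $\varphi^*:M'\to M$ is injective with saturated image. In the corresponding coordinates, $\bfT\cong\bfT'\times\bfT''$ and $\varphi$ is the projection onto the first factor. Then $\varphi^*(f)$ is the same Laurent polynomial $f=\sum_u c_u x^u$, now viewed in $K[x_1^{\pm},\ldots,x_n^{\pm}]$ and involving only the first $n'$ variables. Under $N_\R=N'_\R\oplus\ker\widehat\varphi_\R$, the map $\widehat\varphi$ is the first projection.

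The key computation is the initial form. For $\Gamma\in N_\R$ and $u\in M'$, we have $\Gamma\cdot\varphi^*(u)=\widehat\varphi(\Gamma)\cdot u$. Hence $\init_\Gamma(\varphi^*f)=\varphi^*\bigl(\init_{\widehat\varphi(\Gamma)}f\bigr)$ as Laurent polynomials.

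For a hypersurface, $\Gamma\in\Trop(V(g))$ if and only if $\init_\Gamma g$ is not a monomial. This is the hypersurface case of the fundamental theorem: for a principal ideal, $\init_\Gamma\langle g\rangle=\langle\init_\Gamma g\rangle$. Since $\varphi^*$ is injective on monomials, $\init_\Gamma(\varphi^*f)$ is a monomial if and only if $\init_{\widehat\varphi(\Gamma)}f$ is. This gives \eqref{eq:PullbackProduct}.

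For the weights, take $\Gamma$ in the relative interior of $\Trop(V(\varphi^*f))$ such that $\widehat\varphi(\Gamma)$ lies in the relative interior of $\Trop(V(f))$. This holds on a dense open subset, and since weights are locally constant on the relative interior, that suffices. Then
\[
V(\init_\Gamma\varphi^*f)=V(\init_{\widehat\varphi(\Gamma)}f)\times\bfT''.
\]
The components of the right side are (translates of subtori)$\times\bfT''$, with the same multiplicities as the corresponding components of $V(\init_{\widehat\varphi(\Gamma)}f)$. Multiplicity is computed at the generic point, and taking a product with a smooth torus over $\k$ does not change the length of the local ring there. So the weights agree.

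Alternatively, one can use the Newton polytope description. $\Newt(\varphi^*f)=\varphi^*\Newt(f)$ carries the same heights, so the regular subdivisions correspond under $\varphi^*$. Edge lattice lengths are preserved because $\varphi^*(M')$ is saturated. The dual complex is then the preimage under $\widehat\varphi$ of the dual complex of $\Newt(f)$.

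The only step needing care is the splitting, and specifically that the image of $\varphi^*$ is saturated. This is exactly where surjectivity of $\widehat\varphi$ is used; without saturation, lattice lengths, and hence weights, would be multiplied by an index.
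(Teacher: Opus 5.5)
Your proposal is correct and takes the same approach as the paper. The paper's entire argument is the one-line remark that a surjection $\widehat\varphi$ of finite-rank free $\Z$-modules splits, so $\varphi$ becomes the coordinate projection $\bfT'\times\bfT''\to\bfT'$. Your initial-form comparison and your product-with-$\bfT''$ weight computation spell out why the lemma is then immediate; your observation that injectivity of $\varphi^*$ gives the set-theoretic equality while saturation of its image is what preserves the weights is a correct refinement.
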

(Lemma \ref{lem:HypersurfacePullback} follows from the fact that a surjective map of finite-rank free $\Z$-modules splits, which provides a splitting of $\varphi$.)

\subsection{Tropical intersection theory} \label{sec:BackgroundTropicalIntersectionTheory}
The most general formulation of tropical intersection theory is given in \cite{OssermanPayne2013}. We only need a few results, which we now state.

Let $\{X_j\}\subseteq \bfT^K$ be subvarieties of pure codimensions $r_j.$ We say the $\Trop(X_j)$s \textbf{intersect tropically transversely at $\Gamma\in\bigcap_j\Trop(X_j)$} if, locally near $\Gamma$, the intersection looks like the transverse intersection of affine-linear spaces. In this case, the intersection $\bigcap_j\Trop(X_j)$ is locally an affine-linear space of codimension $\sum_j r_j.$

Suppose $\Trop(X_1)$ and $\Trop(X_2)$ intersect tropically transversely at $\Gamma.$ Let $\zeta_1$ and $\zeta_2$ be the facets of $\Trop(X_1)$ and $\Trop(X_2)$, respectively, that contain $\Gamma$ in their relative interiors. Transversality implies that the lattice $L_\mathbb{Z}(\zeta_1) + L_\mathbb{Z}(\zeta_2)$ is of finite index in $N$ and, dually, $L_\mathbb{Z}(\zeta_1)^\perp \cap L_\mathbb{Z}(\zeta_2)^\perp = 0$. 

The \textbf{tropical intersection multiplicity} of $\Trop(X_1)$ and $\Trop(X_2)$ at $\Gamma$ (or rather, along the facet of $\Trop(X_1)\cap\Trop(X_2)$ containing $\Gamma$) is 
\begin{align}\label{eq:TropIntMultDef}
    i(\Gamma,\Trop(X_1)\cdot\Trop(X_2))=[N:L_{\Z}(\zeta_1)+L_{\Z}(\zeta_2)]\cdot m_{\Trop(X_1);\zeta_1}\cdot m_{\Trop(X_2);\zeta_2},
\end{align}
or equivalently,
\begin{align}\label{eq:TropIntMultDef2}
    i(\Gamma,\Trop(X_1)\cdot\Trop(X_2))=[L_{\Z}(\zeta_1\cap\zeta_2)^{\perp}:L_{\Z}(\zeta_1)^\perp\oplus L_{\Z}(\zeta_2)^\perp]\cdot m_{\Trop(X_1);\zeta_1}\cdot m_{\Trop(X_2);\zeta_2}.
\end{align}
% Jake's argument: We have $L_\Z(\zeta_1) \cap L_\Z(\zeta_2) = L_\Z(\zeta_1 \cap \zeta_2)$ (an intersection of saturated sublattices is saturated).  Recall that dualizing a saturated sublattice produces
% \[0 \to A \to N \to \frac{N}{A} \to 0 \quad \leadsto \quad 0 \to \underbrace{\Big(\frac{N}{A}\Big)^\vee}_{=:A^\perp} \to N^\vee \to A^\vee \to 0.\]
% So with $A_i = L_\Z(\zeta_i)$, we start with
% \[
% 0 \to \frac{N}{A_1 \cap A_2} \to \frac{N}{A_1} \oplus \frac{N}{A_2} \to \underbrace{\frac{N}{A_1+A_2}}_{F} \to 0,
% \]
% the left and middle are free (since $A_1$, $A_2$ and $A_1 \cap A_2$ are saturated in $N$) and we have assumed the right to be finite, call it $F$. We apply $\Hom(-,\Z)$. We get
% \[
% 0 \to \underbrace{\Hom(F, \Z)}_{=0} \to \underbrace{\Big(\frac{N}{A_1}\Big)^\vee \oplus \Big(\frac{N}{A_2}\Big)^\vee}_{A_1^\perp \oplus A_2^\perp} \to 
% \underbrace{\Big(\frac{N}{A_1 \cap A_2}\Big)^\vee}_{(A_1 \cap A_2)^\perp} \to 
% \underbrace{\mathrm{Ext}_\Z^1(F, \Z)}_{\approx F} \to 0
% \]
% since $F$ is finite.

The \textbf{stable tropical intersection} $\Trop(X_1)\cdot\Trop(X_2)$ of $\Trop(X_1)$ and $\Trop(X_2)$ is a balanced weighted polyhedral complex contained in $\Trop(X_1)\cap\Trop(X_2)$, defined via the \emph{local fan displacement rule}, see \cite[Sec. 2.6]{OssermanPayne2013}. If $\Trop(X_1)$ and $\Trop(X_2)$ intersect tropically transversely at $\Gamma$, then locally near $\Gamma$, $\Trop(X_1)\cap\Trop(X_2)$ is a facet of $\Trop(X_1)\cdot\Trop(X_2)$ which has weight equal to the tropical intersection multiplicity $i(\Gamma,\Trop(X_1)\cdot\Trop(X_2)).$

Osserman and Payne \cite{OssermanPayne2013} generalized all of the above notions, allowing tropical intersections to take place inside an ambient tropical subvariety $\Trop(Y)\subseteq N_{\R},$ where $Y\subseteq\bfT^K$ is a subvariety containing $X_1$ and $X_2.$ In this setup, $\Trop(X_1)$ and $\Trop(X_2)$ intersect tropically transversely at $\Gamma\in\Trop(X_1)\cap\Trop(X_2)$ if $\Gamma$ is in a weight-1 facet of $\Trop(Y)$ (that is, $\Gamma$ is a \emph{simple point} of $\Trop(Y)$) and the intersection locally looks like the transverse intersection of affine subspaces of $\Trop(Y).$ The tropical intersection multiplicity $i(\Gamma,\Trop(X_1)\cdot\Trop(X_2);\Trop(Y))$ at such a point is defined as in \eqref{eq:TropIntMultDef}, replacing $N$ with $L_{\Z}(\Trop(Y),\Gamma)$. Similarly there is a notion of stable tropical intersection $\Trop(X_1)\cdot\Trop(X_2)$ in $\Trop(Y)$, provided every facet of $\Trop(X_1)\cap\Trop(X_2)$ intersects $\relint(Y).$
\begin{ex}
    The reader may verify that the tropical intersections in Figures \ref{fig:BezoutExample} (right) and \ref{fig:P2Case} are tropically transverse, with tropical intersection multiplicities as shown, noting that
    \begin{itemize}
        \item The leftmost red rays in Figure \ref{fig:BezoutExample} have slopes $1/2$ and $2$, with all other slopes in $\{0,\pm1,\infty\}$.
        \item The leftmost red ray in Figure \ref{fig:P2Case} has slope 2, with all other slopes in $\{0,\pm1,\infty\}$.
    \end{itemize}
\end{ex}
There are various results of the form ``tropicalization commutes with intersection'', see \cite[Lem. 3.2]{BogartJensenSpeyerSturmfelsThomas2007} for an early such theorem. We will repeatedly use refinements due to Osserman-Payne \cite[Sec. 5.1]{OssermanPayne2013}.

\subsection{Tropical and toric geometry} We consider how tropicalizations are studied in the context of toric geometry. See \cite{Fulton1993} for an introduction to toric geometry. 

In this section, we work over $\Z$, so that we may later base change to any of $\k,$ $K$, or $R$. Let $M\cong\Z^n$ be a lattice, and let $N$ be the dual lattice. Let $\bfT^{\Z}=\Spec(\Z[M])$ denote the torus with character lattice $M$.
\begin{notation}\label{not:ToricNotation}
    Let $\Sigma$ be a smooth fan in $N_{\R}$, and let $Y_\Sigma$ denote the corresponding toric scheme. (The base change of $Y_\Sigma$ to any field is a smooth toric variety.)
    For an $r$-dimensional cone $\sigma\in\Sigma,$
    \begin{enumerate}
        \item Let $\sigma^{\vee}=\{w\in M:\sigma(w)\ge0\}\subseteq M$ denote the dual monoid and $\sigma^{\perp}=\{w\in M:\sigma(w)=0\}\subseteq M$ denote the annihilator sublattice.
        \item Let $Y_\sigma\subseteq Y_\Sigma$ denote the corresponding codimension-$r$ $\bfT^\Z$-orbit, and let $\bar Y_\sigma$ denote its closure. We have a canonical isomorphism $Y_\sigma=\Spec(\Z[\sigma^\perp])$.
        \item Let $\bfT^\Z_\sigma\subseteq\bfT^\Z$ denote the $r$-dimensional subtorus corresponding to the subspace $L_\R(\sigma)\subseteq N_\R.$ Note that $\bfT^\Z_\sigma\subseteq\bfT^\Z$ is the set of elements that fix $Y_\sigma$ pointwise and we have a natural isomorphism $\bfT^\Z_\sigma=\Spec(\Z[M/\sigma^\perp])$. There is a natural $\bfT^\Z$-equivariant map $\rho_\sigma:\bfT^\Z\to Y_\sigma$, which factors through the projection along $\bfT^\Z_\sigma$:
        
        $$\begin{tikzcd}
            \bfT^\Z\arrow[r]\arrow[rr,bend left,"\rho_\sigma"]&\bfT^\Z/\bfT^\Z_\sigma\arrow[r,"\cong",swap]&Y_\sigma
        \end{tikzcd}$$
        We have a distinguished point $\rho_\sigma(\id_{\bfT^\Z})\in Y_\sigma$, where $\id_{\bfT^\Z}\in\bfT^\Z$ is the identity element; the canonical isomorphism $\bfT^\Z/\bfT^\Z_\sigma\cong Y_\sigma$ identifies $\rho_\sigma(\id_{\bfT^\Z})$ with $\id_{\bfT^\Z/\bfT^\Z_\sigma}.$
        Note that the sequence of maps $\bfT^\Z_\sigma\into\bfT^\Z\xrightarrow{\rho_\sigma}{} Y_\sigma$ is induced by the sequence of lattice homomorphisms $\sigma^\perp\to M\to M/\sigma^\perp$.\label{not:TorusProjection}

        \item Let $U_\sigma\subseteq Y_\Sigma$ denote the toric affine chart $\Spec(\Z[\sigma^\vee])$ corresponding to $\sigma$. We have a natural inclusion $\bfT^\Z\into U_\sigma$ induced by the monoid homomorphism $\sigma^{\vee}\into M$. The map $\rho_\sigma:\bfT^\Z\to Y_\sigma$ extends to a $\bfT^\Z$-equivariant map $\rho_\sigma:U_\sigma\to Y_\sigma$. Since $U_\sigma$ is smooth, the fibers of this map are isomorphic to $\A_\Z^r.$ 
        
        In fact, we have (noncanonical) $\bfT^\Z$-equivariant splittings

        $$\begin{tikzcd}
            \bfT^\Z\arrow[r,"\cong"]\arrow[d,hook]&(\bfT^\Z/\bfT^\Z_\sigma)\times \bfT^\Z_\sigma\arrow[r,"="]&Y_\sigma\times \bfT^\Z_\sigma\arrow[d,hook]\arrow[dr]\\
            U_\sigma\arrow[rr,"\cong"]\arrow[rrr,bend right=20,"\rho_\sigma"]&&Y_\sigma\times\A^r_\Z\arrow[r]&Y_\sigma
        \end{tikzcd}$$
        where $\bfT^\Z_\sigma\into\A^r_\Z$ is the inclusion of the usual torus. Note that $\rho_\sigma$ is induced by the monoid homomorphism $\sigma^\perp\into\sigma^\vee.$
        \label{not:ToricAffineProjection}
    \end{enumerate}
    We will use these notations heavily in the proof of Theorem \ref{thm:intro1}.
\end{notation}
\begin{prop}\label{prop:FanOfToricStratum}
    The fan of the toric scheme $\bar{Y}_\sigma$ is equal to $$(\Star_\Gamma\Sigma)/L_{\R}(\sigma)\subseteq N_{\R}/L_{\R}(\sigma),$$ for any $\Gamma\in\relint(\sigma).$
\end{prop}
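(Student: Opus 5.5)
We reduce to the standard description of torus-orbit closures in a toric variety, as in \cite[Sec. 3.1]{Fulton1993}: $\bar Y_\sigma$ is a toric variety with dense torus $Y_\sigma=\Spec(\Z[\sigma^\perp])$. Its cocharacter lattice is the dual of $\sigma^\perp$, namely $N/L_{\Z}(\sigma)$, so its fan lives in $N_\R/L_\R(\sigma)$. Its fan is the \emph{quotient star}
\[
\mathrm{Star}(\sigma)=\{(\tau+L_\R(\sigma))/L_\R(\sigma):\tau\in\Sigma,\ \tau\supseteq\sigma\},
\]
and the orbit closure is covered by the affine charts $\bar Y_\sigma\cap U_\tau=\Spec(\Z[\tau^\vee\cap\sigma^\perp])$ for $\tau\supseteq\sigma$. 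I would briefly justify this over $\Z$, since the paper works with toric schemes. On $U_\tau$, the closed subscheme $\bar Y_\sigma\cap U_\tau$ is cut out by the monomials in $\tau^\vee\setminus\sigma^\perp$. Its coordinate ring is therefore $\Z[\tau^\vee\cap\sigma^\perp]$, because $\sigma$ is a face of $\tau$ and so $\tau^\vee\cap\sigma^\perp$ is a face of the monoid $\tau^\vee$. The monoid $\tau^\vee\cap\sigma^\perp$ is the dual monoid of the image cone $\bar\tau=(\tau+L_\R(\sigma))/L_\R(\sigma)$ in the lattice $N/L_\Z(\sigma)$. These charts glue along the faces, exactly as in the usual construction.

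It then remains to identify $\mathrm{Star}(\sigma)$ with $(\Star_\Gamma\Sigma)/L_\R(\sigma)$ for $\Gamma\in\relint(\sigma)$. By definition, $\Star_\Gamma\Sigma=\{\R_{>0}(\tau-\Gamma):\tau\in\Sigma,\ \tau\ni\Gamma\}$. A cone $\tau\in\Sigma$ contains $\Gamma$ if and only if $\tau\supseteq\sigma$. Indeed, $\tau\cap\sigma$ is a face of $\sigma$ meeting $\relint(\sigma)$, so it equals $\sigma$.

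For such a $\tau$, I claim $\R_{>0}(\tau-\Gamma)=\tau+L_\R(\sigma)$.
\begin{itemize}
\item[$(\supseteq)$] Take $x\in\tau$ and $v\in L_\R(\sigma)$. Since $\Gamma\in\relint(\sigma)$, for small $\epsilon>0$ the point $\Gamma+\epsilon v$ lies in $\sigma$. Then $\Gamma+\epsilon(x+v)=(\Gamma+\epsilon v)+\epsilon x$ lies in $\tau$, because $\tau$ is a convex cone containing $\sigma$. Hence $x+v\in\R_{>0}(\tau-\Gamma)$.
\item[$(\subseteq)$] If $y\in\tau$, then $y-\Gamma\in\tau+L_\R(\sigma)$, and $\tau+L_\R(\sigma)$ is closed under positive scaling.
\end{itemize}
Taking images in $N_\R/L_\R(\sigma)$ gives $\R_{>0}(\tau-\Gamma)/L_\R(\sigma)=\bar\tau$. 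The lattices also agree, since both sides use $N/L_\Z(\sigma)$.

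The only mildly delicate point is bookkeeping rather than substance: the paper writes $\Star_\Gamma$ using open positive spans, so its "cones" differ from closed cones by boundary. I would interpret the resulting fan via closures, or note that the support and cone structure coincide. This yields the stated equality of fans, independent of the choice of $\Gamma\in\relint(\sigma)$.
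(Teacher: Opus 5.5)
Your proposal is correct. The paper gives no proof of this proposition, treating it as standard toric background (cf.\ \cite{Fulton1993}), and your argument is exactly the standard justification: Fulton's description of $\bar Y_\sigma$ through the quotient star $\{(\tau+L_\R(\sigma))/L_\R(\sigma):\tau\supseteq\sigma\}$, combined with the set equality $\R_{>0}(\tau-\Gamma)=\tau+L_\R(\sigma)$ for every $\tau\ni\Gamma$.

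Your closing worry about ``open positive spans'' is unnecessary. Since $0\in\tau-\Gamma$, your two inclusions already show that $\R_{>0}(\tau-\Gamma)$ equals the closed polyhedral cone $\tau+L_\R(\sigma)$, so no passage to closures is needed.
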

The following is a fundamental relationship between tropicalization and toric geometry. Let $K$ be a valued field, and let $R,\m,\k$ denote the valuation ring, maximal ideal, and residue field. A $K$-valued point $y : \Spec K \to \bfT^\Z$ induces a map $M\to K^*,$ and composing with the valuation defines a point of $\Hom(M,\R)=N_\R$, which we denote by $\Trop(y).$
\begin{prop}\cite[Thm. 6.6.20]{MaclaganSturmfels2015}\label{prop:TropicalizationIntersectsOrbits}
\begin{enumerate}
\item[(i)] $\Trop(y) \in \abs{\Sigma}$ if and only if $y$ extends to an $R$-valued point $\overline{y} : \Spec R \to Y_\Sigma$.
\item[(ii)] Under the conditions of (i), there is a unique cone $\sigma \in \Sigma$ whose relative interior contains $\Trop(y)$; it is also the unique cone corresponding to the locally closed stratum $Y_\sigma$ containing the image of $\Spec \k \to Y_\Sigma$ by $\overline{y}$.
\end{enumerate}
\end{prop}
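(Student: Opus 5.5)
The statement is Proposition \ref{prop:TropicalizationIntersectsOrbits}. The plan is to reduce everything to the affine toric charts $U_\sigma=\Spec(\Z[\sigma^\vee])$, $\sigma\in\Sigma$, which cover $Y_\Sigma$. We then use the valuative description of $R$-points of an affine monoid scheme. A $K$-point $y$ of $\bfT^\Z$ is the same as a group homomorphism $\chi_y:M\to K^*$, and $\Trop(y)\in N_\R$ is the functional $w\mapsto\val(\chi_y(w))$. An $R$-point of $U_\sigma$ extending $y$ is a ring map $\Z[\sigma^\vee]\to R$ compatible with $\Z[M]\to K$. Since $R\subseteq K$ is a subring, such an extension exists, and is then unique, exactly when $\chi_y(\sigma^\vee)\subseteq R$, i.e.\ when $\val(\chi_y(w))\ge0$ for all $w\in\sigma^\vee$. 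By duality of cones, $(\sigma^\vee)^\vee=\sigma$, this is equivalent to $\Trop(y)\in\sigma$.

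For (i) we argue in both directions. If $\Trop(y)\in|\Sigma|$, choose a cone $\sigma$ containing it; the argument above gives an extension $\Spec R\to U_\sigma\subseteq Y_\Sigma$. Conversely, let $\bar y:\Spec R\to Y_\Sigma$ be given. The closed point maps into some open chart $U_\sigma$. Since $\Spec R$ is local and every open set containing the closed point is all of $\Spec R$, the whole of $\bar y$ factors through $U_\sigma$. Hence $\chi_y(\sigma^\vee)\subseteq R$, so $\Trop(y)\in\sigma\subseteq|\Sigma|$. Note that $K$ need not be discretely valued: we only use that $R=\{\val\ge0\}$.

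For (ii), uniqueness of the cone whose relative interior contains $\Trop(y)$ is automatic, since the relative interiors of the cones of a fan partition $|\Sigma|$. To identify the stratum, take $\sigma$ with $\Trop(y)\in\relint(\sigma)$ and work in $U_\sigma$. The closed point of $\Spec R$ goes to the $\k$-point given by $w\mapsto\overline{\chi_y(w)}$ for $w\in\sigma^\vee$. A point of $U_\sigma$ lies in the orbit $Y_\sigma$ precisely when, as a monoid map $\sigma^\vee\to\k$, it is nonzero exactly on the face $\sigma^\perp\cap\sigma^\vee=\sigma^\perp$. Here $\overline{\chi_y(w)}\ne0$ iff $\val(\chi_y(w))=0$. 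For $w\in\sigma^\vee$ this holds iff $\langle\Trop(y),w\rangle=0$, and since $\Trop(y)$ is in the relative interior of $\sigma$, that happens iff $w\in\sigma^\perp$. So the reduction lies in $Y_\sigma$; because the orbits are disjoint, $\sigma$ is the unique cone whose stratum contains it.

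The only delicate step is this last one: the standard orbit–face correspondence for the affine toric scheme $U_\sigma$ over $\Z$, namely that $Y_\sigma$ is the locus where the characters in $\sigma^\vee\setminus\sigma^\perp$ vanish and those in $\sigma^\perp$ are invertible. I would cite \cite{Fulton1993} for it, checking it on $\k$-points after base change. Everything else is formal.
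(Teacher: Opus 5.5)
Your proof is correct. The paper does not prove this statement; it quotes it from \cite[Thm.~6.6.20]{MaclaganSturmfels2015}. Your chart-by-chart argument is the standard proof behind that citation. An $R$-point of $U_\sigma$ extending $y$ exists, and is then unique, iff $\chi_y(\sigma^\vee)\subseteq R$, which by duality for rational cones means $\Trop(y)\in\sigma$. The reduction of such a point is nonzero exactly on $\{w\in\sigma^\vee:\langle\Trop(y),w\rangle=0\}$, and this set equals $\sigma^\perp$ precisely because $\Trop(y)\in\relint(\sigma)$.

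What your version adds over the bare citation is clarity about hypotheses. Only $R=\{\val\ge0\}$ and $\m=\{\val>0\}$ are used, so no discreteness or completeness of $K$ is needed. That is the generality the paper relies on later, e.g.\ for $K=\k((\R))$ in Lemma~\ref{lem:InitialIdealInStratum}.

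One point should be made explicit in (ii). You construct an extension through $U_\sigma$, but the statement is about the given $\bar y$. The two coincide because the extension is unique. This follows from separatedness of $Y_\Sigma$, or directly from your own argument:
\begin{enumerate}
\item By your local-ring observation, any extension factors through some chart $U_\tau$, so $\Trop(y)\in\tau$.
\item Since $\Trop(y)\in\relint(\sigma)$, the face $\sigma\cap\tau$ of $\sigma$ is all of $\sigma$. Hence $\sigma$ is a face of $\tau$ and $U_\sigma\subseteq U_\tau$.
\item $R$-points of $U_\tau$ extending $y$ are unique because $R\to K$ is injective.
\end{enumerate}
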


\subsection{Tropical compactifications}\label{sec:BackgroundTropicalCompactifications}
\begin{notation}
    In this section we work over a \emph{trivially valued} algebraically closed field $\k$ of characteristic zero. Let $\bfT$ be a torus over $\k$. 
\end{notation}
Let $\M\subseteq\bfT$ be a subvariety. As $\k$ is trivially valued, the tropicalization $\Trop(\M)$ is the support of a fan (Section \ref{sec:BackgroundTropicalBasics}), and it is natural to consider the relationship between $\M$ and the corresponding toric variety. (In fact, $\Trop(M)$ is generally the support of many fans.) This idea is the origin of the notion of tropical compactification \cite{Tevelev2007}. 

Let $\Sigma$ be a fan in $N_\R$, and let $\Mbar$ denote the closure of $\M$ in the corresponding toric variety $Y_\Sigma\supseteq\bfT$. By \cite[Prop. 6.4.7(1)]{MaclaganSturmfels2015}, $\Mbar$ is proper if and only if $\Trop(\M)\subseteq\abs{\Sigma}.$ We say that $\Sigma$ is a \textbf{tropical fan for $\M$} if $\Mbar$ is proper \emph{and} the multiplication map $\bfT\times\Mbar\to Y_\Sigma$ is faithfully flat \cite[Def 6.4.13]{MaclaganSturmfels2015}. This condition implies $\abs{\Sigma}=\Trop(\M)$, but the converse is not true \cite[Prop. 6.4.14, Ex. 6.4.16]{MaclaganSturmfels2015}. If $\Mbar\supseteq\M$ is a compactification arising from some tropical fan, we say $\Mbar$ is a \textbf{(flat) tropical compactification} of $\M$. Note that from here-on we will drop the word `flat' from `tropical compactification' but will assume flatness as part of the definition.

The variety $\Mbar$ inherits a stratification from $Y_\Sigma.$ For each cone $\sigma$ of $\Sigma$, let $\M_{\sigma}=\Mbar\cap Y_\sigma$ (resp. $\Mbar_{\sigma}=\Mbar\cap\bar Y_\sigma$) denote the corresponding locally closed (resp. closed) boundary stratum in $\Mbar$.

\begin{prop}[See {\cite[Sec. 6.4]{MaclaganSturmfels2015}}]\label{prop:TropicalCompactificationsBasicProperties}
    Tropical compactifications satisfy the following basic properties:
\begin{enumerate}
    \item If $\Sigma$ is a tropical fan for $\M\subseteq \bfT$, the boundary $\Mbar\setminus\M$ has \emph{combinatorial normal crossings}, i.e. for every $\sigma\in\Sigma,$ the stratum $\Mbar_{\sigma}$ has pure codimension $\dim\sigma$ in $\Mbar$.\label{eq:TropicalCompactificationStratumCodimension}
    \item $\Mbar_\sigma$ is the closure of $\M_\sigma$ in $Y_\Sigma$.
    \item Every subvariety $\M\subseteq\bfT$ admits a tropical fan. 
    \item The refinement of a tropical fan for $\M$ is a tropical fan for $\M$.
    \item If $\Sigma$ is a tropical fan for $\M$, then for any cone $\sigma\in\Sigma,$ $\Mbar_\sigma\subseteq\bar Y_\sigma$ is a flat tropical compactification of $\M_\sigma\subseteq Y_\sigma$, with tropical fan $\Star_\sigma\Sigma$. \label{it:ExtendedTropicalStratum}
\end{enumerate}
\end{prop}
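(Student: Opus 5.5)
The statement collects standard facts from \cite[Sec.~6.4]{MaclaganSturmfels2015} and \cite{Tevelev2007}. I would derive items (1), (2), (4) and (5) from the faithful flatness of the multiplication map $m:\bfT\times\Mbar\to Y_\Sigma$, and cite item (3) as Tevelev's existence theorem.

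\emph{Items (1) and (2).} The map $m$ is flat and surjective, of relative dimension $\dim\M$, since $\dim(\bfT\times\Mbar)=n+\dim\M$ and $\dim Y_\Sigma=n$. The orbit $Y_\sigma$ is irreducible of dimension $n-\dim\sigma$. Because $\Mbar_\sigma$ is $\bfT$-stable, $m^{-1}(Y_\sigma)\cong\bfT\times\M_\sigma$ via $(t,x)\mapsto(t,t\cdot x)$. For flat morphisms of finite type, every irreducible component of the preimage of an irreducible subvariety has dimension equal to that of the subvariety plus the relative dimension. This gives
\[
n+\dim\M_\sigma=(n-\dim\sigma)+\dim\M,
\]
and the same count holds componentwise, so $\M_\sigma$ is pure of codimension $\dim\sigma$. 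For (2), flat morphisms satisfy going-down: any point of $\bfT\times\M_\tau$ with $\tau\supsetneq\sigma$ generalizes, inside $m^{-1}(\bar Y_\sigma)$, to a point lying over the generic point of $Y_\sigma$. Hence every point of $\Mbar_\sigma=\bigsqcup_{\tau\supseteq\sigma}\M_\tau$ lies in the closure of $\M_\sigma$. Together with (1), this also shows that $\Mbar_\sigma$ is pure of codimension $\dim\sigma$.

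\emph{Item (4).} Let $\Sigma'$ refine $\Sigma$, so there is a proper birational toric morphism $f:Y_{\Sigma'}\to Y_\Sigma$. Properness of $\Mbar'$ holds because $|\Sigma'|=|\Sigma|\supseteq\Trop(\M)$. The main point is to identify $\bfT\times\Mbar'$ with the base change $P:=(\bfT\times\Mbar)\times_{Y_\Sigma}Y_{\Sigma'}$. The base change $P\to Y_{\Sigma'}$ is flat, so every irreducible component of $P$ dominates $Y_{\Sigma'}$. Every component therefore meets the open set lying over $\bfT$, where $P$ restricts to $\bfT\times\M$. Thus $P$, taken with its reduced structure, is the closure of $\bfT\times\M$, which is $\bfT\times\Mbar'$.

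This step is the main obstacle: one must ensure that $P$ has no embedded or nonreduced junk, so that it really equals $\bfT\times\Mbar'$ as a scheme. I would try to argue that $P$ is flat over $Y_{\Sigma'}$ with geometrically reduced generic fibre, and that $P$ is reduced because $\bfT\times\Mbar$ is, using flatness plus a Cohen--Macaulay/$S_1$ argument. Failing that, I would follow the proof in \cite[Prop.~6.4.17]{MaclaganSturmfels2015} directly. Once the identification is made, flatness and surjectivity of $\bfT\times\Mbar'\to Y_{\Sigma'}$ follow by base change.

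\emph{Item (5).} The restriction of $m$ over $\bar Y_\sigma$ is again flat, by base change. Quotienting by the subtorus $\bfT_\sigma$, which acts trivially on $\bar Y_\sigma$, shows that the multiplication map $(\bfT/\bfT_\sigma)\times\Mbar_\sigma\to\bar Y_\sigma$ is faithfully flat. Properness of $\Mbar_\sigma$ is inherited from $\Mbar$, and by (2) $\Mbar_\sigma$ is the closure of $\M_\sigma$. By Proposition \ref{prop:FanOfToricStratum}, the fan of $\bar Y_\sigma$ is $\Star_\sigma\Sigma$ (taken modulo $L_\R(\sigma)$). Hence $\Star_\sigma\Sigma$ is a tropical fan for $\M_\sigma\subseteq Y_\sigma$.
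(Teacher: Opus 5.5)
The paper gives no proof of this proposition. It is quoted from \cite[Sec.~6.4]{MaclaganSturmfels2015}, with (3) being Tevelev's existence theorem, which you also propose to cite. So your sketch is an actual derivation from faithful flatness of $m$, not a variant of an argument in the paper. Items (1), (2) and (5) are essentially right, with two small slips:
\begin{itemize}
\item In (1), it is $Y_\sigma$ that is $\bfT$-stable; the stratum $\Mbar_\sigma$ is not. That fact alone makes $m^{-1}(Y_\sigma)$ equal to the subscheme $\bfT\times\M_\sigma$ of $\bfT\times\Mbar$, so no twisting map is needed.
\item In (5), the passage to $(\bfT/\bfT_\sigma)\times\Mbar_\sigma\to\bar Y_\sigma$ needs a justification. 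Choose a splitting $\bfT\cong(\bfT/\bfT_\sigma)\times\bfT_\sigma$ and note that flatness descends along the faithfully flat projection forgetting the $\bfT_\sigma$ factor.
\end{itemize}

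The genuine gap is the step in (4) that you flag yourself. Your first instinct, flatness over $Y_{\Sigma'}$ plus reducedness of the generic fibre, is the right one. Your second mechanism, inheriting reducedness from $\bfT\times\Mbar$ via a Cohen--Macaulay/$S_1$ argument, does not work. Reducedness is not preserved by base change along the birational map $f$, and no Cohen--Macaulay hypothesis is available.

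The missing idea is torsion-freeness:
\begin{itemize}
\item Because $P\to Y_{\Sigma'}$ is flat and $Y_{\Sigma'}$ is integral, on affine pieces $\Spec B\to\Spec A$ every nonzero $a\in A$ is a nonzerodivisor on $B$.
\item Consequently no minimal prime of $B$ meets $A\setminus\{0\}$.
\item Also $B$ injects into $B\otimes_A\mathrm{Frac}(A)$. This ring is a localization of the ring of functions on the part of $\Spec B$ lying over the torus.
\item That part is an open subset of $m^{-1}(\bfT)=\bfT\times\M$, since $f^{-1}(\bfT)=\bfT$ and $f$ is an isomorphism there.
\item Hence $P$ is reduced, every component of $P$ meets $\bfT\times\M$, and $P$ is the scheme-theoretic closure of $\bfT\times\M$.
\end{itemize}

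To compare with $\bfT\times\Mbar'$, you also need to say where $P$ sits. The shear $(t,y)\mapsto(t,t\cdot y)$, together with equivariance of $f$, identifies $P$ with the closed subscheme $\bfT\times f^{-1}(\Mbar)$ of $\bfT\times Y_{\Sigma'}$, compatibly with $\bfT\times\M$. This gives $f^{-1}(\Mbar)=\Mbar'$ as schemes, i.e.\ total transform equals strict transform. Flatness and surjectivity of $\bfT\times\Mbar'\to Y_{\Sigma'}$ then follow by base change, as you say.

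The same observation also repairs a point in (5). Apply it to the flat map $\bfT\times(\Mbar\cap\bar Y_\sigma)\to\bar Y_\sigma$. This upgrades your going-down argument in (2) to the statement that $\Mbar\cap\bar Y_\sigma$ is the \emph{scheme-theoretic} closure of $\Mbar\cap Y_\sigma$. That is what (5) actually requires: the flatness you prove there concerns the scheme $\Mbar\cap\bar Y_\sigma$, while a tropical compactification of $\M_\sigma$ is by definition its closure. A purely set-theoretic equality would not exclude embedded components along deeper strata.
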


The main result of \cite{SturmfelsTevelev2008} shows that tropical compactifications give a tropical-geometric way of detecting the degree of a generically finite map. Let $\bfT_1$ and $\bfT_2$ be tori over $\k$ of dimensions $n_1$ and $n_2$, with cocharacter lattices $N_1$ and $N_2$, respectively. Let $\M_1\subseteq \bfT_1$ and $\M_2\subseteq \bfT_2$ be subvarieties, with $\M_2$ irreducible. Let $f:\bfT_1\to \bfT_2$ be a torus homomorphism, with induced linear map $F:(N_1)_\R\to (N_2)_\R$, such that the restriction $f|_{\M_1}$ is a generically finite map from $\M_1$ to $\M_2$ of degree $d$.
\begin{thm}[{\cite[Thm. 3.12]{SturmfelsTevelev2008}}]\label{thm:Sturmfels-Tevelev}
    Suppose $\Sigma_1$ and $\Sigma_2$ are tropical fans for $\M_1$ and $\M_2$ respectively, such that for all $\sigma\in\Sigma_1$, $F(\sigma)\subseteq\abs{\Sigma_2}$ is a union of cones. Let $\zeta$ be a maximal cone of $\Sigma_2$. Then the degree of $f|_{\M_1}$ is 
    \begin{equation}\label{eq:sturmfels-tevelev}
    d = \frac{1}{m_{\Sigma_2;\zeta}} \sum_{\substack{\sigma \in \Sigma_1 \\ F(\sigma) \supseteq \zeta}} m_{\Sigma_1;\sigma} \cdot[L_{\Z}(\zeta) : F(L_{\Z}(\sigma))],
    \end{equation}
    where $m_{\Sigma_2;\zeta}$ and $m_{\Sigma_1;\sigma}$ are the weights of $\zeta$ and $\sigma$, and $L_{\Z}(\zeta)$ and $L_{\Z}(\sigma)$ are the associated lattices (Section \ref{sec:BackgroundPolyhedralGeometry}).
\end{thm}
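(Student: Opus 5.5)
The approach is to deduce the formula from the projection formula for a proper, generically finite map between tropical compactifications. Both sides of \eqref{eq:sturmfels-tevelev} are then computed by intersecting with the zero-dimensional toric stratum attached to $\zeta$.

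\textbf{Step 1 (make $f$ toric on the compactifications).} By Proposition \ref{prop:TropicalCompactificationsBasicProperties}, a refinement of a tropical fan is again a tropical fan. Since each $F(\sigma)$ is a union of cones of $\Sigma_2$, we can refine $\Sigma_1$ to a fan $\Sigma_1'$ in which every cone maps into a single cone of $\Sigma_2$. We may also refine both fans to be smooth, as Notation \ref{not:ToricNotation} requires. Refining does not change the right-hand side of \eqref{eq:sturmfels-tevelev}: a cone $\sigma$ with $F(\sigma)\supseteq\zeta$ is subdivided into pieces, and the sum of $m\cdot[L_\Z(\zeta):F(L_\Z(\cdot))]$ over the pieces mapping onto $\zeta$ is unchanged. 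The index and the weight are constant on the pieces, and for a generic point of $\zeta$ exactly one piece of each such $\sigma$ contains a preimage. We may also refine $\Sigma_2$ so that $\zeta$ stays a cone, or just work near a generic point of $\zeta$. Now $f$ extends to a toric morphism $\bar f:Y_{\Sigma_1'}\to Y_{\Sigma_2}$. It restricts to a morphism $\Mbar_1\to\Mbar_2$, which is proper because $\Mbar_1$ is proper, and it is generically finite of degree $d$.

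\textbf{Step 2 (projection formula).} Let $e=\dim\M_2=\dim\zeta$. The class $[\bar Y_\zeta]\in A^e(Y_{\Sigma_2})$ is an operational class, a product of toric Cartier divisors since the fan is smooth. The projection formula gives
\[
d\cdot\deg\bigl([\bar Y_\zeta]\cap[\Mbar_2]\bigr)=\deg\bigl(\bar f^*[\bar Y_\zeta]\cap[\Mbar_1]\bigr).
\]
Next we need the toric pullback. By the Fulton--Sturmfels description of pullbacks of orbit closures along toric morphisms, computed with a generic displacement vector, $\bar f^*[\bar Y_\zeta]=\sum_{\sigma}[L_\Z(\zeta):F(L_\Z(\sigma))]\,[\bar Y_\sigma]$. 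The sum runs over the $e$-dimensional cones $\sigma$ of $\Sigma_1'$ with $F(\sigma)\supseteq\zeta$. For such a $\sigma$, the index is exactly the degree of the induced isogeny of tori $\bfT_\sigma\to\bfT_\zeta$, which is why it appears.

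\textbf{Step 3 (weights as local intersection numbers).} The remaining input is that, for a tropical compactification and a cone $\tau$ of dimension equal to $\dim\M$, $\deg([\bar Y_\tau]\cap[\Mbar])=m_\tau$. Flatness of $\bfT\times\Mbar\to Y_\Sigma$ gives that $\Mbar$ meets $\bar Y_\tau$ properly, in finitely many points of $Y_\tau$. For the multiplicity, note that by Remark \ref{rem:RescueDegeneration} and Lemma \ref{lem:InitialTropicalization}, the scheme $\Mbar\cap U_\tau$ is, in a neighbourhood of $Y_\tau$, the product $(\init_\Gamma\M)/\bfT_\tau\times\A^e$ for $\Gamma\in\relint\tau$; this uses the splitting $U_\tau\cong Y_\tau\times\A^e$. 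Its intersection with $Y_\tau$ therefore has length equal to the sum of the multiplicities of the subtori in $\init_\Gamma\M$, which is $m_\tau$. Applying this on both sides, the formula from Step 2 becomes $d\,m_{\Sigma_2;\zeta}=\sum_\sigma m_{\Sigma_1;\sigma}[L_\Z(\zeta):F(L_\Z(\sigma))]$, which is \eqref{eq:sturmfels-tevelev}.

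\textbf{Main obstacle.} I expect the delicate points to be Steps 2 and 3 together. One must justify the toric pullback formula, including that cones of $\Sigma_1'$ of dimension greater than $e$ that map into $\zeta$ contribute nothing. The Fulton--Sturmfels formula avoids them by using a generic displacement vector, which is why I would use it rather than a set-theoretic preimage. One must also carefully identify intersection multiplicities with tropical weights; the flatness in the definition of tropical fan is essential here. I would first try to reduce Step 3 to the case of a single torus orbit via the local product structure in Notation \ref{not:ToricNotation}\eqref{not:ToricAffineProjection}.
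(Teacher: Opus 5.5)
The paper does not prove this statement. It is quoted from \cite{SturmfelsTevelev2008}, so your argument has to stand on its own. Its architecture is sound: properness of $\Mbar_1\to\Mbar_2$, the projection formula against the closed orbit $Y_\zeta$, and weights read off as local intersection numbers with $e$-dimensional orbits. However, Step~3 rests on a false claim. Near $Y_\tau$, the scheme $\Mbar\cap U_\tau$ is \emph{not} the product $\M_\tau\times\A^e$ in the toric splitting $U_\tau\cong Y_\tau\times\A^e$.

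For example, take $\M=V(x+y+1)\subseteq(\k^*)^2$ and $\tau=\R_{\ge0}e_1$. Then $U_\tau=\A^1\times\k^*$, $\rho_\tau(x,y)=y$, and $\Mbar\cap U_\tau=\{x+y+1=0\}$, whereas $\rho_\tau^{-1}(\M_\tau)=\{y=-1\}$. For $\M=V(x-(y+1)^2)$, where $m_\tau=2$, $\Mbar$ is a smooth curve tangent to $Y_\tau$ while $\rho_\tau^{-1}(\M_\tau)$ is a double line, so the two are not even abstractly isomorphic. The product description is true of $\rho_\tau^{-1}(\M_\tau)$, the closure of the degeneration $\init_\Gamma\M$ (Lemma~\ref{lem:InitialIdealInStratum}), not of $\Mbar$.

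What Lemma~\ref{lem:InitialIdealInStratum} does legitimately give you is that the $0$-dimensional scheme $\Mbar\cap Y_\tau=\M_\tau$ has length $m_\tau$. But $\deg([\bar Y_\tau]\cdot[\Mbar])$ equals that length only when $\Mbar$ is Cohen--Macaulay at those points \cite[Prop.~7.1]{Fulton1998}. This is where flatness really enters. The map $\bfT\times\Mbar\to Y_\Sigma$ is flat over a Cohen--Macaulay base, and its fibres over points of $Y_\tau$ have the form (finite scheme)$\,\times\,\bfT_\tau$. Hence $\bfT\times\Mbar$, and therefore $\Mbar$, is Cohen--Macaulay along $Y_\tau$.

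Steps~1--2 are also set up inconsistently. The union-of-cones hypothesis is exactly what makes every relevant cone map \emph{onto} $\zeta$. Smoothing $\Sigma_1$ may subdivide $\sigma\cap F^{-1}(\zeta)$ into pieces (and interior rays) whose images are proper subcones of $\zeta$. Your index set $\{\sigma: F(\sigma)\supseteq\zeta\}$ can then become empty, and you are forced into a displacement formula on non-complete toric varieties, which you have not justified. Your worry about cones of dimension $>e$ is vacuous, since $\abs{\Sigma_1}=\Trop(\M_1)$ is pure $e$-dimensional. The excess to fear comes instead from lower-dimensional cones whose relative interiors land in $\relint\zeta$.

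Here is a clean fix.
\begin{enumerate}
\item Refine $\Sigma_2$ to be smooth and replace $\zeta$ by a maximal cone of the refinement lying inside it; the right-hand side does not change.
\item Take $\Sigma_1'=\{\sigma\cap F^{-1}(\tau)\}$ with no further subdivision. Images of its cones are still unions of cones. So any $\sigma'$ with $F(\relint\sigma')\subseteq\relint\zeta$ has $F(\sigma')=\zeta$ and $\dim\sigma'=e$.
\item Consequently $\bar f^{-1}(Y_\zeta)\cap\Mbar_1=\bigsqcup_{F(\sigma')=\zeta}(\Mbar_1\cap Y_{\sigma'})$ is finite, and there is no excess intersection.
\item The local contribution at $\sigma'$ is the length of $\Mbar_1\cap Z_{\sigma'}$ (by Cohen--Macaulayness again). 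Here $Z_{\sigma'}\subseteq U_{\sigma'}$ is cut out by the pullbacks $\chi^{F^*m_1},\ldots,\chi^{F^*m_e}$ of local equations of $Y_\zeta$.
\item The toric identity $[Z_{\sigma'}]=[L_\Z(\zeta):F(L_\Z(\sigma'))]\,[Y_{\sigma'}]$ comes from the degree of the finite map between the affine toric varieties of $\sigma'$ and $\zeta$. Pulling it back along the flat map $\bfT_1\times\Mbar_1\to Y_{\Sigma_1'}$ gives $\len(\Mbar_1\cap Z_{\sigma'})=[L_\Z(\zeta):F(L_\Z(\sigma'))]\cdot m_{\sigma'}$.
\end{enumerate}
This merges your Steps~2 and~3 and needs no global toric pullback formula.
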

Geometrically, $F|_{\Sigma_1}:\Sigma_1\to\Sigma_2$ looks like a ``generically finite map of polyhedral complexes of degree $d$''.

We will need the following minor modification of Theorem \ref{thm:Sturmfels-Tevelev}.

\begin{cor}\label{cor:Sturmfels-Tevelev-New}
    With $d$ as above, fix an element $\Phi\in\relint(\Trop(\M_2))$ such that $F^{-1}(\Phi)\cap\Trop(\M_1)\subseteq\relint(\Trop(\M_1)).$ Then
    \begin{equation} \label{eq:ST-multiplicities-at-point}
    d = \frac{1}{m_{\Trop(\M_2);\Phi}} \sum_{\substack{\Lambda \in \Trop(\M_1) \\ F(\Lambda) = \Phi}} m_{\Trop(\M_1);\Lambda} \cdot[L_{\Z}(\Trop(\M_2);\Phi) : F(L_{\Z}(\Trop(\M_1);\Lambda))],
    \end{equation}
    where $m_{\Trop(\M_2);\Phi}$ and $m_{\Trop(\M_1);\Lambda}$ are the weights at $\Phi$ and $\Lambda$, and $L_{\Z}(\Trop(\M_2);\Phi)$ and $L_{\Z}(\Trop(\M_1);\Lambda)$ are the associated lattices. (See Remarks \ref{rem:RelativeInteriorOfSupport} and \ref{rem:RelativeInteriorOfSupportWeights}.)
    In particular, Equation \eqref{eq:ST-multiplicities-at-point} holds for $\Phi$ in a dense open subset of $\Trop(\M_2)$.
\end{cor}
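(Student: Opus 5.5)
The plan is to reduce Corollary \ref{cor:Sturmfels-Tevelev-New} to Theorem \ref{thm:Sturmfels-Tevelev} by choosing fan structures adapted to the point $\Phi$.

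\textbf{Step 1: refine the fans.} Start from arbitrary tropical fans $\Sigma_1,\Sigma_2$ for $\M_1,\M_2$. By Proposition \ref{prop:TropicalCompactificationsBasicProperties}(4), refinements are again tropical fans. Using standard polyhedral arguments (intersecting with images and preimages of cones under the linear map $F$), we refine $\Sigma_2$ and then $\Sigma_1$ so that two conditions hold.
\begin{itemize}
\item[(a)] Every cone of $\Sigma_1$ maps under $F$ onto a union of cones of $\Sigma_2$. Concretely, first refine $\Sigma_2$ by the images $F(\sigma)$, then refine $\Sigma_1$ by the preimages of cones of $\Sigma_2$, so that each cone maps onto a single cone.
\item[(b)] $\Phi$ lies in the relative interior of a maximal cone $\zeta$ of $\Sigma_2$.
\end{itemize}
Condition (b) can fail after refinement: $\Phi$ may end up on a wall of the refined fan even though $\Phi\in\relint(\Trop(\M_2))$. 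To handle this, first prove the formula for $\Phi'$ in a dense open subset, namely the union of interiors of maximal cones of the refined $\Sigma_2$ minus the images of lower-dimensional cones of $\Sigma_1$. Then extend to the given $\Phi$ by a local constancy argument (Step 3).

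\textbf{Step 2: identify the two formulas at a generic point.} For $\Phi'$ generic in $\relint(\zeta)$, the cones $\sigma\in\Sigma_1$ with $F(\sigma)\supseteq\zeta$ and $\dim\sigma=\dim\zeta$ correspond bijectively to the preimages $\Lambda\in F^{-1}(\Phi')\cap\Trop(\M_1)$. Each such $\Lambda$ lies in the relative interior of its cone, by genericity. Cones $\sigma$ of larger dimension have $F(L_\Z(\sigma))$ of rank exceeding $\dim\zeta$, so the index is infinite or the cone is excluded. To rule these out, note that generic finiteness forces $\dim\Trop(\M_1)=\dim\Trop(\M_2)$. So the relevant cones are maximal, of the same dimension, and $F$ is injective on them; otherwise the fiber would be positive-dimensional, contradicting finiteness on the open set.

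Under this correspondence, the weights and lattices match: $m_{\Sigma_1;\sigma}=m_{\Trop(\M_1);\Lambda}$, $L_\Z(\sigma)=L_\Z(\Trop(\M_1);\Lambda)$, and similarly for $\zeta$ and $\Phi'$ (Remarks \ref{rem:RelativeInteriorOfSupport}, \ref{rem:RelativeInteriorOfSupportWeights}). Hence \eqref{eq:sturmfels-tevelev} becomes \eqref{eq:ST-multiplicities-at-point} at $\Phi'$. This already gives the ``in particular'' clause.

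\textbf{Step 3: pass from $\Phi'$ to the given $\Phi$.} Here the main obstacle is ensuring the right-hand side is locally constant near $\Phi$. Since $F^{-1}(\Phi)\cap\Trop(\M_1)$ is contained in $\relint(\Trop(\M_1))$, it is a finite set of points, and it is finite because of the dimension count above. Near each such $\Lambda$, $\Trop(\M_1)$ is an affine space $\Lambda+L_\R$ with constant weight. Near $\Phi$, $\Trop(\M_2)$ is $\Phi+L'_\R$ with constant weight.

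Properness of the map of supports near a compact fiber, which follows from $F$ being linear and the fiber being finite, shows that for $\Phi'$ close to $\Phi$ every preimage lies near some $\Lambda$. Therefore the preimages of $\Phi'$ near $\Lambda$ consist of a single point if $F(L_\R)=L'_\R$, and of none if $F(L_\R)\neq L'_\R$. In the second case the index $[L'_\Z:F(L_\Z)]$ is interpreted as $0$, consistent with the convention implicit in \eqref{eq:sturmfels-tevelev}. Each local contribution $m\cdot[L'_\Z:F(L_\Z)]$ is constant in $\Phi'$.

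So the sum at $\Phi$ equals the sum at a nearby generic $\Phi'$, which equals $d$ by Step 2. The delicate point is justifying that no preimages of nearby $\Phi'$ escape from neighborhoods of the finitely many $\Lambda$. I would prove this by contradiction: take a sequence of preimages tending to infinity, rescale, and use the fan structure to obtain a ray of $\Trop(\M_1)$ in $\ker F$. That ray would then produce a positive-dimensional fiber over $\Phi$, contradicting finiteness.
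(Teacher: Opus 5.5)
Your overall route is the paper's. You refine $\Sigma_2$ so that the images of cones of $\Sigma_1$ are unions of cones. You then read off \eqref{eq:ST-multiplicities-at-point} from Theorem \ref{thm:Sturmfels-Tevelev} at points in the interior of a maximal cone, and pass to the given $\Phi$ by local constancy. The paper asserts that local constancy in one sentence. Your Step 3 tries to justify it, but the justification is incorrect as written.

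\textbf{The finiteness claim is unjustified.} You need $F^{-1}(\Phi)\cap\Trop(\M_1)$ to be finite, and you attribute this to the dimension count. Equal dimensions do not stop $F$ from contracting maximal cones of $\Trop(\M_1)$. Take $\M_1=V(z-1-x)\subseteq(\k^*)^3$ and $f(x,y,z)=(x,y)$, which is birational onto $(\k^*)^2$. The facet $\{X=0,\ Z\ge 0\}$ of $\Trop(\M_1)$ is mapped onto a line. Your own case $F(L_\R)\neq L'_\R$ is exactly such a contracted point. At such a $\Lambda$ the fiber contains $\Lambda+(L_\R\cap\ker F)$, so that case contradicts the finiteness you assert.

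\textbf{The no-escape argument has a further gap.} A ray of $\Trop(\M_1)$ in $\ker F$ does not by itself give a positive-dimensional fiber over $\Phi$. In the example, $(0,0,1)$ is such a ray, yet fibers over generic points are single points. You would need the ray and a preimage of $\Phi$ to lie in one convex cone.

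\textbf{The fix: finiteness is never needed.} Compare $\Phi$ only with points $\Phi'$ that satisfy two conditions. First, $\Phi'$ lies in the interior of a maximal cone of the refined $\Sigma_2$. Second, $\Phi'$ avoids the finitely many proper affine subspaces $\Phi+F(L_\R(\sigma))$ for maximal $\sigma$ on which $F$ is not injective. The argument then runs as follows.
\begin{enumerate}
\item By your Step 2, every preimage of such a $\Phi'$ lies in a maximal cone $\sigma$ on which $F$ is injective. So $\sigma\cap F^{-1}(B)$ is bounded for bounded $B$.
\item Hence preimages of $\Phi'\to\Phi$ stay in a compact set. They accumulate only on $F^{-1}(\Phi)\cap\Trop(\M_1)\subseteq\relint(\Trop(\M_1))$; this is where the hypothesis is used.
\item Near an accumulation point where $F|_{L_\R}$ is injective, there is exactly one preimage, with the same weight and lattice.
\item Near an accumulation point where $F|_{L_\R}$ is not injective, there are none, because locally $F(\Trop(\M_1))\subseteq\Phi+F(L_\R)$.
\item The fiber points where $F|_{L_\R}$ is injective are isolated points of a polyhedral set, hence finitely many.
\end{enumerate}
This gives \eqref{eq:ST-multiplicities-at-point} at $\Phi$, with contracted fiber points contributing $0$. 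That is the convention you already adopted; the paper's proof does not address such points either. The later application, Theorem \ref{thm:MultIndependentOfChoice}, only uses the dense-open version, which your Step 2 already proves correctly.
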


\begin{proof}
    By \cite[Thm. 1.2]{Tevelev2007}, there exist tropical fans $\Sigma_1,\Sigma_2$ for $\M_1$ and $\M_2$ respectively.
    By standard techniques in convexity theory, we may refine $\Sigma_2$ (say to $\Sigma_2'$) so that for each $\sigma\in\Sigma_1,$ $F(\sigma)$ is a union of cones of $\Sigma_2'$. By \cite[Prop. 2.5]{Tevelev2007}, $\Sigma_2'$ is still a tropical fan for $\M_2$.
    
    Let $\Phi\in\relint(\Trop(\M_2))$. By Remark \ref{rem:RelativeInteriorOfSupport} and the fact that $F^{-1}(\Phi)\cap\Trop(\M_1)\subseteq\relint(\Trop(\M_1))$, the quantity \eqref{eq:ST-multiplicities-at-point} is locally constant at $\Phi$. Therefore we may assume (for the purposes of showing \eqref{eq:ST-multiplicities-at-point}) that $\Phi$ is in the relative interior of a maximal cone $\zeta \subseteq \Sigma_2'$, and likewise $\Lambda \in F^{-1}(\Phi) \cap \Trop(\M_1)$ is in the relative interior of a maximal cone $\sigma \subseteq \Sigma_1$. For such a $\Phi$, the right side of \eqref{eq:ST-multiplicities-at-point} agrees with the right side of \eqref{eq:sturmfels-tevelev}; therefore Theorem \ref{thm:Sturmfels-Tevelev} implies \eqref{eq:ST-multiplicities-at-point}.
\end{proof}

\begin{remark}
    Corollary \ref{cor:Sturmfels-Tevelev-New} makes no reference to tropical compactifications, and simply formulates the degree $d$ as a sum over the preimages of a sufficiently nice point $\Phi$, in particularly one such that $\Phi$ and its preimages in $\Trop(\M_1)$ are all locally linear. In this sense, Theorem \ref{thm:Sturmfels-Tevelev} is not really about tropical compactifications after all, but about detecting the degree of a generically finite map of subvarieties of tori under a torus homomorphism. Corollary \ref{cor:Sturmfels-Tevelev-New} (and its generalization to the case of nontrivially valued fields \cite{OssermanPayne2013,BakerPayneRabinoff2016}) is used to define the correct notion of pushforward for tropical cycles, see \cite{Gubler2013}.
\end{remark}

\section{Tropical compactifications: Boundary strata and initial subschemes}\label{sec:StratificationOfTropicalCompactification}

In this section, we prove some results on boundary strata in tropical compactifications. Lemma \ref{lem:InitialIdealInStratum}, which relates the boundary stratification of a tropical compactification $\Mbar$ with initial subschemes of $\M$, is likely known to experts, but does not appear in the literature as far as we know.

\begin{notation}\label{not:TropicalCompactificationSetup}
In this section and in Section \ref{sec:DegenerationsInTropicalCompactifications}, we fix the following data (matching the notation in the introduction):
\begin{itemize}
    \item \textbf{Base field:} Let $\k$ be an algebraically closed field of characteristic zero, with trivial valuation.
    \item \textbf{Torus:} Let $\bfT$ be an $n$-dimensional torus over $\k$.
    \item \textbf{Subvariety of torus:} Let $\M\subseteq\bfT$ be a subvariety of pure dimension $m$.
    \item \textbf{Fan, ambient toric variety and tropical compactification:} Let $Y_\Sigma$ be a smooth toric variety with fan $\Sigma,$ such that $\Mbar\subseteq Y_{\Sigma}$ is a tropical compactification of $\M$. 
\end{itemize}
Recall that for a cone $\sigma\in\Sigma$, we write $Y_\sigma\subseteq Y_\Sigma$ for the corresponding torus orbit, $\bar Y_\sigma$ for its closure, and $\bfT_\sigma\subseteq\bfT$ for the corresponding subtorus, as in Notation \ref{not:ToricNotation}. We also write $\M_{\sigma}=\Mbar\cap Y_\sigma,$ and $\Mbar_\sigma=\Mbar\cap \bar Y_\sigma$ as in Section \ref{sec:BackgroundTropicalCompactifications}.
\end{notation}

\begin{lem}\label{lem:InitialIdealInStratum}
    Let $\sigma\in\Sigma$ be a cone, and let $\Gamma\in\relint(\sigma)$. Let $\bfT_\sigma$ and $\rho_\sigma:\bfT\to Y_\sigma$ be as in Notation \ref{not:ToricNotation}\eqref{not:TorusProjection}. Then as subschemes of $\bfT$, we have $$\init_\Gamma\M=\rho_\sigma^{-1}(\M_\sigma).$$ In particular, the noncanonical isomorphism $\bfT \cong \bfT/\bfT_\sigma \times \bfT_\sigma\cong Y_\sigma\times \bfT_\sigma$ induces an identification $\init_\Gamma\M\cong\M_\sigma\times \bfT_\sigma,$ with respect to which $\rho_\sigma$ is the first projection.
\end{lem}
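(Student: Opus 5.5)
We prove the equality of subschemes by working in the affine toric chart $U_\sigma=\Spec(\k[\sigma^\vee])$ and comparing ideals directly. Let $I\subseteq\k[M]$ be the ideal of $\M$, and let $J=I\cap\k[\sigma^\vee]$, which is the ideal of $\Mbar\cap U_\sigma$ (the closure of $\M$ in $U_\sigma$). The orbit $Y_\sigma$ is cut out in $U_\sigma$ by the ideal spanned by monomials $x^u$ with $u\in\sigma^\vee\setminus\sigma^\perp$. Hence $\M_\sigma$ has ideal $\bar J\subseteq\k[\sigma^\perp]$, the image of $J$ under the retraction $\pi:\k[\sigma^\vee]\to\k[\sigma^\perp]$ killing those monomials. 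Concretely, $\pi(f)$ keeps exactly the terms of $f$ with $u\in\sigma^\perp$. The scheme $\rho_\sigma^{-1}(\M_\sigma)$ then has ideal $\bar J\cdot\k[M]$.

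\textbf{Step 1: $\bar J\cdot\k[M]\subseteq\init_\Gamma I$.} Since $\k$ is trivially valued, for $f\in J$ every exponent $u$ satisfies $\Gamma\cdot u\ge0$, with equality exactly when $u\in\sigma^\perp$, because $\Gamma\in\relint(\sigma)$. So if $\pi(f)\neq0$ we get $\init_\Gamma f=\pi(f)$, giving the inclusion.

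\textbf{Step 2: $\init_\Gamma I\subseteq\bar J\cdot\k[M]$.} Take $g\in I$. Multiplying by a monomial $x^w$ with $\Gamma\cdot w$ chosen appropriately (and using that $\init_\Gamma$ is compatible with monomial multiplication), we may assume the minimum of $\Gamma\cdot u$ over the support of $g$ is $0$. We cannot simply shift into $\sigma^\vee$, because the support of $g$ need not lie in $\sigma^\vee$. Here I would use the torus action: $\init_\Gamma I$ is homogeneous for the $\bfT_\sigma$-grading by $M/\sigma^\perp$, since $L_\R(\sigma)$ lies in the lineality space of $\Trop(\init_\Gamma\M)$ by Lemma \ref{lem:InitialTropicalization}. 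It therefore suffices to treat a single graded piece. I would then invoke the flatness of $\bfT\times\Mbar\to Y_\Sigma$: by \cite[Sec. 6.4]{MaclaganSturmfels2015}, flatness is equivalent to $\init_\Gamma I$ being generated by $\pi(J)$ for every $\Gamma\in\relint(\sigma)$. More concretely, $\k[\sigma^\vee]/J$ is flat over the toric chart in the right sense, so the degeneration $t^{-\Gamma}\cdot\M$ of Remark \ref{rem:RescueDegeneration} has flat limit $\rho_\sigma^{-1}(\M_\sigma)$ rather than only containing it. This step is the main obstacle. If citing is not acceptable, I would argue as follows. The family $\overline{t^{-\Gamma}\M}$ over $\A^1$ (for rational $\Gamma$, then extend by the locally constant nature on $\relint(\sigma)$) is the pullback of $\bfT\times\Mbar\to Y_\Sigma$ along the map $\A^1\times\bfT\to U_\sigma$ given by $(t,s)\mapsto t^{\Gamma}s$. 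Flatness is preserved under this base change, so the special fiber is the scheme-theoretic preimage $\rho_\sigma^{-1}(\M_\sigma)$, and that special fiber equals $\init_\Gamma\M$ by Remark \ref{rem:RescueDegeneration}.

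\textbf{Step 3.} The final assertion is formal. Under the splitting $\bfT\cong Y_\sigma\times\bfT_\sigma$ of Notation \ref{not:ToricNotation}, $\rho_\sigma$ becomes the first projection, so $\rho_\sigma^{-1}(\M_\sigma)=\M_\sigma\times\bfT_\sigma$.
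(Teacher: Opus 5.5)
Your fallback argument at the end of Step 2 is the paper's proof. In both, one realizes the degeneration $t^{-\Gamma}\M$ as a base change of the flat multiplication map $\bfT\times\Mbar\to Y_\Sigma$ along a curve $t\mapsto t^\Gamma$ in $U_\sigma$. That curve ends at the distinguished point $\rho_\sigma(\id)\in Y_\sigma$, so the special fiber is both $\rho_\sigma^{-1}(\M_\sigma)$ and $\init_\Gamma\M$. Because this gives equality of subschemes outright, Step 1 is redundant, although it is correct and needs no flatness. Step 3 matches the paper.

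Three points need repair.

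\textbf{The family.} As written, you pull back $\bfT\times\Mbar\to Y_\Sigma$ along $\A^1\times\bfT\to U_\sigma$. That gives a flat family over $\A^1\times\bfT$, not the family $\{(t,s): t^\Gamma s\in\Mbar\}$ of subschemes of $\bfT$ over $\A^1$. What you want is $\{(s,w)\in\bfT\times U_\sigma : sw\in\Mbar\}\to U_\sigma$, base-changed along $\A^1\to U_\sigma$, $t\mapsto t^\Gamma$. This family is isomorphic to the multiplication map $\bfT\times(\Mbar\cap U_\sigma)\to U_\sigma$ via $(s,w)\mapsto(s^{-1},sw)$, so it is flat. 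This is exactly the paper's change of coordinates $(x,z)\mapsto(x,x^{-1}z)$.

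\textbf{Irrational $\Gamma$.} Working over $\A^1$ only handles rational $\Gamma$, after scaling to an integral one. Your phrase ``locally constant on $\relint(\sigma)$'' is not known in advance. To reach irrational $\Gamma$ you need an extra input. For example, use that $\init_\Gamma I$ is constant on relative interiors of cones of the rational Gröbner fan of $I$. If $\Gamma\in\relint C$, then $\relint C\cap\relint\sigma=\relint(C\cap\sigma)$ contains rational points. The paper avoids this by pulling back along $\Spec R\to U_\sigma$, with $R$ the valuation ring of $\k((\R))$, and using \cite[Lem.~2.4.14]{MaclaganSturmfels2015}; this treats all real $\Gamma\in\relint(\sigma)$ at once.

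\textbf{The detour at the start of Step 2.} Drop it. $L_\R(\sigma)$ lying in the lineality space of $\Trop(\init_\Gamma\M)$ does not imply that $\init_\Gamma I$ is $M/\sigma^\perp$-homogeneous. Tropicalization only sees the underlying set, not the scheme structure. The $\bfT_\sigma$-invariance of $\init_\Gamma\M$ is a consequence of the lemma, not an input. Likewise, the equivalence ``flatness $\iff$ $\init_\Gamma I=\pi(J)\,\k[M]$'' that you attribute to \cite[Sec.~6.4]{MaclaganSturmfels2015} is essentially the statement being proved. The paper notes it is not aware of this statement in the literature, so rely on the explicit flat-family argument instead.
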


\begin{proof}[Proof of Lemma \ref{lem:InitialIdealInStratum}]

Consider the isomorphism $\bfT \times U_\sigma \to \bfT \times U_\sigma$ given by $(x, z) \mapsto (x, x^{-1}z)$. Restricting it to $\bfT \times (\Mbar \cap U_\sigma)$ yields the following diagram:
$$\begin{tikzcd}
    \bfT\times(\Mbar\cap U_\sigma)\arrow[rrr,"{(x,z)\mapsto(x,x^{-1}z)}",hook]\arrow[rrrr,bend left,"\mathcal{F}","\mathrm{(flat)}"']&&&\bfT\times U_\sigma\arrow[r,"\mathrm{pr}_2"]&U_\sigma
\end{tikzcd}$$
where flatness of $\mathcal{F}$ is by the assumption that $\Mbar$ is a tropical compactification. The first map gives an isomorphism onto its image, hence we may view $\mathcal{F}$ as a flat family of subschemes of $\bfT$ over $U_\sigma$ whose fiber over $z\in U_\sigma$ is $$\{x\in \bfT:x\cdot z\in\Mbar\cap U_\sigma\}.$$ In particular, the fiber over $z\in \bfT\subseteq U_\sigma$ is $z^{-1}\cdot\M\subseteq \bfT$, and the fiber over the distinguished point $\rho_\sigma(\id_{\bfT})\in Y_\sigma$ is $\rho_\sigma^{-1}(\M_\sigma)\subseteq \bfT$, since $$\{x\in \bfT:x\cdot \rho_\sigma(\id_{\bfT})\in\Mbar\}=\{x\in \bfT:x\cdot \rho_\sigma(\id_{\bfT})\in\M_\sigma\}=\{x\in \bfT:\rho_\sigma(x)\in\M_\sigma\}=\rho_\sigma^{-1}(\M_\sigma).$$ Here the first equality is because the image of $\rho_\sigma$ is contained in $Y_\sigma$ (which is a $\bfT$-orbit), and the second equality is by $\bfT$-equivariance of $\rho_\sigma$.

We use Gr\"obner theory to show that the fiber $\rho_\sigma^{-1}(\rho_\sigma(\id_{\bfT}))$ can also be identified with $\init_\Gamma \M$. Let $K:=\k((\R))$ denote the ring of generalized power series\footnote{Alternatively, we may take $K$ to be any valued field extension of $\k$ with residue field $\k$, with a splitting, and over whose value group $\Gamma$ is defined.} over $\k$ in a variable $t$, with its natural valuation \cite[Ex. 2.1.7]{MaclaganSturmfels2015}. Note that $K$ has residue field $\k$, and natural splitting $\alpha\mapsto t^\alpha$. The $\k$-algebra morphism $\k[M]\to K$ given by $x\mapsto t^{\Gamma(x)}$ for $x\in M$ induces a morphism $y:\Spec(K)\to \bfT$ that transparently satisfies $\Trop(y)=\Gamma$. By Proposition \ref{prop:TropicalizationIntersectsOrbits}, $y$ extends to an $R$-valued point $\bar{y}:\Spec(R)\to U_\sigma$, such that $\bar{y}$ maps $\Spec(\k)$ into $Y_\sigma$. Furthermore, since $\Gamma\in\sigma$, we have that $y$ factors through $\bfT_\sigma\into \bfT$. Thus $$\bar{y}(\Spec(\k))\in Y_\sigma\cap\bar{\bfT_\sigma}=\{\rho_\sigma(\id_{\bfT})\}.$$

Let $\mathcal{F}|_{\bar{y}}$ denote the pullback of the flat family $\mathcal{F}:\bfT\times(\Mbar\cap U_\sigma)\to U_\sigma$ along $\bar{y}:\Spec(R)\to U_\sigma$. Then $\mathcal{F}|_{\bar{y}}$ 
 is a flat family of subschemes of $\bfT$ over $\Spec(R),$ whose general fiber $\mathcal{F}|_{y}$ is $t^{-\Gamma}\cdot\M$ (see \cite[Sec. 2.4]{MaclaganSturmfels2015}). By the above computations, the special fiber is $\mathcal{F}|_{\Spec(\k)}=\rho_\sigma^{-1}(\M_\sigma)$. On the other hand, by \cite[Lem. 2.4.14]{MaclaganSturmfels2015}, the special fiber is $$\mathcal{F}|_{\Spec(\k)}=\init_\Gamma\M.$$ We conclude $\rho_\sigma^{-1}(\M_\sigma)=\init_\Gamma(\M)$ as subschemes of $\bfT$.

The last sentence of the Lemma follows from Notation \ref{not:ToricNotation}\eqref{not:TorusProjection}.
\end{proof}

\begin{cor}\label{cor:ExtendedTropicalStratum}
    For any $\Gamma\in\relint(\sigma),$ we have $\Trop(\M_\sigma)=\Star_\Gamma\Sigma/L_{\R}(\sigma)$ as (supports of) weighted polyhedral complexes.
\end{cor}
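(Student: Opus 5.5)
The plan is to deduce this from Lemma \ref{lem:InitialIdealInStratum}, which gives $\init_\Gamma\M=\rho_\sigma^{-1}(\M_\sigma)$ as subschemes of $\bfT$, together with Lemma \ref{lem:InitialTropicalization}, which gives $\Trop(\init_\Gamma\M)=\Star_\Gamma\Trop(\M)=\Star_\Gamma\Sigma$ as weighted fans. Since $\abs{\Sigma}=\Trop(\M)$ and $\Gamma\in\relint(\sigma)$, the star of the support at $\Gamma$ is the support of $\Star_\Gamma\Sigma$, and its weights are the weights of $\Trop(\M)$ near $\Gamma$. So it suffices to show that $\Trop(\rho_\sigma^{-1}(\M_\sigma))$ is the preimage of $\Trop(\M_\sigma)$ under the quotient map $N_\R\to N_\R/L_\R(\sigma)$, with weights pulled back.

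First, I would use the noncanonical splitting $\bfT\cong Y_\sigma\times\bfT_\sigma$ from Lemma \ref{lem:InitialIdealInStratum}, under which $\init_\Gamma\M\cong\M_\sigma\times\bfT_\sigma$ and $\rho_\sigma$ is the first projection. On cocharacter lattices this is a splitting $N\cong N/L_\Z(\sigma)\oplus L_\Z(\sigma)$, where the projection to the first factor is the quotient map. Here $L_\Z(\sigma)$ is saturated, so the quotient is a lattice and a splitting exists.

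Second, I would check that tropicalization is compatible with products with a torus: $\Trop(Z\times\bfT_\sigma)=\Trop(Z)\times L_\R(\sigma)$ inside $(N/L_\Z(\sigma))_\R\oplus L_\R(\sigma)$, with the weight at $(\Phi,v)$ equal to the weight of $\Trop(Z)$ at $\Phi$. The set-theoretic equality follows because the ideal of $Z\times\bfT_\sigma$ is generated by the ideal of $Z$ in the variables of the first factor. Hence $\init_{(\Phi,v)}(I(Z)\cdot\k[\dots])=\init_\Phi(I(Z))\cdot\k[\dots]$, and this is the unit ideal exactly when $\init_\Phi I(Z)$ is. The same identity of initial ideals shows that $\init_{(\Phi,v)}$ is the product of $\init_\Phi Z$ with a torus. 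Multiplicities of the torus components are unchanged by taking a product with a torus, so the weights agree. This argument generalizes the reasoning behind Lemma \ref{lem:HypersurfacePullback} from hypersurfaces to arbitrary subschemes.

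Combining the two steps gives $\Star_\Gamma\Sigma=\pi^{-1}(\Trop(\M_\sigma))$ with matching weights, where $\pi:N_\R\to N_\R/L_\R(\sigma)$ is the quotient map. Because $L_\R(\sigma)$ lies in the lineality space of $\Star_\Gamma\Sigma$, applying $\pi$ yields $\Trop(\M_\sigma)=\Star_\Gamma\Sigma/L_\R(\sigma)$, again with weights. The conclusion does not depend on the noncanonical choice of splitting, since the quotient map itself is canonical.

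The main subtlety I expect is the weights. I need to make sure that the scheme-theoretic equality in Lemma \ref{lem:InitialIdealInStratum}, rather than an equality of underlying sets, is what transfers multiplicities. That is why the statement of that lemma as an equality of subschemes is essential. I also need to confirm that the weight of $\Trop(\init_\Gamma\M)$ at a point $\Gamma+v$ of the star agrees with the weight of $\Trop(\M)$ near $\Gamma$. This is the weighted form of Lemma \ref{lem:InitialTropicalization}, which follows from $\init_v\init_\Gamma=\init_{\Gamma+\epsilon v}$ for small $\epsilon$.
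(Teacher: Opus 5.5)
Your proof is correct. For the weights it is essentially the paper's argument: the paper also combines Lemma \ref{lem:InitialIdealInStratum} with $\init_{\hat\Gamma'}\init_\Gamma\M=\init_{\Gamma+\epsilon\hat\Gamma'}\M$ and the identification $\init_{\hat\Gamma'}(\rho_\sigma^{-1}(\M_\sigma))\cong(\init_{\Gamma'}\M_\sigma)\times\bfT_\sigma$. The paper asserts that identification without proof, and your product-with-a-torus computation of initial ideals supplies the justification. The only real difference is in the equality of supports: the paper cites Proposition \ref{prop:TropicalCompactificationsBasicProperties}\eqref{it:ExtendedTropicalStratum}, whereas you derive it from the same initial-ideal computation, so your argument needs nothing beyond Lemmas \ref{lem:InitialIdealInStratum} and \ref{lem:InitialTropicalization}.
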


\begin{proof}
    The fact that the two sets agree follows from Lemma \ref{prop:TropicalCompactificationsBasicProperties}\eqref{it:ExtendedTropicalStratum}.

    To see the fact about the weight function, let $\zeta$ be a maximal cell in $\Star_\Gamma\Sigma/L_\R(\sigma)$ corresponding to a maximal cell $\hat\zeta$ of $\Sigma$, and let $\Gamma'\in \relint(\zeta)$ and $\hat\Gamma'$ a lift of $\Gamma'$ to $\hat \zeta$. By Lemma~\ref{lem:InitialIdealInStratum}, 
    \begin{equation}\label{eq:InitInit}
    \init_{\hat\Gamma'}(\init_\Gamma \M) = \init_{\hat\Gamma'}(\rho_\sigma^{-1}(\M_\sigma)) = \rho_\sigma^{-1}(\init_{\Gamma'} \M_\sigma) \cong (\init_{\Gamma'} \M_\sigma)\times \bfT_\sigma.
    \end{equation}
    
    On the left-hand side, by \cite[Corollary 2.4.10]{MaclaganSturmfels2015}, for sufficiently small $\epsilon>0$ we have that $\init_{\hat\Gamma'}(\init_\Gamma \M) \cong \init_{\Gamma + \epsilon\Gamma'}\M$. For sufficiently small $\epsilon>0$, then $\Gamma + \epsilon\Gamma'\in \relint(\hat\zeta)$, thus $\init_{\Gamma+\epsilon\Gamma'}\M$ is a union of tori whose multiplicities add to the weight $m_{\Trop(\M);\hat\zeta}$.

    On the right-hand side of \eqref{eq:InitInit}, $(\init_{\Gamma'} \M_\sigma)\times \bfT_\sigma$ is a union of tori whose multiplicities add to $m_{\Trop(\M_\sigma);\zeta}$, since taking the product with $\bfT_\sigma$ does not change this number. Thus, $m_{\Trop(\M);\hat\zeta} = m_{\Trop(\M_\sigma);\zeta}$.
\end{proof}

\section{Degenerations in tropical compactifications, and tropical intersections with skeleta}\label{sec:DegenerationsInTropicalCompactifications}

Let $\mathbf{k},\bfT,n,\M,m,\Sigma,Y_\Sigma,\Mbar$ be as in Notation \ref{not:TropicalCompactificationSetup}. This section deals with a degenerating subvariety in a tropical compactification --- more precisely, with a codimension-$r$ subvariety of $\M$ defined over a nontrivially valued extension of the base field, as in Notation \ref{not:DegenerationSetup} just below. 

In Section \ref{sec:MultDef}, we introduce a ``transversality'' condition (Condition \ref{cond:SigmaGamma}) for a point $\Gamma\in\Trop(X)\cap\Sigma_r$, where $\Sigma_r$ is the $r$-skeleton of $\Sigma$. If $\Gamma$ satisfies this condition, we define (Definition \ref{def:MultIndependentOfChoice}) a natural associated local tropical intersection multiplicity $\mult_\Gamma(\Trop(X),\Sigma_r;\Sigma)$ at $\Gamma$, within $\Sigma$, between $\Sigma_r$ and $\Trop(X)$. Both of these notions --- transversality and tropical multiplicity --- are new. Existing versions do not apply, for the same reason in both cases --- namely, because $\Sigma_r$ is not generally in $\relint(\Sigma)$. We prove that $\mult_\Gamma(\Trop(X),\Sigma_r;\Sigma)$ is well-defined by showing (Definition/Proposition \ref{prop:GenericallyFinite}, Theorem \ref{thm:MultIndependentOfChoice}) that it is equal to the degree $d_\Gamma$ of a certain generically finite map of schemes, obtained as a certain projection onto a boundary stratum. Finally, we show (Theorem \ref{thm:XTilde2-OP}) that if $X$ is a (sufficiently ``tropically transverse'') intersection of some variety $\widetilde X\subseteq Y\subseteq\bfT^K$ with $\M^K,$ then $\mult_{\Gamma}(\Trop(X),\Sigma_r;\Sigma)$ coincides with a tropical intersection multiplicity in the usual sense (as in Section \ref{sec:BackgroundTropicalIntersectionTheory}).

In Section \ref{sec:ProofOfMainTheorem}, we study the limit $(\bar X)_0$ of $\bar X$. Under a further \emph{global} assumption (Condition \ref{cond:GlobalIntersectionCondition}) on $\Trop(X)\cap\Sigma_r,$ we show (Proposition \ref{prop:Set-Theoretic}) that $(\bar X)_0$ is a union of codimension-$r$ boundary strata $\Mbar_\sigma$ of $\Mbar.$ Then, in Theorem \ref{thm:mainthm}, we use the tropical multiplicities $\mult_{\Gamma}(\Trop(X),\Sigma_r;\Sigma)$ to express the limit \emph{cycle} $[(\bar X)_0]$ of $\bar X$ explicitly as a weighted sum of boundary strata $[\Mbar_\sigma].$ These two statements comprise Theorem \ref{thm:intro1}.

\begin{notation}\label{not:DegenerationSetup}
We fix the following for the remainder of this section:
\begin{itemize}
    
    \item \textbf{Valued field extension $K$ of $\mathbf{k}$:} Let $K$ be a nontrivially valued extension of $\k$ with valuation ring $R$ and maximal ideal $\m$, such that $\k \cong R/\m$. (All of the following theorems will hold vacuously if the valuation is trivial.)
    \item \textbf{Base changes from $\mathbf{k}$ to $K$ and $R$:} For a scheme $V$ over $\mathbf{k},$ we will write $V^K$ for the base change $V\times_{\Spec(\mathbf{k})}\Spec(K)$ and $V^R$ for the base change $V\times_{\Spec(\mathbf{k})}\Spec(R)$.
    \item \textbf{Subscheme $X\subseteq\M$ defined over $K$ of codimension $\le r$:} Let $r\ge0$ be an integer, and let $X\subseteq\M^K$ be a closed subscheme such that all irreducible components of $X$ have codimension at most $r$. Let $\bar X$ denote the closure of $X$ in $\Mbar^K$, let $\X$ denote the closure of $\bar X$ in $\Mbar^R$, and let $(\bar X)_0=\X\times_{\Spec(R)}\Spec(\k)$. ($X_0$ is often referred to as the \emph{flat limit} of $\bar X$.)
\end{itemize}
\end{notation}

We will need two further assumptions on $\M$, $\Sigma$, and $X$:
\begin{assumption}\label{ass:MSigmaNice}
    For the rest of this section, we assume that for each $r$-dimensional cone $\sigma\in\Sigma$, $\Mbar_\sigma$ is reduced and irreducible, and a general point of $\Mbar_{\sigma}$ is a smooth point of $\Mbar$ at which $\Mbar$ intersects $Y_\sigma$ transversely.
\end{assumption}

\subsection{Local tropical multiplicities associated to the degeneration \texorpdfstring{$X\subset \M^{K}$}{X in M\^{}K}}\label{sec:MultDef}

The notion of tropical multiplicity we define in this section can be defined at a point of $\Trop(X)\cap\Sigma_r$ where the following local condition is satisfied.

\begin{condition}\label{cond:SigmaGamma}
    Let $\sigma\in\Sigma_r$ be a facet, and let $\Gamma$ be an isolated point of $\Trop(X)\cap\sigma$. Suppose
    \begin{itemize}
        \item $\Gamma\in\relint(\sigma)$, and
        \item every facet of $\Trop(X)$ containing $\Gamma$ has dimension $m-r$.
    \end{itemize}
\end{condition}

\begin{defprop}\label{prop:GenericallyFinite}
    If $\sigma$ and $\Gamma$ are as in Condition \ref{cond:SigmaGamma}, then the composition 
    \begin{equation}\label{eqn:rho-sigma-on-init-gamma-X}
    \init_\Gamma X\into \bfT\into U_\sigma\xrightarrow{\rho_\sigma} Y_\sigma
    \end{equation}
    factors through $M_\sigma\subseteq Y_\sigma$, and the resulting map $\rho_\sigma|_{\init_\Gamma X}:\init_\Gamma X\to\M_\sigma$ is generically finite. We denote its degree by $d_\Gamma$.
\end{defprop}
\begin{proof}
    Recall the natural projection $\rho_\sigma : U_\sigma \to Y_\sigma$ (see Notation \ref{not:ToricNotation}\eqref{not:ToricAffineProjection}). Since $\Gamma\in\relint(\sigma),$ Lemma \ref{lem:InitialIdealInStratum} implies $$\init_\Gamma\M=\rho_\sigma^{-1}(\M_\sigma).$$ Since $X\subseteq\M,$ we have $\init_\Gamma X\subseteq\init_\Gamma\M=\rho_\sigma^{-1}(\M_\sigma),$ i.e.
    we may consider the restriction $$\rho_\sigma|_{\init_\Gamma X}:\init_\Gamma X\to\M_\sigma.$$ By Lemma \ref{lem:InitialTropicalization}, we have $\Trop(\init_\Gamma X)=\Star_\Gamma\Trop(X)$. By the structure theorem for tropical varieties, $\init_\Gamma X$ and $\Star_\Gamma\Trop(X)$ have the same dimension, which is $m-r$ by the assumption that every facet of $\Trop(X)$ containing $\Gamma$ has dimension $m-r.$ We also know that $\M_\sigma$ is $(m-r)$-dimensional by Proposition \ref{prop:TropicalCompactificationsBasicProperties}\eqref{eq:TropicalCompactificationStratumCodimension}. Thus $\rho_\sigma|_{\init_\Gamma X}$ is a morphism of $(m-r)$-dimensional schemes, whose codomain $\M_\sigma$ is irreducible by Assumption \ref{ass:MSigmaNice}. Hence $\rho_\sigma|_{\init_\Gamma X}$ is generically finite over $\M_\sigma$.
\end{proof}

In fact, the degree $d_\Gamma = \deg(\init_\Gamma X \to \M_\sigma)$ can be computed entirely via tropical geometry --- this follows from a general result of Sturmfels-Tevelev \cite{SturmfelsTevelev2008} (Corollary \ref{cor:Sturmfels-Tevelev-New}), as we now show.

\begin{thm}\label{thm:MultIndependentOfChoice}
    Let $\sigma$ and $\Gamma$ be as in Condition \ref{cond:SigmaGamma}. Choose a facet $\zeta\in\Sigma$ containing $\Gamma$, and choose a generic vector $v_\zeta\in\Star_\Gamma\zeta$. Then the tropical $0$-cycle 
    \begin{align}\label{eq:MultCycleDef}
        \frac{1}{m_{\Trop(\M);\zeta}}((v_\zeta+\Star_\Gamma\sigma)\cdot\Star_\Gamma\Trop(X)),
    \end{align}
    has degree $d_\Gamma$. Here ``$\cdot$'' denotes stable tropical intersection taken inside the ambient space $\Star_\Gamma\Sigma$, and the affine-linear space $v_\zeta+\Star_\Gamma\sigma$ is assigned weight 1. In particular, the degree is independent of $\zeta$ and $v_\zeta$.
\end{thm}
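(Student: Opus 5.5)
The plan is to apply Corollary \ref{cor:Sturmfels-Tevelev-New} to the generically finite map $\rho_\sigma|_{\init_\Gamma X}:\init_\Gamma X\to\M_\sigma$ from Definition/Proposition \ref{prop:GenericallyFinite}. This map is the restriction of the torus homomorphism $\rho_\sigma:\bfT\to\bfT/\bfT_\sigma\cong Y_\sigma$, and its linear map on cocharacters is the quotient $F:N_\R\to N_\R/L_\R(\sigma)$. The target has $\Trop(\M_\sigma)=\Star_\Gamma\Sigma/L_\R(\sigma)$ with weights matching those of $\Sigma$ (Corollary \ref{cor:ExtendedTropicalStratum}). The source has $\Trop(\init_\Gamma X)=\Star_\Gamma\Trop(X)$ (Lemma \ref{lem:InitialTropicalization}), with weights inherited from $\Trop(X)$. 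One technicality: $\init_\Gamma X$ may fail to be a variety. I would handle this by summing degrees over its irreducible components with multiplicities, since both sides of Corollary \ref{cor:Sturmfels-Tevelev-New} are additive in cycles.

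Next I choose the point $\Phi:=F(v_\zeta)$. Because $\zeta\supseteq\sigma$ is a facet of $\Sigma$, the cone $\Star_\Gamma\zeta$ is invariant under $L_\R(\sigma)$ and maps onto a maximal cone $F(\Star_\Gamma\zeta)$ of $\Trop(\M_\sigma)$. A generic $v_\zeta$ therefore gives $\Phi$ in the relative interior of that cone, and in the dense open set on which Corollary \ref{cor:Sturmfels-Tevelev-New} applies. In particular, genericity ensures that every point of $F^{-1}(\Phi)\cap\Star_\Gamma\Trop(X)$ lies in $\relint(\Star_\Gamma\Trop(X))$. The Corollary then gives
\[
d_\Gamma=\frac{1}{m_{\Trop(\M);\zeta}}\sum_{\Lambda\in F^{-1}(\Phi)\cap\Star_\Gamma\Trop(X)} m_{\Lambda}\,[L_\Z(\zeta)/L_\Z(\sigma): F(L_\Z(\Trop(X);\Lambda))].
\]

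It remains to match this with \eqref{eq:MultCycleDef}. The fiber $F^{-1}(\Phi)$ is exactly the affine space $v_\zeta+\Star_\Gamma\sigma=v_\zeta+L_\R(\sigma)$. Its intersection with $\Star_\Gamma\Trop(X)$ therefore consists of the points $\Lambda$ above. Since $\Phi$ is generic, these intersections lie in the interior of the facet $\Star_\Gamma\zeta$ of the ambient space $\Star_\Gamma\Sigma$. They are also transverse there, by dimension count: $(m-r)+r=m$. Thus the stable intersection inside $\Star_\Gamma\Sigma$ agrees with the honest intersection, and its multiplicity at $\Lambda$ is $m_\Lambda[L_\Z(\zeta):L_\Z(\Trop(X);\Lambda)+L_\Z(\sigma)]$. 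This is the Osserman--Payne formula with ambient lattice $L_\Z(\zeta)$; the ambient weight $m_\zeta$ is divided out by hand in \eqref{eq:MultCycleDef}. A standard lattice isomorphism identifies $L_\Z(\zeta)/(A+L_\Z(\sigma))$ with $(L_\Z(\zeta)/L_\Z(\sigma))/F(A)$, using that $L_\Z(\sigma)$ is saturated. Hence the two indices coincide term by term, and summing gives $\deg=d_\Gamma$. Independence of $\zeta$ and $v_\zeta$ is then automatic, because $d_\Gamma$ does not depend on either choice.

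I expect the main obstacle to be the bookkeeping needed to justify the genericity and the local-versus-global passage. One must make sure that a generic $v_\zeta$ stays within the open dense set of Corollary \ref{cor:Sturmfels-Tevelev-New}, so that $\Phi$ avoids the images of the lower-dimensional cells of $\Star_\Gamma\Trop(X)$. One must also check that the intersection points are precisely the full fiber, with nothing escaping outside $\Star_\Gamma\zeta$. Since $F$ maps $\Trop(\init_\Gamma X)$ into $\Trop(\M_\sigma)$ and $F^{-1}(F(\Star_\Gamma\zeta))=\Star_\Gamma\zeta$ locally, the whole fiber lies in $\Star_\Gamma\zeta$. The nonreduced or reducible case of $\init_\Gamma X$ also needs the additivity remark made at the start.
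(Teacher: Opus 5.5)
Your proposal is correct and follows the paper's proof essentially step for step. Like the paper, you apply Corollary \ref{cor:Sturmfels-Tevelev-New} to $\rho_\sigma|_{\init_\Gamma X}$ at $\Phi=\rho_\sigma^{\trop}(v_\zeta)$, identify the fiber with $v_\zeta+L_\R(\sigma)$ and the weights via Lemma \ref{lem:InitialTropicalization} and Corollary \ref{cor:ExtendedTropicalStratum}, and convert the lattice index with the third isomorphism theorem; the differences are cosmetic (the paper first translates $\Gamma$ to the origin so that $\sigma$ lies in the lineality space), plus your additivity remark for a reducible or nonreduced $\init_\Gamma X$, which the paper leaves implicit.
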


\begin{remark}
    The $\Star_\Gamma$ operations in \eqref{eq:MultCycleDef} essentially say that $\mult_\Gamma(\Trop(X),\sigma;\Sigma)$ may be computed locally near $\Gamma$.
\end{remark}

\begin{definition}\label{def:MultIndependentOfChoice}
    Let $\sigma$ and $\Gamma$ be as in \ref{cond:SigmaGamma}. We define the \textbf{tropical intersection multiplicity of $\Trop(X)$ and $\sigma$ in $\Sigma$ at $\Gamma$}, denoted $\mult_{\Gamma}(\Trop(X),\sigma;\Sigma)$, as the degree of the tropical cycle in \eqref{eq:MultCycleDef}, for any $\zeta$ and any generic $v_\zeta$.
\end{definition}
\begin{remark}
Our main goal is to give a tropical method to compute limit cycles. As we will see below in Lemma \ref{lem:mainlem}, these limits are more directly expressed in terms of the algebraic data $d_\Gamma$. With the equality
\[\mult_{\Gamma}(\Trop(X),\sigma;\Sigma)=d_\Gamma\]
of Theorem \ref{thm:MultIndependentOfChoice},
we may instead compute these limits purely tropically (Theorem \ref{thm:mainthm}, the main theorem of the paper). We use the latter in our application to $\Mbar_{0, n}$.
\end{remark}

\begin{proof}[Proof of Theorem \ref{thm:MultIndependentOfChoice}]
    By definition, $\rho_\sigma|_{\init_\Gamma X}$ is induced by the homomorphism $\rho_\sigma:\bfT\to \bfT/\bfT_\sigma\cong Y_\sigma$ of tori. Let $\rho_\sigma^{\trop}:N_{\R}\to N_{\R}/L_{\R}(\sigma)$ denote the corresponding linear map. By Corollary \ref{cor:Sturmfels-Tevelev-New}, there exists a dense open subset of $\Trop(\M_\sigma)\subseteq N_{\R}/L_{\R}(\sigma)$ such that for any $\Phi$ in this subset, we have
    \begin{align}\label{eq:AppliedST}
        d_\Gamma&=\frac{1}{m_{\Trop(\M_\sigma);\Phi}}\sum_{\substack{\Lambda\in\Trop(\init_\Gamma X)\\\rho_\sigma^{\trop}(\Lambda)=\Phi}}m_{\Trop(\init_\Gamma X);\Lambda}\cdot[L_{\Z}(\Trop(\M_\sigma);\Phi):\rho_\sigma^{\trop}(L_{\Z}(\Trop(\init_\Gamma X);\Lambda))].
    \end{align}
    We now describe the pieces of this expression one by one. By Lemma \ref{lem:InitialTropicalization}, we have $$\Trop(\init_\Gamma X)=\Star_\Gamma\Trop(X)\subseteq N_{\R}.$$ By Corollary \ref{cor:ExtendedTropicalStratum}, we have $\Trop(\M_\sigma)=\abs{\Star_\Gamma\Sigma/L_{\R}(\sigma)}$, and the weighting function on $\relint(\Star_\Gamma\Sigma)$ is pulled back from the weighting function on $\relint(\Trop(\M_\sigma))$. 

    These quantities are all local, so by passing to a sufficiently small open set around $\Gamma$, and translating so that $\Gamma = \vec{0}$, we may assume that $\sigma$ is a linear space and that $\Sigma=\Trop(\M)$, $\Trop(\M_\sigma)$ and $\Trop(X)$ are cone complexes, with $\sigma$ in the lineality space of $\Sigma$. In particular, we have:
    \begin{align}\label{eq:substitutions}
        L_\R(\sigma) &= \sigma,&\Star_\Gamma \Sigma &= \Sigma&\Trop(\M_\sigma) &= \Sigma/\sigma&&\text{and}&\Star_\Gamma \Trop(X) &= \Trop(X).
    \end{align}
    
    Let $\zeta \in \Sigma$ be a maximal cone containing $\Gamma$ and let $v_\zeta \in \zeta$ be generic. Note that, since $\sigma$ is in the lineality space of $\Sigma$, $v_\zeta + \sigma \subseteq \zeta$. The map
    \[
    \rho_\sigma^\trop : \Sigma \to \Sigma/\sigma
    \]
    is surjective and takes $v_\zeta$ to a generic vector in $\Trop(\M_\sigma) = \Sigma/\sigma$, so we may apply Equation \eqref{eq:AppliedST} with $\Phi = \rho_\sigma^{\trop}(v_\zeta)$ and with the substitutions \eqref{eq:substitutions}. We have $(\rho_\sigma^\trop)^{-1}(\rho_\sigma(v_\zeta)) = v_\zeta + \sigma$, so the condition on $\Lambda$ is just $\Lambda \in \Trop(X) \cap (v_\zeta + \sigma)$. Finally, by Corollary \ref{cor:ExtendedTropicalStratum}, we also have an equality of multiplicities
    \[
    m_{\Sigma/\sigma;\rho_\sigma^{\trop}(v_\zeta)}=m_{\Sigma;\Lambda}=m_{\Sigma;\zeta}
    \] 
    for all such $\Lambda$. Substituting all of this into \eqref{eq:AppliedST} yields:
    \begin{equation}
    \label{eq:AppliedST2}
    d_\Gamma = 
    \sum_{\Lambda\in \Trop(X) \cap (v_\zeta+\sigma)}\frac{m_{\Trop(X);\Lambda}}{m_{\Sigma;\zeta}}\cdot[L_{\Z}(\Sigma/\sigma;\rho_\sigma^\trop(v_\zeta)):\rho_\sigma^{\trop}(L_{\Z}(\Trop(X);\Lambda))].
    \end{equation}
    The two lattices appearing in the index calculation are both sublattices of $N/L_\zeta(\sigma)$. Their preimages under $\rho_\sigma^\trop : N \to N/L_\Z(\sigma)$ are, respectively,
    \begin{align*}
        (\rho_\sigma^{\trop})^{-1}(L_{\Z}(\abs{\Sigma/\sigma};\rho_\sigma^\trop(v_\zeta))) &= L_{\Z}(\Sigma;\Lambda)\\
        (\rho_\sigma^{\trop})^{-1}(\rho_\sigma^{\trop}(L_{\Z}(\Trop(X);\Lambda))) &= L_{\Z}(\Trop(X);\Lambda)\oplus L_{\Z}(\sigma).
    \end{align*}
    By the third isomorphism theorem for groups, we may replace the lattice index in \eqref{eq:AppliedST2} by
    \begin{equation}
        [L_{\Z}(\Sigma;\Lambda) : L_{\Z}(\Trop(X);\Lambda)\oplus L_{\Z}(\sigma)],
    \end{equation}
    which by definition is
    \[
    \frac{1}{m_{\Trop(X); \Lambda}} i(\Lambda, \Trop(X) \cdot (v_\zeta + \sigma)),
    \]
    where the tropical intersection multiplicity is taken in $\Sigma$ and $(v_\zeta + \sigma)$ is given weight $1$. (See Section \ref{sec:BackgroundTropicalIntersectionTheory}.) Thus, we may write \eqref{eq:AppliedST2} as
    \[
    d_\Gamma = \frac{1}{m_{\Sigma; \zeta}} \sum_{\Lambda \in \Trop(X) \cap (v_\zeta + \sigma)} i(\Lambda, \Trop(X) \cdot (v_\zeta + \sigma)).
    \]
    Summing over $\Lambda$, we obtain
    \begin{align*}
        d_\Gamma =
        \frac{1}{m_{\Sigma;\zeta}} \deg((v_\zeta + \sigma) \cdot \Trop(X)),
    \end{align*}
    which is exactly the degree of the cycle \eqref{eq:MultCycleDef}. In particular, the degree of this stable tropical intersection is independent of the choices of $\zeta$ and $v_\zeta$.
\end{proof}

We next prove Theorem \ref{thm:XTilde}, restated here:

\medskip 

\begin{thm}\label{thm:XTilde2-OP} 
    Suppose $\M$ is Cohen-Macaulay and contained in a subvariety $Y \subseteq \bfT$ of dimension $n'$. Let $\widetilde X\subseteq Y^K$ be a Cohen-Macaulay subscheme, and let $X = \widetilde X \cap \M$ be the scheme-theoretic intersection.
    
    Let $\Gamma$ be an isolated point of $\Trop(\widetilde X)\cap \Sigma_r$ that is moreover in the relative interior of:
    \begin{enumerate}
        \item a facet $\eta$ of $\Trop(Y)$ of weight $1$,
        \item a facet $\sigma$ of $\Sigma_r$, and
        \item an $(n'-r)$-dimensional facet of $\Trop(\widetilde X)$,
    \end{enumerate}
    By \cite[Thm. 1.2]{OssermanPayne2013}, $\Gamma \in \Trop(X)$. It follows that $\Gamma$ is an isolated point of $\Trop(X) \cap \sigma$ and satisfies Condition \ref{cond:SigmaGamma}. Then \begin{align}\label{eq:XTilde2}
        \mult_{\Gamma}(\Trop(X),\sigma;\Sigma) = i(\Gamma, \Trop(\widetilde{X}) \cdot\sigma;\Trop(Y)),
    \end{align}
    where on the right-hand side, $\sigma$  is taken to have weight 1 (as in Theorem
    \ref{thm:MultIndependentOfChoice}), and $i(\Gamma, \Trop(\widetilde{X}) \cdot\sigma;\Trop(Y))$ is the local tropical intersection multiplicity of \cite{OssermanPayne2013}, as described in Section \ref{sec:BackgroundTropicalIntersectionTheory}. Note that this multiplicity is computed in ambient space $\eta$.
\end{thm}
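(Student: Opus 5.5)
Since both sides of \eqref{eq:XTilde2} are local at $\Gamma$, we pass to stars as in the proof of Theorem \ref{thm:MultIndependentOfChoice}. Translate $\Gamma$ to the origin and replace every complex by its star at $\Gamma$. Then $\Trop(Y)$ becomes the linear space $L:=L_\R(\eta)$, with lattice $L_\Z:=L_\Z(\eta)$ and weight $1$. Similarly $\Trop(\widetilde X)$ becomes a linear subspace $W\subseteq L$ of dimension $n'-r$ with weight $m_{\widetilde X}$. The cone $\sigma$ becomes the $r$-dimensional subspace $L_\R(\sigma)\subseteq L$, which lies in the lineality space of $\Star_\Gamma\Sigma\subseteq L$. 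Isolatedness of $\Gamma$ in $\Trop(\widetilde X)\cap\sigma$ gives $W\cap L_\R(\sigma)=0$, so $W\oplus L_\R(\sigma)=L$ over $\R$. The right-hand side of \eqref{eq:XTilde2} is then $m_{\widetilde X}\cdot[L_\Z:L_\Z(W)+L_\Z(\sigma)]$.

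Next I identify $\Star_\Gamma\Trop(X)$. Since $\M$ and $\widetilde X$ are Cohen--Macaulay and $\Gamma$ is a simple point of $\Trop(Y)$, I would invoke Osserman--Payne \cite[Thm. 1.2, Sec. 5.1]{OssermanPayne2013}, locally near $\Gamma$. The relevant intersection is $W\cdot \Star_\Gamma\Sigma$ inside the linear space $L$, and it is automatically transverse: $W$ is linear, $\Star_\Gamma\Sigma$ is a fan contained in $L$, and the two have complementary dimensions $n'-r$ and $m$ in $L$, so $m+(n'-r)-n'=m-r$. This gives
\[
\Star_\Gamma\Trop(X)=W\cdot\Star_\Gamma\Sigma,
\]
with the stable intersection taken in $L$, provided each facet of the intersection meets the simple locus. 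This is the case because $W\cap\zeta$ lies in $\relint(\zeta)$ for the facets $\zeta$ it meets in their interior. So near a generic point $\Lambda$ of a facet $\zeta$ of $\Star_\Gamma\Sigma$, $\Trop(X)$ is $W\cap L_\R(\zeta)$ with weight
\[
m_{\widetilde X}\, m_{\Sigma;\zeta}\,[L_\Z:L_\Z(W)+L_\Z(\zeta)].
\]
I expect this step to be the main obstacle: one must make sure the Osserman--Payne refinement applies with ambient $Y$ rather than the torus, and that the local structure coincides with $\Star_\Gamma$ of the intersection. It would also follow from \cite[Thm. 1.2]{OssermanPayne2013} applied to $\init_\Gamma\widetilde X\cap\init_\Gamma\M$ inside $\init_\Gamma Y$.

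Now compute the left side using Definition \ref{def:MultIndependentOfChoice}. Pick a facet $\zeta$ and a generic $v\in\zeta$. The affine space $v+L_\R(\sigma)\subseteq\zeta$ meets $W\cap L_\R(\zeta)$ in exactly one point $\Lambda$, because $W\oplus L_\R(\sigma)=L$. The multiplicity of this intersection inside $L_\R(\zeta)$ is
\[
m_{X;\Lambda}\,[L_\Z(\zeta):(L_\Z(W)\cap L_\Z(\zeta))+L_\Z(\sigma)].
\]
Dividing by $m_{\Sigma;\zeta}$ and substituting the weight from the previous paragraph, it remains to prove the lattice identity
\[
[L_\Z:L_\Z(W)+L_\Z(\zeta)]\cdot[L_\Z(\zeta):(L_\Z(W)\cap L_\Z(\zeta))+L_\Z(\sigma)]=[L_\Z:L_\Z(W)+L_\Z(\sigma)].
\]
This is a routine index computation. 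Use $L_\Z(\sigma)\subseteq L_\Z(\zeta)$, then apply the modular law $(A+C)\cap B=(A\cap B)+C$ for $C\subseteq B$, with $A=L_\Z(W)$, $B=L_\Z(\zeta)$, $C=L_\Z(\sigma)$. This identifies $L_\Z(\zeta)/((A\cap B)+C)$ with $((A+C)+B)/(A+C)$ by the second isomorphism theorem, and the identity follows by multiplicativity of indices in the chain $A+C\subseteq A+B\subseteq L_\Z$. Combining these gives \eqref{eq:XTilde2}, and as a byproduct confirms again that the result is independent of $\zeta$.
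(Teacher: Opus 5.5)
Your proposal is correct and follows the paper's proof essentially step for step. You localize at $\Gamma$ so that $\Trop(Y)$, $\Trop(\widetilde X)$ and $\sigma$ become linear spaces, use \cite[Cor.~5.1.3]{OssermanPayne2013} to get the weight $m_{\widetilde X}\, m_{\Sigma;\zeta}\,[L_\Z(\eta):L_\Z(W)+L_\Z(\zeta)]$ on the facet $W\cap\zeta$ of $\Trop(X)$, evaluate the single transverse intersection point with $v+L_\R(\sigma)$, and conclude with the modular law, the second isomorphism theorem and multiplicativity of indices. One small fix: transversality of $W$ with each facet $\zeta$ does not follow from the dimension count alone; it follows from $W+L_\R(\zeta)\supseteq W\oplus L_\R(\sigma)=L$, which you had already established.
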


\begin{remark}\label{rem:discussion-of-thm-E} \
\begin{enumerate}

\item The assumption that $\Gamma$ is in the relative interior of a facet of $\Trop(Y)$ of weight $1$ is necessary, see \cite[Examples 6.3 and 6.4]{OssermanPayne2013}. In their language, $\Gamma$ must be a \emph{simple point} of $\Trop(Y)$.

\item At one extreme, if $Y = \bfT$, condition (1) is vacuous (but note that not every $X \subset \M$ can be expressed as a sufficiently transverse intersection $\widetilde X\cap\M$). In this case, the statement of Theorem \ref{thm:intro1}, applied to $X$, can be deduced from \cite[Thm. 10.1]{Katz2009}. Specifically, conditions (2) and (3) imply that the limit cycle of $\widetilde X$ is a sum of \emph{toric} boundary strata, and using that fact that specialization commutes with refined intersection product, one may verify that the limit cycle of $X$ is the corresponding sum of boundary strata of $\M$.\label{it:UseKatz}

\item At the other extreme, one might hope to deduce Theorem \ref{thm:mainthm} directly by taking $Y = \M$ and letting $\widetilde{X} = X$, since then condition (3) is vacuous. But this fails precisely because a point $\Gamma \in \Sigma_r$ is essentially never a simple point of $\Trop(\M)$.

\item In the theorem statement, we can remove the assumption that $\M$ and $\widetilde X$ are Cohen-Macaulay, provided we replace $X$ in the statement with the cycle-theoretic intersection $[\widetilde X]\cdot[\M]$. We have stated the theorem as above because in our setup, a subscheme $X$ is given from the beginning.
\end{enumerate}
\end{remark}

\begin{proof}

    By passing to a sufficiently small neighborhood of $\Gamma$ (and translating $\Gamma$ to $\vec{0}$), we may assume $\eta = \Trop(Y)$ is a linear space of dimension $n'$, and $\sigma$ and $\Trop(\widetilde X)$ are linear subspaces of $\eta$ of dimensions $r$ and $n'-r$ that meet transversely (in $\eta$). We may further assume that $\Trop(X)$ is a cone complex, and that $\sigma$ is in the lineality space of $\Sigma$. In particular, $\Star_\Gamma \sigma = \sigma$. We write $\tilde \kappa$ for the linear space $\Trop(\widetilde{X})$.

    Let $\zeta \in \Sigma$ be a maximal cone containing $\vec{0}$. Since $\tilde \kappa$ and $\sigma$ are linear spaces intersecting in the expected dimension in $\eta$, $\tilde \kappa$ intersects $\zeta$ in a cone $\kappa$ of the expected dimension. By \cite[Cor 5.1.3]{OssermanPayne2013} (noting that $\widetilde X$ and $\M$ are Cohen-Macaulay and that $\Gamma$ is a simple point of $\eta = \Trop(Y)$), $\kappa$ is a facet of $\Trop(X)$, of multiplicity \begin{align}\label{eq:MultiplicityOfKappa}
        m_{\Trop(X);\kappa}=m_{\tilde \kappa;\Gamma}\cdot m_{\Sigma;\zeta}\cdot[L_{\Z}(\eta) : L_{\Z}(\zeta)+L_{\Z}(\tilde \kappa)].
    \end{align}

    We first evaluate the left-hand side of \eqref{eq:XTilde2}. Let $v_\zeta \in \zeta$ be a generic vector. By definition
    \begin{align*}
    \mult_{\Gamma}(\Trop(X),\sigma;\Sigma) &=
    \frac{1}{m_{\Trop(\M);\zeta}} \deg ((v_\zeta+\Star_\Gamma\sigma)\cdot\Star_\Gamma\Trop(X)) \\
    &=
    \frac{1}{m_{\Sigma;\zeta}}\deg((v_\zeta+\sigma)\cdot \Trop(X)),
    \end{align*}
    where the intersection product takes place in ambient space $\zeta$, and where $\sigma$ is assigned weight 1. The intersection $(v_\zeta+\sigma)\cap\Trop(X)$ is a single transverse intersection point of the affine-linear space $(v_\zeta+\sigma)$ and the cone $\kappa$. Thus
    \begin{align*}
    \mult_{\Gamma}(\Trop(X),\sigma;\Sigma) &=\frac{1}{m_{\Sigma;\zeta}} \cdot m_{\Trop(X),\kappa}\cdot[L_{\Z}(\zeta):L_{\Z}(v_\zeta+\sigma)+ L_{\Z}(\kappa)]\\
    &=\frac{1}{m_{\Sigma;\zeta}} \cdot m_{\Trop(X),\kappa}\cdot[L_{\Z}(\zeta):L_{\Z}(\sigma)+ L_{\Z}(\kappa)].
    \end{align*}
    Substituting in \eqref{eq:MultiplicityOfKappa}, we have
    \begin{align}
      \mult_{\Gamma}(\Trop(X),\sigma;\Sigma)  
    &=m_{\tilde\kappa;\Gamma}\cdot[L_{\Z}(\eta) : L_{\Z}(\zeta)+L_{\Z}(\tilde \kappa)]\cdot[L_{\Z}(\zeta):L_{\Z}(\sigma)+ L_{\Z}(\kappa)]\nonumber\\
    &=m_{\tilde\kappa;\Gamma}\cdot[L_{\Z}(\eta) : L_{\Z}(\zeta)+L_{\Z}(\tilde \kappa)]\cdot[L_{\Z}(\zeta):L_{\Z}(\sigma)+ (L_\Z(\tilde \kappa)\cap L_{\Z}(\zeta))]\label{eq:SubstituteMultiplicityOfKappa}\\
    &=m_{\tilde\kappa;\Gamma}\cdot[L_{\Z}(\eta) : L_{\Z}(\zeta)+L_{\Z}(\tilde \kappa)]\cdot[L_{\Z}(\zeta):(L_{\Z}(\sigma)+ L_{\Z}(\tilde \kappa))\cap L_{\Z}(\zeta)].\nonumber
\end{align}
where the last line follows from $L_\Z(\sigma) \subseteq L_\Z(\zeta)$ and the equality $(A+B) \cap C = A + (B \cap C)$ when $A \subseteq C$. By the second isomorphism theorem for groups, we have 
    \[
    \frac{L_{\Z}(\zeta)}{(L_{\Z}(\sigma)+ L_{\Z}(\tilde \kappa))\cap L_{\Z}(\zeta)}
    \cong \frac{L_{\Z}(\zeta)+(L_{\Z}(\sigma)+ L_{\Z}(\tilde \kappa))}{L_{\Z}(\sigma)+ L_{\Z}(\tilde \kappa)}
    = \frac{L_{\Z}(\zeta)+L_{\Z}(\tilde \kappa)}{L_{\Z}(\sigma)+ L_{\Z}(\tilde \kappa)},
    \]
    so \eqref{eq:SubstituteMultiplicityOfKappa} gives
    \begin{align*}
    \mult_{\Gamma}(\Trop(X),\sigma;\Sigma)&=m_{\tilde \kappa;\Gamma}\cdot[L_{\Z}(\eta):L_{\Z}(\zeta)+L_{\Z}(\tilde \kappa)]\cdot[L_{\Z}(\zeta)+L_{\Z}(\tilde \kappa):L_{\Z}(\sigma)+ L_{\Z}(\tilde \kappa)]\\
    &=m_{\tilde \kappa;\Gamma}\cdot[L_{\Z}(\eta):L_{\Z}(\sigma)+ L_{\Z}(\tilde \kappa)]\\
    &=i(\Gamma, \tilde \kappa \cdot\sigma;\eta) \\
    &=i(\Gamma, \Trop(\widetilde{X}) \cdot \sigma;\Trop(Y)).\qedhere
    \end{align*}
\end{proof}

\subsection{Main result: The limit cycle of a degeneration in a tropical compactification}\label{sec:ProofOfMainTheorem}

We now replace the local assumptions of the previous section by global assumptions on $X$. 

\begin{condition}\label{cond:GlobalIntersectionCondition}
Let $X$ be as in Notation \ref{not:DegenerationSetup}. We suppose Assumption~\ref{ass:MSigmaNice} holds, and that $\Trop(X)$ intersects the $r$-skeleton $\Sigma_r$ in finitely many points, all contained in the relative interior $\relint(\Sigma_r).$
\end{condition}

\begin{prop}\label{prop:Set-Theoretic}
    Let $X$ satisfy Condition \ref{cond:GlobalIntersectionCondition}. Then the limit scheme $(\bar X)_0$ is supported on the union of the codimension-$r$ boundary strata $\bigcup_{\sigma}\overline{\M}_\sigma \subseteq \Mbar$, where $\sigma$ ranges over the $r$-dimensional cones of $\Sigma$.
\end{prop}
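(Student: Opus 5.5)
We argue pointwise, using Proposition \ref{prop:TropicalizationIntersectsOrbits} to read off which stratum a point of the special fiber lies in. Since $\X$ is flat and proper over $\Spec(R)$ (being the closure of $\bar X$ in the proper scheme $\Mbar^R$), every closed point $p\in(\bar X)_0$ specializes from the generic fiber. Concretely, after a finite (or algebraic) extension $\widetilde K$ of $K$ with valuation ring $\widetilde R$, there is a $\widetilde K$-point $y$ of $\bar X$ whose closure $\bar y:\Spec\widetilde R\to\X$ sends the closed point to $p$. Since $\bar X$ is the closure of $X$ and $X$ is dense in each component, we may even choose $y\in X(\widetilde K)$, i.e.\ $y$ lies in the torus. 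This step uses the standard fact that in a flat family over a valuation ring, points of the special fiber lift to points of the generic fiber defined over an extension; we may also take $y$ to avoid any given proper closed subset of $X$.

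Given such a $y$, set $\Gamma:=\Trop(y)\in\Trop(X)\subseteq\abs{\Sigma}$. By Proposition \ref{prop:TropicalizationIntersectsOrbits}(ii), $p$ lies in the orbit $Y_\tau$, where $\tau$ is the unique cone with $\Gamma\in\relint(\tau)$. Thus $p\in\M_\tau$. If $\dim\tau\ge r$, then $\M_\tau\subseteq\Mbar_{\sigma}$ for some $r$-dimensional face $\sigma\subseteq\tau$, and we are done. So it suffices to show that the points $p$ for which $\dim\tau<r$ do not occur, or at least are not dense in any component of $(\bar X)_0$.

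To handle this case, we use a dimension count. By flatness, every component of $(\bar X)_0$ has dimension $\dim X\ge m-r$. Suppose some component $Z$ of $(\bar X)_0$ is not contained in $\bigcup_{\dim\sigma=r}\Mbar_\sigma$. Then a general point $p\in Z$ lies in $\M_\tau$ for some $\tau$ with $\dim\tau<r$. Stratifying, $Z\cap\M_\tau$ is dense in $Z$, so we may choose $p$ general in $Z\cap \M_\tau$. The tropical points $\Gamma=\Trop(y)$ of the lifts then lie in $\Trop(X)\cap\relint(\tau)$.

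The key input is that the part of $(\bar X)_0$ lying in $\M_\tau$ is governed by $\Trop(X)\cap\relint(\tau)$. By the argument of Lemma \ref{lem:InitialIdealInStratum} (a flat family over $\bar y$), $(\bar X)_0\cap\M_\tau$ is the union over $\Gamma\in\Trop(X)\cap\relint(\tau)$ of $\rho_\tau(\init_\Gamma X)$. Condition \ref{cond:GlobalIntersectionCondition} says $\Trop(X)\cap\tau$ is empty when $\dim\tau<r$: any point of $\tau\subseteq\Sigma_r$ lying in $\Trop(X)$ would lie in $\relint(\Sigma_r)$, i.e.\ in the relative interior of an $r$-dimensional cone, contradicting $\dim\tau<r$. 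Hence no such $\Gamma$ exists, so $(\bar X)_0\cap\M_\tau=\emptyset$ for all $\tau$ with $\dim\tau<r$. Therefore $(\bar X)_0\subseteq\bigcup_{\dim\tau\ge r}\M_\tau=\bigcup_{\dim\sigma=r}\Mbar_\sigma$.

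The main obstacle is the lifting step: justifying that every point of the special fiber (not merely a dense set) arises as the specialization of a $\widetilde K$-point of $X$ itself, rather than of the boundary of $\bar X$. I would first try the standard going-down/valuative argument for flat $R$-schemes, using the fact that $\X$ is the closure of $X$ in $\Mbar^R$. If this step proves delicate, I would instead use that $\X\cap(\text{open stratum }Y_\tau\times\Spec R)$ is the closure of $X\cap$ that chart, together with the surjectivity result \cite[Thm.~6.6.20]{MaclaganSturmfels2015} applied to the chart $U_\tau$.
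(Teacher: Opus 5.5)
Your core mechanism matches the paper's. Suppose a point of $(\bar X)_0$ is the reduction of a point $y$ of $X$ over a field whose real-valued valuation extends a multiple of $\val$. Then Proposition \ref{prop:TropicalizationIntersectsOrbits} puts that point in $Y_\tau$ with $\Trop(y)\in\Trop(X)\cap\relint(\tau)$, and Condition \ref{cond:GlobalIntersectionCondition} forces $\dim\tau\ge r$. Your observation that $\Trop(X)\cap\tau=\emptyset$ for $\dim\tau<r$ is correct.

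The gap is the lifting step. It is the only thing connecting $(\bar X)_0$ to $\Trop(X)$, you leave it unproved, and the routes you propose do not supply it as stated.
\begin{enumerate}
\item Proposition \ref{prop:TropicalizationIntersectsOrbits} goes from a given valued point to its reduction. It is not a surjectivity statement.
\item A valuative argument at a closed point $p$ gives a valuation ring of the function field dominating $\mathcal{O}_{\X,p}$. Such a ring need not have rank one, and then $\Trop(y)\in N_\R$ is not defined, so you would have to arrange rank one.
\item Density of $X$ in $\bar X$ does not by itself let you take the lift in $X$ rather than in $\bar X\setminus X$. For a general $p$ this holds only because the flat limit of $\bar X\setminus X$ has dimension $<\dim X$, which you would need to argue.
\end{enumerate}
Your paragraph describing $(\bar X)_0\cap\M_\tau$ as $\bigcup_\Gamma\rho_\tau(\init_\Gamma X)$ does not help. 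Lemma \ref{lem:InitialIdealInStratum} is about $\M$ over the trivially valued $\k$ and uses flatness of $\bfT\times\Mbar\to Y_\Sigma$. The inclusion you would need there is itself equivalent to the lifting.

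The paper avoids lifting entirely by using the generic point $\eta_V$ of a component $V$ of $(\bar X)_0$ instead of closed points. With $X$ irreducible, $\X$ is irreducible and flat over $R$, so $\eta_V$ is a codimension-one point of $\X$. It therefore gives a rank-one (divisorial) valuation on the function field $L$ of $X$, extending a positive multiple of $\val$, whose valuation ring dominates $\mathcal{O}_{\X,\eta_V}$. The generic point $\Spec L\to X\subseteq\bfT$ is then automatically a point of $X$, not of the boundary of $\bar X$. It extends over this valuation ring with the closed point mapping to $\eta_V$. Proposition \ref{prop:TropicalizationIntersectsOrbits} then places its rescaled tropicalization in $\Trop(X)\cap\relint(\tau)$, where $\eta_V\in Y_\tau$. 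If $V$ were not contained in a codimension-$r$ stratum, then $\dim\tau<r$, contradicting the Condition exactly as in your final step. This single choice of valuation replaces your closed-point lifting, field extension, and boundary-avoidance arguments.
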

\begin{proof}
    We may assume without loss of generality that $X$ is irreducible.

    Let $V\subseteq(\bar X)_0$ be an irreducible component not contained in the codimension-$r$ boundary strata of $\Mbar.$ Let $L$ denote the function field of $\X$. Then $V$ defines a divisorial valuation on $L$, which extends the valuation on $K$ (possibly after taking a positive multiple). By Proposition \ref{prop:TropicalizationIntersectsOrbits}, the tropicalization of the $L$-valued point $\Spec(L)\to\bfT$ lies in $\Trop(X)\cap\Sigma_{r-1}$. However, this intersection is empty by assumption, which completes the proof.
\end{proof}

\begin{cor}\label{cor:NoTooBigComponents}
    Let $X$ satisfy Condition \ref{cond:GlobalIntersectionCondition}. Then $X$ has pure codimension $r$, and for every $\Gamma\in\Trop(X) \cap \Sigma_r$, $\Trop(X)$ and  $\Gamma$ satisfy  Condition \ref{cond:SigmaGamma}.  
\end{cor}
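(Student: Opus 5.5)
To prove Corollary \ref{cor:NoTooBigComponents}, I would first show that $X$ has pure codimension $r$. By Notation \ref{not:DegenerationSetup}, every irreducible component $X'$ of $X$ has codimension at most $r$, so it suffices to rule out components of codimension $s<r$. Suppose $X'$ is such a component. Then $\Trop(X')\subseteq\Trop(X)$ is pure of dimension $m-s>m-r$, and it lies in $\Trop(\M)=\abs{\Sigma}$.

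The key step is to show that $\Trop(X')$ must meet $\Sigma_{r-1}$, or else meet $\Sigma_r$ in a positive-dimensional set. Either outcome contradicts Condition \ref{cond:GlobalIntersectionCondition}. I would argue this with the limit, as in the proof of Proposition \ref{prop:Set-Theoretic}.

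Let $\X'$ be the closure of $X'$ in $\Mbar^R$. It is flat over $R$, so the special fibre $(\bar X')_0$ is nonempty (as $\Mbar$ is proper, $\bar X'$ is proper) and pure of dimension $m-s$. By Proposition \ref{prop:Set-Theoretic} applied to $X'$, which still satisfies Condition \ref{cond:GlobalIntersectionCondition} since $\Trop(X')\subseteq\Trop(X)$, this fibre lies in the union of the codimension-$r$ strata. Those strata have dimension $m-r$ by Proposition \ref{prop:TropicalCompactificationsBasicProperties}\eqref{eq:TropicalCompactificationStratumCodimension}. Since $m-s>m-r$, this is a contradiction, so $X$ is pure of codimension $r$.

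The one point to check here is that Proposition \ref{prop:Set-Theoretic} uses only the hypothesis $\Trop(X)\cap\Sigma_{r-1}=\emptyset$. That hypothesis passes to components, and the proof's reduction to irreducible $X$ already handles this. The proposition's proof also uses that a divisorial component of the special fibre gives a valuation whose tropicalization lies in $\Sigma_{r-1}$ if the component is not inside the codimension-$r$ strata; this is independent of the codimension of $X'$. I expect this step to be the main (mild) obstacle: confirming that Proposition \ref{prop:Set-Theoretic} is genuinely valid without knowing purity in advance.

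For the second claim, fix $\Gamma\in\Trop(X)\cap\Sigma_r$. By Condition \ref{cond:GlobalIntersectionCondition}, $\Gamma$ is isolated, since the intersection is finite. It also lies in $\relint(\Sigma_r)$, hence in the relative interior of a unique $r$-dimensional cone $\sigma$, which is a facet of $\Sigma_r$; this facet has dimension exactly $r$, since $\Gamma\notin\Sigma_{r-1}$. So $\Gamma$ is an isolated point of $\Trop(X)\cap\sigma$ lying in $\relint(\sigma)$.

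Finally, since $X$ is now known to be pure of dimension $m-r$, the structure theorem \cite[Thm. 3.3.5]{MaclaganSturmfels2015} says $\Trop(X)$ is pure of dimension $m-r$. Hence every facet of $\Trop(X)$ through $\Gamma$ has dimension $m-r$, and Condition \ref{cond:SigmaGamma} holds.
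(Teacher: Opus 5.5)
Your proposal is correct and follows the paper's proof. You use the standing assumption that every component has codimension at most $r$, then apply Proposition~\ref{prop:Set-Theoretic} together with properness of $\Mbar$ and the dimension of flat limits to rule out components of codimension $<r$, and finally combine the structure theorem with finiteness of $\Trop(X)\cap\Sigma_r$ (inside $\relint(\Sigma_r)$) to obtain Condition~\ref{cond:SigmaGamma}. Applying Proposition~\ref{prop:Set-Theoretic} component by component, as you do, just makes the paper's one-line argument more explicit.
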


\begin{proof}
    We have assumed (in Notation \ref{not:DegenerationSetup}) that $X$ has no irreducible components of codimension $>r$, and Proposition \ref{prop:Set-Theoretic} implies that $(\bar X)_0$, and therefore (by properness of $\Mbar$) $X$ itself, has no irreducible components of codimension $<r$, i.e. $X$ has pure codimension $r$. By the structure theorem for tropical varieties, $\Trop(X)$ has pure codimension $r$. Putting this together with the fact that $\Trop(X) \cap \Sigma_r$ is finite shows that Condition \ref{cond:SigmaGamma} is satisfied.
\end{proof}

In particular, the values $d_\Gamma$ of Proposition \ref{prop:GenericallyFinite} and $\mult_\Gamma(\Trop(X), \sigma;\Sigma)$ of Definition \ref{def:MultIndependentOfChoice} are well-defined.

Our main theorem computes the limit cycle of $\bar X$ in terms of the tropical intersection multiplicity of Definition \ref{def:MultIndependentOfChoice}.
\begin{thm}\label{thm:mainthm}
Let $X$ satisfy Condition \ref{cond:GlobalIntersectionCondition}.
    Then \[
    [(\bar X)_0] = \sum_{\sigma \in \Sigma_r} \bigg(\sum_{\Gamma \in \Trop(X) \cap \sigma} \mult_{\Gamma}(\Trop(X),\sigma;\Sigma) \bigg) [\Mbar_\sigma].
    \]
\end{thm}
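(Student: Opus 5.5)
By Proposition \ref{prop:Set-Theoretic} and Corollary \ref{cor:NoTooBigComponents}, $(\bar X)_0$ is supported on $\bigcup_{\dim\sigma=r}\Mbar_\sigma$, and $X$ has pure codimension $r$. Flatness of $\X$ over $\Spec(R)$ then makes $(\bar X)_0$ pure of dimension $m-r$. Since each $\Mbar_\sigma$ is irreducible of dimension $m-r$ (Assumption \ref{ass:MSigmaNice}), we can write $[(\bar X)_0]=\sum_\sigma a_\sigma[\Mbar_\sigma]$, where $a_\sigma$ is the length of $(\bar X)_0$ at the generic point of $\Mbar_\sigma$. By Theorem \ref{thm:MultIndependentOfChoice} it then suffices to prove the algebraic statement (the ``Lemma \ref{lem:mainlem}'' alluded to earlier) that
\[
a_\sigma=\sum_{\Gamma\in\Trop(X)\cap\sigma} d_\Gamma .
\]

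To compute $a_\sigma$ I will work in the toric chart $U_\sigma\cong Y_\sigma\times\A^r$ of Notation \ref{not:ToricNotation}, using the retraction $\rho_\sigma$. Pick a general point $p\in\M_\sigma$. By Assumption \ref{ass:MSigmaNice}, $\Mbar$ is smooth at $p$ and meets $Y_\sigma$ transversely there, so near $p$ the map $\rho_\sigma|_{\Mbar\cap U_\sigma}\colon \Mbar\cap U_\sigma\to Y_\sigma$ restricts over a neighborhood of $p$ in $\M_\sigma$ to a smooth morphism with $r$-dimensional fibers. The fiber $F_p=\rho_\sigma^{-1}(p)\cap\Mbar$ is then a smooth $r$-dimensional germ meeting $\M_\sigma$ transversely at $p$. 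Hence $a_\sigma$ equals the intersection multiplicity at $p$ of $(\bar X)_0$ with $F_p$, and by flatness of $\X$ this is the number of points of $\bar X\cap \rho_\sigma^{-1}(\tilde p)$ specializing to $p$, counted with multiplicity, for a general $R$-lift $\tilde p$ of $p$.

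Equivalently, $a_\sigma$ is the degree, over a general point of $\M_\sigma$, of the map $\rho_\sigma\colon \X\cap U_\sigma^R\to Y_\sigma^R$ near the special fiber. I will decompose these specializing points by tropicalization. A point $y$ of $X$ (over an extension of $K$) that specializes into $\M_\sigma$ has, by Proposition \ref{prop:TropicalizationIntersectsOrbits}, $\Trop(y)\in\relint(\sigma)$. Since $\rho_\sigma(y)$ specializes to the general point $p\in Y_\sigma$, its tropicalization is $0$ in $N_\R/L_\R(\sigma)$. So $\Trop(y)\in\Trop(X)\cap\sigma$, which is a finite set of points $\Gamma$. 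Grouping the points by $\Gamma$ and translating by $t^{-\Gamma}$ as in Remark \ref{rem:RescueDegeneration} sends $X$ to a family whose special fiber is $\init_\Gamma X$ (valuations of the $\sigma$-coordinates are constant on each group, so this step needs the finiteness of $\Trop(X)\cap\sigma$). The points in group $\Gamma$ then correspond, with multiplicity, to the fiber of $\rho_\sigma|_{\init_\Gamma X}\colon\init_\Gamma X\to\M_\sigma$ over $p$, whose length is $d_\Gamma$ for general $p$. Summing over $\Gamma$ gives the identity for $a_\sigma$.

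The main obstacle is making this last correspondence rigorous: multiplicities must be preserved, and the properness or finiteness needed for ``number of points of the general fiber specializing'' must hold. My plan is to consider the closure of $t^{-\Gamma}X$ in $\bfT^R$, which is flat over $R$ with special fiber $\init_\Gamma X$. I will show that its morphism to $Y_\sigma^R$ is finite over a neighborhood of $p$ in the special fiber; here isolatedness of $\Gamma$ should exclude points escaping to infinity, and the purity established in Corollary \ref{cor:NoTooBigComponents} is also needed. Conservation of number for finite flat maps then equates the generic and special fiber degrees. Finally, I will check that the disjoint union over $\Gamma$ of these rescaled families exhausts the germ of $\X$ along $F_p$ over $\tilde p$, with no points lost to other strata, using Proposition \ref{prop:Set-Theoretic}.
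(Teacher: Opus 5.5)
Your overall plan matches the paper's: reduce via Theorem \ref{thm:MultIndependentOfChoice} to Lemma \ref{lem:mainlem}, slice at a general $p\in\M_\sigma$, split the specializing points by tropicalization, rescale by $t^{-\Gamma}$, and compare with the fiber of $\init_\Gamma X\to\M_\sigma$. The gap is the slice: $\rho_\sigma^{-1}(p)$ has the wrong codimension as soon as $\M\neq\bfT$, and the argument breaks there.

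First, $\rho_\sigma$ does not retract $\Mbar\cap U_\sigma$ onto $\M_\sigma$. A retraction would force $\Mbar\cap U_\sigma\subseteq\rho_\sigma^{-1}(\M_\sigma)$, whose trace on $\bfT$ is $\init_\Gamma\M$ by Lemma \ref{lem:InitialIdealInStratum}; so $\M$ would have to be $\bfT_\sigma$-invariant. Transversality of $\Mbar$ and $Y_\sigma$ at $p$ only says that $T_p\Mbar$ surjects onto $\ker d\rho_\sigma$. It does not make $T_p\Mbar\cap\ker d\rho_\sigma$ $r$-dimensional. Concretely, in Example \ref{ex:Degeneration} the map $\rho_{\sigma_1}(x,y,z)=(y,z)$ sends $\Mbar\cap U_{\sigma_1}=V(x+y+z+1)$ isomorphically onto $Y_{\sigma_1}$, so $F_p=\{p\}$.

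Second, the point count also fails. $\rho_\sigma(\bar X)$ has dimension at most $m-r<n-r=\dim Y_\sigma$, so for a general lift $\tilde p$ the set $\bar X\cap\rho_\sigma^{-1}(\tilde p)$ is empty. If you instead take $\tilde p$ on $\rho_\sigma(\bar X)$, the closure of that image in $Y_\sigma^R$ can contain $\M_\sigma$ with multiplicity $\mu>1$ in its special fiber, and your count (your ``degree of $\rho_\sigma$ on $\X$'') undercounts $a_\sigma$. In the same example $\rho_{\sigma_1}(X)=\{(1+y+z)^2yz=-t\}$, so $\mu=2$; each such $\tilde p$ has exactly one preimage in $\bar X$, yet $a_{\sigma_1}=2$.

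Third, for $m<n$ the map $\overline{t^{-\Gamma}X}\to Y_\sigma^R$ cannot be flat near $p$, since its image has relative dimension $m-r<n-r$. So your conservation-of-number step has nothing to apply to. Your slice is correct only in Katz's toric case $m=n$.

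The missing idea is to slice by a $\bfT_\sigma$-invariant subvariety of codimension $m-r$. Choose $\underline W\subset Y_\sigma$ smooth of dimension $n-m$, meeting $\M_\sigma$ transversely and (after shrinking) only at $p$, and set $W=\rho_\sigma^{-1}(\underline W)$. Then:
\begin{itemize}
\item $W\cap(\bar X)_0=\{p\}$ set-theoretically;
\item $W\cap\init_\Gamma X=(\rho_\sigma|_{\init_\Gamma X})^{-1}(p)$ still has length $d_\Gamma$;
\item $t^{-\Gamma}(W^K\cap X)=W^K\cap t^{-\Gamma}X$, because $t^{\Gamma}\in\bfT_\sigma$.
\end{itemize}
The paper then supplies the rigor your last paragraph defers. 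It shrinks $Y_\sigma$ so that the limits of $\bar X\setminus X$ and of the non-Cohen--Macaulay locus of $X$ are empty, and each $\rho_\sigma|_{\init_\Gamma X}$ is finite flat. It uses that $R$ is henselian to get that the connected components of $W^R\cap\X$ and $W^R\cap\overline{t^{-\Gamma}X}$ through the relevant special points are finite over $R$, with flatness from a regular-sequence argument. It matches the $\Gamma$-part of the first with the second via Lemma \ref{lem:ProjectionAntiRescue}. Finally, it obtains $a_\sigma=\len_K Z$ because specialization commutes with refined intersection products \cite[Theorem 4.4.5]{OssermanPayne2013}, which flatness of $\X$ alone does not give.
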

Theorem \ref{thm:mainthm} follows from Theorem \ref{thm:MultIndependentOfChoice} and the following algebro-geometric lemma:
\begin{lem}\label{lem:mainlem}
    Let $X$ satisfy Condition \ref{cond:GlobalIntersectionCondition}.
    Let $\sigma \in \Sigma_r$ be an $r$-dimensional cone. Then the multiplicity of $[(\bar X)_0]$ along the stratum $\Mbar_\sigma$ is given by:\label{it:FormulaForX0}
    \begin{align}\label{eq:mainlem}
    \mult_{\Mbar_\sigma}([(\bar X)_0]) &= \sum_{\Gamma\in\Trop(X)\cap\sigma}d_\Gamma,
    \end{align}
    where $d_\Gamma = \deg(\init_\Gamma(X) \to \M_\sigma)$ is as in Proposition \ref{prop:GenericallyFinite}.
\end{lem}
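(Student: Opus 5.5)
The plan is to compute the multiplicity of $[(\bar X)_0]$ along $\Mbar_\sigma$ locally at a general point $p\in\M_\sigma$, using the toric chart $U_\sigma\cong Y_\sigma\times\A^r$ of Notation \ref{not:ToricNotation}\eqref{not:ToricAffineProjection}. By Assumption \ref{ass:MSigmaNice}, near a general $p\in\M_\sigma$ the variety $\Mbar$ is smooth and meets $Y_\sigma$ transversely. The restriction $\rho_\sigma|_{\Mbar\cap U_\sigma}:\Mbar\cap U_\sigma\to Y_\sigma$ therefore has image $\M_\sigma$ near $p$, and it is a smooth morphism of relative dimension $r$ there. Its fibre over $p$ is a smooth $r$-dimensional germ $F_p$ meeting $\M_\sigma$ transversely at the single point $p$. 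Because $(\bar X)_0$ is supported on codimension-$r$ strata (Proposition \ref{prop:Set-Theoretic}), the coefficient of $[\Mbar_\sigma]$ equals the local intersection number of $(\bar X)_0$ with the general fibre $\rho_\sigma^{-1}(p)\cap\Mbar$ near $\M_\sigma$. Equivalently, it equals the degree of the finite part of $\mathcal X\to\M_\sigma^R$ over the generic point of $\M_\sigma$, computed on its special fibre.

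The second step identifies this count with a count over the generic fibre $X$, using flatness of $\mathcal X$ over $R$. Consider the family $\rho_\sigma\times\id:\mathcal X\cap U_\sigma^R\to Y_\sigma^R$. Over a general point $p$ (lifted to $p^R$), the fibre $\mathcal X\cap\rho_\sigma^{-1}(p^R)$ is finite over $R$ near the special point. Its special-fibre length is the desired coefficient, and this equals the number of $\bar K$-points of $X\cap\rho_\sigma^{-1}(p)$ that specialize into $\rho_\sigma^{-1}(p)\cap\M_\sigma$. Those points are the $y\in X(\bar K)$ with $\rho_\sigma(y)=p$ and $\Trop(y)\in\relint\sigma$ (Proposition \ref{prop:TropicalizationIntersectsOrbits}). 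By Condition \ref{cond:GlobalIntersectionCondition} they split according to which $\Gamma\in\Trop(X)\cap\sigma$ is hit, since $\Trop(y)$ lies in a small neighbourhood of $\Gamma$ by finiteness. This gives one contribution for each $\Gamma$.

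The third step computes the contribution of a fixed $\Gamma$ by rescaling with $t^{-\Gamma}$, as in Remark \ref{rem:RescueDegeneration}. Points $y$ with $\Trop(y)$ near $\Gamma$ and $\rho_\sigma(y)=p$ correspond, after multiplying by $t^{-\Gamma}$ (which fixes $\rho_\sigma$ values up to the distinguished point), to points of $t^{-\Gamma}X$ whose limits lie in $\init_\Gamma X$ over $p$. Take the closure of $t^{-\Gamma}X$ in $\bfT^R$; its special fibre is $\init_\Gamma X$, and by Lemma \ref{lem:InitialIdealInStratum} it sits over $\M_\sigma$ via $\rho_\sigma$. The number of such points over a general $p$ should then be the generic degree $d_\Gamma$ of $\init_\Gamma X\to\M_\sigma$ (Definition/Proposition \ref{prop:GenericallyFinite}). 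The justification is that the family $\overline{t^{-\Gamma}X}\to \M_\sigma^R$ is generically finite and flat over $R$, and no points escape to infinity in $\bfT$ because $\Gamma$ is isolated in $\Trop(X)\cap\sigma$. Summing over $\Gamma$ then gives \eqref{eq:mainlem}.

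I expect the main obstacle to be this last properness/finiteness argument: showing that over a general $p$ the rescaled family $\overline{t^{-\Gamma}X}\cap\rho_\sigma^{-1}(p)$ is finite \emph{and proper} over $R$ near $\Gamma$. This is needed so that generic-fibre and special-fibre counts agree with multiplicities, and so that the different $\Gamma$ and the charts $U_\sigma$ versus the torus match up without losing or double counting points. My first attempt would be to use $\Trop$ of the fibre $X\cap\rho_\sigma^{-1}(p)$. This is $\Trop(X)\cap(\text{a translate of }L_\R(\sigma))$ locally, hence finite near each $\Gamma$ by isolatedness, and it has no points tending toward the boundary of $\sigma$ by the relative interior hypothesis. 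Properness would then follow from Proposition \ref{prop:TropicalizationIntersectsOrbits}, and multiplicities would follow from flatness of the closure over the DVR-like base (after passing to a finite extension of $K$ if needed).
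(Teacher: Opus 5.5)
There is a genuine gap, and it sits in your first step rather than in the properness issue you flag.

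\textbf{The false claim.} You assert that $\rho_\sigma$ maps $\Mbar\cap U_\sigma$ onto $\M_\sigma$ near $p$, with smooth $r$-dimensional fibres. That holds for the initial degeneration, $\init_\Gamma\M=\rho_\sigma^{-1}(\M_\sigma)$ (Lemma \ref{lem:InitialIdealInStratum}). It fails for $\M$ and $\Mbar$, which are not $\bfT_\sigma$-stable.

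Take $\M=V(x+y+1)\subseteq(\k^*)^2$ with the fan of three rays, $\sigma=\R_{\ge0}e_1$ and $r=1$. Then $U_\sigma=\A^1_x\times\k^*_y$, $\rho_\sigma(x,y)=y$ and $\M_\sigma=\{p\}=\{(0,-1)\}$. Moreover $\rho_\sigma$ restricts to an isomorphism $\Mbar\cap U_\sigma\to Y_\sigma$, so the fibre over $p$ is just $p$. Now let $X=\{(t^a,-1-t^a)\}$ with $a>0$. It satisfies Condition \ref{cond:GlobalIntersectionCondition} with $\Trop(X)=\{(a,0)\}$, and has limit $p$ with multiplicity $1=d_\Gamma$. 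Yet $\rho_\sigma(X)=-1-t^a\neq p$, so your count of points $y\in X$ with $\rho_\sigma(y)=p$ gives $0$.

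In general $\rho_\sigma^{-1}(p)$ has dimension $r$ in the $n$-dimensional $U_\sigma$. This is complementary to the $(m-r)$-dimensional $(\bar X)_0$ only when $m=n$, i.e.\ in Katz's toric case. Consequences:
\begin{itemize}
\item Your slice meets the special fibres $(\bar X)_0$ and $\init_\Gamma X$, which lie over $\M_\sigma$, in an excess intersection.
\item It typically misses the generic fibres $X$ and $t^{-\Gamma}X$, which do not lie over $\M_\sigma$.
\item $\X\cap\rho_\sigma^{-1}(p^R)$ is therefore not flat over $R$.
\item There is no family $\overline{t^{-\Gamma}X}\to\M_\sigma^R$ as used in your third step; only the special fibre maps to $\M_\sigma$.
\end{itemize}

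\textbf{The missing idea.} Slice instead with $W=\rho_\sigma^{-1}(\underline W)$, where $\underline W\subset Y_\sigma$ is smooth of dimension $n-m$ and meets $\M_\sigma$ transversely, only at $P$. This $W$ is still $\bfT_\sigma$-invariant, so $t^{-\Gamma}(W^K\cap X)=W^K\cap t^{-\Gamma}X$ and the rescaling comparison via Lemma \ref{lem:ProjectionAntiRescue} goes through. It now has complementary dimension to $X_0$ in $U_\sigma$, and it meets $\init_\Gamma X$ exactly in $(\rho_\sigma|_{\init_\Gamma X})^{-1}(P)$.

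Making the lengths match then needs the further steps in the paper:
\begin{itemize}
\item Shrink $Y_\sigma$ so that each $\rho_\sigma|_{\init_\Gamma X}$ is finite flat, and so that the limits of $\bar X\setminus X$ and of the non-Cohen--Macaulay locus of $X$ are removed.
\item Use henselianity of $R$ to isolate the components of $W^R\cap\X$ and $W^R\cap\overline{t^{-\Gamma}X}$ that are finite over $R$.
\item Deduce their flatness from the fact that the equations of $W$ form a regular sequence on the Cohen--Macaulay special fibres.
\item Identify $\len_K$ of the finite part of $W^K\cap X$ with $\mult_{\Mbar_\sigma}([(\bar X)_0])$; the paper does this via Osserman--Payne's specialization of refined intersection products.
\end{itemize}
Your sorting of points by $\Gamma$ and your rescaling by $t^{-\Gamma}$ are the right ideas once the slice is corrected.
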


We first prove the following, where, for $Q\in\bfT^K$, $\displaystyle\lim_{t\to0}Q$ denotes the special fiber of the closure of $Q$~in~$Y_\Sigma^R$.

\begin{lemma}\label{lem:ProjectionAntiRescue}
    Let $Q \in \bfT^K$ such that $\displaystyle\lim_{t\to0} Q\in \bfT$. Let $\sigma \in \Sigma$ be a cone and let $\rho_\sigma : U_\sigma \to Y_\sigma$ be as in Notation \ref{not:ToricNotation}\eqref{not:ToricAffineProjection}. Then, for all $\Gamma \in \relint(\sigma)$,
    \begin{equation}\label{eqn:a-iff-b}
    \rho_\sigma\big(\lim_{t \to 0} Q\big) = \lim_{t \to 0} (t^\Gamma Q).
    \end{equation}
\end{lemma}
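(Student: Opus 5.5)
The plan is to prove the lemma by a direct computation with points of the affine toric chart $U_\sigma=\Spec(\k[\sigma^\vee])$, described as monoid homomorphisms. Throughout, $t^\Gamma$ is taken to be meaningful, i.e. $\Gamma(u)$ lies in the value group for $u\in M$. If necessary we first extend $K$, using that limits commute with such a base change. The argument has three steps: compute $\lim_{t\to0}(t^\Gamma Q)$ explicitly as a point of $U_\sigma$, compute $\rho_\sigma(\lim_{t\to0}Q)$ explicitly, and compare.

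\textbf{Step 1: the two points as characters.} A $K$-point $Q$ of $\bfT^K$ is a group homomorphism $Q:M\to K^*$, $u\mapsto Q(u)$. The hypothesis $\lim_{t\to0}Q\in\bfT$ means that $\Trop(Q)=0$, i.e. $Q(u)\in R^*$ for all $u$. Then $\lim_{t\to0}Q$ is the $\k$-point $u\mapsto\overline{Q(u)}\in\k^*$. The point $t^\Gamma Q$ is $u\mapsto t^{\Gamma(u)}Q(u)$, so $\Trop(t^\Gamma Q)=\Gamma\in\relint(\sigma)$. By Proposition~\ref{prop:TropicalizationIntersectsOrbits}, $t^\Gamma Q$ extends to an $R$-point of $Y_\Sigma$ whose special fiber lies in $Y_\sigma\subseteq U_\sigma$. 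Concretely, the extension is the monoid homomorphism $\sigma^\vee\to R$, $u\mapsto t^{\Gamma(u)}Q(u)$. This is well defined because $\Gamma(u)\ge0$ for $u\in\sigma^\vee$ and $t^{\Gamma(u)}\in R$. It is the closure point by separatedness, so uniqueness of the extension is automatic.

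\textbf{Step 2: reduce modulo $\m$.} Since $\Gamma\in\relint(\sigma)$, we have $\Gamma(u)=0$ for $u\in\sigma^\perp$ and $\Gamma(u)>0$ for $u\in\sigma^\vee\setminus\sigma^\perp$. Reducing modulo $\m$ therefore shows that $\lim_{t\to0}(t^\Gamma Q)$ is the $\k$-point
\[
\sigma^\vee\to\k,\qquad u\mapsto\begin{cases}\overline{Q(u)} & u\in\sigma^\perp,\\ 0 & u\notin\sigma^\perp.\end{cases}
\]
On the other side, by Notation~\ref{not:ToricNotation}\eqref{not:ToricAffineProjection}, $\rho_\sigma$ is induced by the inclusion $\sigma^\perp\into\sigma^\vee$. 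Hence $\rho_\sigma(\lim_{t\to0}Q)$ is the point of $Y_\sigma=\Spec(\k[\sigma^\perp])$ given by $u\mapsto\overline{Q(u)}$ for $u\in\sigma^\perp$.

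\textbf{Step 3: identify points of $Y_\sigma$ inside $U_\sigma$.} The main point to justify is how $Y_\sigma$ sits inside $U_\sigma$. The closed orbit $Y_\sigma\subseteq U_\sigma$ is cut out by the ideal spanned by the $\chi^u$ with $u\in\sigma^\vee\setminus\sigma^\perp$; this is a face ideal, since $\sigma^\perp$ is a face of $\sigma^\vee$. The identification $Y_\sigma\cong\Spec(\k[\sigma^\perp])$ sends a $\k$-point $\phi:\sigma^\vee\to\k$ vanishing off $\sigma^\perp$ to its restriction $\phi|_{\sigma^\perp}$. This is the standard description, compatible with the splitting $U_\sigma\cong Y_\sigma\times\A^r$ of Notation~\ref{not:ToricNotation}, under which $\rho_\sigma$ is the first projection. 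With this identification, the point found in Step 2 restricts on $\sigma^\perp$ to exactly $u\mapsto\overline{Q(u)}$, which is $\rho_\sigma(\lim_{t\to0}Q)$. This gives \eqref{eqn:a-iff-b}.

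Beyond this bookkeeping I expect no real obstacle. The only subtlety is the sign convention relating $\Gamma\in\sigma$ to positivity on $\sigma^\vee$, which must match the convention that makes $t^\Gamma Q$, rather than $t^{-\Gamma}Q$, specialize into $Y_\sigma$. That convention is fixed by Proposition~\ref{prop:TropicalizationIntersectsOrbits}, and I would check it against the proof of Lemma~\ref{lem:InitialIdealInStratum}, where the same $y=t^\Gamma$ was used.
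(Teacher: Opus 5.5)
Your proof is correct, but it is organized differently from the paper's. The paper argues structurally in three moves. First, because $t^\Gamma\in\bfT_\sigma$ and $\rho_\sigma$ is $\bfT_\sigma$-invariant, $\rho_\sigma(Q)=\rho_\sigma(t^\Gamma Q)$. Second, because $\rho_\sigma$ is a morphism over $R$, it commutes with taking limits, so $\rho_\sigma(\lim_{t\to0}Q)=\rho_\sigma(\lim_{t\to0}t^\Gamma Q)$. Third, because $\lim_{t\to0}t^\Gamma Q\in Y_\sigma$ (by Proposition~\ref{prop:TropicalizationIntersectsOrbits}) and $\rho_\sigma|_{Y_\sigma}=\id$, the right-hand side equals $\lim_{t\to0}t^\Gamma Q$.

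You instead write every point as a monoid homomorphism and compare the two sides coordinatewise. Your steps are explicit versions of the same three facts. The triviality of $t^{\Gamma(u)}$ on $\sigma^\perp$ is the $\bfT_\sigma$-invariance. Reduction mod $\m$ commuting with restriction to $\sigma^\perp$ is the continuity. Your Step~3 is the statement $\rho_\sigma|_{Y_\sigma}=\id$.

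The paper's route is shorter and coordinate-free. Yours is self-contained, and it buys several things:
\begin{itemize}
\item It constructs the $R$-point extending $t^\Gamma Q$ explicitly instead of citing Proposition~\ref{prop:TropicalizationIntersectsOrbits}.
\item It actually checks the continuity step, which the paper only asserts.
\item It fixes the sign convention concretely.
\item It isolates exactly where $\Gamma\in\relint(\sigma)$ is needed: to get $\Gamma(u)>0$ for $u\in\sigma^\vee\setminus\sigma^\perp$, so that the limit lands in the closed orbit $Y_\sigma$ rather than in the orbit of a proper face of $\sigma$.
\end{itemize}
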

\begin{proof}
    Since $\Gamma \in \relint(\sigma)$, we have $\displaystyle{\lim_{t \to 0} (t^\Gamma Q)} \in Y_\sigma \subset U_\sigma$ by Proposition \ref{prop:TropicalizationIntersectsOrbits}.

    Recall that $\rho_\sigma : U_\sigma^R \to Y_\sigma^R$ is $\bfT_\sigma$-invariant and restricts to the identity on $Y_\sigma^R$. Also, by continuity
    \begin{equation}
    \rho_\sigma(\lim_{t \to 0} Q') = \lim_{t \to 0} \rho_\sigma(Q')
    \end{equation}
    for any $Q' \in U_\sigma^K$ whenever the limit exists in $U_\sigma$ on the left-hand side.
    
    To prove \eqref{eqn:a-iff-b}, we first have $\rho_\sigma(Q) = \rho_\sigma(t^\Gamma Q)$ by $\bfT_\sigma$-invariance and the fact that $t^\Gamma \in \bfT_\sigma$. Taking the limit, we obtain
    \begin{equation}
    \lim_{t \to 0} \rho_\sigma(Q) = \lim_{t \to 0} \rho_\sigma(t^\Gamma Q).
    \end{equation}
    Since $\displaystyle{\lim_{t \to 0} Q \in \bfT}$ and $\displaystyle{\lim_{t\to 0}t^\Gamma Q} \in Y_\sigma \subset U_\sigma$ both exist, by continuity, we can move both limits inside $\rho_\sigma$:
    \begin{equation}
    \rho_\sigma(\lim_{t \to 0} Q) =
    \rho_\sigma(\lim_{t \to 0} t^\Gamma Q).
    \end{equation}
    Finally, we can simplify the right-hand side because $\displaystyle{\lim_{t \to 0} t^\Gamma Q \in Y_\sigma}$ and $\rho_\sigma|_{Y_\sigma}$ is the identity. We conclude
    \[
    \rho_\sigma(\lim_{t \to 0} Q) =
    \lim_{t \to 0} t^\Gamma Q. \qedhere
    \]
\end{proof}

\begin{proof}[Proof of Lemma \ref{lem:mainlem}]

Both sides of \eqref{eq:mainlem} are unaffected by a base change from $K$ to any valued extension of $K$ that induces an isomorphism on residue fields. Therefore, by \cite[Cor. 5]{Poonen1993}, we may assume from now on that $K$ is complete, algebraically closed, has residue field $\mathbf{k}$, and has value group $\R$. We may therefore fix a splitting \cite[Lemma 2.1.15]{MaclaganSturmfels2015}. We note that whenever $\Gamma \in \sigma$, the corresponding element $t^\Gamma$ satisfies $t^\Gamma \in \bfT_\sigma$.

Both sides of Equation \eqref{eq:mainlem} can be calculated on $U_\sigma$, so for the remainder of the proof, we work entirely in $U_\sigma$. Let $\rho_\sigma : U_\sigma \to Y_\sigma$ be the projection, and write $\rho_\sigma|_{\init_\Gamma X} : \init_\Gamma X \to \M_\sigma$ for its restriction for each $\Gamma \in \Trop(X) \cap \sigma$. Note that, by definition, $\init_\Gamma X \subseteq \bfT\subseteq U_\sigma$.

Both sides of Equation \eqref{eq:mainlem} are local on $\M_\sigma$. We pass to a dense open subset as follows.
\begin{itemize}
\item Let $D = \overline{X} \setminus X \subset (\overline{\M} \setminus \M)^K$ and let $D_0$ be its flat limit in $U_\sigma$, i.e. the limit of the boundary of $\overline{X}$. Note that $\dim D \leq \dim X - 1 = \dim \M_\sigma - 1$ and that $\dim D_0 = \dim D$, see e.g. \cite[Section 4]{OssermanPayne2013} and/or \cite[Theorem 12.1.1(i)]{EGA66}. In particular, $\M_\sigma \setminus D_0$ is nonempty. Replacing $Y_\sigma$ by $Y_\sigma \setminus D_0$, $\M_\sigma$ by $\M_\sigma \setminus D_0$, $\bfT$ by $\bfT \setminus \rho_\sigma^{-1}(D_0)$ and $U_\sigma$ by $U_\sigma \setminus \rho_\sigma^{-1}(D_0)$, we may assume the flat limit of $D$ in $U_\sigma$ is empty.
\item Let $X' \subsetneq X$ be the locus where $X$ is not Cohen-Macaulay; note that $X'$ is a closed subscheme of $X$ whose complement is dense in $X$ \cite[\href{https://stacks.math.columbia.edu/tag/00RG}{Tag 00RG}]{stacks-project}. We may similarly assume that the flat limit of $X'$ in $U_\sigma$ is empty.
\item Similarly, by replacing $Y_\sigma$ by a dense open subset $U$ intersecting $\M_\sigma$, replacing $\M_\sigma$ by $\M_\sigma \cap U$, $\bfT$ by $\bfT \cap \rho_\sigma^{-1}(U)$ and $U_\sigma$ by $\rho_\sigma^{-1}(U)$, we may assume that $\M_\sigma$ is smooth and that all the maps $\rho_\sigma|_{\init_\Gamma X}$ are finite and flat. Note that this implies each $\init_\Gamma X$ is Cohen-Macaulay \cite[\href{https://stacks.math.columbia.edu/tag/045J}{Tag 045J}]{stacks-project}.
\end{itemize}
Note that shrinking $Y_\sigma$, $\M_\sigma$, $\bfT$ and $U_\sigma$ in this way retains the action of $\bfT_\sigma$ but loses the action of $\bfT$ (indeed $\bfT$ is no longer itself a torus), and that $\rho_\sigma$ only remains $\bfT_\sigma$-equivariant. We will accordingly only use the action of $\bfT_\sigma$ in the remainder of the proof.

Let $P \in \M_\sigma$ be arbitrary and let $\underline{W} \subset Y_\sigma$ be a smooth $(n-m)$-dimensional subvariety that intersects $\M_\sigma$ transversely at $P$ in an isolated reduced point. By shrinking $Y_\sigma$, $\M_\sigma$, $\bfT$ and $U_\sigma$ again, we may assume $\underline{W} \cap \M_\sigma = P$. Let $W = \rho_\sigma^{-1}(\underline{W})$. We now examine the following schemes:
\begin{itemize}
    \item The intersection $W^R \cap \X$ in $U_\sigma^R$, and
    \item The intersection $W^R \cap \overline{t^{-\Gamma} X}$ in $\bfT^R$ (here $W^R$ denotes $(W \cap \bfT)^R$), for each $\Gamma \in \Trop(X) \cap \sigma$.
\end{itemize} 

By \cite[\href{https://stacks.math.columbia.edu/tag/03QH}{Tag 03QH(4)}]{stacks-project}, since $R$ is a complete local ring, in particular henselian, every finite-type $R$-scheme has finitely many connected components $\mathcal{Z}$, each of which has exactly one of the two properties below:
\begin{enumerate}
    \item every irreducible component of the special fiber of $\mathcal{Z}$ has positive dimension, or else
    \item $\mathcal{Z}$ is finite over $\Spec R$ and its special fiber is a single closed point.
\end{enumerate}
Let $\mathcal{Z} \subseteq W^R \cap \X$ be the connected component containing $P$. We have the following equalities of sets: 
\[(W^R \cap \X)_0 = W \cap X_0 = W \cap \M_\sigma = P,\]
since $X_0 = \M_\sigma$ as a set, and using our hypothesis on $W$. We see that the second conclusion applies; in particular $\mathcal{Z}$ is finite over $\Spec R$. Let $Z$ denote its generic fiber. We note that every point of $Z$ has flat limit $P$; in particular $Z \subseteq X$, since the flat limit of $\overline{X} \setminus X$ in $U_\sigma$ is empty.

Similarly, for each $\Gamma\in\Trop(X)\cap\sigma,$ the special fiber of $W^R \cap \overline{t^{-\Gamma} X}$ is 
\[
W \cap \init_\Gamma X = \rho_\sigma^{-1}(\underline{W}) \cap \init_\Gamma X
=
(\rho_\sigma|_{\init_\Gamma X})^{-1}(\underline{W} \cap \M_\sigma)
=
(\rho_\sigma|_{\init_\Gamma X})^{-1}(P),
\]
as a scheme. Since $\rho_\sigma|_{\init_\Gamma X}$ is finite, this fiber is also finite. Let $\mathcal{V}_\Gamma \subseteq W^R \cap \overline{t^{-\Gamma}X}$ denote the union of all connected components containing points of $(\rho_\sigma|_{\init_\Gamma X}^{-1})(P)$. Then $\mathcal{V}_\Gamma$ is again finite over $\Spec R$, by case (2) above. Let $V_\Gamma$ denote its generic fiber.

We claim that $\mathcal{Z}$ and $\mathcal{V}_\Gamma$ are both flat over $\Spec R$. To see this, we apply \cite[\href{https://stacks.math.columbia.edu/tag/0470}{Tag 0470}]{stacks-project}: since $\mathcal{X}$ and $\overline{t^{-\Gamma} X}$ are themselves flat over $\Spec R$, with special fibers $X_0$ and $\init_\Gamma X$, it suffices to show that the equations for $W$ form a regular sequence at $P \in X_0$ and at each point of $(\rho_\sigma|_{\init_\Gamma X})^{-1}(P) \subset \init_\Gamma X$. This holds since $X_0$ and $\init_\Gamma X$ are Cohen-Macaulay, $W, U_\sigma$ and $\bfT$ are smooth, and the intersections $W\cap X_0\subseteq U_\sigma$ and $W\cap\init_\Gamma X\subseteq \bfT$ are of the expected dimension. Thus $\mathcal{Z}$ and $\mathcal{V}_\Gamma$ are flat over $\Spec R$.

We will compare the generic fibers $Z$ and $V_\Gamma$. We first decompose $Z$ using tropical data. Since $Z\subseteq X,$ we have $\Trop(Z)\subseteq\Trop(X).$ Also, since the special fiber of $\mathcal{Z}$ is contained in $Y_\sigma$, we have $\Trop(Z) \subseteq \sigma$. Since $Z$ is a zero-dimensional subscheme of $\bfT^K$ and $K$ is algebraically closed, it decomposes as a disjoint union
\[
Z = \bigsqcup_{\Gamma \in \Trop(X) \cap \sigma} Z_\Gamma,
\]
where $\Trop(Z_\Gamma)$ is supported at $\Gamma$. (See Figure \ref{fig:rescuing} for an illustration of this decomposition.) In particular, we have
\begin{align}\label{eq:LengthAsSumOverGamma}
\len_K(Z)=\sum_{\Gamma\in\Trop(X)\cap\sigma}\len_K(Z_\Gamma).
\end{align}

\begin{figure}
    \centering

\begin{tikzpicture}[scale=0.75]
  \begin{axis}[
    domain=0:1,
    ymin=0, ymax=3,
    ticks=none,
    axis x line=none,
    axis y line=none,
    clip = true,
    ]
    \addplot[black, thick] {0};
    \addplot[orange, thick] {x^10/3};
    \addplot[blue, thick] {x*x};
    \addplot[blue, thick] {x*x*0.75};
    \addplot[red, thick] {1.3*x};
    \addplot[red, thick] {1.4*x};
    \addplot[red, thick] {1.5*x};
    \addplot[mark=*] coordinates {(0,0)} node {$\bullet$};    
  \end{axis}
  \node[red] at (6.8, 2.65) {$Z_{\Gamma_3}$};
  \node[blue] at (6.8, 1.65) {$Z_{\Gamma_2}$};
  \node[orange] at (6.8, 0.65) {$Z_{\Gamma_1}$};
  \node at (6.8, 0) {$P^R$};
  \node at (0, 0) {$P$};
  \draw (0.57, 3) node[left] {$\rho_\sigma^{-1}(P)$};
  \node at (7.8, 1.65) {\raisebox{-.46\height}{\rotatebox{90}{\makebox[13ex]{\upbracefill}}} $\ Z$};
  \draw (4,-1) node {$\Spec(R)$};
  \draw (1,-1) node {$0\leftarrow t$};
  \draw (0.57,0)--(0.57,3);
\end{tikzpicture}
\quad
\begin{tikzpicture}[scale=0.75]
  \begin{axis}[
    domain=0:1,
    ymin=0, ymax=3,
    ticks=none,
    axis x line=none,
    axis y line=none,
    clip = true,
    ]
    \addplot[black, ultra thick] {0};
    \addplot[orange, thick] {x^3/3};
    \addplot[blue, thick] {1} node[pos=0] {$\bullet$}  node[pos=0.12,above] {$\init_{\Gamma_2}(Z)$};
    \addplot[blue, thick] {0.75} node[pos=0] {$\bullet$};
    \addplot[red, thick, samples=50] {1.3/(x+0.01)^0.1} node[pos=0] {$\circ$};
    \addplot[red, thick, samples=50] {1.4/(x+0.015)^0.1} node[pos=0] {$\circ$};
    \addplot[red, thick, samples=50] {1.5/(x+0.02)^0.1} node[pos=0] {$\circ$};
    \addplot[mark=o, orange, thick] coordinates {(0,0)};
  \end{axis}
  \node[red] at (7.3, 2.65) {$\textcolor{blue}{t^{-\Gamma_2}} Z_{\Gamma_3}$};
  \node[blue] at (7.3, 1.65) {$t^{-\Gamma_2} Z_{\Gamma_2}$};
  \node[orange] at (7.3, 0.65) {$\textcolor{blue}{t^{-\Gamma_2}} Z_{\Gamma_1}$};
  \node at (9.23, 1.65) {\raisebox{-.46\height}{\rotatebox{90}{\makebox[13ex]{\upbracefill}}} $\ \textcolor{blue}{t^{-\Gamma_2}} Z$};
  \draw (4,-1) node {$\Spec(R)$};
  \draw (1,-1) node {$0\leftarrow t$};
  \node at (6.8, 0) {$P^R$};
  \node at (0, 0) {$P$};
  \draw (0.57, 3) node[left] {$\rho_\sigma^{-1}(P)$};
  \draw (0.57,0)--(0.57,3);
\end{tikzpicture}
    
    \caption{{\bf Left:} A finite scheme $Z \subset U_\sigma$ with $\len_K(Z) = 6$ and flat limit supported at $P \in \M_{\sigma}$. Here $\Trop(Z) \cap \sigma$ consists of three distinct points $\{\Gamma_1, \Gamma_2, \Gamma_3\}$, with $\len_K Z_{\Gamma_i} = i$ for each $i$. {\bf Right:} Acting by $t^{-\Gamma_2}$ moves the limits of the points with tropicalization $\Gamma_2$ back into the torus $\bfT \subset U_\sigma$. We have $\init_{\Gamma_2}(Z_{\Gamma_i}) = \varnothing$ for $i \ne 2$, so $\init_{\Gamma_2}(Z) = \init_{\Gamma_2}(Z_{\Gamma_2}) \subseteq \bfT$. }
    \label{fig:rescuing}
\end{figure}
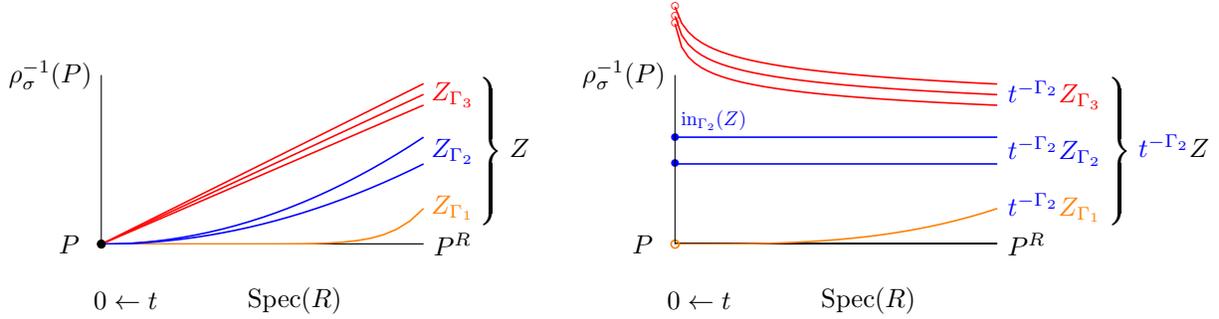

\noindent \textit{Claim 1.} We have $V_\Gamma = t^{-\Gamma} Z_\Gamma$. In particular, $\len_K Z = \sum_\Gamma \len_K(V_\Gamma)$.

\begin{proof}
    Since $W^K$ is $\bfT_\sigma$-invariant and $t^\Gamma \in \bfT_\sigma$, we have
    \[t^{-\Gamma} (W^K \cap X) = W^K \cap t^{-\Gamma} X.\]
    By definition, $Z_\Gamma$ is an open subscheme of $W^K \cap X$, so $t^{-\Gamma}Z_\Gamma$ is an open subscheme of $W^K \cap t^{-\Gamma}X$. Likewise, $V_\Gamma$ is by definition an open subscheme of $W^K \cap t^{-\Gamma}X$. It is therefore sufficient to show $V_\Gamma = t^{-\Gamma} Z_\Gamma$ as sets.

    Consider an arbitrary geometric point $Q \in W^K \cap t^{-\Gamma} X$. By the definitions of $V_\Gamma$ and $t^{-\Gamma} Z_\Gamma$, we have the following characterizations:
    \begin{enumerate}
        \item[(a)] $Q \in V_\Gamma$ if and only if $\displaystyle{\lim_{t \to 0} Q \in \bfT}$ and $\displaystyle{\rho_\sigma\big(\lim_{t \to 0} Q\big) = P}$.
        \item[(b)] $Q \in t^{-\Gamma}Z_\Gamma$ if and only if
        $\displaystyle{\lim_{t \to 0} Q \in \bfT}$ and $\displaystyle{\lim_{t \to 0} (t^\Gamma Q) = P}$.
    \end{enumerate}

    To show (a) iff (b), it is thus sufficient to prove that for all $Q \in \bfT^K$ such that $\displaystyle\lim_{t\to0} Q\in \bfT$,
    \begin{equation}
    \rho_\sigma\big(\lim_{t \to 0} Q\big) = \lim_{t \to 0} (t^\Gamma Q).
    \end{equation}
    This equation holds in the original $U_\sigma$ (prior to passing to an open subset) by Lemma \ref{lem:ProjectionAntiRescue}. In fact, this point lies in our smaller open subset because we deleted a union of fibers of $\rho_\sigma$, and we are assuming $\displaystyle{\lim_{t\to 0}Q}$ exists in $\bfT$ after this deletion.
\end{proof}

\noindent \textit{Claim 2.} $d_\Gamma = \len_K V_\Gamma$.

\begin{proof}
    Since $\mathcal{V}_\Gamma$ is finite and flat over $\Spec R$, $\len_K(V_\Gamma)$ is the same as the length of the special fiber of $\mathcal{V}_\Gamma$, which is $(\rho_\sigma|_{\init_\Gamma X})^{-1}(P)$ (as a scheme). Since $\rho_\sigma|_{\init_\Gamma X}$ is finite and flat, this fiber has length $d_\Gamma = \deg(\rho_\sigma|_{\init_\Gamma X})$.
\end{proof}

\noindent \textit{Claim 3.} $\mult_{\M_\sigma}([X_0]) = \len_K Z$.

\begin{proof}
    We apply Osserman-Payne's specialization map on cycles over $\Spec R$ \cite[Theorem 4.4.5]{OssermanPayne2013}, which generalizes \cite[Sec. 20.3]{Fulton1998} to the case of arbitrary rank-1 valuation rings.
    
    By definition, $Z$ is a union of connected components of $W^K \cap \overline{X}$ intersecting in $U_\sigma^K$ with the correct dimension, and all of whose points have limit $P$. We have $Z \subseteq X$ since the flat limit of $\overline{X} \setminus X$ is empty. Similarly, since the flat limit of the non-Cohen-Macaulay locus of $X$ is empty, $X$ is Cohen-Macaulay along $Z$. (We are using two properties of the dense open subset we passed to at the beginning of the proof.) Finally, $W^K$ is smooth by construction. Thus the intersection cycle agrees with the intersection product along $Z$:
\begin{equation}
    ([W^K \cap \overline{X}])_Z = ([W^K] \cdot [\overline{X}])_Z \in A_*(Z).
\end{equation}
We apply the specialization map to this equation. The left-hand side is $[Z]$. Since $\mathcal{Z}$ is finite and flat over $\Spec R$ and its special fiber is supported at $P$, $[Z]$ specializes to $\len_K(Z) \cdot [P]$.

As for the right-hand side, specialization of cycles commutes with refined intersection product by \cite[Theorem 4.4.5]{OssermanPayne2013}, so the right-hand side specializes to
\begin{equation}\label{eqn:W-times-X0}
    ([W] \cdot [X_0])_P \in A_*(P).
\end{equation}
On $U_\sigma$, by Proposition \ref{prop:Set-Theoretic}, we have $[X_0]=\mult_{\M_\sigma}([X_0])\cdot[\M_\sigma]$. Since $W$ intersects $\M_\sigma$ transversely at $P$, \eqref{eqn:W-times-X0} is thus $\mult_{\M_\sigma}([X_0]) \cdot [P]$. Comparing the specializations gives the claim.
\end{proof}

Combining \eqref{eq:LengthAsSumOverGamma} with Claims 1, 2, and 3, we conclude
\[
\mult_{\M_\sigma}([X_0]) = \sum_\Gamma d_\Gamma,
\]
as desired.
\end{proof}

\begin{remark}
We have assumed that $\Mbar\supseteq\M$ is a tropical compactification, but versions of Proposition \ref{prop:Set-Theoretic}, Corollary \ref{cor:NoTooBigComponents}, and Lemma \ref{lem:mainlem} hold under weaker hypotheses on $\Mbar$, as well as under a weaker version of Condition \ref{cond:GlobalIntersectionCondition}, as we now explain.

Assume that the boundary of $\Mbar$ has combinatorial normal crossings in the sense of Proposition \ref{prop:TropicalCompactificationsBasicProperties}\eqref{eq:TropicalCompactificationStratumCodimension}, but is not necessarily a tropical compactification, i.e. $\Mbar$ need not be proper, and $T\times\Mbar\to Y_\Sigma$ need not be faithfully flat. We obtain the following altered set-theoretic conclusions.
\begin{enumerate}[label=(\alph*)]
    \item \textbf{(Local version of Proposition \ref{prop:Set-Theoretic})} Suppose that for some \emph{fixed} $r$-dimensional cone $\sigma\in\Sigma$, $\Trop(X)$ intersects $\sigma$ in finitely many points, all in $\relint(\sigma)$. (This is a local version of Condition \ref{cond:GlobalIntersectionCondition}.) Then the conclusion of Proposition \ref{prop:Set-Theoretic} holds ``locally near $\M_\sigma,$'' i.e. inside $\Mbar\cap U_\sigma$, the limit scheme is supported on $\M_{\sigma}.$ The proof is unchanged. \label{it:SetTheoreticWorksLocally}
    \item \textbf{(Local version of Corollary \ref{cor:NoTooBigComponents})} Under the same hypothesis as in \ref{it:SetTheoreticWorksLocally}, the conclusion of Corollary \ref{cor:NoTooBigComponents} holds with the following modification. $X$ may have irreducible components of codimension $<r$, but any such component has empty limit in $\Mbar\cap U_\sigma$. Again, the proof is essentially unchanged.
    \item \textbf{(Global version of Corollary \ref{cor:NoTooBigComponents}, without assuming properness of $\Mbar$)} Finally, if $X$ satisfies the global Condition \ref{cond:GlobalIntersectionCondition}, then the limit cycle of $\bar X$ is supported on the codimension-$r$ boundary. ($X$ may have irreducible components of codimension $<r$, but any such component has empty limit in $\Mbar$.)
\end{enumerate}
To obtain the statements about multiplicities, we suppose in addition that $T\times\Mbar\to Y_\Sigma$ is faithfully flat (again without a properness assumption on $\Mbar$); this implies $\Mbar$ has combinatorial normal crossings.
\begin{enumerate}
    \item[(d)] \textbf{(Lemma \ref{lem:mainlem} under local version of Condition \ref{cond:GlobalIntersectionCondition}, without assuming properness of $\Mbar$)} With faithful flatness and the hypothesis as in \ref{it:SetTheoreticWorksLocally},
    the conclusion of Lemma \ref{lem:mainlem} holds. Our proof of Lemma \ref{lem:mainlem} uses properness of $\Mbar$ to rule out irreducible components of $X$ of large dimension, but this can be bypassed by ignoring them, since by the above, they have empty limit in $\Mbar\cap U_\sigma$ and do not contribute to $\mult_{\Mbar_{\sigma}}([(\bar{X})_0]).$
\end{enumerate}
\end{remark}

% BACKGROUND
\part{Application: Computing intersections on \texorpdfstring{$\Mbar_{0,n}$}{M\_{0,n}-bar}}\label{part:Application}
In the rest of the paper, we give an extended application of Theorem \ref{thm:intro1} and \ref{thm:XTilde} to computing limits of cycles on $\Mbar_{0,n}$. We first (Section \ref{sec:TropicalKapranov}) describe the tropicalization of the map to projective space associated to a $\psi$-class, which we call the \emph{tropical Kapranov map} (Definition \ref{def:TropicalPsi}), as well as \emph{tropical $\psi$-hypersurfaces} (Definition \ref{def:BasicHyperplane}). The tropical $\psi$-hypersurfaces we described are tropicalizations of hypersurfaces representing the corresponding $\psi$-classes, and defined over non-trivially valued fields. They are described in terms of path lengths along metric trees, and may be of independent interest. (See Example  \ref{ex:TropicalKapranov}.) We next (Section \ref{sec:TropIntisnice}) introduce our intersection setup --- a collection of $\psi$-hypersurfaces $H_q$ that are sufficiently general with respect to each other. We show that the intersection $\Sigma_r\cap\Trop(\bigcap_{q=1}^r H_q)$ is finite and contained in the relative interior of $\Sigma_r$. This allows us to apply the results of Part \ref{part:MainTheorem}, which we do in Section \ref{sec:RelateToAlgebraic}, proving that the limit cycle of $\bigcap_{q=1}^r \overline{H}_q$ is a sum of boundary strata, each with multiplicity 1. In Section \ref{sec:RecursionandFirework}, we show that $\Sigma_r\cap\Trop(\bigcap_{q=1}^r H_q)$, as $r$ varies, has an elegant recursive structure, and use this structure to give an algorithm to compute $\Sigma_r\cap\Trop(\bigcap_{q=1}^r H_q)$.

\section{Background 2: Moduli of curves, classical and tropical}\label{sec:BackgroundModuli}
For the rest of the paper, let $\k$ be an algebraically closed field of characteristic zero, and let $K=\k((t))$, which we consider as a discretely valued field with the $t$-adic valuation. We write $\bbT$ for the standard 1-torus over $\k$, and $\bbT_K$ for its base change to $K$.

We refer the reader to \cite{CavalieriNotes} for background on $\Mbar_{0,n}$, and to \cite[Sec. 2]{KerberMarkwig2009} and \cite[Secs. 4.3, 6.4]{MaclaganSturmfels2015} for background on $\M_{0,n}^{\trop}$.

\subsection{The moduli space \texorpdfstring{$\Mbar_{0,n}$}{M\_{0,n}-bar}}\label{sec:M0nBar}

For a finite set $S$, we write $\Mbar_{0,S}$ for the moduli space of stable rational curves with marked points indexed by $S$; we write $\M_{0, S} \subseteq \Mbar_{0, S}$ for the open locus of smooth $S$-marked curves. We often take $S=[n]$. 

An $S$-marked stable curve $(C, p_\bullet)$ has a dual tree $\tree$, a \emph{stable $S$-marked tree} (see \cite[Def. 8]{CavalieriNotes}), which has a vertex for each irreducible component of $C$, an edge for each nodal singularity of $C$, and additional \emph{legs} or \emph{half-edges} labeled by $S$, such that each vertex has valence at least 3 (counting legs). For a stable $S$-marked tree $\tree$, we write $\M_{\tree}\subseteq\Mbar_{0,S}$ for the locally closed locus of stable $S$-marked curves with dual tree $\tree$, and we write $\Mbar_{\tree}$ for the closure of $\M_{\tree}$. We have $(C',p_\bullet')\in \Mbar_{\tree}$ if and only if the dual tree $\tree'$ of $(C',p_\bullet')$ admits a sequence of edge contractions resulting in $\tree$. Note that $\M_{0,S}=\M_{\tree_0(S)},$ where $\tree_0(S)$ is the trivial $S$-marked tree with one vertex and no edges.

For any subset $S' \subseteq S$ with $|S'| \geq 3$, there is a forgetful map 
$\pi_{S'} : \Mbar_{0,S} \to \Mbar_{0,S'}$
given by forgetting the marked points $s \notin S'$, then \emph{stabilizing} the curve by repeatedly contracting components corresponding to vertices in the dual tree with valence less than 3. By abuse of notation, we also write $\pi_{S'}(\tree)$ for the resulting tree, obtained analogously by deleting legs and contracting edges of $\tree$. 

\subsection{The tropical moduli space \texorpdfstring{$\M_{0,n}^{\trop}$}{M\_{0, n}\^{}trop}}\label{sec:M0nTrop}

\newcommand{\Mtreetrop}[1]{\M_{0,n}^\trop(#1)}

An $S$-marked stable \emph{tropical curve} $\Gamma$ is a metric space whose underlying topological space is a stable $S$-marked tree $\tree$, obtained by assigning a positive real length $\length(e)$ to each edge of $\tree$, and assigning the legs infinite length. The space of tropical curves with underlying tree $\tree$ is isomorphic to $(\R_{> 0})^{\edges(\tree)}$. We allow edge lengths to go to $0$ by contracting the corresponding edges of $\tree$; we denote by $\M_{0,S}^{\trop}(\tree)\cong (\R_{\ge 0})^{\edges(\tree)}$ the resulting cone. We note that $\M_{0,S}^{\trop}(\tree)$ has a natural integral structure, namely the lattice points $(\Z_{\ge 0})^{\abs{\edges(\tree)}}\subset (\R_{\ge 0})^{\abs{\edges(\tree)}}$ corresponding to metric trees with integer edge lengths.

The moduli space $\M_{0,S}^{\trop}$ of $S$-marked tropical curves is the polyhedral cone complex obtained by taking the disjoint union of all cones $\M_{0,S}^{\trop}(\tree)$, and gluing along faces by identifying a tropical curve with length-zero edges with the tropical curve obtained by contracting those edges, as described above. The cones of $\M_{0,S}^{\trop}$ are in natural bijection with the boundary strata of $\Mbar_{0,S},$ and this bijection switches dimension and codimension.

If $S'\subset S$, there is a forgetful map $\pi_{S'}^{\trop}:\M_{0,S}^{\trop}\to\M_{0,S'}^{\trop}$, defined by
\begin{equation}
\pi_{S'}^\trop(\Gamma) := \text{ the convex hull of the legs in $S'$} \subseteq \Gamma.
\end{equation}
This convex hull is naturally equipped with the structure of an $S'$-marked tropical curve, with underlying $S'$-marked tree $\pi_{S'}(\tree)$, where $\tau$ is the underlying tree of $\Gamma$. The edge lengths of $\pi_{S'}^\trop(\Gamma)$ are sums of edge lengths of $\Gamma$. See Figure \ref{fig:TropForget}. The map $\pi_{S'}^{\trop}$ is a morphism of cone complexes preserving the integral structure. Along with the inclusion $\pi_{S'}^{\trop}(\Gamma)\into\Gamma$, there is a natural deformation retraction $\Gamma\to\pi_{S'}^{\trop}(\Gamma)$, which contracts all legs not in $S'$ (and possibly more).

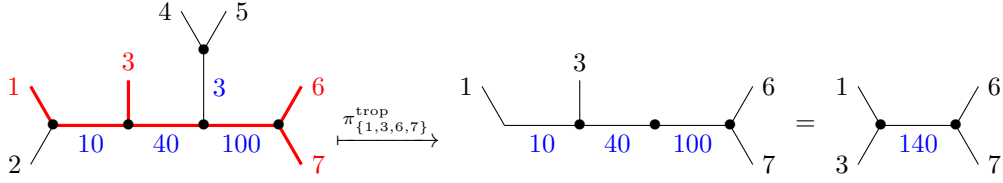
\begin{figure}
    \centering
\begin{tikzpicture}
        \draw[very thick,red] (0,0)--(3,0);
        \draw[very thick,red] (0,0)--++(120:.6) node[left] {1};
        \draw (0,0)--++(240:.6) node[left] {2};
        \draw[very thick,red] (1,0)--++(90:.6) node[above] {3};
        \draw (2,0)--++(90:1);
        \draw (2,1)--++(60:.6) node[right] {5};
        \draw (2,1)--++(120:.6) node[left] {4};
        \draw[very thick,red] (3,0)--++(60:.6) node[right] {6};
        \draw[very thick,red] (3,0)--++(-60:.6) node[right] {7};
        \draw (0,0) node {$\bullet$};
        \draw (1,0) node {$\bullet$};
        \draw (2,0) node {$\bullet$};
        \draw (2,1) node {$\bullet$};
        \draw (3,0) node {$\bullet$};
        \draw[blue] (.5,0) node[below] {10};
        \draw[blue] (1.5,0) node[below] {40};
        \draw[blue] (2.5,0) node[below] {100};
        \draw[blue] (2,0.5) node[right] {3};
        \draw (4.5,0) node {$\xmapsto{\pi_{\{1,3,6,7\}}^{\trop}}{}$};
        \draw (6,0)--(9,0);
        \draw (7,0) node {$\bullet$};
        \draw (8,0) node {$\bullet$};
        \draw (9,0) node {$\bullet$};
        \draw (6,0)--++(120:.6) node[left] {1};
        \draw (7,0)--++(90:.6) node[above] {3};
        \draw (9,0)--++(60:.6) node[right] {6};
        \draw (9,0)--++(-60:.6) node[right] {7};
        \draw[blue] (6.5,0) node[below] {10};
        \draw[blue] (7.5,0) node[below] {40};
        \draw[blue] (8.5,0) node[below] {100};
        \draw (10,0) node {$=$};
        \draw (11,0)--(12,0);
        \draw (11,0) node {$\bullet$};
        \draw (12,0) node {$\bullet$};
        \draw (11,0)--++(120:.6) node[left] {1};
        \draw (11,0)--++(240:.6) node[left] {3};
        \draw (12,0)--++(60:.6) node[right] {6};
        \draw (12,0)--++(-60:.6) node[right] {7};
        \draw[blue] (11.5,0) node[below] {140};
    \end{tikzpicture}
    \caption{Applying the tropical forgetful map $\pi_{\{1, 3, 6, 7\}}^\trop$ by taking the convex hull of the legs $1, 3, 6, 7$ in a tree. Since legs have infinite length, the \textcolor{blue}{10} disappears.}
    \label{fig:TropForget}
\end{figure}

\subsection{Embeddings of \texorpdfstring{$\Mbar_{0,S}$}{M\_{0,S}-bar} and \texorpdfstring{$\M_{0,S}^{\trop}$}{M\_{0, S}\^{}trop}}\label{sec:Embeddings}

Let $\Gr(2,S)^\circ$ denote the open subset of the Grassmannian $\Gr(2, \k^S)$ on which all Pl\"ucker coordinates are nonzero. Sending a $2 \times |S|$ matrix $M = (m_{ij})$ to the tuple of points $[m_{1i} : m_{2i}] \in \mathbb{P}^1$ induces a well-defined map $\Gr(2,S)^\circ \to \M_{0, S}$, which is identified with the quotient map by the column action of $\bbT^S$. Kapranov \cite{Kapranov1993Chow} showed moreover that the $\Mbar_{0, S}$ is identified with the resulting Chow quotient of $\Gr(2, \k^S)$. The Pl\"ucker embedding of $\Gr(2, \k^S)$ then induces an embedding
\[
\Plucker: \M_{0,S} = \Gr(2,S)^\circ/\bbT^S \into \bbT^{\binom{S}{2}}/\bbT^{S},
\] 
where $(\lambda_\ell)_{\ell\in S}\in \bbT^{S}$ acts on the coordinates $p_{ij}$ of $\bbT^{\binom{S}{2}}$ (for $i, j \in S$) by $p_{ij}\mapsto\lambda_i\lambda_jp_{ij}$. 
Explicitly, for $(C, p_\bullet) \in \M_{0, S}$, choose arbitrary coordinates on $\P^1$ and write $(p_\ell)_{\ell\in S}$ as the columns of a $2\times \abs{S}$ matrix. Then $\Plucker(C,p_\bullet)$ is represented by the vector of $2\times 2$ minors of this matrix.

The embedding $\Plucker$ then induces a tropicalization $\Trop(\M_{0,S}) \subset \R^{\binom{S}{2}}/\R^{S}$,
where the quotient is by the image of the map
\begin{align*}
\R^{S} &\to \R^{\binom{S}{2}} \\
(a_i)_{i\in S} &\mapsto (a_i + a_j)_{i,j \in S}.
\end{align*}
In fact, $\Trop(\M_{0,S})$ is naturally isomorphic to $\M_{0,S}^\trop$ as a cone complex with integral structure, via the map
\begin{align*}
\Plucker^{\trop}:
\M_{0,S}^\trop &\xrightarrow{\cong}\Trop(\M_{0,S})\subseteq
\R^{\binom{S}{2}}/\R^{S}, \\
\Gamma &\mapsto -\tfrac{1}{2}\vec{d},
\end{align*}
where $\vec{d}$ is the vector of pairwise distances  $d_{ij}$ between the legs of $\Gamma$. Note that the factor of $\tfrac{1}{2}$ implies that the integer points of $\R^{\binom{S}{2}} / \R^S$ lying on the image of $\M_{0, S}^\trop$ are precisely those coming from metric trees with integer edge lengths, i.e. the natural integer lattices on the cones of $\M_{0, S}^\trop$ and $\Trop(\M_{0, S})$ agree\footnote{This factor is also needed to ensure that $\Plucker^{\trop}$ is compatible with the tropicalization map in non-Archimedean geometry, see \cite[Thm. 1.2.1]{AbramovichCaporasoPayne2012}}, see \cite[Thm. 4.2]{SpeyerSturmfels2004}, \cite[Thm. 3.7]{GathmannKerberMarkwig2009}, and \cite[Sec. 4.3]{MaclaganSturmfels2015}. All maximal cones of $\Trop(\M_{0,S})$ have weight 1 \cite[Cor. 4.3.12]{MaclaganSturmfels2015}.

By \cite{Tevelev2007}, $\Mbar_{0,S}$ is a tropical compactification of $\M_{0, S}$, as we now explain. The embedding $\Plucker^{\trop}$ of the cone complex $\M_{0,S}^{\trop}$ gives a fan $\Sigma(S)\subseteq\R^{\binom{S}{2}}/\R^{S}$ in which cones are in bijection with stable $S$-marked trees. The Zariski closure of $\M_{0,S}$ in the corresponding toric variety $\TV_{\Sigma(S)}$ is isomorphic to $\Mbar_{0,S}.$ Furthermore, the torus orbits $Y_\tree$ in $\TV_{\Sigma(S)}$ are in bijection with stable $S$-marked trees $\tree$, and the boundary stratum $\Mbar_\tree$ is precisely the (transverse) intersection of $\Mbar_{0,S}$ with $Y_\tree$.

\subsection{Kapranov maps}\label{sec:Kapranov}
For any $i \in S$, $\Mbar_{0,S}$ admits a birational map to $\P^{\abs{S}-3}$ called the $i$-th Kapranov map,
induced by the $i$-th \emph{psi (cotangent) class} $\psi_i \in A^1(\Mbar_{0,S})$. To describe this map, it is convenient to fix a second marked point $j \in S\setminus \{i\}$. Let $(C, p_\bullet)$ be an $S$-marked stable curve. Contracting all components except for the one containing $p_i$ yields an $S$-marked irreducible rational curve $(C' \cong \P^1,p_\bullet)$, where markings may coincide with each other, but may not coincide with $p_i$. We coordinatize $C'$ so that $p_i = \infty$ and $p_j = 0$. Up to a common scalar, the remaining points $p_{k}$ have well-defined coordinates $q_k$, not all zero, and we define the \emph{$i$-th Kapranov map, relative to $j$} by
\[
\kap_{i \rel j}(C, p_\bullet) = [q_{k} : k \in S \setminus \{i, j\}] \in \P^{|S|-3},
\]
in which we view $\P^{|S|-3}$ as $\P(\k^S / \k^{\{i, j\}})$.
\begin{remark}\label{rem:KapranovNotWellDefined}
    For $j, j'\ne i$, $\kap_{i \rel j}$ and $\kap_{i \rel j'}$ differ only by post-composing with a linear automorphism of $\P^{\abs{S}-3}$. Indeed, there is a unique $i$-th Kapranov map $\kap_i$, up to change of coordinates, induced by the $i$-th cotangent line bundle. However, the automorphism induced by replacing $j$ with $j'$ effectively changes the choice of torus in $\P^{|S|-3}$. For non-tropical purposes this subtlety is unimportant (e.g. the pullback $\psi_i=\kap_{i\rel j}^*(H)$ of the hyperplane class is independent of $j$) but tropicalization of morphisms is well-behaved only when the morphisms are (locally) restrictions of toric morphisms. Thus, the second marked point will play a role in our algorithms later on, notably in our definition below of a tropical Kapranov map.
\end{remark}

\begin{ex}
Consider the curve below:
\begin{center}
    \includegraphics[scale=1]{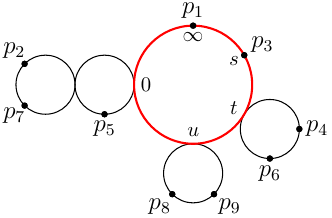}
\end{center}
Up to change of coordinates, all the special points on the non-highlighted components can be taken to be at $0, 1$ or $\infty$. We have $\kap_{1 \rel 2}(C) = [s : t : 0 : t : 0 : u : u]$.
\end{ex}

\section{The tropical Kapranov map and tropical \texorpdfstring{$\psi$}{psi}-hypersurfaces}\label{sec:TropicalKapranov}
In this section we introduce a \emph{tropical Kapranov map} $\Psi_{i\rel j}^\trop:\M_{0,S}^\trop\to\R^{\abs{S}-2}/\R$, in analogy with the Kapranov map $\Psi_{i\rel j}:\Mbar_{0,S}\to\P^{\abs{S}-3}$. 

\subsection{The tropical Kapranov map}\label{sec:Factorizations}

We remark that both $\Mbar_{0,S}$ and $\P^{\abs{S}-3}$ arise as spaces of \emph{weighted} stable curves in the sense of Hassett \cite{Hassett2003}, and the Kapranov map is then one of Hassett's reduction morphisms. These spaces, along with their reduction morphisms, have natural tropicalizations; unfortunately, the tropicalization of $\P^{\abs{S}-3}$ in this sense is a single point, so the tropical reduction morphism loses all information. This loss of information is related to the lack of a natural choice of torus on $\P^{|S|-3}$, as discussed in Remark \ref{rem:KapranovNotWellDefined}.

Thus, we remedy the difficulty by fixing the second marked point $j$ and identifying $\P^{|S|-3}$ with $\P(\k^S / \k^{\{i,j\}})$ with its standard coordinate torus. The Kapranov map now arises as the composition of the tropical compactification $\Mbar_{0, S} \xrightarrow{\Plucker} \TV_{\Sigma(S)}$ with a toric morphism $G_{i, j} : \TV_{\Sigma(S)} \to \P(\k^S / \k^{\{i,j\}})$, yielding a diagram
\begin{align}\label{eq:ToricMorphisms}
    \begin{tikzcd}[ampersand replacement=\&]
    \Mbar_{0,S}\arrow[r,"\Plucker"]\arrow[rr,bend right,"\kap_{i\rel j}"]\&\TV_{\Sigma(S)}\arrow[r,"G_{i,j}"]\& \P(\k^S / \k^{\{i,j\}}) \cong \P^{\abs{S}-3}.
    \end{tikzcd}
\end{align}
On the torus $\bbT^{\binom{S}{2}} / \bbT^S$, we define $G_{i,j}$ by
\begin{equation}\label{eq:ToricMorphism-formula}
(p_{ab} : a,b \in S) \mapsto 
\left(\frac{p_{j\ell}}{p_{i\ell}} : \ell\in S\setminus\{i,j\}\right).
\end{equation}
It is straightforward to see that $G_{i, j}$ is well-defined on the quotient by $\bbT^S$ and that $G_{i, j} \circ \Plucker = \kap_{i \rel j}$ on $\M_{0, S}$. Below, we verify that $G_{i, j}$ extends to a toric morphism.

We thus obtain a linear map
\begin{align}\label{eq:TropicalMorphism-formula}
G_{i,j}^\trop :\R^{\binom{S}{2}}/\R^{S} &\to \R^{S \setminus \{i, j\}}/\R \cong \R^{\abs{S}-3}, \nonumber \\
(p_{ab}^\trop : a,b \in S) &\mapsto \big(p_{j\ell}^\trop - p_{i \ell}^\trop : \ell \in S \setminus \{i, j\} \big),
\end{align}
where we quotient on the right by the all-ones vector $\mathbbm{1} \in \R^{S \setminus \{i, j\}}$. This gives a diagram of tropicalizations:
\begin{align}\label{eq:TropicalMorphisms}
    \begin{tikzcd}[ampersand replacement=\&]
         \M_{0,S}^{\trop}\arrow[r,"\Plucker^{\trop}"]\arrow[rr,bend right,"\kap_{i\rel j}^\trop"]\&\R^{\binom{S}{2}}/\R^{S}\arrow[r,"G_{i,j}^{\trop}"]\&\R^{S \setminus \{i, j\}}/\R.
    \end{tikzcd}
\end{align}

\begin{definition}\label{def:TropicalPsi}
    The \emph{$i$-th tropical Kapranov map relative to $j$} is $\Psi_{i\rel j}^\trop = G_{i,j}^\trop\circ\Plucker^\trop$. Concretely, let $\Gamma$ be a tropical curve. Choose any isometry between $\R$ and the convex hull of legs $i$ and $j$, oriented from $i$ to $j$. For each $\ell \in S \setminus \{i, j\}$, let $d_\ell$ be the coordinate of the nearest point to the $\ell$-th leg in this convex hull. Then
    \begin{equation}\label{eqn:PsiCoords}
    \Psi_{i \rel j}^\trop(\Gamma) := (d_\ell : \ell \in S \setminus \{i, j\}).
    \end{equation}
    We note that $\Psi_{i \rel j}^\trop(\Gamma)$ is defined only up to translation by $\mathbbm{1} \in \R^{S \setminus \{i, j\}}$, which accounts for the choice of isometry. Equation \eqref{eq:TropicalMorphism-formula} naturally sets the attachment points of the legs $i$ and $j$ at $-\tfrac{1}{2}d_{ij}$ and $\tfrac{1}{2}d_{ij}$, where $d_{ij}$ is the distance between the $i$-th and $j$-th legs. Adding $\tfrac{1}{2}d_{ij} \mathbbm{1}$ in \eqref{eq:TropicalMorphism-formula} instead sets the attachment point of leg $i$ at $0$ and that of leg $j$ at $+d_{ij}$.
    \end{definition}

We refer the reader to Example \ref{ex:TropicalKapranov} for an illustration.    

\begin{prop}\label{prop:ToricFactorization}
    The morphism $G_{i,j}$ extends to a toric morphism $\TV_{\Sigma(S)} \to \P(\k^S/\k^{\{i, j\}})$.
\end{prop}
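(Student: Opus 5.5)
By the standard criterion for toric morphisms (e.g. \cite{Fulton1993}), a homomorphism of tori whose cocharacter map is $G_{i,j}^\trop$ extends to a toric morphism $\TV_{\Sigma(S)}\to\P(\k^S/\k^{\{i,j\}})$ exactly when $G_{i,j}^\trop$ sends every cone of $\Sigma(S)$ into some cone of the fan $\Sigma_{\P}$ of $\P^{\abs{S}-3}$. So I would first check that \eqref{eq:ToricMorphism-formula} is a homomorphism of tori. It is well-defined on the quotient by $\bbT^S$, because $\lambda\in\bbT^S$ multiplies $p_{j\ell}/p_{i\ell}$ by $\lambda_j/\lambda_i$, independently of $\ell$, and this is killed by projectivization. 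Its cocharacter map is then $G_{i,j}^\trop$ in \eqref{eq:TropicalMorphism-formula}. The problem is thus reduced to checking the cone condition, which I would do using Definition \ref{def:TropicalPsi}.

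Recall that the cones of $\Sigma_{\P}$ in $\R^{S\setminus\{i,j\}}/\R$ are indexed by proper subsets $A\subsetneq S\setminus\{i,j\}$. The cone $C_A$ is the set of classes of vectors $x$ whose minimum is attained at all coordinates in $A$, or equivalently the nonnegative span of $\{e_\ell:\ell\notin A\}$ modulo $\mathbbm{1}$. (I will need to fix the sign convention so that it matches the convention $\Plucker^\trop(\Gamma)=-\frac12\vec d$; if it is reversed, I would use maxima instead, with the symmetric argument.)

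Now fix a stable tree $\tau$ and its cone $\Sigma(S)_\tau=\Plucker^\trop(\M^\trop_{0,S}(\tau))$. By Definition \ref{def:TropicalPsi}, for $\Gamma$ in this cone, $\kap^\trop_{i\rel j}(\Gamma)=(d_\ell)$, where $d_\ell$ is the position along the $i$--$j$ path $P$ at which the leg $\ell$ attaches. The combinatorial data is independent of the edge lengths. That data is which vertex $v$ of $P$ each leg $\ell$ hangs off, and the order of these vertices along $P$. Let $A$ be the set of $\ell\ne i,j$ hanging off the vertex of $P$ nearest $i$. Then for every $\Gamma$ in the closed cone, the minimum of the $d_\ell$ is attained on $A$, and possibly on more coordinates when edges are contracted. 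This shows $\kap^\trop_{i\rel j}(\Sigma(S)_\tau)\subseteq C_A$.

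I also need $A$ to be a proper subset. If all $\ell$ hung off the first vertex, $\Gamma$ would have only one vertex on $P$, and $C_A$ would be the full-coordinate cone, i.e. the origin. This case is still fine, since the cone $C_{S\setminus\{i,j\}}$ is the zero cone $\{0\}$. So I would instead take $A$ to be the full minimizing set and allow $C_A=\{0\}$.

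A more conceptual check is to write $\kap^\trop_{i\rel j}$ as piecewise-linear with linear pieces on each cone: $d_\ell-d_{\ell'}$ is a nonnegative sum of edge lengths along $P$ whenever $\ell$ attaches after $\ell'$. This gives nonnegative coefficients on the generators $e_\ell$ with $\ell\notin A$, and hence containment in $C_A$.

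The main obstacle I expect is bookkeeping of signs and conventions. I need to confirm that $G^\trop_{i,j}\circ\Plucker^\trop$ really equals the $d_\ell$ formula, including the factor $-\frac12$ and the choice of orientation, so that one lands in the min cones rather than the max cones. I also need to confirm that the cone criterion applies in the quotient lattices, where integral structures agree by \cite{SpeyerSturmfels2004}. Once the formula $p^\trop_{j\ell}-p^\trop_{i\ell}=\frac12(d_{i\ell}-d_{j\ell})$ is shown to equal $d_\ell$ up to translation, via the identity $d_{i\ell}=d_{i,v}+(\text{dist from }v\text{ to }\ell)$, the rest is the routine containment above.
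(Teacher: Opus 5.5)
Your proposal is correct and takes essentially the same route as the paper. Both reduce to the criterion that $G_{i,j}^\trop$ sends each cone of $\Sigma(S)$ into a cone of the fan of $\P(\k^S/\k^{\{i,j\}})$, and both show that the cone of tree type $\tau$ lands in the cone indexed by the legs whose nearest point on the $i$--$j$ path is the base of leg $i$. Your explicit check that $G_{i,j}^\trop\circ\Plucker^\trop$ recovers the coordinates $d_\ell$ (so the min-convention cones are the right ones) fills in what the paper calls straightforward.

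One small correction: when cones are indexed by minimizing sets, the condition you need is that $A$ be nonempty, not proper. Nonemptiness holds because, by stability, the base of leg $i$ has a branch containing neither $i$ nor $j$, and any leg on that branch lies in $A$.
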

\begin{proof}
    We show that the linear map $G_{i,j}^\trop$ of \eqref{eq:TropicalMorphisms} induces a morphism of fans, i.e. that for each cone $\sigma$ of $\Sigma(S),$ $G_{i,j}^\trop(\sigma)$ is contained in a cone of the fan $\Sigma'$ of $\P(\k^S/\k^{\{i,j\}}).$ The cones of $\Sigma'$ are in bijection with nonempty subsets of $S \setminus \{i, j\}$.
    
    Every point of $\sigma$ is of the form $\Plucker^\trop(\Gamma)$ for some $\Gamma\in\M_{0,S}^\trop$ of a fixed combinatorial type. It is straightforward to verify that $\Psi_{i\rel j}^\trop(\Gamma)$ lies in the cone of $\Sigma'$ corresponding to the following set $\sigma' \subseteq S \setminus \{i, j\}$: the set of $\ell$ for which the nearest point to the convex hull of $i$ and $j$ is the base of the leg $i$. (Equivalently, the path from $\ell$ to $j$ includes the base of the leg $i$.) In particular, $\sigma'$ depends only on the combinatorial type of $\Gamma$, hence only on $\sigma$. Thus $G_{i,j}^\trop(\sigma) \subseteq \sigma'$ as desired.
\end{proof}

\begin{remark}
    The Kapranov map factors through the reduction map from $\Mbar_{0,n}$ to Losev-Manin space, and the map $G_{i, j}$ accordingly factors through Losev-Manin space. 
    As a map of sets, $\Psi_{i\rel j}^\trop$ may be identified with the tropical reduction map from $\M_{0,S}^\trop$ to tropical Losev-Manin space, which is a refinement of the fan of $\P(\k^S/\k^{\{i,j\}})$. With this interpretation, Proposition \ref{prop:ToricFactorization} essentially follows from e.g. \cite[Sec. 3]{CavalieriHampeMarkwigRanganathan2016}.
\end{remark}

\subsection{Kapranov maps and forgetful maps} \label{subsec:KapranovAndForgetful}
We will consider compositions of $\kap_{i\rel j}$ and $\kap_{i\rel j}^\trop$ with forgetful maps. For $S'\subseteq S$ and $i,j\in S'$, we define:
\begin{align}\label{eq:ForgetfulKapranov}
    \kap_{S',i\rel j}:=\kap_{i\rel j}\circ\pi_{S'},
\end{align}
and similarly for the tropical maps. The following is easy to check from the definitions:
\begin{prop}
    The diagrams \eqref{eq:ToricMorphisms} and \eqref{eq:TropicalMorphisms} are compatible with forgetful maps. Precisely, if $S'\subseteq S$ is such that $\abs{S'}\ge3$ and $i,j\in S'$, then we have commutative diagrams:
    \begin{align}\label{eq:ToricMorphismsForget}
    \begin{tikzcd}[ampersand replacement=\&]
         \Mbar_{0,S}\arrow[r,"\Plucker"]\arrow[d,"\pi_{S'}"]\&\TV_{\Sigma(S)}\arrow[r,"G_{i,j}"]\arrow[d,"\pi_{S'}"]\&\P(\k^{S \setminus \{i, j\}})\arrow[d,dashed]\\
         \Mbar_{0,S'}\arrow[r,"\Plucker"]\&\TV_{\Sigma(S')}\arrow[r,"G_{i,j}"]\&\P(\k^{S' \setminus \{i, j\}}),
    \end{tikzcd}
\end{align}
where the vertical arrows (except the leftmost) are toric, and
\begin{align}\label{eq:TropicalMorphismsForget}
    \begin{tikzcd}[ampersand replacement=\&]
         \M_{0,S}^\trop\arrow[r,"\Plucker^\trop"]\arrow[d,"\pi_{S'}^{\trop}"]\&\R^{\binom{S}{2}}/\R^{S}\arrow[r,"G_{i,j}^\trop"]\arrow[d,"\pi_{S'}^{\trop}"]\&\R^{S \setminus \{i, j\}}/\R\arrow[d]\\
         \M_{0,S'}^\trop\arrow[r,"\Plucker^\trop"]\&\R^{\binom{S'}{2}}/\R^{S'}\arrow[r,"G_{i,j}^\trop"]\&\R^{S' \setminus \{i, j\}}/\R,
    \end{tikzcd}
\end{align}
where the vertical maps of vector spaces are linear.
\end{prop}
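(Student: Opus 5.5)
We must prove compatibility of the two diagrams with forgetful maps. The plan is to define each vertical arrow explicitly, check commutativity of the tropical diagram first (it is purely linear), and then deduce the algebraic statement.

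\textbf{Definitions of the vertical arrows.} The middle vertical arrow is the coordinate projection $\bbT^{\binom{S}{2}}\to\bbT^{\binom{S'}{2}}$, $(p_{ab})\mapsto(p_{ab})_{a,b\in S'}$. It is equivariant for the restriction $\bbT^S\to\bbT^{S'}$, so it descends to a homomorphism of quotient tori. Its tropicalization is the corresponding coordinate projection on $\R^{\binom{S}{2}}/\R^S$. The right vertical arrow is the coordinate projection $\R^{S\setminus\{i,j\}}/\R\to\R^{S'\setminus\{i,j\}}/\R$, and algebraically the rational linear projection forgetting coordinates $\ell\notin S'$.

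\textbf{Step 1: the right-hand squares commute.} Both compositions send $(p_{ab})$ to $(p_{j\ell}/p_{i\ell})_{\ell\in S'\setminus\{i,j\}}$, and tropically to $(p^\trop_{j\ell}-p^\trop_{i\ell})_{\ell\in S'\setminus\{i,j\}}$. This is immediate from \eqref{eq:ToricMorphism-formula} and \eqref{eq:TropicalMorphism-formula}.

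\textbf{Step 2: the left-hand squares commute.}
\begin{enumerate}
\item Tropical square. $\pi^\trop_{S'}(\Gamma)$ is the convex hull of the $S'$-legs, and distances between legs in $S'$ are unchanged by passing to this convex hull. So forgetting the coordinates $d_{ab}$ with $\{a,b\}\not\subseteq S'$ from $-\tfrac12\vec d$ gives exactly $\Plucker^\trop(\pi^\trop_{S'}\Gamma)$.
\item Algebraic square on the interior $\M_{0,S}$. Forgetting points deletes columns of the $2\times|S|$ matrix, hence deletes the corresponding Pl\"ucker coordinates. So the square commutes on the dense open set $\M_{0,S}$.
\end{enumerate}

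\textbf{Step 3: extend from the torus to the toric varieties.} I expect this to be the main point requiring care. The plan is to show that the middle vertical arrow is a toric morphism $\TV_{\Sigma(S)}\to\TV_{\Sigma(S')}$, that is, that the linear projection maps each cone of $\Sigma(S)$ into a cone of $\Sigma(S')$. This follows from Step 2(1): the projection restricted to $|\Sigma(S)|$ equals $\Plucker^\trop\circ\pi^\trop_{S'}$, and $\pi^\trop_{S'}$ sends the cone of a tree $\tree$ into the cone of $\pi_{S'}(\tree)$.

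Given this, the extended maps agree on the dense open set $\M_{0,S}$, both source and target are separated, and $\Mbar_{0,S}$ is reduced and irreducible. Hence the left square of \eqref{eq:ToricMorphismsForget} commutes on all of $\Mbar_{0,S}$.

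The right square is an equality of rational maps, which the dashed arrow indicates. It holds because the maps agree on the dense torus by Step 1.
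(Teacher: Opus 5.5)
Your proposal is correct and is essentially the routine check the paper has in mind; the paper gives no proof beyond calling the statement easy to check from the definitions. Your details fill that in soundly: you take the vertical arrows to be coordinate projections, verify both squares on tori and tropically, deduce toricity of the middle arrow from the tropical left square (as in Proposition~\ref{prop:ToricFactorization}), and extend the algebraic left square from $\M_{0,S}$ by density into the separated target $\TV_{\Sigma(S')}$.
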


\subsection{Tropical \texorpdfstring{$\psi$}{psi}-hypersurfaces}\label{sec:Hypersurfaces}

Let $n\ge3$, and let $\bfT=\bbT^{\binom{n}{2}}/\bbT^n$ be the torus into which $\M_{0,n}$ embeds, and let $N$ be its co-character lattice. Let $\bfT^K$ be the base change to $K$. In this section, define and describe certain hypersurfaces in $\M_{0,n}^K$, chosen so that their tropicalizations represent pullbacks of tropical $\psi$-classes along forgetful maps. (See Corollary \ref{cor:HbarRepresentsPsi}.)

\begin{definition}\label{def:BasicHyperplane}
    Fix $S \subseteq [n]$ and $i, j \in S$. For each $\ell \in S \setminus \{i,j\}$, let $f_\ell \in \K$ be a nonzero Laurent series. Assume that the valuations $a_\ell := \val(f_\ell)$ are distinct. We consider the hyperplane
    \begin{align}\label{eq:VeryBasicHyperplane}
    \underline{H} :=\mathbb V\bigg(\sum_{\ell\in S\setminus\{i,j\}} f_\ell x_\ell\bigg)\subseteq \bbT_K^{S \setminus \{i,j\}}/\bbT_K.
    \end{align}
    Note that $\underline{H}$ is nonempty. We write $$\widetilde{H}=(G_{i,j}\circ\pi_S)^{-1}(\underline H)\subseteq\bfT^K\quad\quad\text{and}\quad\quad H=(G_{i,j}\circ\Plucker\circ\pi_S)^{-1}(\underline H)\subseteq\M_{0,n}^K$$ for the scheme-theoretic preimages, see \eqref{eq:ToricMorphismsForget}. We then have 
    \begin{equation}
    \Trop(H) = \Trop(\widetilde{H} \cap \M_{0,n}^K) \subseteq \M_{0,n}^{\trop}\subset\R^{\binom{n}{2}}/\R^n.
    \end{equation}
We say that $\Trop(H)$ is a \textit{tropical $\psi$-hypersurface that represents $\pi_S^*(\psi_i)$.} This terminology is justified by the following Proposition and Corollary.
The key well-behavedness features of these hyperplanes are as follows.

\begin{prop}\label{prop:HypersurfaceHasNoComponentsInBoundary}
    Let $\overline{H} \subseteq \Mbar_{0, n}^K$ and $\overline{\underline{H}} \subseteq \P(\K^{S \setminus \{i, j\}})$ denote closures.
    Then $\overline{H}=(\kap_{S,i\rel j})^{-1}(\overline{\underline{H}})$. In particular, $(\kap_{S,i\rel j})^{-1}(\overline{\underline{H}}) \subseteq\Mbar_{0,n}^K$ has no components contained in the boundary of $\Mbar_{0,n}^K$.
\end{prop}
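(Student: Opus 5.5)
Since $\overline{H}$ is the closure of $H=\kap_{S,i\rel j}^{-1}(\underline H)\cap\M_{0,n}^K$ and $\kap_{S,i\rel j}^{-1}(\overline{\underline H})$ is closed and contains $H$, we always have $\overline H\subseteq\kap_{S,i\rel j}^{-1}(\overline{\underline H})$. Equality therefore amounts to showing that the preimage has no irreducible component contained in the boundary $\Mbar_{0,n}^K\setminus\M_{0,n}^K$. The preimage of a hyperplane is cut out locally by one equation, namely the pullback of the section $\sum f_\ell x_\ell$ of $\mathcal O(1)$, which pulls back to a section of the line bundle $\pi_S^*L_i$. This section is not identically zero, since $H\neq\emptyset$. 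Hence every component of the preimage has codimension exactly $1$. It thus suffices to show that the pullback section does not vanish identically on any boundary divisor $D_\tau$, where $\tau$ is a one-edge tree.

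I will handle each boundary divisor $D_\tau$ as follows. If $\pi_S$ contracts $\tau$ (one side of the edge contains at most one element of $S$), then $\pi_S(D_\tau)=\Mbar_{0,S}$. The restriction of the section is then the pullback of a nonzero section along a dominant map, so it is nonzero. Otherwise $\pi_S(D_\tau)$ is a boundary divisor $D_{\tau'}$ of $\Mbar_{0,S}$, and it suffices to show that $\kap_{i\rel j}(D_{\tau'})\not\subseteq\overline{\underline H}$.

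Let $A\ni i$ be the $i$-side of the edge. If $j\in A$, then $\kap_{i\rel j}$ contracts the component not containing $p_i$ to a point. The image of $D_{\tau'}$ is then the set of $[q]$ with the $q_\ell$ arbitrary for $\ell\in A\setminus\{i,j\}$, equal to $q_\ell=c$ for $\ell\in A^c$, and with $c$ free. This is a linear subspace of dimension $\ge1$ containing the coordinate points $e_\ell$ for $\ell\in A\setminus\{i,j\}$, together with $\sum_{\ell\notin A}e_\ell$. If $j\notin A$ and $|A|\ge3$, the collision point coincides with $p_j=0$, so the image is the coordinate subspace spanned by the $e_\ell$ with $\ell\in A\setminus\{i\}$. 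If $A=\{i,a\}$, the image is the whole projective space, since that divisor is not contracted.

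In each case the image contains either a coordinate point $e_\ell$ or the point $\sum_{\ell\in B}e_\ell$ for a nonempty set $B$. The hyperplane $\sum f_\ell x_\ell=0$ contains neither: $f_\ell\neq0$, and $\sum_{\ell\in B}f_\ell\neq0$ because the valuations $a_\ell$ are distinct, so the minimal-valuation term cannot cancel. Hence no image lies in $\overline{\underline H}$, and the section is nonzero on every $D_\tau$.

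The main obstacle is correctly identifying $\kap_{i\rel j}(D_{\tau'})$ in each case. This is the one place where the distinct-valuation hypothesis is used: it is needed exactly for the case in which $j$ lies on the same side of the edge as $i$, where the image is a subspace containing $\sum_{\ell\notin A}e_\ell$.
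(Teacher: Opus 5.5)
Your route is sound and differs from the paper's, but one sub-case is misidentified.

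\textbf{The case $A=\{i,a\}$ with $a\neq j$.} Here $D_{\tau'}$ is not mapped onto $\P^{|S|-3}$. It cannot be, since $\dim D_{\tau'}=|S|-4$. In fact it is contracted. The component containing $p_i$ carries only $p_i$, $p_a$ and the node, and the node sits at $p_j=0$. So $\kap_{i\rel j}(D_{\tau'})$ is the single point $e_a$; this is the exceptional divisor over $e_a$. That is exactly what your formula for $j\notin A$, $|A|\ge 3$ already gives, so you can drop the separate case. The conclusion survives because $f_a\neq 0$.

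\textbf{The case $j\in A$.} The image has dimension $|A|-2$, which is $0$ when $A=\{i,j\}$. The image is then the point $\sum_{\ell\in S\setminus\{i,j\}}e_\ell$. This is the only place distinct valuations are needed, since every other image contains some $e_\ell$.

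\textbf{Two smaller fixes.}
\begin{enumerate}
\item The pulled-back section is nonzero because $\kap_{S,i\rel j}$ is dominant, not because $H\neq\emptyset$.
\item To get equality of schemes rather than of sets, note that the preimage is an effective Cartier divisor on the smooth variety $\Mbar_{0,n}^K$. It therefore has no embedded components, so excluding components in the boundary suffices.
\end{enumerate}

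\textbf{Comparison with the paper.} The paper uses Kapranov's description of $\kap_{i\rel j}$ as an iterated blowup of $\P^{|S|-3}$ along linear spans of the points $e_\ell$ and $\sum_{\ell} e_\ell$. It observes that $\overline{\underline H}$ contains none of these points, hence no blowup center. So the total transform of $\overline{\underline H}$ equals its strict transform, which gives the statement on $\Mbar_{0,S}$. It then transports this to $\Mbar_{0,n}$ using flatness of $\pi_S$ and the fact that closure commutes with flat pullback.

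You instead use Krull's principal ideal theorem to reduce to checking that no boundary divisor of $\Mbar_{0,n}$ maps into $\overline{\underline H}$. You then compute these images directly from the modular description, handling $\pi_S$-contracted divisors by dominance. This avoids both the blowup theorem and the flatness of $\pi_S$, at the cost of a case analysis. The paper's version is shorter given Kapranov's theorem and explains conceptually why the hypothesis on the $f_\ell$ is the right one.

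Both arguments rest on the same fact: $\overline{\underline H}$ avoids the $|S|-1$ special points $e_\ell$ and $\sum_\ell e_\ell$. Beyond $f_\ell\neq 0$, this needs only $\sum_\ell f_\ell\neq 0$.
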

\begin{proof} 
    The subvariety $\overline{\underline{H}} \subseteq \P(\K^{S \setminus \{i, j\}})$
    does not contain any of the centers of blowups of the birational map $\Psi_{i\rel j}:\Mbar_{0,S}^K\to \P(\K^{S \setminus \{i, j\}})$ by direct computation, since the $f_\ell$ are all nonzero and distinct, cf.~\cite[Lemma 1.2]{GillespieGriffinLevinson2022}. Thus, $\Psi_{i\rel j}^*(\overline{\underline{H}}) = \overline{\Psi_{i\rel j}^*(\underline{H})}$. Since $\kap_{S, i \rel j} = \kap_{i \rel j} \circ \pi_S$, the statement now follows from flatness of $\pi_S$ and the fact that closure commutes with flat pullback.
\end{proof}
\begin{cor}\label{cor:HbarRepresentsPsi}
    $\overline{H}$ is an effective representative for the class $\pi_{S}^*(\psi_i)\in A^1(\Mbar_{0,n}^K).$
\end{cor}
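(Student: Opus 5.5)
Since Proposition \ref{prop:HypersurfaceHasNoComponentsInBoundary} already gives $\overline{H}=(\kap_{S,i\rel j})^{-1}(\overline{\underline{H}})$ scheme-theoretically, the plan is to identify the class of this pullback with the pullback of the hyperplane class. Recall that $\psi_i=\kap_{i\rel j}^*(h)$ on $\Mbar_{0,S}$, where $h\in A^1(\P^{|S|-3})$ is the hyperplane class (Section \ref{sec:Kapranov}, Remark \ref{rem:KapranovNotWellDefined}). The cotangent class is compatible with base change from $\k$ to $K$, so this identity also holds in $A^1(\Mbar_{0,S}^K)$. Applying $\pi_S^*$ and using $\kap_{S,i\rel j}=\kap_{i\rel j}\circ\pi_S$, we obtain $\pi_S^*(\psi_i)=\kap_{S,i\rel j}^*(h)$.

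Next, $\overline{\underline{H}}$ is a hyperplane in $\P(\K^{S\setminus\{i,j\}})$; its defining linear form $\sum f_\ell x_\ell$ is nonzero. Hence $[\overline{\underline{H}}]=h$, and $\overline{\underline{H}}$ is the zero locus of a section $s$ of $\mathcal{O}(1)$. The pullback $\kap_{S,i\rel j}^{-1}(\overline{\underline{H}})$ is then the zero scheme of the section $\kap_{S,i\rel j}^*s$ of the line bundle $\kap_{S,i\rel j}^*\mathcal{O}(1)$, whose first Chern class is $\pi_S^*(\psi_i)$.

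The one point that needs care is that this section is not identically zero on any component, so that its zero scheme is an effective Cartier divisor representing $c_1$. Since $\Mbar_{0,n}^K$ is irreducible, it suffices that the section is nonzero somewhere. It is nonzero on the open part: $\kap_{S,i\rel j}$ restricted to $\M_{0,n}^K$ is dominant onto the torus, and $H\subsetneq\M_{0,n}^K$ is a proper closed subset because $\underline H$ is a proper hypersurface of the torus. Proposition \ref{prop:HypersurfaceHasNoComponentsInBoundary} further guarantees that the divisor has no boundary components. Therefore $[\overline{H}]=c_1(\kap_{S,i\rel j}^*\mathcal{O}(1))=\pi_S^*(\psi_i)$, and $\overline H$ is effective. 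I expect no real obstacle beyond keeping track of the base change of $\psi_i$ to $K$.
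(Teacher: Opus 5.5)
Your proposal is correct and follows the same route the paper takes implicitly. The paper reads the corollary off from Proposition \ref{prop:HypersurfaceHasNoComponentsInBoundary} (giving $\overline{H}=(\kap_{S,i\rel j})^{-1}(\overline{\underline{H}})$), together with $\psi_i=\kap_{i\rel j}^*(h)$ and $\kap_{S,i\rel j}=\kap_{i\rel j}\circ\pi_S$, and your check that the pulled-back section of $\kap_{S,i\rel j}^*\mathcal{O}(1)$ is not identically zero (via dominance of $\M_{0,n}^K$ onto the torus) makes explicit a detail the paper leaves unstated.
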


\end{definition}
    Letting $(\underline x_\ell)_{\ell\in S\setminus\{i,j\}}$ denote an element of $\R^{S \setminus \{i,j\}}/\R$, Equation \eqref{eq:VeryBasicHyperplane} tropicalizes to
    \begin{align} \label{eq:DistanceConditionHUnderlineTrop}
    \Trop(\underline{H}) = \{(\underline x_\ell)
    : \min_\ell(\underline x_\ell+ a_\ell)\text{ is achieved twice}\}\subseteq\R^{S \setminus \{i, j\}}/\R.
\end{align}

\begin{lem}\label{lem:SurjectiveMapOfLattices}
    The monomial map $G_{i,j}\circ\pi_S$ of tori corresponds to a surjective map of lattices.
\end{lem}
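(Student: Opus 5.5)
We need to show that the lattice map $\widehat{G_{i,j}\circ\pi_S}: N \to \Z^{S\setminus\{i,j\}}/\Z\mathbbm{1}$ is surjective. The plan is to factor it as the composition of the forgetful lattice map $N=\Z^{\binom{n}{2}}/\Z^n \to \Z^{\binom{S}{2}}/\Z^{S}$ with $\widehat{G_{i,j}}:\Z^{\binom{S}{2}}/\Z^S\to \Z^{S\setminus\{i,j\}}/\Z$, and to prove that each factor is surjective.

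For the forgetful factor, the map on tori sends Plücker coordinates $(p_{ab})_{a,b\in[n]}$ to $(p_{ab})_{a,b\in S}$. The corresponding map on cocharacter lattices is therefore induced by the coordinate projection $\Z^{\binom{n}{2}}\to\Z^{\binom{S}{2}}$, which forgets the pairs not contained in $S$. This projection is visibly surjective, and it sends the image of $\Z^n$ into the image of $\Z^S$, since $(a_k)\mapsto(a_a+a_b)$ restricts to the same formula for $a,b \in S$. Hence it descends to a surjection on the quotients.

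For $G_{i,j}$, formula \eqref{eq:TropicalMorphism-formula} gives the lattice map $(p_{ab})\mapsto(p_{j\ell}-p_{i\ell})_{\ell}$ modulo $\mathbbm{1}$. I will exhibit preimages of a generating set. Given $\ell_0\in S\setminus\{i,j\}$, take the basis vector $e_{j\ell_0}\in\Z^{\binom{S}{2}}$. Its image is the standard basis vector $e_{\ell_0}$, because the coordinate $p_{j\ell}-p_{i\ell}$ equals $1$ exactly for $\ell=\ell_0$ and equals $0$ otherwise. Note that $e_{j\ell_0}$ contributes nothing to any $p_{i\ell}$ coordinate since $\ell_0 \neq i$. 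The vectors $e_{\ell_0}$ generate $\Z^{S\setminus\{i,j\}}$, and hence generate its quotient by $\Z\mathbbm{1}$, so this factor is surjective.

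Composing the two surjections gives the lemma. The only step needing care is bookkeeping: confirming that the forgetful maps in \eqref{eq:TropicalMorphismsForget} really are the coordinate projections described above, and that the quotients by $\R^n$, $\R^S$ and $\R\mathbbm{1}$ are respected. I expect no genuine obstacle here.
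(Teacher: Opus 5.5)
Your proposal is correct and matches the paper's proof: the paper also lifts each coordinate by placing it in the $p_{j\ell}$ slot with all other coordinates trivial (your $e_{j\ell_0}\mapsto e_{\ell_0}$), and it likewise treats surjectivity of the forgetful part as evident. Phrasing the argument directly on cocharacter lattices, as you do, is slightly cleaner than the paper's version on points of tori, since surjectivity on points alone would only give finite cokernel.
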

\begin{proof}
    Given $(y_\ell)_{\ell\in S\setminus \{i,j\}} \in \bbT_K^{S \setminus \{i, j\}}/\bbT_K$, define $x_{j,\ell} \coloneqq y_\ell$ for all $\ell\neq j$, and $x_{a,b}\coloneqq 1$ for all other pairs. Then $G_{i,j}((x_{a,b})_{a<b})_\ell = y_\ell/1 = y_\ell$, so $G_{i,j}$ induces a surjective map of lattices. Furthermore, $\pi_S$ clearly also induces a surjective map of lattices, so the composition $G_{i,j}\circ \pi_S$ does as well.
\end{proof}
Combining Lemma \ref{lem:SurjectiveMapOfLattices}, Lemma \ref{lem:HypersurfacePullback}, and the fact that $\underline{H}$ is a hyperplane (so all facets of $\Trop(\underline{H})$ have weight 1) now yields:
\begin{cor}\label{cor:FacetsOfHTildeHaveWeight1}
    $\Trop(\widetilde H)=(G_{i,j}^{\trop})^{-1}(\Trop(\underline{H}))$, and all facets have weight 1.
\end{cor}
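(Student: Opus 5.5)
The plan is to deduce both assertions directly from Lemma \ref{lem:HypersurfacePullback}, applied to the monomial map $\varphi = G_{i,j}\circ\pi_S$ of tori from $\bfT^K$ to $\bbT_K^{S\setminus\{i,j\}}/\bbT_K$, with $f=\sum_{\ell} f_\ell x_\ell$. The scheme $\widetilde H$ is, by definition, the scheme-theoretic preimage $\varphi^{-1}(\underline H) = V(\varphi^*(f))$, since pulling back the principal ideal $(f)$ gives the principal ideal $(\varphi^*f)$. Lemma \ref{lem:SurjectiveMapOfLattices} says that the lattice map $\widehat\varphi$ is surjective, so the hypothesis of Lemma \ref{lem:HypersurfacePullback} holds.

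The first step is to identify $\widehat\varphi\otimes\R$ with the tropical map. By construction in \eqref{eq:TropicalMorphism-formula} and \eqref{eq:TropicalMorphismsForget}, the linear map induced on cocharacter spaces by $G_{i,j}\circ\pi_S$ is $G_{i,j}^\trop\circ\pi_S^\trop$ on $\R^{\binom n2}/\R^n$. The statement abbreviates this composite as $G_{i,j}^\trop$, matching the notation $\widetilde H=(G_{i,j}\circ\pi_S)^{-1}(\underline H)$. Equation \eqref{eq:PullbackProduct} then gives $\Trop(\widetilde H)=(G_{i,j}^{\trop})^{-1}(\Trop(\underline H))$ as sets.

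The second step is to read off the weights. Since $f$ is linear in the coordinates $x_\ell$, the Newton polytope of $f$ is a simplex with vertices at the standard basis vectors, and every lattice point of $\Newt(f)$ is a vertex. Every edge of the regular subdivision induced by $\ell\mapsto a_\ell$ therefore joins two vertices $e_\ell,e_{\ell'}$ and has lattice length $1$. The facts on tropical hypersurfaces recalled in Section \ref{sec:BackgroundTropicalBasics} then show that every facet of $\Trop(\underline H)$ has weight $1$. The remaining point of Lemma \ref{lem:HypersurfacePullback} (weights are pulled back) shows that every facet of $\Trop(\widetilde H)$ has weight $1$.

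The only mildly delicate point is the bookkeeping in the first step, namely that $\widehat\varphi_\R$ really is the map denoted $G_{i,j}^\trop$ in the statement, including $\pi_S$. This follows from the commutative diagram \eqref{eq:TropicalMorphismsForget}. Beyond that, the argument is a direct citation.
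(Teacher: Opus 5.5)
Your proposal is correct and follows the paper's argument: the paper likewise combines Lemma \ref{lem:SurjectiveMapOfLattices}, Lemma \ref{lem:HypersurfacePullback}, and the fact that $\underline H$ is a hyperplane, so all facets of $\Trop(\underline H)$ have weight $1$. You have filled in the two details the paper leaves implicit: that $G_{i,j}^{\trop}$ here means the composite with $\pi_S^{\trop}$, and the lattice-length argument showing why a linear form gives weight-$1$ facets.
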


We now describe $(\kap_{S,i\rel j}^{\trop})^{-1}(\Trop(\underline H))=\Trop(\widetilde H)\cap\M_{0,n}^{\trop}$. Suppose $\Gamma\in(\kap_{S,i\rel j}^{\trop})^{-1}(\Trop(\underline H))$. Then the minimum 
\begin{align}\label{eq:MinimumBeforeIntersecting}
    \min_{\ell\in S\setminus\{i,j\}}(\underline d_\ell+a_\ell)
\end{align}
    is achieved at least twice, say by $\ell_1, \ell_2 \in S \setminus \{i,j\}$ with $a_{\ell_1} > a_{\ell_2}$. Then
\[\underline d_{\ell_2} - \underline d_{\ell_1} = a_{\ell_1} - a_{\ell_2},\]
so $a_{\ell_1} - a_{\ell_2}$ is the distance between the attachment points of the legs $\ell_1$ and $\ell_2$ along the path between the legs $i$ and $j$.  In particular, the convex hull of $i,j,\ell_1,\ell_2$ in $\Gamma$ is of the form
\begin{align}\label{eq:ilkjPath0}
    \raisebox{-20pt}{\begin{tikzpicture}[scale=.7]
                \draw[line width=0.5pt,decorate, decoration={snake,amplitude=.35mm,segment length=2.25mm,post length=1mm}] (0,0)--(2,0);
                \draw (0,0) node {$\bullet$};
                \draw (2,0) node {$\bullet$};
                \draw (0,0)--++(180:.6) node[left] {$i$};
                \draw (0,0)--++(90:.6) node[above] {$\ell_1$};
                \draw (2,0)--++(90:.6) node[above] {$\ell_2$};
                \draw (2,0)--++(0:.6) node[right] {$j$,};
                \draw (1,-0.25) node[below] {$P$};
            \end{tikzpicture}
            }
        \end{align}
        where $P$ is a path in $\Gamma$ and $\len(P) = a_{\ell_1} - a_{\ell_2}$. Note that $\ell_1$ is closer to $i$.

\begin{lem}\label{lem:IndependentPerturbations}
    For every cone $\sigma\in\M_{0,n}^{\trop}$, the locus where the minimum \eqref{eq:MinimumBeforeIntersecting} is achieved $r$ times has codimension $r-1$ in $\sigma$.
\end{lem}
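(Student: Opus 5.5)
The plan is to work one cone at a time and reduce the statement to a linear-algebra fact about the coordinates $\underline d_\ell$ on that cone. Fix a cone $\sigma=\M_{0,n}^{\trop}(\tree)$. Because the minimum \eqref{eq:MinimumBeforeIntersecting} only involves $\pi_S^{\trop}(\Gamma)$, I will describe the functions $\underline d_\ell$ on $\sigma$ through the convex hull of legs $i$ and $j$ in $\tree$. In $\tree$ this convex hull is a path with vertices $v_0,v_1,\dots,v_k$, where leg $i$ is attached at $v_0$ and leg $j$ at $v_k$, and whose edges are $e_1,\dots,e_k$. For each $\ell\in S\setminus\{i,j\}$, the nearest point of this path to leg $\ell$ is a vertex $v_{s(\ell)}$, and $s(\ell)$ depends only on $\tree$. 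Normalizing as in Definition \ref{def:TropicalPsi}, we get
\[\underline d_\ell(\Gamma)=\sum_{p\le s(\ell)}\len(e_p)\]
on $\sigma$. Thus each $\underline d_\ell$ is a linear function of the edge-length coordinates of $\sigma$, and it depends only on the block $B_s=s^{-1}(s)$ containing $\ell$. Faces of $\sigma$ are themselves cones of $\M_{0,n}^{\trop}$, so the same description applies to them and it suffices to understand each cone by itself.

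The first key step is to rule out ties inside a block. If $\ell,\ell'\in B_s$ with $\ell\ne\ell'$, then $\underline d_\ell=\underline d_{\ell'}$ identically on $\sigma$. Since $a_\ell\neq a_{\ell'}$ by hypothesis, $\underline d_\ell+a_\ell$ and $\underline d_{\ell'}+a_{\ell'}$ are never equal. Hence any indices $\ell_1,\dots,\ell_r$ simultaneously achieving the minimum lie in $r$ distinct blocks $B_{s_1},\dots,B_{s_r}$, which I order so that $s_1<\dots<s_r$. Moreover, within each block only the element with smallest $a_\ell$ can ever achieve the minimum, so the locus where the minimum is achieved $r$ times is contained in a finite union, over choices of $r$ distinct blocks, of the sets
\[\{\underline d_{\ell_{p+1}}-\underline d_{\ell_p}=a_{\ell_p}-a_{\ell_{p+1}},\ p=1,\dots,r-1\}\cap\sigma.\]

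The second step shows these $r-1$ equations are linearly independent. The $p$-th equation reads
\[\sum_{s_p<q\le s_{p+1}}\len(e_q)=a_{\ell_p}-a_{\ell_{p+1}}.\]
Its linear part involves only the edge lengths in the interval $(s_p,s_{p+1}]$ of path edges. These intervals are nonempty and pairwise disjoint, so the linear forms have disjoint nonempty supports in the coordinates $(\len(e))_{e\in\edges(\tree)}$ and are therefore independent. Consequently each piece is $\sigma$ intersected with an affine subspace of codimension exactly $r-1$, and the locus in question has codimension $r-1$ in $\sigma$. The strict inequalities, namely that the other $\underline d_\ell+a_\ell$ are larger, are open conditions and do not lower the codimension. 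Since the equations involve disjoint sets of coordinates, it is also easy to find interior solutions whenever the required differences $a_{\ell_p}-a_{\ell_{p+1}}$ are positive, so the codimension is exact when the locus is nonempty.

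The only real subtlety, which I expect to be the main obstacle, is making precise the first step. One must see that the functions $\underline d_\ell$ coincide exactly within a block and are otherwise "separated by independent edge lengths." This is where the hypothesis that the $a_\ell$ are distinct is essential: without it, two legs attached at the same path vertex could tie identically on all of $\sigma$, and the locus would fail to have the expected codimension.
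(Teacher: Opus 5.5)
Your proposal is correct and follows essentially the paper's argument: the paper likewise uses distinctness of the $a_\ell$ to force the $r$ minimizers to attach at distinct points of the path from $i$ to $j$, and then notes that moving $\ell_2,\dots,\ell_r$ to the right independently gives an $(r-1)$-dimensional normal space; your disjoint-support linear forms in the edge-length coordinates make exactly this rigorous. One small tightening for exactness: the operative fact is that, near a point where exactly $r$ indices tie, the locus coincides with the convex polyhedron cut out of $\sigma$ by your $r-1$ equations, and this polyhedron has dimension $\dim\sigma-r+1$ at each of its points; finding interior solutions of the equations alone does not suffice, since such solutions need not satisfy the minimality conditions.
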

\begin{proof}
    If $a_{\ell_1}>\ldots>a_{\ell_r}$ achieve the minimum, then since the $a_{\ell_i}$s are all distinct, the $\underline{d}_{\ell_i}$s are all distinct. That is, the convex hull of $i,j,\ell_1,\ldots,\ell_r$ in $\Gamma$ is of the form:
\begin{align}\label{eq:lengthrPath0}
    \raisebox{-20pt}{\begin{tikzpicture}[scale=.7]
                \draw[line width=0.5pt,decorate, decoration={snake,amplitude=.35mm,segment length=2.25mm,post length=1mm}] (0,0)--(3,0);
                \draw (0,0) node {$\bullet$};
                \draw (1,0) node {$\bullet$};
                \draw (3,0) node {$\bullet$};
                \draw (0,0)--++(180:.6) node[left] {$i$};
                \draw (0,0)--++(90:.6) node[above] {$\ell_1$};
                \draw (1,0)--++(90:.6) node[above] {$\ell_2$};
                \draw (2,0.6) node[above] {$\cdots$};
                \draw (3,0)--++(90:.6) node[above] {$\ell_r$};
                \draw (3,0)--++(0:.6) node[right] {$j$,};
            \end{tikzpicture}
            }
        \end{align}

    Observe that if the position of $\ell_i$ is perturbed to the right, then $\ell_i$ is no longer minimizing. We may perturb $\ell_2, \ldots, \ell_r$ to the right independently modulo translation, which gives an $r-1$-dimensional normal space to the locus where the minimum is achieved $r$ times.
\end{proof}

\begin{lem}\label{lem:PullbackMinimum}
    We have $\Trop(H) = \Trop(\widetilde{H}) \cap\M_{0,n}^\trop$ as weighted polyhedral complexes.
\end{lem}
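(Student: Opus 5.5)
The plan is to deduce Lemma \ref{lem:PullbackMinimum} from Osserman--Payne's theorem that tropicalization commutes with intersection when the intersection is proper. Here $H=\widetilde H\cap\M_{0,n}^K$ scheme-theoretically, because $H$ is the pullback of $\underline H$ along $G_{i,j}\circ\pi_S$ restricted to $\M_{0,n}^K$. The ambient space $\Trop(\M_{0,n})=\M_{0,n}^{\trop}$ has all maximal cones of weight $1$, and by Corollary \ref{cor:FacetsOfHTildeHaveWeight1} all facets of $\Trop(\widetilde H)$ have weight $1$. So the content reduces to two points:
\begin{enumerate}
\item $\Trop(\widetilde H)\cap\M_{0,n}^{\trop}$ has the expected dimension $n-4$;
\item the resulting stable-intersection multiplicities are all $1$.
\end{enumerate}

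\textbf{Step 1 (properness).} I will show that for every cone $\sigma$ of $\M_{0,n}^{\trop}$, the set $\Trop(\widetilde H)\cap\sigma=(\kap^{\trop}_{S,i\rel j})^{-1}(\Trop(\underline H))\cap\sigma$ has codimension exactly $1$ in $\sigma$, or is empty. By \eqref{eq:DistanceConditionHUnderlineTrop}, this set is the locus in $\sigma$ where the minimum \eqref{eq:MinimumBeforeIntersecting} is achieved at least twice. Lemma \ref{lem:IndependentPerturbations} with $r=2$ gives exactly that this locus has codimension $1$ in $\sigma$.

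Consequently the intersection has dimension $n-4=\dim\M_{0,n}^{\trop}-1$. Moreover, its pieces lying in non-maximal cones have dimension strictly less than $n-4$. Hence every top-dimensional piece is the closure of its intersection with the relative interiors of maximal cones $\zeta$ of $\M_{0,n}^{\trop}$. Those are simple points of $\Trop(\M_{0,n})$. Shrinking further, I may also take the generic point to lie in the relative interior of a facet of $\Trop(\widetilde H)$, i.e.\ where the minimum is achieved exactly twice.

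With properness established, I will invoke \cite[Thm.~1.1, Cor.~5.1.3]{OssermanPayne2013} (using that $\M_{0,n}$ is smooth and $\widetilde H$ is a hypersurface, hence Cohen--Macaulay). This gives $\Trop(\widetilde H\cap\M_{0,n}^K)=\Trop(\widetilde H)\cdot\Trop(\M_{0,n})$ as weighted complexes.

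\textbf{Step 2 (multiplicities are $1$, and supports agree).} At a generic point $\Gamma$ as above, the minimum in \eqref{eq:MinimumBeforeIntersecting} is achieved exactly by some $\ell_1,\ell_2$. Locally near $\Gamma$, $\Trop(\widetilde H)$ is the hyperplane $\underline d_{\ell_2}-\underline d_{\ell_1}=a_{\ell_1}-a_{\ell_2}$. The stable-intersection multiplicity there is $[L_\Z(\zeta):L_\Z(\zeta)\cap L_\Z(\widetilde H;\Gamma)+\ldots]$, which reduces to the index of the image of $L_\Z(\zeta)$ under the linear functional $\underline d_{\ell_2}-\underline d_{\ell_1}$, since that functional defines the hyperplane.

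In the edge-length coordinates of $\zeta\cong\R_{\ge0}^{\edges(\tau)}$, this functional is the length of the path $P$ in \eqref{eq:ilkjPath0}, namely a sum of edge-length coordinates with coefficients $1$. It is therefore primitive on $L_\Z(\zeta)$ and surjects onto $\Z$, so the index is $1$.

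Since every facet of the set-theoretic intersection thus carries positive (indeed unit) multiplicity, the support of the stable intersection equals the full set $\Trop(\widetilde H)\cap\M_{0,n}^{\trop}$. Weights are determined at such generic points, giving equality as weighted complexes.

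\textbf{Main obstacle.} I expect the delicate point to be the properness check near lower-dimensional cones of $\M_{0,n}^{\trop}$, where $\Trop(\M_{0,n})$ is not locally linear. There I must make sure that no component of the intersection lies entirely in a lower-dimensional cone, so that Osserman--Payne's hypotheses (facets meeting simple points) really hold. I will handle this by the dimension count from Lemma \ref{lem:IndependentPerturbations} applied cone by cone, as in Step 1.
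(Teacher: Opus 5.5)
Your argument is correct and is essentially the paper's. Properness of $\Trop(\widetilde H)\cap\M_{0,n}^{\trop}$ comes from Lemma~\ref{lem:IndependentPerturbations}. Osserman--Payne's lifting theorem for the proper intersection of the two torus subvarieties $\widetilde H$ and $\M_{0,n}^K$ then gives set-theoretic equality, and \cite[Cor.~5.1.3]{OssermanPayne2013} with Cohen--Macaulayness gives the weights; your unit-multiplicity computation is what the paper does separately in Proposition~\ref{prop:facetsofHtrophaveweightone}.

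Two small points. Because the intersection takes place in the torus, no simple-point hypothesis on $\Trop(\M_{0,n})$ is needed. You should also take set-theoretic equality from the pointwise lifting statement (every $\Gamma$ near which the intersection is proper lies in $\Trop(H)$), not from positivity of weights on facets. The latter would additionally require showing that every point of $\Trop(\widetilde H)\cap\M_{0,n}^{\trop}$ lies in the closure of a top-dimensional piece.
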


\begin{proof}
    By definition $H = (\kap_{S,i\rel j})^{-1}(\underline{H})$ and $\Trop(H) = \Trop(\widetilde{H} \cap \M_{0,n})\subseteq \R^{\binom{n}{2}}/\R^n$. Thus, the inclusion 
    \[
    \Trop(H) \subseteq \Trop(\widetilde{H}) \cap \M_{0,n}^\trop
    \]
    is immediate from the definition of tropicalization. For the reverse inclusion, let $\Gamma\in \Trop(\widetilde{H}) \cap \M_{0,n}^\trop$.
    By Lemma \ref{lem:IndependentPerturbations}, $\Trop(\widetilde H) \cap\M_{0,n}^\trop$ has codimension 1 in $\M_{0,n}^\trop$ in a neighborhood of $\Gamma$. Thus by \cite[Thm. 5.1.1]{OssermanPayne2013}, $\Gamma\in \Trop(H).$ This proves that $\Trop(H) = \Trop(\widetilde{H}) \cap \M_{0,n}^\trop$ as sets. Finally, for $\Gamma$ on a facet $\facet$ of $\Trop(H)$, since $\widetilde{H}$ and $\M_{0,n}$ are Cohen-Macaulay, \cite[Cor. 5.1.3]{OssermanPayne2013} implies that the weights of $\Trop(H)$ and $\Trop(\widetilde{H}) \cap \M_{0,n}^\trop$ agree, completing the proof.
\end{proof}
Corollary \ref{cor:FacetsOfHTildeHaveWeight1} and Lemma \ref{lem:PullbackMinimum} imply:
\begin{cor}\label{cor:DistanceCondition}
    \begin{align} \label{eq:DistanceConditionHTrop}
    \Trop(H) &= (\Psi_{S,i \rel j}^\trop)^{-1}(\Trop(\underline{H})) \\ &= \{\Gamma \in \M_{0, n}^\trop : \min_\ell(\underline d_\ell+ a_\ell)\text{ is achieved twice, where } (\underline d_\ell) = \Psi_{i \rel j}^\trop(\Gamma)\}.\nonumber
\end{align}
\end{cor}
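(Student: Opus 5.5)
The plan is to chain the two preceding results and then unwind the definitions. By Lemma~\ref{lem:PullbackMinimum}, $\Trop(H)=\Trop(\widetilde H)\cap\M_{0,n}^\trop$ as weighted polyhedral complexes. By Corollary~\ref{cor:FacetsOfHTildeHaveWeight1}, $\Trop(\widetilde H)=(G_{i,j}^\trop\circ\pi_S^\trop)^{-1}(\Trop(\underline H))$, where $\pi_S^\trop$ is the linear forgetful map on $\R^{\binom{n}{2}}/\R^n$ from \eqref{eq:TropicalMorphismsForget}. Note that this corollary is stated with $G_{i,j}^\trop$, but the monomial map in Lemma~\ref{lem:SurjectiveMapOfLattices} is the composite $G_{i,j}\circ\pi_S$, so the preimage is under the composite.

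Intersecting with $\M_{0,n}^\trop$ therefore gives
\[
\Trop(H)=\M_{0,n}^\trop\cap (G_{i,j}^\trop\circ\pi_S^\trop)^{-1}(\Trop(\underline H)).
\]
By commutativity of \eqref{eq:TropicalMorphismsForget} (identifying $\M_{0,n}^\trop$ with its image under $\Plucker^\trop$), the restriction of $G_{i,j}^\trop\circ\pi_S^\trop$ to $\M_{0,n}^\trop$ equals $G_{i,j}^\trop\circ\Plucker^\trop\circ\pi_S^\trop=\kap^\trop_{i\rel j}\circ\pi_S^\trop=\kap^\trop_{S,i\rel j}$. This proves the first equality of \eqref{eq:DistanceConditionHTrop}. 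The weights agree as well: every facet of $\Trop(\widetilde H)$ has weight $1$, all maximal cones of $\M_{0,n}^\trop$ have weight $1$, and Lemma~\ref{lem:PullbackMinimum} identifies weights.

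For the second equality, substitute the explicit description \eqref{eq:DistanceConditionHUnderlineTrop} of $\Trop(\underline H)$. A point $\Gamma$ lies in the preimage exactly when the coordinates $(\underline d_\ell)=\kap^\trop_{S,i\rel j}(\Gamma)$ satisfy the condition that $\min_\ell(\underline d_\ell+a_\ell)$ is achieved at least twice. This condition is invariant under translation by $\mathbbm 1$, so it is well defined on the quotient. Here $\underline d_\ell$ are the coordinates of Definition~\ref{def:TropicalPsi}, computed on the convex hull $\pi_S^\trop(\Gamma)$. The statement writes $\Psi^\trop_{i\rel j}(\Gamma)$, which is to be read as $\Psi^\trop_{i\rel j}(\pi_S^\trop(\Gamma))$. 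This is harmless: the attachment positions along the $i$--$j$ path are the same whether measured in $\Gamma$ or in its convex hull $\pi_S^\trop(\Gamma)$.

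The only step needing care is checking that the sign and normalization conventions in \eqref{eq:TropicalMorphism-formula} match the min-convention \eqref{eq:DistanceConditionHUnderlineTrop}. This amounts to confirming that the coordinates produced by $G^\trop_{i,j}\circ\Plucker^\trop$ are exactly the $d_\ell$ of Definition~\ref{def:TropicalPsi} up to $\mathbbm 1$, which the paper already records there. There is no real obstacle; the corollary is a formal consequence of the two cited results.
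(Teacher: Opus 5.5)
Your argument is correct and is essentially the paper's own. The paper records this corollary as an immediate consequence of Corollary~\ref{cor:FacetsOfHTildeHaveWeight1} and Lemma~\ref{lem:PullbackMinimum}, read through \eqref{eq:DistanceConditionHUnderlineTrop}. Your two clarifications correctly fix small notational slips in the paper: the preimage is taken under the composite $G_{i,j}^\trop\circ\pi_S^\trop$, and $\Psi^\trop_{i\rel j}(\Gamma)$ should be read as $\Psi^\trop_{S,i\rel j}(\Gamma)$.

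You can drop the aside on weights. This statement is set-theoretic, so it is not needed. As you phrased it, it would also need the lattice-index computation of Proposition~\ref{prop:facetsofHtrophaveweightone}, not just the fact that the facets have weight $1$.
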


We can use Corollary \ref{cor:DistanceCondition} to give local equations for $\Trop(H)$ in coordinates, on cones of $\M_{0,n}^{\trop}.$ Let $\sigma\subset\M_{0,n}^{\trop}$ be a cone, corresponding to a tree $\tau$, and let $\Gamma\in\Trop(H)\cap\relint(\sigma)$ be a point where the minimum \eqref{eq:DistanceConditionHTrop} is achieved exactly twice, say by $\ell_1,\ell_2\in S\setminus\{i,j\}$ with $a_{\ell_1}>a_{\ell_2}.$ Let $v_1,\ldots,v_{r}$ be primitive vectors along the rays of $\sigma$; these are in canonical bijection with the edges $e_1,\ldots,e_r$ of $\tau$, and they generate the lattice $L_{\Z}(\sigma).$ The cone coordinates on $\zeta$ with respect to the basis $v_1,\ldots v_{r}$ are the lengths $(\len(e_1),\ldots,  \len(e_{r}))$. 

Near $\Gamma$ on $\sigma$, $\Trop(H)$ is defined by the equation \begin{align}\label{eq:TropHEquationPathLength}
    \len(P)=a_{\ell_1}-a_{\ell_2},    
\end{align}
where $P$ is the path in \eqref{eq:ilkjPath0}. We may also write this equation as the matrix equation  
\begin{align}\label{eq:TropHMatrixEquation}
    \begin{bmatrix}
    c_1&\cdots&c_{r}
\end{bmatrix}
\cdot\begin{bmatrix}
    \len(e_1)&
    \cdots&
    \len(e_{r})
\end{bmatrix}^T
=\begin{bmatrix}
    a_{\ell_1}-a_{\ell_2}
\end{bmatrix},
\end{align}
where $$c_i=\begin{cases}
    1&e_i\text{ is on the path $P$, and}\\
    0&\text{otherwise.}
\end{cases}$$

\begin{prop}\label{prop:facetsofHtrophaveweightone}
Every facet of $\Trop(H)$ has weight $1$.
    
\end{prop}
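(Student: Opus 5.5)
We plan to compute the weight of a facet of $\Trop(H)$ at a generic point using Lemma \ref{lem:PullbackMinimum}, which identifies $\Trop(H)$ with $\Trop(\widetilde H)\cap\M_{0,n}^{\trop}$ as weighted complexes. The weights come from \cite[Cor. 5.1.3]{OssermanPayne2013}, so the job is to evaluate that product formula. Take $\Gamma$ generic on a facet $\facet$ of $\Trop(H)$. Since $\facet$ has codimension $1$ in $\M_{0,n}^{\trop}$, we may take $\Gamma$ in the relative interior of a maximal cone $\zeta$ of $\M_{0,n}^{\trop}$ (a trivalent tree, weight $1$ by \cite[Cor. 4.3.12]{MaclaganSturmfels2015}). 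By Lemma \ref{lem:IndependentPerturbations} we may also assume the minimum \eqref{eq:MinimumBeforeIntersecting} is attained exactly twice, by $\ell_1,\ell_2$. Then $\Gamma$ lies in the relative interior of a facet $\tilde\facet$ of $\Trop(\widetilde H)$, of weight $1$ by Corollary \ref{cor:FacetsOfHTildeHaveWeight1}.

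The Osserman--Payne formula, with ambient $N$ (the torus $\bfT$), then gives
\[m_{\Trop(H);\facet}=m_{\tilde\facet}\cdot m_{\zeta}\cdot[N:L_\Z(\zeta)+L_\Z(\tilde\facet)]=[N:L_\Z(\zeta)+L_\Z(\tilde\facet)].\]
So the claim reduces to showing $L_\Z(\zeta)+L_\Z(\tilde\facet)=N$. Since $L_\Z(\tilde\facet)$ is the kernel of a primitive linear functional $\phi$ on $N$, namely the pullback of the equation $\underline x_{\ell_2}-\underline x_{\ell_1}$ under $G_{i,j}^{\trop}\circ\pi_S^{\trop}$, the index equals the index of $\phi(L_\Z(\zeta))$ in $\phi(N)$. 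Primitivity of $\phi$ on $N$ uses Lemma \ref{lem:SurjectiveMapOfLattices}: the lattice map is surjective and $e_{\ell_2}-e_{\ell_1}$ is primitive in the dual of $\Z^{S\setminus\{i,j\}}/\Z$. It therefore suffices to find a lattice vector of $\zeta$ with $\phi$-value $\pm1$.

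For this we use the local description \eqref{eq:TropHMatrixEquation}. In cone coordinates on $\zeta$, with the basis $v_1,\dots,v_r$ of $L_\Z(\zeta)$ given by edge lengths, $\phi$ restricts to $\sum_k c_k\len(e_k)$, where $c_k\in\{0,1\}$ indicates whether $e_k$ lies on the path $P$. The path $P$ is nonempty because $a_{\ell_1}\neq a_{\ell_2}$, so $\len(P)>0$. Hence some $c_k=1$, and $\phi(v_k)=1$. This gives index $1$.

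The main obstacle is the bookkeeping showing that $\phi$ restricted to $\zeta$ really is the path-length functional with coefficients $0/1$ in the primitive ray basis. This requires the $\tfrac12$ normalization in $\Plucker^{\trop}$ so that integral structures match, and it requires that the restriction is the primitive functional and not a multiple of it. We will check this via Definition \ref{def:TropicalPsi}: $\underline d_{\ell_2}-\underline d_{\ell_1}$ is literally the distance along the $i$--$j$ path, which is linear with unit coefficients in edge lengths.
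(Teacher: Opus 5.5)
Your proposal is correct and is essentially the paper's proof: choose a generic point of the facet, reduce via Lemma \ref{lem:PullbackMinimum} (i.e.\ \cite[Cor.~5.1.3]{OssermanPayne2013}) to the tropical intersection multiplicity of $\Trop(\widetilde H)$ and $\zeta$, and get $1$ because the path functional has some coefficient $c_k=1$ in the ray basis of $L_\Z(\zeta)$. One small point on the first step: that a generic point of the facet lies in the interior of a maximal cone $\zeta$ follows from Lemma \ref{lem:IndependentPerturbations}, not from codimension counting alone. The paper evaluates the multiplicity via the dual index \eqref{eq:TropIntMultDef2}, using that the rows $\begin{bmatrix}\vec{c} & \vec{*}\end{bmatrix}$ and $\begin{bmatrix}\vec{0} & I\end{bmatrix}$ span a saturated sublattice of $N^\vee$, whereas you use the equivalent primal index $[N:L_\Z(\zeta)+L_\Z(\tilde\facet)]=[\phi(N):\phi(L_\Z(\zeta))]$; your explicit check that $\phi|_{\zeta}$ is exactly, not merely proportionally, the path length is a point the paper leaves implicit.
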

\begin{proof}
Let $\kappa$ be a facet of $\Trop(H)$. Combining Lemmas \ref{lem:PullbackMinimum} and \ref{lem:IndependentPerturbations}, a general point $\Gamma\in\kappa$ is in the interior of a maximal cone $\zeta$ of $\M_{0,n}^{\trop}$, and also achieves the minimum in \eqref{eq:DistanceConditionHTrop} exactly twice, say by $\ell_1$ and $\ell_2$ with $a_{\ell_1} > a_{\ell_2}$. 

As above, let $v_1, \ldots, v_{n-3}$ be primitive vectors along the rays of $\zeta$. These span $L_{\Z}(\zeta)$ (see Section \ref{sec:Embeddings}), so we may extend to a basis of $N$. From the equation \eqref{eq:TropHMatrixEquation} for $\Trop(H)$ on $\zeta$ near $\Gamma$, and the fact that locally $\Trop(H)=\Trop(\widetilde H)\cap\zeta,$ we see that the equation for $\Trop(\widetilde H)$ (locally near $\Gamma$ in $N_{\R}$) has the form
\[
\begin{bmatrix}
    c_1&\cdots&c_{n-3}&*&\cdots&*
\end{bmatrix}
\cdot\begin{bmatrix}
    \len(e_1)&
    \cdots&
    \len(e_{r})&
    *&
    \cdots&
    *
\end{bmatrix}^T
=
\begin{bmatrix}
    a_{\ell_1}-a_{\ell_2}
\end{bmatrix},
\]
where $\vec{c} = (c_1, \ldots, c_{n-3})$, as before, is a nonzero vector of $0$s and $1$s.

We then add in the equations for $L_\Z(\zeta) \subset N$ --- these correspond to setting the coefficient of each basis vector other than the $v_i$s to zero. Combining, we obtain the block matrix equation for $\Trop(H)$ in $N_{\R}$ (locally near $\Gamma$):
\[
\begin{bmatrix}
   \vec{c} & \vec{*}\\
       \vec{0}  & I
\end{bmatrix}
\begin{bmatrix}
   \vec{e} \\ \vec{*}
\end{bmatrix}
= \begin{bmatrix}a_{\ell_1} - a_{\ell_2}\\ \vec{0}\end{bmatrix},
\]
   where $I$ is an identity matrix. The rows of the leftmost matrix span a saturated sublattice of $N^\vee$. Since $\zeta$ has weight $1$ \cite[Cor. 4.3.12]{MaclaganSturmfels2015}, and facets of $\Trop(\widetilde H)$ have weight $1$ by Lemma \ref{cor:FacetsOfHTildeHaveWeight1}, the tropical intersection multiplicity of $\Trop(\widetilde H)$ and $\zeta$ along $\kappa$ is 1, see \eqref{eq:TropIntMultDef2}. We conclude that $\Trop(H)$ has weight 1 along $\kappa$.
\end{proof}

\subsection{An example of the tropical Kapranov map and a tropical \texorpdfstring{$\psi$}{psi}-hypersurface}
\begin{ex}\label{ex:TropicalHypersurface}
    Let $S=[5],$ $i=1,$ $j=2,$ and let $f_\ell = t^{3\ell}$ for each $\ell$. By Equations \eqref{eq:VeryBasicHyperplane} and \eqref{eq:DistanceConditionHTrop}, we have
    \begin{align*}
    \underline H &= V(t^9x_3+t^{12}x_4+t^{15}x_5)\subseteq\bbT_K^3/\bbT_K, \\
    \Trop(\underline H) &= \{(\underline{x}_3,\underline{x}_4,\underline{x}_5):\min\{\underline x_3+9,\underline x_4+12,\underline x_5+15\}\text{ achieved twice}\}\subseteq\R^3/\R, \\
    \Trop(H) &= \{\Gamma : \kap_{1 \rel 2}(\Gamma) \in \Trop(\underline{H})\} \subseteq \M_{0, 5}^\trop.
    \end{align*}
    The second and third loci are drawn in red in Figures \ref{fig:LM5Plane} and \ref{fig:LM5-3D}, respectively. With coordinates $x = \underline{x}_4 - \underline{x}_3$ and $y = \underline{x}_5 - \underline{x}_3$ on $\mathbb{R}^2$, the tropical Kapranov map becomes
    \begin{alignat*}{3}
        \M_{0,5}^\trop &\quad\to\quad& \R^3/\R &\quad\to\quad& \R^2\\
        \Gamma&\quad\mapsto\quad& (d_3,d_4,d_5)/\sim&\quad\mapsto\quad& (d_4-d_3,d_5-d_3),
    \end{alignat*}
    In $(x,y)$ coordinates, the tropical hyperplane $\Trop(\underline{H})$ (pictured in red in Figure \ref{fig:LM5Plane}) is the locus where $\min\{0,x+3,y+6\}$ is achieved twice.
    
    For example, suppose $\Gamma$ has the combinatorial type
    \[
    \begin{tikzpicture}
        \draw (0,0)--(2,0);
        \draw (0,0) node {$\bullet$};
        \draw (1,0) node {$\bullet$};
        \draw (2,0) node {$\bullet$};
        \draw (0,0)--++(180:.5) node[left] {1};
        \draw (0,0)--++(90:.5) node[above] {5};
        \draw (1,0)--++(90:.5) node[above] {3};
        \draw (2,0)--++(90:.5) node[above] {4};
        \draw (2,0)--++(0:.5) node[right] {2};
    \end{tikzpicture}.
    \]
    Then the condition $\Gamma \in \Trop(H)$ is precisely $0 = y + 6$, since we have $x + 3 = d_4-d_3+3 > 0$ from the combinatorial type. This corresponds to the ray through the blue region of Figure \ref{fig:LM5Plane}.
    
    The locus $\Trop(H) \subseteq M_{0, n}^\trop$ has $3$ additional linear segments, as shown in Figure \ref{fig:LM5-3D}.
\end{ex}

\begin{figure}
    \centering
    \begin{tikzpicture}[scale=0.8]
    \filldraw[red,opacity=.1] (0,0)--(5,0)--(5,5);
    \draw (3.5,1.5) node {
        \begin{tikzpicture}[scale=.5]
            \draw (0,0)--(2,0);
            \draw (0,0) node {$\bullet$};
            \draw (1,0) node {$\bullet$};
            \draw (2,0) node {$\bullet$};
            \draw (0,0)--++(180:.5) node[left] {1};
            \draw (0,0)--++(90:.5) node[above] {3};
            \draw (1,0)--++(90:.5) node[above] {5};
            \draw (2,0)--++(90:.5) node[above] {4};
            \draw (2,0)--++(0:.5) node[right] {2};
        \end{tikzpicture}
        };
        \filldraw[blue,opacity=.1] (0,0)--(0,5)--(5,5);
    \draw (1.5,4) node {
        \begin{tikzpicture}[scale=.5]
            \draw (0,0)--(2,0);
            \draw (0,0) node {$\bullet$};
            \draw (1,0) node {$\bullet$};
            \draw (2,0) node {$\bullet$};
            \draw (0,0)--++(180:.5) node[left] {1};
            \draw (0,0)--++(90:.5) node[above] {3};
            \draw (1,0)--++(90:.5) node[above] {4};
            \draw (2,0)--++(90:.5) node[above] {5};
            \draw (2,0)--++(0:.5) node[right] {2};
        \end{tikzpicture}
        };
        \filldraw[green,opacity=.1] (0,0)--(0,5)--(-5,5)--(-5,0);
    \draw (-3,3) node {
        \begin{tikzpicture}[scale=.5]
            \draw (0,0)--(2,0);
            \draw (0,0) node {$\bullet$};
            \draw (1,0) node {$\bullet$};
            \draw (2,0) node {$\bullet$};
            \draw (0,0)--++(180:.5) node[left] {1};
            \draw (0,0)--++(90:.5) node[above] {4};
            \draw (1,0)--++(90:.5) node[above] {3};
            \draw (2,0)--++(90:.5) node[above] {5};
            \draw (2,0)--++(0:.5) node[right] {2};
        \end{tikzpicture}
        };
        \filldraw[orange,opacity=.1] (0,0)--(-5,0)--(-5,-5);
    \draw (-3.5,-1.5) node {
        \begin{tikzpicture}[scale=.5]
            \draw (0,0)--(2,0);
            \draw (0,0) node {$\bullet$};
            \draw (1,0) node {$\bullet$};
            \draw (2,0) node {$\bullet$};
            \draw (0,0)--++(180:.5) node[left] {1};
            \draw (0,0)--++(90:.5) node[above] {4};
            \draw (1,0)--++(90:.5) node[above] {5};
            \draw (2,0)--++(90:.5) node[above] {3};
            \draw (2,0)--++(0:.5) node[right] {2};
        \end{tikzpicture}
        };
        \filldraw[yellow,opacity=.1] (0,0)--(0,-5)--(-5,-5);
    \draw (-2,-4.5) node {
        \begin{tikzpicture}[scale=.5]
            \draw (0,0)--(2,0);
            \draw (0,0) node {$\bullet$};
            \draw (1,0) node {$\bullet$};
            \draw (2,0) node {$\bullet$};
            \draw (0,0)--++(180:.5) node[left] {1};
            \draw (0,0)--++(90:.5) node[above] {5};
            \draw (1,0)--++(90:.5) node[above] {4};
            \draw (2,0)--++(90:.5) node[above] {3};
            \draw (2,0)--++(0:.5) node[right] {2};
        \end{tikzpicture}
        };
        \filldraw[cyan,opacity=.1] (0,0)--(0,-5)--(5,-5)--(5,0);
    \draw (3,-3) node {
        \begin{tikzpicture}[scale=.5]
            \draw (0,0)--(2,0);
            \draw (0,0) node {$\bullet$};
            \draw (1,0) node {$\bullet$};
            \draw (2,0) node {$\bullet$};
            \draw (0,0)--++(180:.5) node[left] {1};
            \draw (0,0)--++(90:.5) node[above] {5};
            \draw (1,0)--++(90:.5) node[above] {3};
            \draw (2,0)--++(90:.5) node[above] {4};
            \draw (2,0)--++(0:.5) node[right] {2};
        \end{tikzpicture}
        };
        \draw (6.5,0) node {
        \begin{tikzpicture}[scale=.5]
            \draw (0,0)--(1,0);
            \draw (0,0) node {$\bullet$};
            \draw (1,0) node {$\bullet$};
            \draw (0,0)--++(180:.5) node[left] {1};
            \draw (0,0)--++(60:.5) node[above] {5};
            \draw (0,0)--++(120:.5) node[above] {3};
            \draw (1,0)--++(90:.5) node[above] {4};
            \draw (1,0)--++(0:.5) node[right] {2};
        \end{tikzpicture}
        };
        \draw (0,6) node {
        \begin{tikzpicture}[scale=.5]
            \draw (0,0)--(1,0);
            \draw (0,0) node {$\bullet$};
            \draw (1,0) node {$\bullet$};
            \draw (0,0)--++(180:.5) node[left] {1};
            \draw (0,0)--++(60:.5) node[above] {4};
            \draw (0,0)--++(120:.5) node[above] {3};
            \draw (1,0)--++(90:.5) node[above] {5};
            \draw (1,0)--++(0:.5) node[right] {2};
        \end{tikzpicture}
        };
        \draw (-6.5,0) node {
        \begin{tikzpicture}[scale=.5]
            \draw (0,0)--(1,0);
            \draw (0,0) node {$\bullet$};
            \draw (1,0) node {$\bullet$};
            \draw (0,0)--++(180:.5) node[left] {1};
            \draw (0,0)--++(90:.5) node[above] {4};
            \draw (1,0)--++(120:.5) node[above] {3};
            \draw (1,0)--++(60:.5) node[above] {5};
            \draw (1,0)--++(0:.5) node[right] {2};
        \end{tikzpicture}
        };
        \draw (0,-6) node {
        \begin{tikzpicture}[scale=.5]
            \draw (0,0)--(1,0);
            \draw (0,0) node {$\bullet$};
            \draw (1,0) node {$\bullet$};
            \draw (0,0)--++(180:.5) node[left] {1};
            \draw (0,0)--++(90:.5) node[above] {5};
            \draw (1,0)--++(120:.5) node[above] {3};
            \draw (1,0)--++(60:.5) node[above] {4};
            \draw (1,0)--++(0:.5) node[right] {2};
        \end{tikzpicture}
        };
        \draw (-6,-6) node {
        \begin{tikzpicture}[scale=.5]
            \draw (0,0)--(1,0);
            \draw (0,0) node {$\bullet$};
            \draw (1,0) node {$\bullet$};
            \draw (0,0)--++(180:.5) node[left] {1};
            \draw (0,0)--++(60:.5) node[above] {5};
            \draw (0,0)--++(120:.5) node[above] {4};
            \draw (1,0)--++(90:.5) node[above] {3};
            \draw (1,0)--++(0:.5) node[right] {2};
        \end{tikzpicture}
        };
        \draw (6,6) node {
        \begin{tikzpicture}[scale=.5]
            \draw (0,0)--(1,0);
            \draw (0,0) node {$\bullet$};
            \draw (1,0) node {$\bullet$};
            \draw (0,0)--++(180:.5) node[left] {1};
            \draw (0,0)--++(90:.5) node[above] {3};
            \draw (1,0)--++(120:.5) node[above] {4};
            \draw (1,0)--++(60:.5) node[above] {5};
            \draw (1,0)--++(0:.5) node[right] {2};
        \end{tikzpicture}
        };
        \draw (.75,-.75) node {
        \begin{tikzpicture}[scale=.5]
            \draw (0,0) node {$\bullet$};
            \draw (0,0)--++(180:.5) node[left] {1};
            \draw (0,0)--++(90:.5) node[above] {4};
            \draw (0,0)--++(135:.5) node[above left] {3};
            \draw (0,0)--++(45:.5) node[above right] {5};
            \draw (0,0)--++(0:.5) node[right] {2};
        \end{tikzpicture}
        };
        \draw[very thick] (-5.4,-5.4)--(5.4,5.4);
        \draw[very thick] (-5.4,0)--(5.4,0);
        \draw[very thick] (0,-5.4)--(0,5.4);
        \draw[very thick,red] (-1,-2)--(-1,5);
        \draw[very thick,red] (-1,-2)--(5,-2);
        \draw[very thick,red] (-1,-2)--(-4,-5);
        \draw[red] (-1,-2) node {$\bullet$};
        \draw[red] (-.7,-2.2) node[below] {\tiny $(-3,-6)$};
    \end{tikzpicture}
    \caption{This figure depicts the image of the tropical Kapranov map $\kap_{1 \rel 2} : \M_{0,5}^\trop \to \R^2$. The coordinates on $\R^2=\R^3/\R$ are $(x, y) = (d_4-d_3, d_5-d_3)$, where $d_\ell$ is as in Equation \eqref{eqn:PsiCoords}. We have labeled each region/ray by the the combinatorial type of tree obtained by contracting a tropical curve $\Gamma\in\M_{0,n}^{\trop}$ onto the convex hull of its legs $1$ and $2$. The tropical hyperplane $\Trop(\underline{H})$ of Example \ref{ex:TropicalHypersurface} appears in red. Note that $\M_{0, 5}^\trop$ has fifteen facets: 6 of them map bijectively to the regions above; $6$ others are contracted onto the 6 rays, and $3$ are contracted to the origin. See Figure \ref{fig:LM5-3D} for a three-dimensional view.}
    \label{fig:LM5Plane}
\end{figure}
\begin{figure}
    \begin{center}
    \begin{tabular}{>{\centering\arraybackslash}m{0.45\textwidth} >{\centering\arraybackslash}m{0.45\textwidth}}
  \includegraphics[valign=c, width=0.4\textwidth]{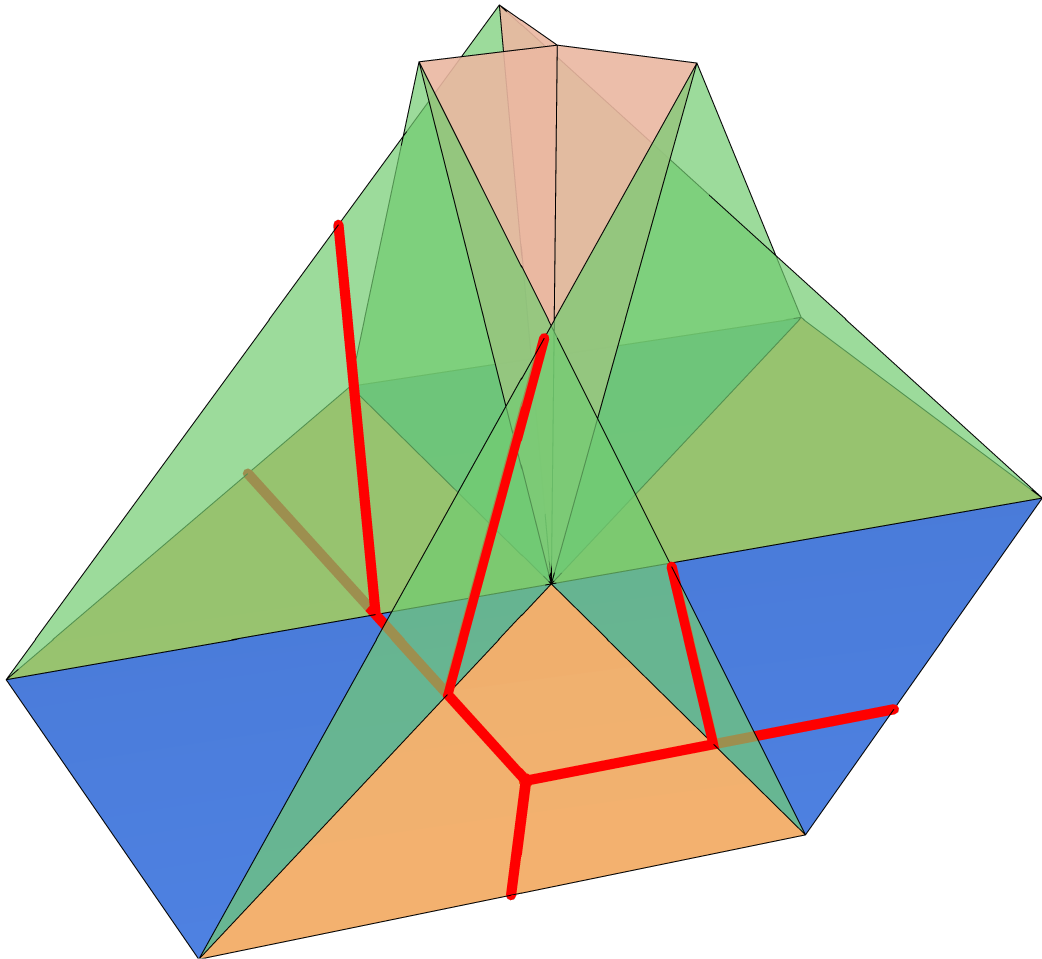} & \includegraphics[valign=c, width=0.2\textwidth]{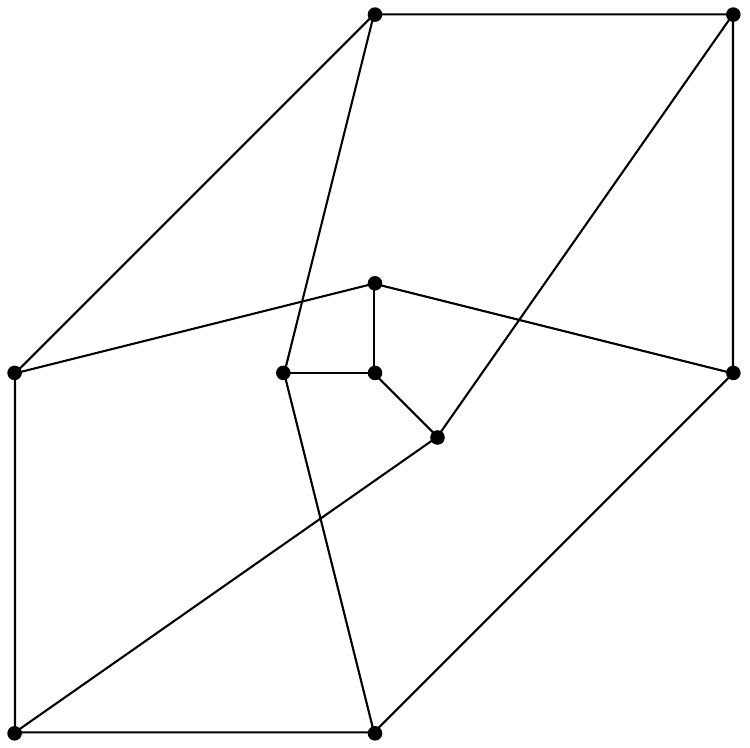} \\
  \includegraphics[valign=c, width=0.4\textwidth]{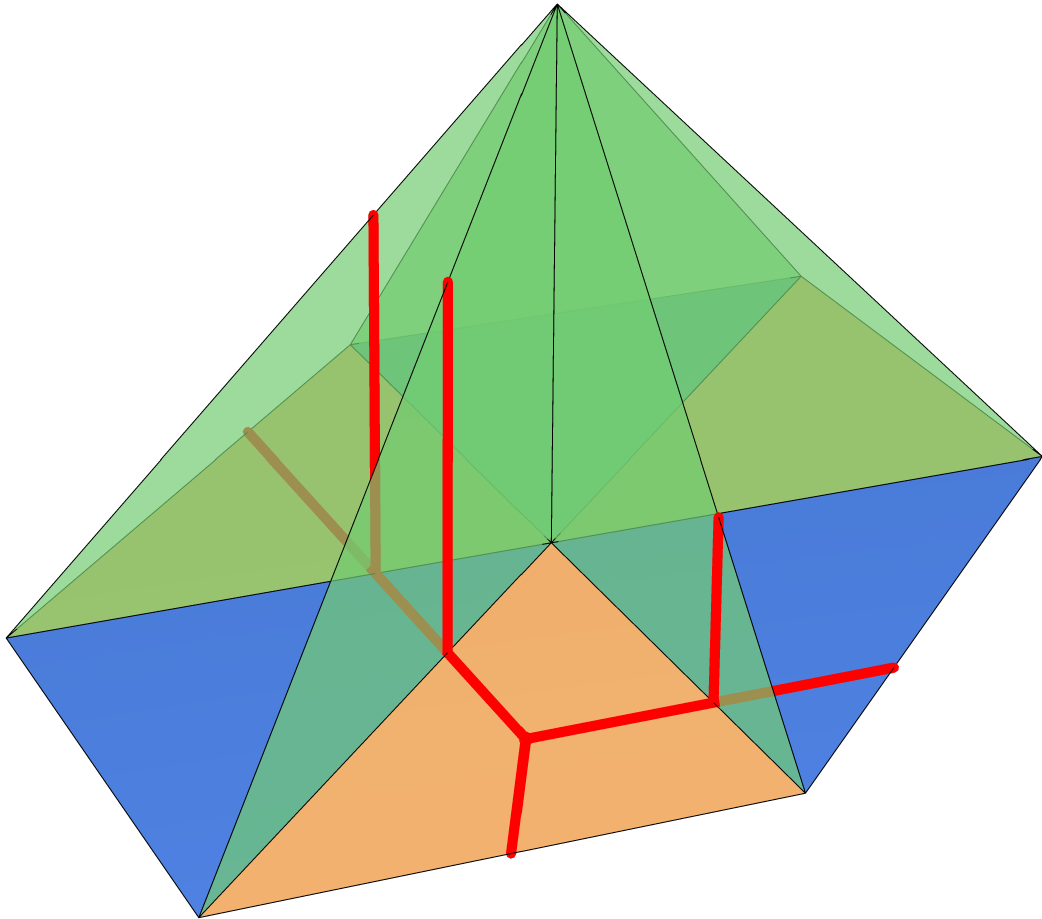} & \includegraphics[valign=c, width=0.2\textwidth]{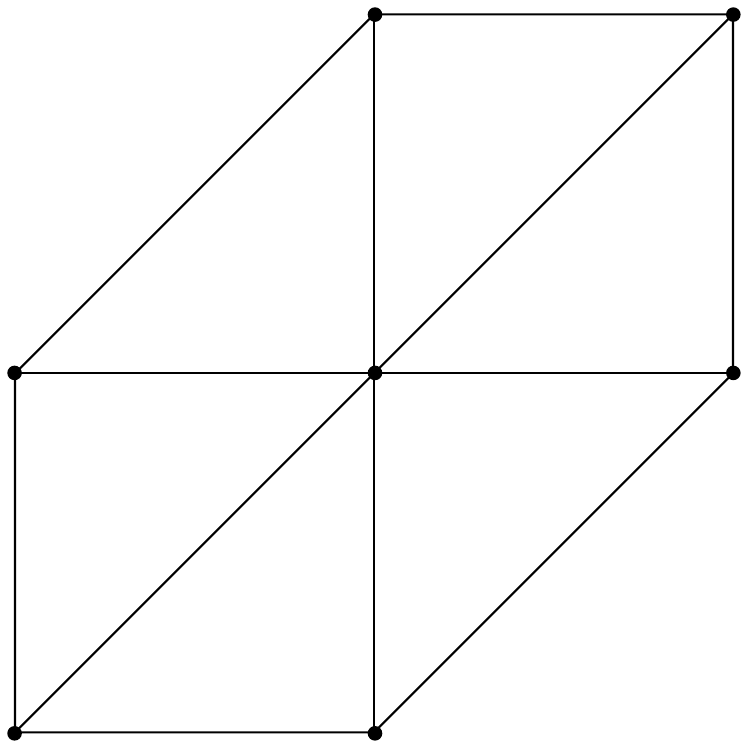} \\
  \includegraphics[valign=c, width=0.4\textwidth]{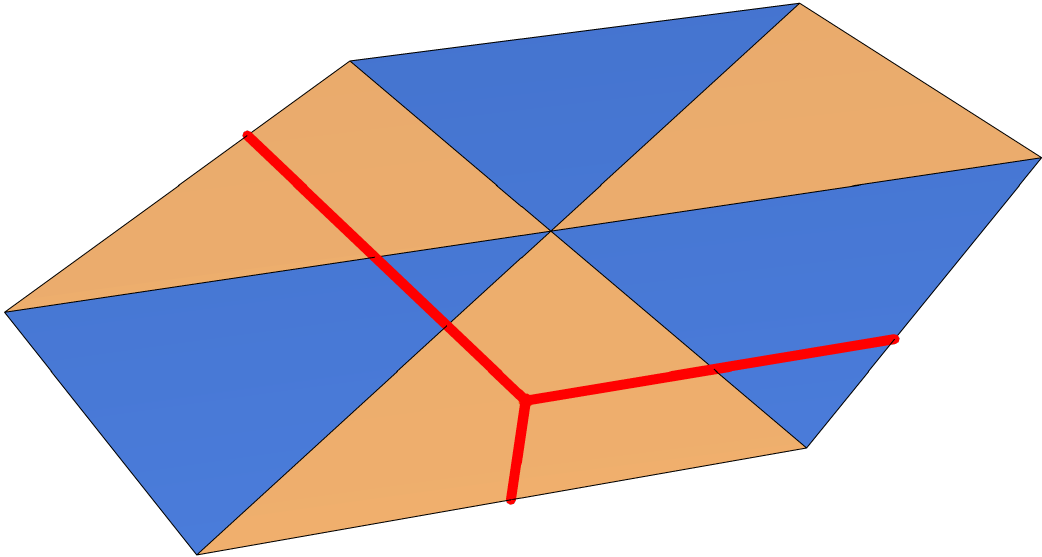} & \includegraphics[valign=c, width=0.2\textwidth]{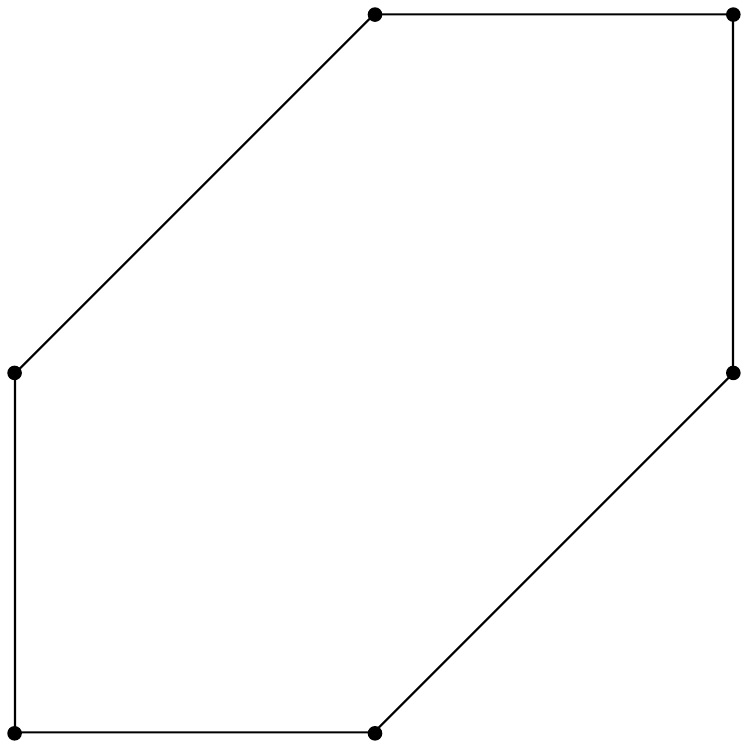} \\
\end{tabular}
    \end{center}
    \caption{
    A three-dimensional view of the tropical Kapranov map $\kap_{1 \rel 2} : \M_{0,5}^\trop \to \R^2$ of Figure \ref{fig:LM5Plane}. Each figure on the left is the cone over the graph to its right. 
    $\M_{0,5}^\trop$ (upper left) is the cone over the Petersen graph (upper right) and is shown immersed in $\R^3$, with $\R^2$ as the $xy$-plane of Figure \ref{fig:LM5Plane}. The tropical hypersurface $\Trop(H)$ (resp.~tropical hyperplane $\Trop(\underline{H})$) of Example \ref{ex:TropicalHypersurface} are shown in red in the upper left (resp.~lower left). The map $\kap_{1 \rel 2}$ can be thought of as contracting the three central cones into the positive $z$-axis (middle left); then projecting the result to the $xy$-plane (lower left) to obtain Figure \ref{fig:LM5Plane}.
    }
    \label{fig:LM5-3D}
\end{figure}

\section{Intersecting tropical \texorpdfstring{$\psi$}{psi}-hypersurfaces}\label{sec:TropIntisnice}

\subsection{The intersection problem}\label{sec:NotationForIntersection}

For the remainder of the paper, we fix the following notation.

\begin{notation}\label{not:Sqiqjq}
 Fix $n\ge3$ and $r\in\{0,\ldots,n-3\}$, along with subsets $S_1,\ldots,S_r\subseteq[n]$ with $\abs{S_q}\ge3$ for each $q=1,\ldots,r$, and distinct elements $i_q,j_q\in S_q$ for each $q$. Fix also an integer $B\gg0$. (It suffices to take $B \geq 2n+1$.) 
 
 For each $q$, we define $H_q$ and $\Trop(H_q)$ as in Definition \ref{def:BasicHyperplane}, using the Kapranov map $\kap_{S_q, i_q \rel j_q}$ and tropical Kapranov map $\kap_{S_q, i_q \rel j_q}^\trop$, taking the coefficient of $x_\ell$ in Equation \eqref{eq:VeryBasicHyperplane} to be the monomial
 \[
 f_\ell := (t^\ell)^{B^{n-3-q}}, \text{ with valuation } \val(f_\ell) = \ell B^{n-3-q}.
 \]
\end{notation}
The purpose of this choice is so that the coefficients of the hypersurface $H_1$ are powers of $t$ that are much larger than those of $H_2$, and so on. If $B=10$ and $r=4$ and $n=7$, for example, then the exponents of $t$ are multiples of $1000$ in $H_1$, multiples of $100$ in $H_2$, of $10$ in $H_3$ and of $1$ in $H_4$.

\subsection{Summary of this section} In this Section, we prove Theorem \ref{prop:RelativeInteriors2} and Corollary \ref{cor:TropIntFinite}, which say in particular that
\[\bigcap_{q=1}^r \Trop(H_q)\subseteq\M_{0,n}^{\trop}\]
intersects the $r$-skeleton $\Sigma_r\subseteq\M_{0,n}^{\trop}$ in finitely many points, contained in the relative interior of $\Sigma_r$, and with tropical intersection multiplicity $1$ at each point (as in Definition \ref{def:MultIndependentOfChoice}). This will allow us, in Section \ref{sec:RelateToAlgebraic}, to apply Theorem \ref{thm:mainthm} from Part 1 to conclude that  
\[\bigcap_{q=1}^r \overline{H}_q\subseteq\Mbar_{0,n}\]
has no irreducible components in the boundary, and that its limit cycle is supported on the codimension-$r$ boundary of $\Mbar_{0,n}$. 

We also prove some technical lemmas that are used in Section \ref{sec:RecursionandFirework} to state a recursive algorithm to explicitly compute $\Sigma_r\cap\bigcap_{q=1}^r \Trop(H_q)$, and therefore also the limit cycle of $\bigcap_{q=1}^r \overline{H}_q$. 

\subsection{Combinatorial descriptions and metric estimates for points of \texorpdfstring{$\TropInt$}{\TropIntPDFString}}

Recall that for $\Gamma\in\M_{0,n}$, 
$$\kap_{S_q,i_q\rel j_q}^\trop(\Gamma)=(\underline d^q_\alpha)_{\alpha\in[n]\setminus\{i_q,j_q\}},$$
 where $\alpha\in S_q\setminus\{i_q,j_q\},$ $\underline{d}^q_\alpha$ is the real coordinate of the leg $\alpha$ on the convex hull of $i_q$ and $j_q$, which is identified with $\R$. (As before, the tuple $(\underline{d}^q_\alpha)_{\alpha\in S_q\setminus\{i_q,j_q\}}$ is defined up to real translation.) 
By Corollary \ref{cor:DistanceCondition}, $\Trop(H_q)$ is the locus where the minimum
\begin{align}\label{eq:Min}
    \min_{\alpha\in S_q\setminus\{i_q,j_q\}}(\underline{d}^q_\alpha+\alpha B^{n-3-q})
\end{align} is achieved at least twice.

 Let $k_q<\ell_q\in S_q\setminus\{i_q,j_q\}$ be any two indices that minimize \eqref{eq:Min}. In particular, $$\underline{d}^q_{k_q}-\underline{d}^q_{\ell_q}=(\ell_q-k_q)B^{n-3-q}.$$ As in \eqref{eq:ilkjPath0}, the convex hull of $i_q, j_q, k_q$ and $\ell_q$ in $\Gamma$ is of the form \begin{align}\label{eq:ilkjPath}
    \raisebox{-20pt}{\begin{tikzpicture}[scale=.7]
        \draw (0,0)--(2,0);
                \draw (0,0) node {$\bullet$};
                \draw (2,0) node {$\bullet$};
                \draw (0,0)--++(180:.6) node[left] {$i_q$};
                \draw (0,0)--++(90:.6) node[above] {$\ell_q$};
                \draw (2,0)--++(90:.6) node[above] {$k_q$};
                \draw (2,0)--++(0:.6) node[right] {$j_q$,};
                \draw (1,0) node[below] {$P_q$};
            \end{tikzpicture}
            }
        \end{align}
where the depicted path $P_q$ has length exactly $(\ell_q-k_q)B^{n-3-q}$.

We now show that if $\Gamma$ is in the $r$-skeleton $\Sigma_r\subseteq\M_{0,n}^{\trop}$, \emph{and} is in the first $r$ tropical hypersurfaces $H_1^{\trop},\ldots,H_1^{\trop}$, then $\Gamma$ must have a tightly constrained form, in which, for each $q$, nearly all the length of the path $P_q$ comes from a single (uniquely determined) edge.

\begin{lem}\label{lem:GammaInTropIntCombinatorics}
    Let $\Gamma\in\TropInt$. Then:
    \begin{enumerate}
        \item \label{it:REdges} $\Gamma$ has exactly $r$ edges.
        \item \label{it:EdgeLengthBounds} For each $q \in [r]$ there is a unique edge $e_q$ with 
        \[
        \tfrac{1}{2} B^{n-3-q} \leq \len(e_q) \leq n B^{n-3-q}.
        \]
        In particular, $\len(e_1) > \len(e_2) > \cdots > \len(e_r).$

        \item \label{it:PqContainsEq} Let $P_q$ be the path in \eqref{eq:ilkjPath}. Then $P_q$ contains $e_q$ and does not contain $e_{q'}$ for any $q'<q$.
        \item For every $q\in[r],$ the minimum of \eqref{eq:Min} is achieved \emph{exactly} twice.\label{it:ExactlyTwice}
    \end{enumerate}
\end{lem}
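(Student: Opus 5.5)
The plan is to run an induction on $q=1,\ldots,r$, peeling off one edge of the right scale at each step. Throughout I use three facts. First, $\Gamma\in\Sigma_r$ has at most $r$ edges. Second, for each $q$, Corollary \ref{cor:DistanceCondition} gives two minimizers $k_q<\ell_q$ of \eqref{eq:Min}, and the path $P_q$ of \eqref{eq:ilkjPath} then has length $(\ell_q-k_q)B^{n-3-q}$, which lies in $[B^{n-3-q},\,nB^{n-3-q}]$. Third, the scales are separated: by the choice of $B$, one has $\tfrac12 B^{n-3-q'}>nB^{n-3-q}$ for every $q'<q$. I would also verify that $\sum_{q'>q} nB^{n-3-q'}$ is at most a small fraction of $B^{n-3-q}$, and I would state explicitly which lower bound on $B$ this requires; see the last paragraph.

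The induction hypothesis at stage $q$ is that distinct edges $e_1,\ldots,e_{q-1}$ have been found with $\tfrac12B^{n-3-q'}\le\len(e_{q'})\le nB^{n-3-q'}$. By scale separation, each $e_{q'}$ is longer than all of $P_q$, so $P_q$ contains none of them; this is the second half of \eqref{it:PqContainsEq}. Hence $P_q$ is made of edges outside $\{e_1,\ldots,e_{q-1}\}$, and since $\Gamma$ has at most $r$ edges, at most $r-q+1$ such edges remain. I would let $e_q$ be the longest edge on $P_q$. Its length is at most $\len(P_q)\le nB^{n-3-q}$. This already forces $e_q\ne e_{q'}$ for $q'<q$, since those edges are too long. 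Carrying the induction to $q=r$ produces $r$ distinct edges, and combined with the bound of at most $r$ edges this proves \eqref{it:REdges}.

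Once we know that the edges are exactly $e_1,\ldots,e_r$, the lower bound follows. The path $P_q$ contains no $e_{q'}$ with $q'<q$, so $P_q\setminus e_q$ consists only of edges $e_{q'}$ with $q'>q$. To bound these, I would run a second, downward induction from $q=r$: for $q=r$ the path $P_r$ can only be the single edge $e_r$. For general $q$ this gives
\[
\len(e_q)\;\ge\;B^{n-3-q}-\sum_{q'>q}nB^{n-3-q'}\;\ge\;\tfrac12 B^{n-3-q}.
\]
This establishes \eqref{it:EdgeLengthBounds} and the first half of \eqref{it:PqContainsEq}. Uniqueness of the edge in the stated window is again scale separation, and so is the strict ordering $\len(e_1)>\cdots>\len(e_r)$.

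For \eqref{it:ExactlyTwice}, suppose three indices achieve the minimum \eqref{eq:Min}. As in \eqref{eq:lengthrPath0}, their attachment points on the $i_q$--$j_q$ path are distinct, and consecutive ones are separated by distances that are positive integer multiples of $B^{n-3-q}$. So there are two disjoint subpaths, each of length at least $B^{n-3-q}$ and at most $nB^{n-3-q}$. Each of them can contain no $e_{q'}$ with $q'<q$, because those edges are too long. If it also avoided $e_q$, it would consist only of edges $e_{q'}$ with $q'>q$, whose total length is smaller than $B^{n-3-q}$, which is impossible. So both disjoint subpaths contain $e_q$, a contradiction. The main obstacle I expect is pinning down the constants, namely that $B\ge 2n+1$ really makes the tail sum $\sum_{q'>q}nB^{n-3-q'}$ at most $\tfrac12B^{n-3-q}$. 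I would first try to sharpen the tail bound using the fact that $P_q$ uses only edges with $q'>q$ that lie on that specific path. If that fails, I would enlarge $B$, which Notation \ref{not:Sqiqjq} permits since it only requires $B\gg0$.
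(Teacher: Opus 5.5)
There is a genuine gap: your upward induction is circular. The hypothesis at stage $q$ includes the lower bounds $\len(e_{q'})\ge\tfrac12B^{n-3-q'}$ for $q'<q$. You use them twice: to show that $P_q$ avoids $e_1,\dots,e_{q-1}$, and to show that the new edge differs from them. But the inductive step only produces the upper bound $\len(e_q)\le nB^{n-3-q}$ for the new edge. The lower bound is proved afterwards, and that proof uses that the edges are exactly $e_1,\dots,e_r$ and that $P_q$ avoids $e_{q'}$ for $q'<q$, which is the output of the induction. At stage $q$, all you actually know is that $P_q$ consists of at most $r-q+1$ edges of total length $\ge B^{n-3-q}$. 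Nothing at that point prevents this length from being spread over several edges, so the longest one is only guaranteed length $\ge B^{n-3-q}/(r-q+1)$. Carrying that weaker bound does close your induction, but only when $B>nr$ (so $B$ of order $n^2$), not under the running assumption $B\ge 2n+1$.

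The missing idea, which is how the paper argues, is to compare $P_q$ with the later \emph{paths} $P_{q+1},\dots,P_r$, whose lengths are known in advance from the hypersurface equations, rather than with edges not yet constructed. The geometric series gives $\sum_{a>q}\len(P_a)\le\frac{n}{B-1}B^{n-3-q}\le\tfrac12B^{n-3-q}$. This also settles the ``main obstacle'' you flagged: $B\ge 2n+1$ suffices and no enlargement is needed.

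Consequently $P_q$ contains an edge $e_q$ lying on none of $P_{q+1},\dots,P_r$.
\begin{itemize}
\item Such edges are automatically distinct: for $q<q'$ we have $e_{q'}\in P_{q'}$ but $e_q\notin P_{q'}$. This gives part (1) with no length bounds at all.
\item For $q'<q$ we get $e_{q'}\notin P_q$, since $P_q$ is one of the later paths for $q'$.
\item Hence the edges of $P_q$ other than $e_q$ lie among $e_{q+1},\dots,e_r$. Using $\len(e_a)\le\len(P_a)$, this yields $\len(e_q)\ge\len(P_q)-\sum_{a>q}\len(P_a)\ge\tfrac12B^{n-3-q}$.
\end{itemize}
The upper bound, uniqueness and ordering then follow as you say. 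Your argument for part (4), via two edge-disjoint subpaths each forced to contain $e_q$, is correct once (1)--(3) are established.
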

\begin{proof}
    For each $q = 1, \ldots, r$, let $k_q < \ell_q$ be indices minimizing \eqref{eq:Min}. As above, $\Gamma$ is of the form $\eqref{eq:ilkjPath}$ for each $q$, with $\len(P_q)=(\ell_q-k_q)B^{n-3-q}$. It is convenient to note that, for any $q$,
    \begin{equation} \label{eqn:geom-series}
        \sum_{a=q+1}^r \len(P_a) = \sum_{a = q+1}^r (\ell_a - k_a) B^{n-3-a} \leq \tfrac{n}{B-1} B^{n-3-q} \leq \tfrac{1}{2} B^{n-3-q},
    \end{equation}
    using geometric series, the bounds $\ell_a - k_a \leq n$ all $a$ and the fact that $B \geq 2n+1$.
    
    Using \eqref{eqn:geom-series}, we next calculate
    \begin{equation*}
        \length(P_q)-\sum_{a=q+1}^r\length(P_a)
        \geq (\ell_q - k_q)B^{n-3-q} - \tfrac{1}{2} B^{n-3-q} \geq \tfrac{1}{2} B^{n-3-q},
    \end{equation*}
    since $\ell_q - k_q \geq 1$. In particular, $P_q$ contains an edge $e_q$ not in any of the paths $P_{q+1},\ldots,P_r$. Equivalently, $P_q$ contains $e_q$ but does not contain $e_{q'}$ for any $q' < q$. This produces $r$ edges $e_1,\ldots,e_r$, distinct by definition. Since $\Gamma\in\Sigma_r$, $\Gamma$ has at most $r$ edges, hence exactly $r$ edges, proving \eqref{it:REdges}.
 
    Furthermore, we have
    \[\length(e_q)\le\length(P_q)\le nB^{n-3-q}\]
    and, since $P_q$ consists of $e_q$ and a subset of $e_{q+1}, \ldots, e_r$,
    \[\length(e_q)\ge\length(P_q)-\sum_{a=q+1}^r\length(e_a)\ge\length(P_q)-\sum_{a=q+1}^r\length(P_a)\geq \tfrac{1}{2} B^{n-3-q}.\]
    The ordering of the $\len(e_q)$ now follows by the fact that $n B^{n-3-q} < \tfrac{1}{2} B^{n-3-(q-1)}$ when $B \geq 2n+1$. This proves \eqref{it:EdgeLengthBounds} and \eqref{it:PqContainsEq}.

    To see \eqref{it:ExactlyTwice}, let $\alpha\in S_q\setminus\{i_q,j_q,k_q,\ell_q\}$ and consider the two paths along the convex hull of $i_q$ and $j_q$, connecting the leg $\alpha$ to either $k_q$ or $\ell_q$. If $\alpha$ again minimized \eqref{eq:Min}, then the same reasoning as for \eqref{it:PqContainsEq}, with $k_q$ or $\ell_q$ replaced by $\alpha$, would show that $e_q$ is contained in both these paths. (Note that $e_q$ is uniquely determined by its length, by \eqref{it:EdgeLengthBounds}.) This is impossible, since $e_q$ disconnects $k_q$ and $\ell_q$. This proves \eqref{it:ExactlyTwice}.
\end{proof}
Motivated by Lemma \ref{lem:GammaInTropIntCombinatorics}, we define the following property for a combinatorial (i.e. non-metric) tree together with some combinatorial data.
\begin{definition}\label{def:ConditionStar}
Let $\tree$ be an $[n]$-marked stable tree with exactly $r$ edges, let $\vec e=(e_1>\cdots >e_r)$ be an ordering on the edges of $\tree,$ and let $\vec k=(k_1,\ldots,k_r)$ and $\vec \ell=(\ell_1,\ldots,\ell_r)$ with $k_q<\ell_q\in S_q\setminus\{i_q,j_q\}.$ We say $(\tree,\vec e,\vec k,\vec l)$ \textbf{satisfies condition $(*)$} if for all $q\in[r]$, we have \begin{enumerate}
        \item The convex hull of $i_q, j_q, k_q$ and $\ell_q$ in $\tree$ is of the form \eqref{eq:ilkjPath}, and 
        \item The depicted path $P_q$ contains $e_q$ and does not contain $e_{q'}$ for any $q'<q$.\label{eq:PqContainseq}
    \end{enumerate}
\end{definition}

\begin{cor}
    \label{cor:TropIntGivesConditionStar}
    Let $\Gamma\in\TropInt$. Let $\tau$ denote the underlying tree of $\Gamma$, and for $q\in[r],$ let $e_q,k_q,\ell_q$ as in Lemma \ref{lem:GammaInTropIntCombinatorics}. Then $(\tau,\vec e,\vec k,\vec \ell)$ satisfies condition $(*)$.
\end{cor}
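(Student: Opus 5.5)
This is essentially a repackaging of Lemma \ref{lem:GammaInTropIntCombinatorics}, so the plan is to check the two clauses of Definition \ref{def:ConditionStar} one at a time against what that lemma and the discussion preceding it already give.

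First, the data. By Lemma \ref{lem:GammaInTropIntCombinatorics}\eqref{it:REdges}, the underlying tree $\tau$ of $\Gamma$ has exactly $r$ edges. By \eqref{it:EdgeLengthBounds}, the edges $e_1,\ldots,e_r$ are well defined, pairwise distinct, and satisfy $\len(e_1)>\cdots>\len(e_r)$. Hence they are precisely the edges of $\tau$, and $\vec e=(e_1>\cdots>e_r)$ is an ordering of $\edges(\tau)$ of the required kind.

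For each $q$, let $k_q<\ell_q$ be the two elements of $S_q\setminus\{i_q,j_q\}$ attaining the minimum \eqref{eq:Min}. By \eqref{it:ExactlyTwice} there are exactly two, so these are uniquely determined.

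Next, clause (1) of $(*)$. This is the metric statement just before Lemma \ref{lem:GammaInTropIntCombinatorics}. Because the coefficient valuations $\alpha B^{n-3-q}$ are distinct, the minimizers satisfy $\underline d^q_{k_q}-\underline d^q_{\ell_q}=(\ell_q-k_q)B^{n-3-q}>0$. So $k_q$ and $\ell_q$ attach at distinct points of the convex hull of $i_q$ and $j_q$, with $\ell_q$ nearer to $i_q$. Therefore the convex hull of $i_q,j_q,k_q,\ell_q$ in $\Gamma$ has the shape \eqref{eq:ilkjPath}, with $P_q$ of positive length. Forgetting the metric, this is a statement about the underlying tree $\tau$, since a convex hull of legs depends only on the combinatorial type and a path of positive length contains at least one edge.

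Then, clause (2) of $(*)$. This is exactly Lemma \ref{lem:GammaInTropIntCombinatorics}\eqref{it:PqContainsEq}, applied to the same path $P_q$, now viewed as a set of edges of $\tau$.

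The only real point of care is consistency between the objects: the $e_q$, $k_q$, $\ell_q$ in the corollary must be the same ones the lemma uses. This is ensured by the lemma's uniqueness statements in \eqref{it:EdgeLengthBounds} and \eqref{it:ExactlyTwice}. I expect no genuine obstacle.
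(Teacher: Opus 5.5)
Your proposal is correct and matches the paper, which states this corollary without a separate proof as an immediate consequence of Lemma~\ref{lem:GammaInTropIntCombinatorics}. Clause (1) of $(*)$ comes from the discussion preceding \eqref{eq:ilkjPath}, clause (2) is item \eqref{it:PqContainsEq}, and the remaining items of the lemma guarantee that $\vec e$ orders all edges of $\tau$ and that $k_q,\ell_q$ are well defined, exactly as you check.
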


 \subsection{The path matrix}\label{sec:pathmatrix}

\begin{definition}[Path matrices, path-length vectors and edge-length vectors]\label{def:PathMatrix}
        Let $(\tree,\vec e,\vec k,\vec \ell)$ be a tuple satisfying $(*)$, where $\tau$ has $r$ edges. The \emph{path matrix} of $\tree$ is the $r\times r$ matrix $A_\tree=(a_{q,q'})_{q,q'\in[r]}$, where $$a_{q,q'}=\begin{cases}
        1&\text{$e_{q'}$ is on the path $P_q$ in \eqref{eq:ilkjPath}}\\
        0&\text{otherwise}.
    \end{cases}$$
    We define the \emph{path-length vector} $\vec L_\tree$ associated to $\tree$ by $$\vec L_\tree=((\ell_q-k_q)B^{n-3-q})_{q\in[r]}.$$ By Definition \ref{def:ConditionStar}\eqref{eq:PqContainseq}, $A_\tree$ is upper-triangular, with 1s along the diagonal, and 0s and 1s above the diagonal. There is therefore a unique solution $\vec y_\tree=(y_{\tree,q})_{q\in[r]}$ to the matrix equation $$A_\tree \vec y=\vec L_\tree,$$
    with integer entries, which we call the \emph{edge-length vector} associated to $\tau$.
    \end{definition}

In fact, the same calculation as in Lemma \ref{lem:GammaInTropIntCombinatorics}\eqref{it:EdgeLengthBounds} shows:

\begin{lem}\label{lem:ConditionStarImpliesBoundOnEdgeLengths}
        Let $(\tree,\vec e,\vec k,\vec \ell)$ satisfy $(*)$. Then the entries of the edge-length vector $\vec y_\tree$ satisfy
        \[\tfrac{1}{2} B^{n-3-q} \leq y_{\tau, q} \leq n B^{n-3-q}.\]
        In particular, they are positive and $y_{\tree, 1} > \cdots > y_{\tree, r}$.
    \end{lem}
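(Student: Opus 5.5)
The plan is to repeat the estimate from the proof of Lemma \ref{lem:GammaInTropIntCombinatorics}\eqref{it:EdgeLengthBounds}, now applied to the unique solution $\vec y_\tree$ of $A_\tree\vec y=\vec L_\tree$ instead of to actual edge lengths. Write the $q$-th row of the system as
\[
y_{\tree,q}=(\ell_q-k_q)B^{n-3-q}-\sum_{q'>q} a_{q,q'}\,y_{\tree,q'},
\]
which holds because $A_\tree$ is upper unitriangular. We then argue by descending induction on $q$, from $q=r$ down to $q=1$, that $\tfrac12 B^{n-3-q}\le y_{\tree,q}\le nB^{n-3-q}$.

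For the inductive step, first check that the entries $y_{\tree,q'}$ with $q'>q$ are all positive. This follows from the induction hypothesis, since each lies in $[\tfrac12 B^{n-3-q'},\,nB^{n-3-q'}]$. With positivity, the upper bound for $y_{\tree,q}$ is immediate: subtracting nonnegative terms from $(\ell_q-k_q)B^{n-3-q}\le nB^{n-3-q}$ can only decrease it.

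For the lower bound, use $a_{q,q'}\in\{0,1\}$ together with the inductive upper bounds to get
\[
\sum_{q'>q}a_{q,q'}y_{\tree,q'}\le \sum_{q'>q}nB^{n-3-q'}\le \tfrac{n}{B-1}B^{n-3-q}\le\tfrac12 B^{n-3-q},
\]
using the geometric series exactly as in \eqref{eqn:geom-series} and the assumption $B\ge 2n+1$. Since $\ell_q-k_q\ge1$, this gives $y_{\tree,q}\ge B^{n-3-q}-\tfrac12B^{n-3-q}=\tfrac12B^{n-3-q}$. The base case $q=r$ is the same computation with an empty sum.

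Positivity of the entries follows at once from the lower bound. The strict ordering $y_{\tree,1}>\cdots>y_{\tree,r}$ follows from $nB^{n-3-q}<\tfrac12B^{n-3-(q-1)}$, which holds for $B\ge 2n+1$, just as in Lemma \ref{lem:GammaInTropIntCombinatorics}. The only point needing care is to run the induction downward, so that the upper bounds on the later entries, rather than bounds on path lengths, control the subtracted sum. Beyond that there is no real obstacle.
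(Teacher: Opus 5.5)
Your proposal is correct and is essentially the paper's own proof: a descending induction on $q$ that uses the upper-unitriangular $0/1$ structure of $A_\tree$ coming from condition $(*)$, followed by the same geometric-series estimate as in Lemma \ref{lem:GammaInTropIntCombinatorics} with $B \geq 2n+1$. The one cosmetic difference is that you bound the subtracted sum by the inductive upper bounds $y_{\tree,q'} \leq nB^{n-3-q'}$ rather than by the path lengths, but those bounds come from the same inequality, so the argument is the same.
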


    \begin{proof}
        We proceed by reverse induction on $q$. The calculation is then identical to the one in \ref{lem:GammaInTropIntCombinatorics} (replacing the proofs of Lemma \ref{lem:GammaInTropIntCombinatorics}\eqref{it:REdges} and \eqref{it:PqContainsEq} by the two parts of condition $(*)$).
    \end{proof}

The positivity $y_{\tree, q} > 0$ allows us to define a metric tree $\Gamma$ on $\tree$ by setting $\len(e_q) = y_{\tree, q}$. The following properties hold for $\Gamma$:
\begin{prop}\label{prop:MetricSatisfiesPathEquations}
    Let $(\tree,\vec e,\vec k,\vec \ell)$ satisfy $(*)$ and let $\Gamma$ be the metric tree on $\tree$ by with $\len(e_q)=y_{\tau,q}$. Then:
    \begin{enumerate}
        \item For each $q\in[r]$, the path $P_q$ from \eqref{eq:ilkjPath} has length $(\ell_q-k_q)B^{n-3-q}$. In particular, the $k_q$ and $\ell_q$ terms of Equation \eqref{eq:Min} for $\Gamma$ are equal. \label{it:PathsHaveCorrectLength}
        \item For each $q \in [r]$, we have \[\tfrac{1}{2} B^{n-3-q} \leq \len(e_q) \leq n B^{n-3-q}.\]
        In particular, $\len(e_1) > \len(e_2) > \cdots > \len(e_r)$. \label{it:ConditionStarGivesEdgeOrderingByLength}
        \item Let $q \in [r]$ and $\alpha \in S_q \setminus \{i_1, j_q, k_q, \ell_q\}$ and suppose $\underline d_\alpha^q + \alpha B^{n-3-q}$ is greater than or equal to the common $k_q$ and $\ell_q$ value of \eqref{eq:Min}. Then it in fact exceeds that value by at least $\tfrac{1}{2}B^{n-3-q}$.\label{it:ConditionStarGivesMininequality}
    \end{enumerate}
\end{prop}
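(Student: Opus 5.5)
Parts (1) and (2) follow almost directly from the construction; part (3) takes a short path-length argument.

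\emph{Part (1).} By Definition \ref{def:PathMatrix}, $\len(P_q)=\sum_{q'}a_{q,q'}\len(e_{q'})=(A_\tree\vec y_\tree)_q=(\vec L_\tree)_q=(\ell_q-k_q)B^{n-3-q}$. Part (1) of condition $(*)$ says the convex hull of $i_q,j_q,k_q,\ell_q$ has the shape \eqref{eq:ilkjPath}, with $\ell_q$ nearer to $i_q$. Hence, up to translation, $\underline d^q_{k_q}-\underline d^q_{\ell_q}=\len(P_q)=(\ell_q-k_q)B^{n-3-q}$. This is exactly equality of the $k_q$ and $\ell_q$ terms of \eqref{eq:Min}.

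\emph{Part (2).} This is Lemma \ref{lem:ConditionStarImpliesBoundOnEdgeLengths}, since $\len(e_q)=y_{\tree,q}$.

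\emph{Part (3).} Write $m_q$ for the common value $\underline d^q_{k_q}+k_qB^{n-3-q}=\underline d^q_{\ell_q}+\ell_qB^{n-3-q}$. Let $x$ be the attachment point of $\alpha$ on the $i_q$--$j_q$ path. Since $\tree$ has only the edges $e_1,\dots,e_r$, the distance from $x$ to each of the endpoints of $P_q$ is a sum of edge lengths.
\begin{itemize}
\item If $x$ lies weakly on the $j_q$ side of $k_q$'s attachment point, then $\underline d^q_\alpha\ge\underline d^q_{k_q}$, so the $\alpha$ term minus $m_q$ is at least $(\alpha-k_q)B^{n-3-q}$. This is $\ge B^{n-3-q}$ when $\alpha>k_q$. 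If $\alpha<k_q$, the term would fall below $m_q$ unless $x$ is far from $k_q$'s attachment point; that case is handled by the same dichotomy as below.
\item If $x$ lies weakly on the $i_q$ side of $\ell_q$'s attachment point, the argument is symmetric, comparing with $\ell_q$.
\item If $x$ lies strictly inside $P_q$, then $\underline d^q_\alpha=\underline d^q_{\ell_q}+\delta$, where $\delta$ is a sum of a subset of the edges of $P_q$, and the $\alpha$ term minus $m_q$ equals $\delta-(\ell_q-\alpha)B^{n-3-q}$.
\end{itemize}
The key quantization is that any such $\delta$ either contains $e_q$ or does not. If it does not, then, since $P_q$ avoids $e_{q'}$ for $q'<q$, $\delta$ is at most $\sum_{a>q}\len(e_a)\le\tfrac12B^{n-3-q}$ by \eqref{eqn:geom-series} and Lemma \ref{lem:ConditionStarImpliesBoundOnEdgeLengths}. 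If it does, then $\delta=\len(e_q)+\epsilon$ with $0\le\epsilon\le\tfrac12B^{n-3-q}$.

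This quantization is not enough on its own, so the plan is to refine the estimate. Write every relevant difference $\underline d^q_\alpha-\underline d^q_\beta$ as a $0/1$ combination of $\len(e_1),\dots,\len(e_r)$. Then express each $\len(e_{q'})$ for $q'\ge q$ as an integer multiple of $B^{n-3-q'}$ plus lower-order terms, by back-substituting in $A_\tree\vec y=\vec L_\tree$; the exact value of $\len(e_q)$ is $(\ell_q-k_q)B^{n-3-q}$ minus lower-order corrections. The $\alpha$ term minus $m_q$ then has the form $cB^{n-3-q}+\eta$, with $c\in\Z$ and $|\eta|\le\tfrac12B^{n-3-q}$. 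The hypothesis that this quantity is $\ge 0$ forces $c\ge0$. The remaining step is to rule out $c=0$ with $\eta\ge0$, using part (1) and the location of $x$.

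This last step is the main obstacle. If $c$ turns out to be $0$ there, I expect the real content to be that $\eta$ is a nonempty signed sum of edges $e_a$ with $a>q$ whose leading term $\pm\len(e_{a_{\min}})$ dominates the rest. In that case the gap is at least of order $B^{n-3-a}$ rather than $B^{n-3-q}$, and the claimed bound $\tfrac12B^{n-3-q}$ may need to be reinterpreted accordingly. The first thing to check is whether $c=0$ actually occurs.
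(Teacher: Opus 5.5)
Your parts (1) and (2) are correct and match the paper. Part (3), however, is not proved. You stop at the case $c=0$ and even suggest the stated bound might need reinterpreting. It does not: $c=0$ never occurs.

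The missing idea is the choice of comparison index. Let $x$ be the attachment point of $\alpha$ on the $i_q$--$j_q$ path. The edge $e_q$ lies on $P_q$ and separates the attachment points of $k_q$ and $\ell_q$. Hence there is exactly one $m\in\{k_q,\ell_q\}$ for which the path $P$ from $x$ to the attachment point of $m$ avoids $e_q$. Compare the $\alpha$-term with this $m$. The difference is $\pm\len(P)+(\alpha-m)B^{n-3-q}$, and $\alpha-m$ is a \emph{nonzero} integer because $\alpha\notin\{k_q,\ell_q\}$. There are then two cases.
\begin{enumerate}
\item If $P$ contains some $e_{q'}$ with $q'<q$, then $\len(P)\ge\tfrac12B^{n-3-q'}\ge\tfrac12B^{n-2-q}$. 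This swamps $|\alpha-m|B^{n-3-q}<nB^{n-3-q}$, so nonnegativity forces the plus sign. The difference is then at least $(\tfrac12B-n)B^{n-3-q}\ge\tfrac12B^{n-3-q}$.
\item Otherwise $P$ uses only edges $e_a$ with $a>q$, so $\len(P)\le\tfrac12B^{n-3-q}$ by \eqref{eqn:geom-series}. Nonnegativity then forces $\alpha-m\ge1$, and the difference is at least $B^{n-3-q}-\tfrac12B^{n-3-q}=\tfrac12B^{n-3-q}$.
\end{enumerate}
This is exactly the paper's argument.

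This repairs both places where your sketch breaks down.

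In your interior case with $e_q$ contained in $\delta$, substitute part (1). You get $\delta-(\ell_q-\alpha)B^{n-3-q}=(\alpha-k_q)B^{n-3-q}-\delta'$, where $\delta'$ is the length from $x$ to the attachment point of $k_q$. That segment of $P_q$ avoids $e_q$ and all $e_{q'}$ with $q'<q$, so $\delta'\le\tfrac12B^{n-3-q}$. Your integer $c$ is therefore just $\alpha-k_q\neq0$. The ``quantization'' you had \emph{is} enough once you compare with the correct endpoint, and no back-substitution through $A_\tree^{-1}$ is needed.

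In your exterior cases (for example $x$ beyond $k_q$'s attachment point with $\alpha<k_q$), you appeal to ``the same dichotomy as below.'' That dichotomy bounded $\delta$ by $\sum_{a>q}\len(e_a)$ using that $P_q$ avoids $e_{q'}$ for $q'<q$, which fails for paths that leave $P_q$. What is needed there is case (1) above, where a long edge $e_{q'}$ with $q'<q$ dominates.
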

\begin{remark}\label{rmk:ConditionStarNotEnough}
    The metric tree $\Gamma$ is not necessarily in $\TropInt$, because the common value of the $k_q$ and $\ell_q$ parts of Equation \eqref{eq:Min} may not be the minimum value. In fact, we will use Proposition \ref{prop:MetricSatisfiesPathEquations} only in Section \ref{sec:RecursionandFirework}, but its proof belongs here with the discussion of the path matrix.
    
    In the mean time, the triangularity and unimodularity of the path matrix turn out to suffice for the analysis of the local geometry of $\TropInt$, in particular for establishing tropical transversality and applying Theorem \ref{thm:mainthm}.
\end{remark}

\begin{proof}
    (1) holds by definition of the path matrix and (2) holds by Lemma \ref{lem:ConditionStarImpliesBoundOnEdgeLengths}. For \eqref{it:ConditionStarGivesMininequality}, we have by hypothesis
    \begin{equation}\label{eq:minequality-1}
    \underline d_{\alpha}^q + \alpha B^{n-3-q} \geq \underline d_{k_q}^q + k_qB^{n-3-q}
    =\underline d_{\ell_q}^q + k_qB^{n-3-q}.
    \end{equation}
    Consider the two paths along the convex hull of $i_q$ and $j_q$, connecting the leg $\alpha$ to either $k_q$ or $\ell_q$. Since $e_q$ disconnects $k_q$ and $\ell_q$, there is a unique choice $m \in \{k_q, \ell_q\}$ for which the corresponding path does not contain $e_q$; let $P$ denote this path. Rearranging \eqref{eq:minequality-1}, we have
    \begin{equation}\label{eq:minequality-2}
    (\underline d_\alpha^q + \alpha B^{n-3-q}) - (\underline d_m^q + m B^{n-3-q}) =
    \pm \len(P) + (\alpha - m) B^{n-3-q} \ \big({\geq 0}\big).
    \end{equation}
    If $P$ contains an edge $e_{q'}$ with $q' < q$, then $\len(P)$ must occur with a positive sign on the right-hand side (or else the expression is negative). More precisely, the right-hand side is at least
    \begin{equation}\label{eq:minequality-3}
    \tfrac{1}{2} B^{n-3-q'} - n B^{n-3-q} \geq (\tfrac{1}{2}B-n)B^{n-3-q} \geq \tfrac{1}{2} B^{n-3-q}
    \end{equation}
    as claimed (using $B \geq 2n+1$). If $P$ does not contain such an edge, we must have $\alpha > m$ (or else the whole expression is negative), so $\alpha - m \geq 1$. Then, more precisely, the right-hand side is again at least
    \begin{equation}\label{eq:minequality-4}
    B^{n-3-q} - \sum_{a=q+1}^r \len(e_a) 
    \geq (1 - \tfrac{1}{2}) B^{n-3-q} = \tfrac{1}{2} B^{n-3-q}. \qedhere
    \end{equation}
\end{proof}

\subsection{Transversality of the tropical intersection}\label{sec:TropicalTransversality}
    Let $\Gamma\in\TropInt.$ Let $\tau$ be the underlying tree of $\Gamma$, and for $q\in[r],$ let $e_q,k_q,\ell_q$ as in Lemma \ref{lem:GammaInTropIntCombinatorics}. By Lemma \ref{lem:GammaInTropIntCombinatorics}\eqref{it:REdges}, $\tau$ has exactly $r$ edges, and by Corollary \ref{cor:TropIntGivesConditionStar}, $(\tau,\vec e,\vec k,\vec\ell)$ satisfies Condition $(*).$ Let $\sigma\in\Sigma_r$ be the cone containing $\Gamma$ in its relative interior. Then $\vec y=(\len(e_1),\ldots,\len(e_r))$ give coordinates on $\sigma.$ 
    \begin{prop}\label{prop:TropHqCutOutByPathMatrixEquation}
        On $\sigma$, in a neighborhood of $\Gamma$, $\Trop(H_q)$ is cut out by the $q$-th row of the matrix equation $$A_\tree\vec y=\vec L_\tree,$$ where $A_\tree$ is the path matrix and $\vec L_\tree$ is the path-length vector, as in Definition \ref{def:PathMatrix}. In particular, $\vec y_\tree$ is the vector of edge-lengths of $\Gamma$.
    \end{prop}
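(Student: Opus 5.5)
Since $\Trop(H_q)$ is described by Corollary \ref{cor:DistanceCondition}, the plan is to localize that description on $\sigma$ near $\Gamma$. By Lemma \ref{lem:GammaInTropIntCombinatorics}\eqref{it:ExactlyTwice}, at $\Gamma$ the minimum \eqref{eq:Min} is attained exactly by $k_q$ and $\ell_q$. Every other $\alpha \in S_q\setminus\{i_q,j_q,k_q,\ell_q\}$ therefore gives a strictly larger value. Each quantity $\underline d^q_\alpha + \alpha B^{n-3-q}$, taken modulo the common translation, is a linear function of the edge lengths $\vec y$ on the cone $\sigma$: the differences $\underline d^q_\alpha-\underline d^q_\beta$ are signed sums of edge lengths along the $i_q$--$j_q$ path, and the combinatorial type is fixed on $\relint(\sigma)$. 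By continuity, there is a neighborhood $U$ of $\Gamma$ in $\sigma$ (inside $\relint(\sigma)$, since $\Gamma$ lies there) on which every $\alpha\neq k_q,\ell_q$ remains strictly non-minimal. Consequently, on $U$ the condition ``the minimum is attained at least twice'' is equivalent to the equality of the $k_q$ and $\ell_q$ terms.

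Next I will rewrite this equality using the tree structure. On $\relint(\sigma)$ the underlying tree is $\tree$, so the convex hull of $i_q,j_q,k_q,\ell_q$ keeps the shape \eqref{eq:ilkjPath} with the same path $P_q$ (Corollary \ref{cor:TropIntGivesConditionStar}). Hence $\underline d^q_{k_q}-\underline d^q_{\ell_q}=\len(P_q)=\sum_{q'} a_{q,q'}\len(e_{q'})$. The equality of the two terms then reads $\len(P_q)=(\ell_q-k_q)B^{n-3-q}$, which is exactly the $q$-th row of $A_\tree\vec y=\vec L_\tree$, as in \eqref{eq:TropHEquationPathLength}. One point to check is the sign and orientation convention: $\ell_q$ is nearer $i_q$, and the larger index has the smaller $\underline d$. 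This matches \eqref{eq:ilkjPath0} with $a_{\ell_q}>a_{k_q}$.

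For the final assertion, $\Gamma$ itself lies in every $\Trop(H_q)$, so its edge-length vector satisfies all $r$ rows. Since $A_\tree$ is unitriangular (Definition \ref{def:PathMatrix}), the solution is unique, and so it equals $\vec y_\tree$.

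The main obstacle is the neighborhood argument: the equality for $k_q,\ell_q$ has to be the only active condition near $\Gamma$. This is exactly what Lemma \ref{lem:GammaInTropIntCombinatorics}\eqref{it:ExactlyTwice} provides, together with the fact that the combinatorial type of the $i_q$--$j_q$ path does not change within $\relint(\sigma)$.
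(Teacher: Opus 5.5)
Your proposal is correct and follows the paper's proof. The paper likewise invokes Lemma \ref{lem:GammaInTropIntCombinatorics}\eqref{it:ExactlyTwice} to see that only $k_q$ and $\ell_q$ are active near $\Gamma$, then reads off the $q$-th row from \eqref{eq:TropHMatrixEquation} and Definition \ref{def:PathMatrix}; you simply make explicit the continuity/localization argument and the unitriangularity-based uniqueness for the final assertion, both of which the paper leaves implicit.
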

    \begin{proof}
        By Lemma \ref{lem:GammaInTropIntCombinatorics}\eqref{it:ExactlyTwice}, the minimum \eqref{eq:Min} is achieved by exactly two indices, namely $k_q$ and $\ell_q$. The statement now follows from \eqref{eq:TropHMatrixEquation} and Definition \ref{def:PathMatrix}.
    \end{proof}

\begin{cor}\label{cor:TropIntFinite}
    $\TropInt$ is finite.
\end{cor}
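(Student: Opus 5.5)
The plan is to show that every point of $\TropInt$ is determined by a finite amount of combinatorial data, and that only finitely many such data exist. Let $\Gamma\in\TropInt$, with underlying tree $\tau$, and let $\vec e,\vec k,\vec\ell$ be as in Lemma \ref{lem:GammaInTropIntCombinatorics}. By Lemma \ref{lem:GammaInTropIntCombinatorics}\eqref{it:REdges}, $\tau$ has exactly $r$ edges. By Corollary \ref{cor:TropIntGivesConditionStar}, the tuple $(\tau,\vec e,\vec k,\vec\ell)$ satisfies condition $(*)$. So to each point of the intersection we attach such a tuple.

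The key step is that the tuple determines $\Gamma$ completely. By Proposition \ref{prop:TropHqCutOutByPathMatrixEquation}, the edge-length vector of $\Gamma$ satisfies $A_\tau\vec y=\vec L_\tau$. The matrix $A_\tau$ is upper unitriangular, so this system has the unique solution $\vec y_\tau$. Hence $\Gamma$ is the metric tree on $\tau$ with $\len(e_q)=y_{\tau,q}$.

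Finiteness then follows by counting. There are finitely many $[n]$-marked stable trees $\tau$ with $r$ edges, finitely many orderings $\vec e$ of their edges, and finitely many choices of $k_q<\ell_q$ in $S_q$. So the assignment $\Gamma\mapsto(\tau,\vec e,\vec k,\vec\ell)$ is an injective map into a finite set, and $\TropInt$ is finite.

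The only subtle point is that the choice of $k_q,\ell_q$ is well-defined, but injectivity does not need this. It is enough to fix any choice for each $\Gamma$, since two points with the same data coincide by uniqueness of the solution $\vec y_\tau$. Lemma \ref{lem:GammaInTropIntCombinatorics}\eqref{it:ExactlyTwice} already makes the choice unique in any case. I expect no real obstacle; the proof is essentially bookkeeping on top of Proposition \ref{prop:TropHqCutOutByPathMatrixEquation}.
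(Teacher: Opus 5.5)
Your proposal is correct and is essentially the paper's own proof: each point of $\TropInt$ yields a tuple $(\tau,\vec e,\vec k,\vec\ell)$ satisfying $(*)$, and by Proposition~\ref{prop:TropHqCutOutByPathMatrixEquation} that tuple determines the metric tree uniquely via the unitriangular system $A_\tau\vec y=\vec L_\tau$. Since there are only finitely many such tuples, finiteness follows. Your remark that injectivity does not depend on a canonical choice of $k_q,\ell_q$ is a correct and harmless refinement.
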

\begin{proof}
There are only finitely many ways for a tree to satisfy condition $(*)$, and each way of fixing this data already determines a unique metric tree $\Gamma$ from the path matrix equation in Proposition~\ref{prop:TropHqCutOutByPathMatrixEquation}.
\end{proof}   

The following are precisely the tropical-geometric properties we will use in Section \ref{sec:RelateToAlgebraic} to prove that $\Trop(\bigcap_{q=1}^rH_q)=\bigcap_{q=1}^r\Trop(H_q)$ locally near $\Sigma_r$, and to compute $\mult_\Gamma(\Trop(\bigcap_{q=1}^rH_q),\sigma;\Sigma)$ for $\Gamma\in\Trop(\bigcap_{q=1}^rH_q)$.
\begin{prop}\label{prop:RelativeInteriors2}
    Let $\Gamma\in \TropInt$. Then:
    \begin{enumerate}
        \item $\Gamma$ lies in the relative interior of an $r$-dimensional cone $\sigma\in\Sigma_r$,\label{it:SigmaRInterior2}
        \item $\Gamma$ is an isolated point of $\TropInt$,
        \label{it:GammaIsolated}

        \end{enumerate}
        Furthermore, let $\zeta$ be any maximal cone of $\M_{0,n}^{\trop}$ that contains $\Gamma.$ Then:
        \begin{enumerate}\setcounter{enumi}{2}
        \item For $q=1,\ldots,r$, locally near $\Gamma$ on $\zeta$, $\Trop(H_q)$ is an affine-linear codimension-$1$ subspace $W_q$,\label{it:WqDef}
        \item For $q=1,\ldots,r$, the intersection $V_q:=W_1\cap\cdots\cap W_q$ is transverse and intersects $\zeta^\circ$,\label{it:VqDef}
        \item For $q=1,\ldots, r-1$, the tropical intersection multiplicity $i(V_{q+1},V_q\cdot W_{q+1};\zeta)$ is $1$, and\label{it:VkWMult1}
        \item If $v_\zeta\in\zeta$ is generic, then the stable intersection $(v_\zeta+\sigma)\cdot V_r$ (computed inside $\zeta$) is a single point with weight $1$.\label{it:VrsigmaMult1}
    \end{enumerate}
\end{prop}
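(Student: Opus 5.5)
Parts (1) and (2) come essentially for free from what is already proved. By Lemma \ref{lem:GammaInTropIntCombinatorics}\eqref{it:REdges} the underlying tree $\tau$ of $\Gamma$ has exactly $r$ edges, and by Lemma \ref{lem:GammaInTropIntCombinatorics}\eqref{it:EdgeLengthBounds} all these edge lengths are positive. Hence $\Gamma$ lies in the relative interior of the $r$-dimensional cone $\sigma=\M_{0,n}^{\trop}(\tau)$. Part (2) then follows from Corollary \ref{cor:TropIntFinite}, since a finite set has only isolated points.

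For (3) I will fix a maximal cone $\zeta\ni\Gamma$, corresponding to a trivalent tree $\tau'$ that contracts onto $\tau$. Its coordinates are the edge lengths $(\len(e_1),\ldots,\len(e_r),\len(f_1),\ldots,\len(f_{n-3-r}))$, where the $f_s$ are the extra edges of $\tau'$, which have length $0$ at $\Gamma$. By Lemma \ref{lem:GammaInTropIntCombinatorics}\eqref{it:ExactlyTwice}, the minimum \eqref{eq:Min} for $q$ is attained exactly twice at $\Gamma$, by $k_q$ and $\ell_q$. By continuity it is then attained only by these two indices on a neighborhood of $\Gamma$, and the gap is bounded away from zero. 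So, near $\Gamma$ on $\zeta$, $\Trop(H_q)$ is cut out by the single equation \eqref{eq:TropHMatrixEquation}, namely $\len(P_q)=(\ell_q-k_q)B^{n-3-q}$, where $P_q$ is now the path in $\tau'$. This path consists of the $e_{q'}$ with $a_{q,q'}=1$ together with some subset of the $f_s$. The equation is affine-linear with a nonzero $0/1$ coefficient vector (the coefficient of $e_q$ is $1$), so it defines a hyperplane $W_q$.

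For (4) and (5) I will write the defining linear forms as the rows of an $r\times(n-3)$ matrix $[A_\tau\mid C]$, where $A_\tau$ is the path matrix, which is upper unitriangular, and $C$ is some $0/1$ matrix. Since $A_\tau$ is unimodular, any $q$ of the first rows are linearly independent and span a saturated sublattice of $L_\Z(\zeta)^\vee$. Saturation holds because the first $q$ rows restricted to the $e$-coordinates form a unitriangular block, which can be completed to a unimodular basis by adding the coordinate functionals for $e_{q+1},\ldots,e_r$ and the $f_s$. This gives transversality of $W_1,\ldots,W_q$. It also gives multiplicity one: by \eqref{eq:TropIntMultDef2} the index $[\,(V_{q+1})^\perp: V_q^\perp\oplus W_{q+1}^\perp]$ equals $1$ because the rows extend to a basis, and all weights are $1$ by Proposition \ref{prop:facetsofHtrophaveweightone}.

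To see that $V_q$ meets $\zeta^\circ$, I will start from $\Gamma$, which satisfies all $r$ equations with every $f_s=0$. Setting $f_s=\epsilon>0$ and solving the triangular system for the $e$-coordinates changes them only by $O(\epsilon)$, so for small $\epsilon$ they stay positive. This produces a point of $V_r\cap\zeta^\circ$, and since $V_r\subseteq V_q$ it lies in every $V_q$.

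For (6) the argument is as follows. The cone $\sigma$ is spanned by the $e$-coordinates, so $v_\zeta+\sigma$ is the affine space obtained by fixing the $f$-coordinates at the generic positive values $v_\zeta$. Intersecting it with $V_r$ amounts to solving $A_\tau\vec y=\vec L_\tau-C\vec f$ for $\vec y$, which has a unique solution because $A_\tau$ is invertible. The multiplicity is the index of the lattice generated by the rows of $[A_\tau\mid C]$ together with the $f$-coordinate functionals, and that lattice is all of $L_\Z(\zeta)^\vee$ because $A_\tau$ is unimodular. The solution is near $\Gamma$ when $v_\zeta$ is small, and it lies in $\zeta$ provided $\zeta$ is the cone in which we work locally.

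The main obstacle I expect is the local-constancy step in (3). One has to check that on all of $\zeta$ near $\Gamma$ no third index $\alpha$ enters the minimum, and that the pair attaining it stays $\{k_q,\ell_q\}$ even after the perturbation changes the combinatorial positions of legs on the $i_q$--$j_q$ path. I plan to handle this with a strict-inequality-plus-continuity argument: the quantities $\underline d^q_\alpha$ are continuous, piecewise linear functions of $\Gamma$ on $\zeta$, so strict inequalities at $\Gamma$ persist on a neighborhood.
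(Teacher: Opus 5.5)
Your proposal is correct and follows essentially the same route as the paper. Both arguments rest on the same ingredients: the local equations given by the rows of $[A_\tau\mid *]$ on $\zeta$, unitriangularity of $A_\tau$ for transversality and saturation (hence multiplicity $1$), and the determinant-one block matrix $\begin{bmatrix}A_\tau & *\\ \mathbf{0} & I\end{bmatrix}$ for (6). Your explicit $\epsilon$-perturbation of the $f$-coordinates, showing that $V_q$ meets $\zeta^\circ$, is a more precise version of the paper's brief ``dimension count''. Deducing (2) from Corollary \ref{cor:TropIntFinite} is equivalent to the paper's full-rank argument.
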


\begin{proof}
   \eqref{it:SigmaRInterior2} is immediate from Lemma \ref{lem:GammaInTropIntCombinatorics}\eqref{it:REdges}. \eqref{it:GammaIsolated} follows from Proposition \ref{prop:TropHqCutOutByPathMatrixEquation}, and the fact that $A_\tau$ is full rank (since it is upper triangular with 1s on the diagonal).

  Let $\tau$ be the underlying tree of $\Gamma$, and let $\sigma \in \Sigma_r$ be the corresponding $r$-dimensional cone, which contains $\Gamma$ in its relative interior. Let $\zeta$ be any maximal cone of $\M_{0,n}^{\trop}$ that contains $\Gamma$. Then $\zeta$ has $\sigma$ as a face, and corresponds to a tree $\tau'$, with edges $e_1,\ldots,e_r,e_{r+1},\ldots,e_{n-3}$, from which $\tau$ is obtained by contracting the edges $e_{r+1},\ldots,e_{n-3}$. The coordinates on $\sigma$ extend to coordinates $(\len(e_1),\ldots,\len(e_{n-3}))$ on $\zeta$. We have the following stronger version of Proposition \ref{prop:TropHqCutOutByPathMatrixEquation}:
    \begin{claim}\label{clm:ZetaPathMatrixEquation}
        Locally near $\Gamma$ in $\zeta$, $\Trop(H_q)$ is cut out by the $q$th row of a matrix equation of the form 
        \begin{align}\label{eq:ZetaPathMatrixEquation}
            \begin{bmatrix}
        A_\tau&*    
        \end{bmatrix}\cdot\begin{bmatrix}
    \len(e_1)&
    \cdots&
    \len(e_{r})&
    \len(e_{r+1})&
    \cdots&
    \len(e_{n-3})
\end{bmatrix}^T\,=\vec L_\tau,
        \end{align}
         where $*$ is an $r\times(n-3-r)$ matrix of 0s and 1s.
    \end{claim}
\begin{proof}[Proof of Claim]
     Denote by $P'_q$ the path  \eqref{eq:ilkjPath} in $\tau'$ and by $P_q$ the analogous path in $\tau$. As in the proof of Proposition \ref{prop:TropHqCutOutByPathMatrixEquation}, locally near $\Gamma$ in $\Trop(H)$, the minimum is achieved (only) by $k_q$ and $l_q$. This means that, on $\zeta$, sufficiently near $\Gamma$, $H_q$ is defined by the condition $\len(P'_q)=(\ell_q-k_q)B^{n-3-q}.$ Now, observe that for the first $r$ edges $e\in \{e_1,\ldots,e_r\}$, $e$ is on $P_q'$ if and only if it is on $P_q$. This part is in the $A_\tau$ block of the matrix, and the $*$ block describes which other edges are on $P'_q$. The claim follows. 
\end{proof}

Let $W_q$ be the codimension-1 affine-linear space cut out by the $q$th row of \eqref{eq:ZetaPathMatrixEquation}, which is nonzero since $A_\tau$ is upper triangular with 1s on the diagonal. \eqref{it:WqDef} is immediate from Claim \ref{clm:ZetaPathMatrixEquation}.

Since $V_q=W_1\cap\cdots\cap W_q$ is cut out by the first $q$ rows of \eqref{eq:ZetaPathMatrixEquation}. The fact that $A_\tau$ is upper triangular with 1s on the diagonal implies that the intersection is transverse. A dimension count, using the fact that $\Gamma$ is an isolated point of $\TropInt$, implies that $V_q$ is not contained in a proper face of $\zeta$, hence intersects $\zeta^{\circ}$. This proves \eqref{it:VqDef}.

Since $A_\tau$ is upper triangular with 1s on the diagonal, the first $q+1$ rows of the matrix $\begin{bmatrix}
    A_\tau&*
\end{bmatrix}$ span a saturated sublattice of $L_\Z(\zeta)^\vee$. An easy calculation using \eqref{eq:TropIntMultDef2} shows $i(V_{q+1},V_q\cdot W_{q+1};\zeta)=1$, proving \eqref{it:VkWMult1}.

On $\zeta$, the face $\sigma$ is cut out by setting $\len(e_i) = 0$ for $i > r$, that is, by the equation $$\begin{bmatrix}
    \mathbf{0}&I
\end{bmatrix}\cdot\begin{bmatrix}
    \len(e_1)&
    \cdots&
    \len(e_{r})&
    \len(e_{r+1})&
    \cdots&
    \len(e_{n-3})
\end{bmatrix}^T=\vec 0,$$ where $\mathbf{0}$ is an $(n-3-r)\times r$ matrix of zeroes, and $I$ is an $(n-3-r)\times(n-3-r)$ identity matrix. Fix $v_\zeta\in\zeta^{\circ}$. Let $\bar{v}_{\zeta}=\begin{bmatrix}
    \mathbf{0}&I
\end{bmatrix}\cdot v_\zeta$. Then $v_\zeta+\sigma\subset\zeta$ is cut out by the equation $$\begin{bmatrix}
    \mathbf{0}&I
\end{bmatrix}\cdot\begin{bmatrix}
    \len(e_1)&
    \cdots&
    \len(e_{r})&
    \len(e_{r+1})&
    \cdots&
    \len(e_{n-3})
\end{bmatrix}^T= \bar{v}_\zeta.$$ 
The intersection of $V_r$ with $\sigma+v_\zeta$ is cut out by the matrix equation  
\begin{align}\label{eq:VrSigmaIntersect}
\begin{bmatrix}
   A_{\tau} & *\\
       \mathbf{0}  & I
\end{bmatrix}
\cdot\begin{bmatrix}
    \len(e_1)&
    \cdots&
    \len(e_{r})&
    \len(e_{r+1})&
    \cdots&
    \len(e_{n-3})
    \end{bmatrix}^T
    =
\begin{bmatrix}
    \vec{L}_{\tau}\\
    \bar{v}_{\zeta}
\end{bmatrix}. 
\end{align}
Since the leftmost matrix is nonsingular, \eqref{eq:VrSigmaIntersect} has at most 1 solution in $\zeta$. In the degenerate case $v_\zeta=0$, $\Gamma$ is a solution in $\zeta$, and it follows that for any sufficiently small $v_\zeta\in\zeta^{\circ},$ there is a unique solution in $\zeta^\circ.$ As in the proof of \eqref{it:VrsigmaMult1}, the fact that the leftmost matrix in \eqref{eq:VrSigmaIntersect} has determinant 1 implies that the stable intersection $(v_\zeta+\sigma)\cdot V_r$ has weight 1 at the solution.
\end{proof}

\subsection{Extended tropicalizations}\label{sec:Extended Tropicalization}

In this section we introduce the extended tropical moduli space $\Mbar_{0,n}^{\trop}$, and sketch a proof of Proposition \ref{prop:ExtendedFireworkAlgorithm}, which is an analog of Corollary \ref{cor:TropIntFinite} in this setting. The extended tropical moduli space detects points and subvarieties of $\Mbar_{0,n}$ that are contained in the boundary. We use Proposition \ref{prop:ExtendedFireworkAlgorithm} in Section \ref{sec:RelateToAlgebraic} to conclude that the intersection of the closures of the $H_q$s does not have an irreducible component supported on the boundary of $\Mbar_{0,n}$.

\newcommand{\Rbar}{\overline{\R}}
\newcommand{\fin}{\mathrm{fin}}
Let $\Rbar = \R \cup \{\infty\}$ denote the extended real numbers. The space $\Mbar_{0,S}^\trop$ can be viewed as the space of stable $S$-marked extended metric trees, i.e. stable $S$-marked trees with edge lengths from $\Rbar_{>0}$. It is stratified as follows. Let $\tau_\infty$ be a stable tree, thought of as an extended metric tree with all edges of infinite length. Then there is a stratum $\M_{\tau_\infty}^\trop$ consisting of all stable extended metric trees $\Gamma$ for which contracting the finite edges gives $\tau_\infty$. Each $\M_{\tau_\infty}^\trop$ is a cone complex, with cones indexed by stable trees $\tau_\fin$ obtained by inserting edges into $\tau_\infty$ (called \emph{finite edges}). We have

\makeatletter
\newcommand{\vast}{\bBigg@{4}}
\newcommand{\Vast}{\bBigg@{5}}
\makeatother

\begin{align}\label{eq:ExtendedProductDecomposition}
    \M_{\tau_\infty}^\trop \cong \prod_{v \in V(\tau_\infty)} \M_{0, \deg(v)}^\trop.
\end{align}
\begin{ex}
    The isomorphism \eqref{eq:ExtendedProductDecomposition} corresponds to cutting the tree along its infinite edges:
    \begin{center}
    \begin{tikzpicture}
    \begin{scope}[shift={(0, 0)}]
        \draw (-1, 2)       node {$\bullet$}
            -- node[left, midway] {$a$} (-1, 1) node {$\bullet$}
            -- node[above, midway] {$\infty$} (0, 0.5) node {$\bullet$}
            -- node[above, midway] {$b$} (1.5, 0.5) node {$\bullet$}
            -- node[above left, midway] {$c$} (2.5, 1.5) node {$\bullet$};
        \draw (1.5, 0.5)
            -- node[above, midway] {$\infty$} (3, 0)  node {$\bullet$}
            -- node[above, midway] {$d$} (4, 0.25) node {$\bullet$};
        \draw (-1.4, 2.5) -- (-1, 2) -- (-0.6, 2.5);
        \draw (-1.5, 0.5) -- (-1, 1);
        \draw (0, 0.5) -- (0, 0);
        \draw (3, 1.5) -- (2.5, 1.5) -- (2.9, 1.9);
        \draw (2.5, 2) -- (2.5, 1.5);
        \draw (2.6, -0.5) -- (3, 0) -- (3.4, -0.5);
        \draw (4, 0.75) -- (4, 0.25) -- (4.5, 0.25);
    \end{scope}

    \node at (5, 1) {$\mapsto \Vast($};

    \begin{scope}[shift={(7.1, -0.5)}]
        \draw (-1, 2)       node {$\bullet$}
            -- node[left, midway] {$a$} (-1, 1) node {$\bullet$};
        \draw (-1.4, 2.5) -- (-1, 2) -- (-0.6, 2.5);
        \draw (-1.5, 0.5) -- (-1, 1) -- (-0.5, 0.5);
    \end{scope}

    \node at (6.75, 0) {,};

    \begin{scope}[shift={(7.5, 0)}]
        \draw (-0.5, 1)
            -- (0, 0.5) node {$\bullet$}
            -- node[above, midway] {$b$} (1.5, 0.5) node {$\bullet$}
            -- node[above left, midway] {$c$} (2.5, 1.5) node {$\bullet$};
        \draw (1.5, 0.5)
            -- (2, 0.15);
        \draw (0, 0.5) -- (0, 0);
        \draw (3, 1.5) -- (2.5, 1.5) -- (2.9, 1.9);
        \draw (2.5, 2) -- (2.5, 1.5);
    \end{scope}

    \node at (10.7, 0) {,};

    \begin{scope}[shift={(8.5, 0.5)}]
        \draw (2.5, 0.35)
            -- (3, 0)  node {$\bullet$}
            -- node[above, midway] {$d$} (4, 0.25) node {$\bullet$};
        \draw (2.6, -0.5) -- (3, 0) -- (3.4, -0.5);
        \draw (4, 0.75) -- (4, 0.25) -- (4.5, 0.25);
    \end{scope}

    \node at (13.25, 1) {$\Vast)$};
    \end{tikzpicture}
    \end{center}
\end{ex}
Forgetful maps likewise extend to maps $\pi_{S}^{\trop} : \Mbar_{0, n}^\trop \to \Mbar_{0, S}^\trop$ when $S' \subseteq [n]$. To extend the tropical Kapranov map to $\Mbar_{0,S}^{\trop}$, we recall tropical projective space 
\[
\P(K^{S \setminus \{i, j\}})^\trop = (\Rbar^{S \setminus \{i, j\} }\setminus \{\vv \infty\})/ \R,
\]
where $\vv{\infty}$ denotes the all-infinite vector and the quotient is by finite translations by the all-ones vector. We write $(\underline{x}_\ell)_{\ell \in S \setminus \{i, j\}}$ to denote an element of tropical projective space, with each $\underline{x}_\ell \in \Rbar$.

The tropical Kapranov map extends to a map 
\[
\Psi_{i \rel j}^\trop : \Mbar_{0, S}^\trop \to \P(K^{S \setminus \{i, j\}})^\trop.
\]
As in Section \ref{subsec:KapranovAndForgetful}, we consider compositions $$\Psi_{S,i \rel j}^\trop := \Psi_{i \rel j}^\trop\circ\pi_{S}^{\trop}:\Mbar_{0,n}^{\trop}\to\P(K^{S \setminus \{i, j\}})^\trop,$$ described similarly to Equation \eqref{eqn:PsiCoords}: for each $\ell \in S \setminus \{i, j\}$, let $p_\ell$ be the nearest point to the $\ell$-th leg along the convex hull of the legs $i$ and $j$. Let $p_i$ be the attachment point of the leg $i$ within the convex hull of $S$. Let $\underline d_\ell \in \Rbar$ be the distance from $p_i$ to $p_\ell$. Then
\begin{equation}\label{eqn:ExtendedPsiCoords}
    \Psi_{S,i \rel j}^\trop(\Gamma) = (\underline d_\ell : \ell \in S \setminus \{i, j\}).
\end{equation}
The stability condition ensures that the image is not $\vv{\infty}$. The main difference in working with extended metric trees is that, if $\pi_S^\trop(\Gamma)$ has an infinite edge $e$ between the legs $i$ and $j$, then $\underline d_\ell = \infty$ for all $\ell$ with $p_\ell$ on the $j$-side of $e$. In particular, the map does not see the information of the relative positions of these $p_\ell$.

\begin{lem}
    With $a_\ell$ for $\ell \in S \setminus \{i, j\}$ and $H$ as in Definition \ref{def:BasicHyperplane}, the set $\Trop(\overline{H})$ is given by
    \[
    \Trop(\overline{H})=\big\{\Gamma\in\Mbar_{0,n}^{\trop} : \min_{\ell\in S\setminus\{i,j\}}(\underline d_\ell+ a_\ell)\text{ is achieved twice}\big\}.
    \]
\end{lem}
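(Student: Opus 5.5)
We prove the equality by working one stratum at a time. The extended tropicalization $\Trop(\overline{H})$ is, by definition, the union over strata of $\Mbar_{0,n}$. For each combinatorial type $\tau_\infty$, the part of $\Trop(\overline{H})$ in $\M_{\tau_\infty}^\trop$ is $\Trop(\overline{H}\cap \M_{\tau_\infty})$, where $\M_{\tau_\infty}$ is the open stratum. That open stratum is a torus-orbit intersection of the tropical compactification $\Mbar_{0,n}\subseteq \TV_{\Sigma(n)}$, and its tropicalization is $\M^\trop_{\tau_\infty}$. So it suffices to prove the following for each $\tau_\infty$:
\[
\Trop(\overline{H}\cap\M_{\tau_\infty}) = \big\{\Gamma\in\M_{\tau_\infty}^\trop : \min_{\ell}(\underline d_\ell+a_\ell)\text{ is achieved twice}\big\}.
\]
Here $\underline d_\ell$ is computed by \eqref{eqn:ExtendedPsiCoords}. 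When $\tau_\infty$ is the trivial tree, this is exactly Corollary \ref{cor:DistanceCondition}.

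For general $\tau_\infty$, we first identify $\overline{H}\cap\M_{\tau_\infty}$ algebraically. By Proposition \ref{prop:HypersurfaceHasNoComponentsInBoundary}, $\overline{H}=\kap_{S,i\rel j}^{-1}(\overline{\underline H})$. The map $\kap_{S,i\rel j}$ is $G_{i,j}\circ\pi_S$ composed with $\Plucker$, and $G_{i,j}$ is toric by Proposition \ref{prop:ToricFactorization}. Therefore it sends the orbit of $\tau_\infty$ into the torus orbit of $\P(K^{S\setminus\{i,j\}})$ indexed by
\[
J=\{\ell : \underline d_\ell=\infty\},
\]
that is, the legs $\ell$ attached on the $j$-side of an infinite edge of $\pi_S(\tau_\infty)$ lying on the $i$--$j$ path. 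On that orbit the hyperplane restricts to
\[
\overline{\underline H}\cap\{x_\ell=0\ \forall \ell\in J\} = \mathbb V\Big(\sum_{\ell\notin J} f_\ell x_\ell\Big),
\]
which is again a hyperplane with coefficients of distinct valuation, possibly a monomial. The restricted map $\M_{\tau_\infty}\to$ (orbit) is, under the product decomposition \eqref{eq:ExtendedProductDecomposition}, a tropical Kapranov-type map on the factor $\M_{0,\deg(v)}$ containing the attachment point of $i$, composed with a forgetful map. The coordinates $\ell\notin J$ are exactly the distances measured in that factor.

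We then repeat the arguments of Lemma \ref{lem:SurjectiveMapOfLattices}, Corollary \ref{cor:FacetsOfHTildeHaveWeight1} and Lemma \ref{lem:PullbackMinimum} in this setting. The monomial map on the orbit is still surjective on lattices, by the same section argument. The local codimension-one property of Lemma \ref{lem:IndependentPerturbations} still holds, since the perturbation argument only uses the finite coordinates. By \cite[Thm. 5.1.1]{OssermanPayne2013}, tropicalization then commutes with the pullback. This gives the minimum condition over $\ell\notin J$. That agrees with the stated condition, because the terms $\underline d_\ell+a_\ell$ with $\ell\in J$ are infinite and never attain the minimum. They are all infinite only when $J$ is everything; stability excludes this, since the image of $\Psi^\trop$ is not $\vv{\infty}$.

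The main obstacle is the degenerate case $|S\setminus(\{i,j\}\cup J)|=1$. Then the restricted equation is a single monomial, so the restriction of $\overline{\underline H}$ to the orbit is empty. We must check that both sides are empty: the minimum over a single finite term cannot be achieved twice, so the right side is empty, matching. We also need some care to verify that the identification of the orbit map with a Kapranov map on a product factor is genuinely toric. We would check this directly from formula \eqref{eq:ToricMorphism-formula}: the ratios $p_{j\ell}/p_{i\ell}$ vanish on the orbit exactly for $\ell\in J$.
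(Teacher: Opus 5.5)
Your argument is correct, but it takes a different and heavier route than the paper. The paper does not work orbit by orbit. It cites \cite[Thm.~6.2.18]{MaclaganSturmfels2015}: the extended tropicalization of the closure $\overline{H}$ is the closure of $\Trop(H)$ in $\Mbar_{0,n}^{\trop}$. The lemma then reduces to Corollary~\ref{cor:DistanceCondition} on the open part, plus an elementary check that the closure of that locus is the extended ``minimum achieved twice'' locus. (For example, giving the infinite edges large finite lengths leaves the finite $\underline d_\ell$ unchanged and pushes the others off the minimum.)

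You instead use Proposition~\ref{prop:HypersurfaceHasNoComponentsInBoundary} to identify $\overline{H}\cap\M_{\tau_\infty}$ with the preimage of the restricted hyperplane under the orbit map. You then re-run the lattice-surjectivity, perturbation and \cite[Thm.~5.1.1]{OssermanPayne2013} arguments on every boundary orbit. This costs more verification, but it gives more. You get an explicit description of each boundary piece $\overline{H}\cap\M_{\tau_\infty}$ as a $\psi$-type hypersurface pulled back from the factor at the vertex $v$. One could also apply \cite[Cor.~5.1.3]{OssermanPayne2013} to it to obtain weights.

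Two small corrections.
\begin{enumerate}
\item The orbit map is not just a Kapranov map composed with a forgetful map. Legs $\ell\notin J$ lying in the same branch at $v$ receive the same coordinate. So the effective hypersurface on $\M_{0,\deg(v)}$ has one coefficient per branch $b$, with valuation $\min_{\ell\in b}a_\ell$. These valuations are still distinct, so the argument of Lemma~\ref{lem:IndependentPerturbations} applies.
\item Correspondingly, the degenerate case is whenever all of $S\setminus(\{i,j\}\cup J)$ lies in a single branch at $v$, not only when it is a singleton. In that case the right-hand side is empty. The automatic inclusion of $\Trop(\overline{H}\cap\M_{\tau_\infty})$ into the preimage of the restricted tropical hyperplane intersected with $\M^{\trop}_{\tau_\infty}$ then forces the left-hand side to be empty too. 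So this is not a real obstacle.
\end{enumerate}
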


Note that the stability condition again implies that this minimum is finite.

\begin{proof}
    By \cite[Thm. 6.2.18]{MaclaganSturmfels2015}, $\Trop(\overline{H})$ is the closure of $\Trop(H)$ in $\Mbar_{0, n}^\trop$. It is straightforward to check that the closure is as described above, see \eqref{eq:DistanceConditionHTrop}.
\end{proof}

Let $\tau_\infty$ be an extended stable tree with all edges infinite. Let $\Sigma_{\tau_{\infty},r}$ denote the $r$-skeleton of $\Trop(\M_{\tau_{\infty}}),$ which consists of all extended metric trees $\Gamma$ with at most $r$ finite edges, and such that contracting all finite edges gives $\tau_{\infty}$. Corollary \ref{cor:TropIntGivesConditionStar} can be modified to show that elements of 
\[\Sigma_{\tau_{\infty},r}\cap\bigcap_{q=1}^r\Trop(\bar{H_q})\]
satisfy a modified version of Condition $(*)$ from Definition \ref{def:ConditionStar} as follows: the underlying tree $\tau$ has exactly $r$ \emph{finite} edges, such that the contraction of those edges gives $\tau_\infty$, and such that the path $P_q$ in Equation \eqref{eq:ilkjPath} does not contain any infinite edges. This is sufficient to define a path matrix as in Section \ref{sec:pathmatrix}, and to use arguments similar to those in Section \ref{sec:TropicalTransversality} to prove: 

\begin{prop}\label{prop:ExtendedFireworkAlgorithm} 
    The intersection $\Sigma_{\tau_{\infty},r}\cap\bigcap_{q=1}^r\Trop(\bar{H_q})$ consists of a finite set of points of $\relint(\Sigma_{\tau_{\infty},r})$.
\end{prop}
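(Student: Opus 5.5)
The plan is to transport the finite-case argument (Lemma \ref{lem:GammaInTropIntCombinatorics}, Corollary \ref{cor:TropIntGivesConditionStar}, Proposition \ref{prop:TropHqCutOutByPathMatrixEquation}, Corollary \ref{cor:TropIntFinite}) to the stratum $\M^\trop_{\tau_\infty}$, treating the infinite edges as fixed combinatorial data. Fix $\Gamma\in\Sigma_{\tau_\infty,r}\cap\bigcap_{q}\Trop(\bar H_q)$. For each $q$ the minimum $\min_\ell(\underline d^q_\ell+\ell B^{n-3-q})$ is finite, by stability, and is attained by two indices $k_q<\ell_q$.

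\textbf{Step 1: the minimizing path uses only finite edges.} If $p_{\ell_q}$ lies beyond an infinite edge on the $i_q$–$j_q$ path, then $\underline d^q_{\ell_q}=\infty$. So both minimizers must have $p_{k_q},p_{\ell_q}$ in the finite part adjacent to $p_{i_q}$. The path $P_q$ between them therefore consists of finite edges only, and its length is $(\ell_q-k_q)B^{n-3-q}$, exactly as in \eqref{eq:ilkjPath}.

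\textbf{Step 2: the estimates of Lemma \ref{lem:GammaInTropIntCombinatorics}.} These estimates use only path lengths, so they carry over verbatim with ``edges'' replaced by ``finite edges''. The geometric-series bound \eqref{eqn:geom-series} gives, for each $q$, a finite edge $e_q\subseteq P_q$ not lying on $P_{q'}$ for any $q'>q$. Since $\Gamma$ has at most $r$ finite edges, it has exactly $r$ of them, with lengths satisfying $\tfrac12B^{n-3-q}\le\len(e_q)\le nB^{n-3-q}$. The minimum is again achieved exactly twice, by the same disconnection argument. Hence the modified condition $(*)$ holds.

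\textbf{Step 3: the path matrix and finiteness.} Locally near $\Gamma$ on the cone of $\M^\trop_{\tau_\infty}$ indexed by the underlying tree $\tau_\fin$ (with $\tau_\infty$ recovered by contracting the finite edges), each $\Trop(\bar H_q)$ is cut out by $\len(P_q)=(\ell_q-k_q)B^{n-3-q}$. This uses that the minimizers are locally constant, since the minimum is achieved exactly twice. The path matrix $A_\tau$ is upper unitriangular, so the system $A_\tau\vec y=\vec L_\tau$ has a unique solution. Because there are only finitely many data $(\tau,\vec e,\vec k,\vec\ell)$ with $\tau$ contracting to $\tau_\infty$, the intersection is finite.

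\textbf{Step 4: relative interior.} Every $\len(e_q)$ is at least $\tfrac12B^{n-3-q}>0$ and finite, and there are exactly $r$ finite edges. So $\Gamma$ lies in the interior of an $r$-dimensional cone of $\Trop(\M_{\tau_\infty})\cong\prod_v\M^\trop_{0,\deg v}$, that is, in $\relint(\Sigma_{\tau_\infty,r})$.

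\textbf{Main obstacle.} The delicate point is Step 1 together with the bookkeeping of $\underline d_\ell$ measured from $p_{i_q}$ in the extended setting. One must rule out a minimizer whose attachment point lies on the $i_q$-side but whose comparison with another index involves an infinite edge. I would handle this by noting that any index with $\underline d^q_\ell=\infty$ cannot attain a finite minimum. All finite values $\underline d^q_\ell$ are then measured within the finite connected component containing $p_{i_q}$, and there the finite-case arguments apply unchanged.
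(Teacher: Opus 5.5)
Your proposal is correct and follows the paper's route. The paper also shows that points of the intersection satisfy a modified condition $(*)$: exactly $r$ finite edges, with each $P_q$ avoiding infinite edges because both minimizers have finite $\underline d^q$. It then reuses the unitriangular path matrix and the arguments of Section~\ref{sec:TropicalTransversality} to get finiteness and membership in $\relint(\Sigma_{\tau_\infty,r})$, and your write-up fills in more detail than the paper's sketch. The local description of $\Trop(\bar H_q)$ in your Step 3 is not needed for finiteness, since the equations $A_\tau\vec y=\vec L_\tau$ holding at $\Gamma$ itself already determine the edge lengths for each combinatorial datum.
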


\section{From tropical to algebraic intersections}\label{sec:RelateToAlgebraic}

As in Section \ref{sec:Hypersurfaces}, let $\bfT$ be the torus into which $\M_{0,n}$ embeds, and let $\bfT^K$ be its base change to $K$. Recalling the notation established in Subsection~\ref{sec:NotationForIntersection}, define, for $r=1,\ldots, m$,
\begin{align*}
    X_r &\coloneqq \bigcap_{q=1}^r H_q  \subseteq \M_{0,n}^K.
\end{align*}
Let $\overline{X}_r$ denote the closure of $X_r$ in $\Mbar_{0, n}$. 

In this section, we apply Theorem~\ref{thm:mainthm} to characterize the limit cycle $[(\overline{X}_r)_0]$ in terms of the tropical intersection points $\TropInt$. We will show:
\begin{thm}\label{thm:XbarRepresentsPsiProduct}
We have an equality of cycle classes
\begin{equation}\label{eq:ProdHqBarEqualsXCycle}
    \prod_{q=1}^r[\bar H_q]=[\bar{X}_r]\in A^r(\Mbar_{0,n}^K).
    \end{equation}
\end{thm}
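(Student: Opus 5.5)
The plan is to show that the scheme-theoretic intersection $\bigcap_{q=1}^r \overline{H}_q$ in $\Mbar_{0,n}^K$ has pure codimension $r$ and no irreducible components contained in the boundary. Granting this, the intersection product is represented by a cycle supported on $\bigcap_q \overline{H}_q$. Since $\Mbar_{0,n}^K$ is smooth and each $\overline{H}_q$ is a Cartier divisor, the intersection of $r$ Cartier divisors in codimension exactly $r$ is a local complete intersection, hence Cohen--Macaulay. The product $\prod_q[\overline{H}_q]$ is therefore the fundamental cycle of the scheme $\bigcap_q\overline{H}_q$. If no component lies in the boundary, this scheme is the closure of its restriction to $\M_{0,n}^K$, which is $X_r$. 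Without embedded boundary components (excluded by Cohen--Macaulayness), its cycle is $[\overline{X}_r]$, giving \eqref{eq:ProdHqBarEqualsXCycle}.

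\textbf{Step 1 (no boundary components).} I would argue by induction on $r$, using extended tropicalization. Suppose $V$ is an irreducible component of $\bigcap_q\overline{H}_q$ contained in a boundary stratum $\Mbar_{\tau_\infty}$ with $\tau_\infty$ nontrivial. Every component has codimension at most $r$, because each $\overline{H}_q$ is Cartier (it is pulled back from a hyperplane by Proposition \ref{prop:HypersurfaceHasNoComponentsInBoundary}). So $V\cap\M_{\tau_\infty}$ has dimension at least $\dim \M_{\tau_\infty} - r'$ for some $r'\le r$. Its tropicalization inside $\Trop(\M_{\tau_\infty})$ is then a polyhedral set of codimension at most $r'$, contained in $\bigcap_q\Trop(\overline{H}_q)\cap \M^{\trop}_{\tau_\infty}$. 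Such a set must meet the $r'$-skeleton $\Sigma_{\tau_\infty,r'}$ in a way that is either infinite or leaves the relative interior. I expect this to follow from the recession-cone/balancing argument of Proposition \ref{prop:Set-Theoretic}: take the limit of $V$ and pick a divisorial valuation.

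Proposition \ref{prop:ExtendedFireworkAlgorithm}, applied to the first $r'$ hypersurfaces and also with $\tau_\infty$ accounting for part of the codimension, bounds this intersection and produces a contradiction via a dimension count. Concretely, I would compare dimensions: $\bigcap_q\Trop(\overline{H}_q)$ restricted to the stratum has codimension at least $r$ generically in $\M_{\tau_\infty}^\trop$, while $V$ would need codimension $r - \operatorname{codim}\M_{\tau_\infty}<r$ there.

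\textbf{Step 2 (pure codimension on the interior).} Here I would apply Theorem \ref{thm:mainthm} through Proposition \ref{prop:Set-Theoretic} and Corollary \ref{cor:NoTooBigComponents} to $X=X_r$. This requires Condition \ref{cond:GlobalIntersectionCondition}, namely that $\Trop(X_r)\cap\Sigma_r$ is finite and lies in $\relint(\Sigma_r)$. We have $\Trop(X_r)\subseteq\bigcap_q\Trop(H_q)$, so Corollary \ref{cor:TropIntFinite} and Proposition \ref{prop:RelativeInteriors2}(1) give the condition. Assumption \ref{ass:MSigmaNice} holds for $\Mbar_{0,n}$ because boundary strata are smooth and irreducible and meet the torus orbits transversely. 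Corollary \ref{cor:NoTooBigComponents} then shows that $X_r$ has pure codimension $r$, since every component of $X_r$ has codimension at most $r$, being cut out by $r$ equations.

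\textbf{Main obstacle.} I expect Step 1 to be the delicate part. Turning Proposition \ref{prop:ExtendedFireworkAlgorithm}, which only controls intersections with skeleta of the boundary strata, into the statement that no component of the intersection lives in the boundary needs the boundary analogue of Proposition \ref{prop:Set-Theoretic}. That analogue would use Proposition \ref{prop:TropicalCompactificationsBasicProperties}\eqref{it:ExtendedTropicalStratum} to view $\Mbar_{\tau_\infty}$ as a tropical compactification of $\M_{\tau_\infty}$ with fan $\Star\Sigma$. I would try first to run the proofs of Proposition \ref{prop:Set-Theoretic} and Corollary \ref{cor:NoTooBigComponents} verbatim inside each stratum, with the restrictions of the $\overline{H}_q$ to that stratum.
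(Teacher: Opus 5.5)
Your overall plan is the paper's. Step 2 is exactly Proposition \ref{prop:XHasCodimensionR}: Condition \ref{cond:GlobalIntersectionCondition} comes from Corollary \ref{cor:TropIntFinite} and Proposition \ref{prop:RelativeInteriors2}, and then Corollary \ref{cor:NoTooBigComponents} applies. Step 1 is Proposition \ref{prop:NoIrreducibleComponentsInBoundary}, and your closing Cohen--Macaulay/expected-dimension argument is a more detailed version of the paper's last line. The one place where the proposal, as written, does not close is the mechanism of Step 1.

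Suppose $V\cap\M_\tau$ has codimension exactly $r'$ in the stratum. Then there is no reason for its tropicalization to meet $\Sigma_{\tau,r'}$ badly. Meeting the skeleton of its own codimension finitely and in relative interiors is the generic behaviour (compare $X_{r'}$ itself). Proposition \ref{prop:ExtendedFireworkAlgorithm} for the first $r'$ hypersurfaces is consistent with this, so no contradiction arises. Your ``concrete'' dimension count would need $\bigcap_q\Trop(\bar H_q)\cap\Trop(\M_\tau)$ to have codimension at least $r$ everywhere. That is nowhere established: Proposition \ref{prop:ExtendedFireworkAlgorithm} only controls the intersection with a skeleton, and the paper deliberately avoids describing the full tropical intersection.

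The fix, which is the paper's argument, is to take the skeleton index equal to the number $r$ of hypersurfaces, not to the codimension of $V$. Let $Z^\circ=V\cap\M_\tau^K$, dense in $V$. Every component of $Z^\circ$ has codimension at most $r-\operatorname{codim}\M_\tau<r$ in $\M_\tau^K$. On the other hand, $\Sigma_{\tau,r}\cap\Trop(Z^\circ)\subseteq\Sigma_{\tau,r}\cap\bigcap_{q=1}^r\Trop(\bar H_q)$ is finite and contained in $\relint(\Sigma_{\tau,r})$ by Proposition \ref{prop:ExtendedFireworkAlgorithm}. Apply Corollary \ref{cor:NoTooBigComponents} verbatim, with this $r$, to the tropical compactification $\Mbar_\tau\supseteq\M_\tau$ (Proposition \ref{prop:TropicalCompactificationsBasicProperties}\eqref{it:ExtendedTropicalStratum}). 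It forces $Z^\circ$ to have pure codimension $r$, a contradiction. So no separate boundary analogue of Proposition \ref{prop:Set-Theoretic} is needed beyond this direct application.
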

\begin{thm}\label{thm:LimitCycleFW}
    We have an equality of \emph{cycles} on $\Mbar_{0,n}$
    \begin{equation}\label{eq:XLimitCycleExpansion}
    [(\bar{X}_r)_0]=\sum_{\substack{\Gamma\in\TropInt\\ \tree \text{ tree of } \Gamma}}[\Mbar_{\tree}].
    \end{equation}
\end{thm}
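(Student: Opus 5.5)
The plan is to apply Theorem \ref{thm:mainthm} to $X=X_r\subseteq\M_{0,n}^K$ with $\Mbar=\Mbar_{0,n}$, $\Sigma=\M_{0,n}^{\trop}$. First I will check Assumption \ref{ass:MSigmaNice}: boundary strata $\Mbar_\tree$ are smooth and irreducible, and $\Mbar_{0,n}$ meets the torus orbits $Y_\tree$ transversely (Section \ref{sec:Embeddings}). Every irreducible component of $X_r$ has codimension at most $r$, since it is cut out by $r$ equations. The substantive input is the identification
\[
\Trop(X_r)\cap U=\textstyle\bigcap_{q=1}^r\Trop(H_q)\cap U
\]
for a neighborhood $U$ of $\Sigma_r$, together with a computation of weights near each $\Gamma\in\TropInt$.

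To obtain this identification I will induct on $q$ using Proposition \ref{prop:RelativeInteriors2}. Fix $\Gamma$ and a maximal cone $\zeta\ni\Gamma$. Locally on $\zeta$ each $\Trop(H_q)$ is the hyperplane $W_q$, and $\zeta$ is a weight-$1$ facet of $\Trop(\M_{0,n})$, so $\zeta^\circ$ consists of simple points. By Proposition \ref{prop:RelativeInteriors2}\eqref{it:VqDef}, $V_q\cap W_{q+1}$ is transverse and meets $\zeta^\circ$. Osserman--Payne's lifting theorem \cite[Thm.~5.1.1, Cor.~5.1.3]{OssermanPayne2013}, applied inside $\Trop(\M_{0,n})$ to $X_q$ and $H_{q+1}$ (both Cohen--Macaulay, being complete intersections in a smooth variety), then shows $V_{q+1}\cap\zeta^\circ\subseteq\Trop(X_{q+1})$. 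Its weight is $m_{V_q}\cdot i(V_{q+1},V_q\cdot W_{q+1};\zeta)=1$ by \eqref{it:VkWMult1}; the base case uses Proposition \ref{prop:facetsofHtrophaveweightone}.

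For the reverse containment, $\Trop(X_r)\subseteq\bigcap_q\Trop(H_q)$ always holds. So near $\Gamma$, $\Trop(X_r)$ is contained in the union over $\zeta$ of the $V_r$ pieces. Since $\Trop(X_r)$ has dimension at least $n-3-r$, it coincides with the closure of these pieces, with weight $1$. In particular $\Trop(X_r)\cap\Sigma_r=\TropInt$ is finite (Corollary \ref{cor:TropIntFinite}) and lies in $\relint(\Sigma_r)$, which is Condition \ref{cond:GlobalIntersectionCondition}.

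Theorem \ref{thm:mainthm} then expresses $[(\bar X_r)_0]$ as a sum of $\mult_\Gamma[\Mbar_\tree]$. By Definition \ref{def:MultIndependentOfChoice}, $\mult_\Gamma$ is $\frac{1}{m_\zeta}\deg((v_\zeta+\sigma)\cdot V_r)$, which equals $1$ by Proposition \ref{prop:RelativeInteriors2}\eqref{it:VrsigmaMult1} together with the weight-$1$ statements above. Distinct $\Gamma$ lying in the same cone would both contribute; this is harmless, since the sum in \eqref{eq:XLimitCycleExpansion} is indexed by $\Gamma$.

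I expect the main obstacle to be the lifting step. The difficulty is that $\Gamma$ itself lies in $\Sigma_r$, where $\Trop(\M_{0,n})$ is not locally linear, so Osserman--Payne cannot be applied at $\Gamma$ directly. I will get around this by applying it only at points of $\zeta^\circ$ and passing to the closure, checking that no extra component of $\Trop(X_r)$ can pass through $\Gamma$ inside lower-dimensional cones. The latter uses the dimension bound, which rules out pieces of dimension less than $n-3-r$. If needed, I will also invoke Theorem \ref{thm:XTilde} with $\widetilde X=\bigcap_q\widetilde H_q$ to cross-check the multiplicity.
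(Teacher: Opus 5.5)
Your proposal is correct and follows essentially the paper's route. The paper likewise proves the local identification by induction (Proposition \ref{prop:intersectionandtropicalization}: Osserman--Payne lifting at points of $\zeta^\circ$, closure, and the trivial containment $\Trop(X_{q+1})\subseteq\Trop(X_q)\cap\Trop(H_{q+1})$, with the weight-$1$ bookkeeping via Cohen--Macaulayness), deduces Corollaries \ref{prop:FWTropX} and \ref{cor:TropMultIs1}, and then applies Theorem \ref{thm:mainthm}.

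Two minor points. The dimension bound you invoke is unnecessary, since that containment already rules out extra pieces near $\Gamma$. Also, Cohen--Macaulayness of $X_q$ should be justified first: get pure codimension $q$ from Condition \ref{cond:GlobalIntersectionCondition} and Corollary \ref{cor:NoTooBigComponents}, as in Proposition \ref{prop:XHasCodimensionR}.
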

\begin{cor}\label{cor:FWRepresentsPsiProduct}
    We have an equality of cycle classes
    \begin{equation}\label{eq:CycleClassExpansion}
    \pi_{S_1}^*(\psi_{i_1})\cdots \pi_{S_r}^*(\psi_{i_r})=\sum_{\substack{\Gamma\in\TropInt\\ \tree \text{ tree of } \Gamma}}[\Mbar_{\tree}] \in A^r(\Mbar_{0,n}).
    \end{equation}
\end{cor}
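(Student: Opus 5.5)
The plan is to deduce Corollary \ref{cor:FWRepresentsPsiProduct} by chaining the two preceding theorems with Corollary \ref{cor:HbarRepresentsPsi}, and then comparing classes on the generic and special fibres of the family over $\Spec(R)$. First, by Corollary \ref{cor:HbarRepresentsPsi}, each $\bar H_q$ is an effective representative of $\pi_{S_q}^*(\psi_{i_q})\in A^1(\Mbar_{0,n}^K)$. Theorem \ref{thm:XbarRepresentsPsiProduct} then gives
\[
\prod_{q=1}^r\pi_{S_q}^*(\psi_{i_q})=\prod_{q=1}^r[\bar H_q]=[\bar X_r]\in A^r(\Mbar_{0,n}^K).
\]
Second, Theorem \ref{thm:LimitCycleFW} identifies the flat limit cycle $[(\bar X_r)_0]$ with $\sum_{\Gamma}[\Mbar_\tree]$ as a cycle on $\Mbar_{0,n}$ over $\k$.

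The remaining step is to pass from $[\bar X_r]$ over $K$ to $[(\bar X_r)_0]$ over $\k$ while preserving the class. Let $\X_r$ be the closure of $\bar X_r$ in $\Mbar_{0,n}^R$, which is flat over $\Spec(R)$. I would use the specialization homomorphism $A_*(\Mbar_{0,n}^K)\to A_*(\Mbar_{0,n})$ of \cite[Sec.~20.3]{Fulton1998}, or its generalization \cite[Thm.~4.4.5]{OssermanPayne2013}. It sends $[\bar X_r]$ to $[(\bar X_r)_0]$ because $\X_r$ is flat. It is compatible with intersection products, and it commutes with pullback of the classes $\psi_i$ along $\pi_S$: these are Chern classes of line bundles defined over the base, or equivalently pullbacks of hyperplane classes along $\kap_{S,i\rel j}$, which is defined over $\k$. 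Hence it sends $\prod_q\pi_{S_q}^*(\psi_{i_q})$ over $K$ to the same product over $\k$.

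Applying specialization to the displayed identity then yields the asserted equality in $A^r(\Mbar_{0,n})$. The only point I expect to need care is this compatibility of specialization with the $\psi$-classes and with products. To handle it, I would note that $A^*(\Mbar_{0,n})$ is generated by boundary classes defined over $\Z$ (Keel), and that $\Mbar_{0,n}^R$ is the constant family. Specialization is therefore the identity map on Keel's presentation, and in particular maps $\pi_S^*\psi_i$ to $\pi_S^*\psi_i$.
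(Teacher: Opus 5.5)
Your proposal is correct and follows the same chain as the paper: Corollary \ref{cor:HbarRepresentsPsi}, then Theorem \ref{thm:XbarRepresentsPsiProduct}, then the identification of $[\bar X_r]$ with its flat limit $[(\bar X_r)_0]$, then Theorem \ref{thm:LimitCycleFW}. The paper settles the flat-limit step with a single citation to Fulton; you spell it out via the specialization homomorphism and its compatibility with the $\psi$-classes, which makes explicit the identification of $A^*(\Mbar_{0,n}^K)$ with $A^*(\Mbar_{0,n})$ that the paper leaves implicit.
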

\begin{proof}[Proof of Corollary \ref{cor:FWRepresentsPsiProduct}, assuming Theorems \ref{thm:XbarRepresentsPsiProduct} and \ref{thm:LimitCycleFW}.]
    We have \begin{align*}
        \sum_{\substack{\Gamma\in\TropInt\\ \tree \text{ tree of } \Gamma}}[\Mbar_{\tree}]&=[(\bar{X}_r)_0]&&\text{by Theorem \ref{thm:LimitCycleFW}}\\
        &=[\bar{X}_r]&&\text{by \cite[Prop. 8.2(b)]{Fulton1998}}\\
        &=\prod_{q=1}^r[\bar H_q]&&\text{by Theorem \ref{thm:XbarRepresentsPsiProduct}}\\
        &=\pi_{S_1}^*(\psi_{i_1})\cdots \pi_{S_r}^*(\psi_{i_r})&&\text{by Corollary \ref{cor:HbarRepresentsPsi}}.\qedhere
    \end{align*}
\end{proof}

\begin{remark}
Although Theorem \ref{thm:LimitCycleFW} is satisfactory as a purely tropical statement, it is possible to give a concrete description of $\TropInt$ and hence of the cycle $[(\overline{X}_r)_0]$. We do so in Section \ref{sec:RecursionandFirework}.
\end{remark}

\subsection{Proof of Theorem \ref{thm:XbarRepresentsPsiProduct}}

\begin{prop}\label{prop:XHasCodimensionR}
    $X_r$ has pure codimension $r$ in $\M_{0,n}^K$.
\end{prop}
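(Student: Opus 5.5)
We need to show $X_r=\bigcap_{q=1}^r H_q$ has pure codimension $r$ in $\M_{0,n}^K$. The lower bound is automatic: each $H_q$ is a Cartier divisor, the pullback of a hyperplane, cut out by one equation on the smooth variety $\M_{0,n}^K$. Hence every irreducible component of $X_r$ has codimension at most $r$. So $X_r$ satisfies the hypothesis of Notation \ref{not:DegenerationSetup}, and what remains is to rule out components of codimension $<r$. The plan is to deduce this from Corollary \ref{cor:NoTooBigComponents}, which asserts exactly pure codimension $r$ once Condition \ref{cond:GlobalIntersectionCondition} holds for $X=X_r$. We apply it with $\M=\M_{0,n}$, its tropical compactification $\Mbar_{0,n}\subseteq \TV_{\Sigma(S)}$, and $\Sigma=\M_{0,n}^{\trop}$.

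First, Assumption \ref{ass:MSigmaNice} holds. Each closed stratum $\Mbar_\tau$ is isomorphic to a product of smaller $\Mbar_{0,k}$'s, so it is irreducible and reduced. Moreover $\Mbar_{0,n}$ is smooth and meets the torus orbits transversely (Section \ref{sec:Embeddings}).

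Second, we need $\Trop(X_r)\cap\Sigma_r$ to be finite and contained in $\relint(\Sigma_r)$. Since $\Trop(X_r)\subseteq\bigcap_q\Trop(H_q)$ (tropicalization of an intersection lies in the intersection of tropicalizations), we get $\Trop(X_r)\cap\Sigma_r\subseteq\TropInt$. By Corollary \ref{cor:TropIntFinite}, the latter is finite. By Proposition \ref{prop:RelativeInteriors2}\eqref{it:SigmaRInterior2}, each of its points lies in the relative interior of an $r$-dimensional cone. Condition \ref{cond:GlobalIntersectionCondition} is therefore satisfied, and Corollary \ref{cor:NoTooBigComponents} gives pure codimension $r$.

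The main obstacle is hidden inside the proof of Corollary \ref{cor:NoTooBigComponents}. There, Proposition \ref{prop:Set-Theoretic} and properness of $\Mbar_{0,n}$ are used to say that a component of small codimension would have a flat limit with a component not in the codimension-$r$ boundary. That component would produce a point of $\Trop(X_r)\cap\Sigma_{r-1}$. We must check that this point is genuinely excluded. It is, because $\Sigma_{r-1}\subseteq\Sigma_r$, so the point would lie in $\TropInt$, but every point of $\TropInt$ has exactly $r$ edges by Lemma \ref{lem:GammaInTropIntCombinatorics}\eqref{it:REdges} and so does not lie in $\Sigma_{r-1}$.

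One subtlety is that a low-codimension component could be entirely "at infinity" in the tropical sense. Properness of $\Mbar_{0,n}$ guarantees that its limit is nonempty. Its general points then have tropicalizations in $|\Sigma|$ by Proposition \ref{prop:TropicalizationIntersectsOrbits}, so the argument goes through without needing the extended tropicalization of Proposition \ref{prop:ExtendedFireworkAlgorithm}. That proposition is reserved for boundary components of $\bigcap\overline H_q$, which is a separate statement.
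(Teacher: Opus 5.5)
Your proof is correct and is essentially the paper's argument. Every component of $X_r$ has codimension at most $r$ because $X_r$ is cut out by $r$ equations; then $\Trop(X_r)\cap\Sigma_r\subseteq\TropInt$, together with Corollary \ref{cor:TropIntFinite} and Proposition \ref{prop:RelativeInteriors2}\eqref{it:SigmaRInterior2}, gives Condition \ref{cond:GlobalIntersectionCondition}, and Corollary \ref{cor:NoTooBigComponents} finishes. Your explicit check of Assumption \ref{ass:MSigmaNice} is a reasonable addition that the paper leaves implicit. Your re-examination of the proof of Corollary \ref{cor:NoTooBigComponents} is unnecessary, since Condition \ref{cond:GlobalIntersectionCondition} (all points in $\relint(\Sigma_r)$) already excludes $\Sigma_{r-1}$.
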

\begin{proof}
    Since $X_r$ is the intersection of $r$ hypersurfaces, every irreducible component has codimension $\le r.$ We have $$\Sigma_r\cap\Trop(X_r)=\Sigma_r\cap\Trop(\bigcap_{q=1}^r H_q)\subseteq\Sigma_r\cap\bigcap_{q=1}^r\Trop(H_q).$$  In particular, by Proposition \ref{prop:RelativeInteriors2} and Corollary \ref{cor:TropIntFinite}, $\Sigma_r\cap\Trop(X_r)$ is finite and contained in the relative interior $\relint(\Sigma_r).$ (This is Condition \ref{cond:GlobalIntersectionCondition}.) By Corollary \ref{cor:NoTooBigComponents}, $X_r$ has pure codimension $r$.
\end{proof}

\begin{cor}\label{cor:XbarHasCodimensionR}
    $\bar X_r$ has pure codimension $r$ in $\Mbar_{0,n}^K.$
\end{cor}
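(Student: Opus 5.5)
Corollary \ref{cor:XbarHasCodimensionR} follows by combining Proposition \ref{prop:XHasCodimensionR} with the extended tropical statement, Proposition \ref{prop:ExtendedFireworkAlgorithm}. The closure of $X_r$ has pure codimension $r$ by Proposition \ref{prop:XHasCodimensionR}. So it is enough to show that $\bigcap_{q=1}^r \overline{H}_q$ has no irreducible component of codimension $<r$ lying in the boundary. Granting that, every irreducible component of $\bigcap_q\overline H_q$ is either the closure of a component of $X_r$ or a boundary component of codimension $\geq r$. Each $\overline{H}_q$ is a Cartier divisor (the pullback of a hyperplane, Proposition \ref{prop:HypersurfaceHasNoComponentsInBoundary}), so every component also has codimension $\leq r$. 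Hence boundary components of $\bigcap_q\overline H_q$ either have codimension exactly $r$ or do not exist, and $\bar X_r$, being a union of closures of components of $X_r$, is pure of codimension $r$.

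The plan is to argue stratum by stratum. Fix a boundary stratum $\M_{\tau_\infty}$ of $\Mbar_{0,n}$ of codimension $c\geq 1$; it is itself a product of $\M_{0,\deg(v)}$'s and sits in a torus orbit, with tropicalization the cone complex $\M^\trop_{\tau_\infty}$ of \eqref{eq:ExtendedProductDecomposition}. Let $Z$ be an irreducible component of $\bigcap_q \overline H_q$ whose generic point lies in $\M_{\tau_\infty}$, with $\operatorname{codim}_{\Mbar_{0,n}} Z = c + s$. Then $Z\cap \M_{\tau_\infty}$ has codimension $s$ in $\M_{\tau_\infty}$, and $\Trop(Z\cap\M_{\tau_\infty})$ is pure of codimension $s$ in $\Trop(\M_{\tau_\infty})$. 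We want to rule out $c+s<r$.

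The key step is the following. A pure codimension-$s$ tropical subvariety of the fan $\Trop(\M_{\tau_\infty})$ meets the skeleton $\Sigma_{\tau_\infty,r}$ in a set of dimension at least $r-s>0$. To see this, note that $\Trop(\M_{\tau_\infty})$ is pure of dimension $n-3-c$, with skeleton dimension $\min(r,n-3-c)$. A codimension-$s$ set meets every $\rho$-dimensional cone it hits in dimension $\geq\rho-s$. I would try to make this precise by applying Proposition \ref{prop:Set-Theoretic} to $Z\cap \M_{\tau_\infty}$, viewed inside the tropical compactification $\Mbar_{\tau_\infty}$ (Proposition \ref{prop:TropicalCompactificationsBasicProperties}\eqref{it:ExtendedTropicalStratum}). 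A closed subvariety of codimension $s$ must meet the $s$-skeleton; otherwise its flat limit would be empty, contradicting properness. Moreover it meets skeleta in sets of the expected dimension, since it meets $\Sigma_{\tau_\infty,s}$ nontrivially and its intersection with $\Sigma_{\tau_\infty,\rho}$ for $\rho>s$ is positive dimensional because the flat limit has positive dimension.

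On the other hand, $\Trop(Z\cap\M_{\tau_\infty})\subseteq \bigcap_q\Trop(\overline H_q)$. By Proposition \ref{prop:ExtendedFireworkAlgorithm}, the latter meets $\Sigma_{\tau_\infty,r}$ (which contains $\Sigma_{\tau_\infty,r-c}$) in a finite set. Applying the same proposition with $r$ replaced by $r-c$ and only the first $r-c$ hypersurfaces (the argument is insensitive to this truncation) gives finiteness in $\Sigma_{\tau_\infty,r-c}$, with the intersection lying in the relative interior. If $s<r-c$, then $\Trop(Z\cap \M_{\tau_\infty})$ would meet $\Sigma_{\tau_\infty,r-c}$ in positive dimension, a contradiction. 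Hence $c+s\geq r$.

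I expect the main obstacle to be the dimension claim: that a codimension-$s$ subvariety of $\M_{\tau_\infty}$ meets the $\rho$-skeleton of a tropical fan in dimension $\geq \rho-s$. I would first try to deduce it from Proposition \ref{prop:Set-Theoretic} and Corollary \ref{cor:NoTooBigComponents} applied on the stratum $\M_{\tau_\infty}$ with $\rho=s$ and the appropriate truncated set of hypersurfaces. That route gives exactly that finiteness of the intersection with $\Sigma_{\tau_\infty,r-c}$ forces pure codimension $r-c$ for the stratum intersection, which is the statement we need.
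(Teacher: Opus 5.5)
Your second sentence is the entire proof, and it is the paper's argument; the paper states this corollary without proof, as immediate from Proposition~\ref{prop:XHasCodimensionR}. Indeed, $\overline{X}_r$ is by definition the closure of $X_r$ in $\Mbar_{0,n}^K$. Its irreducible components are the closures of those of $X_r$. Since $\M_{0,n}^K$ is dense open in the irreducible $\Mbar_{0,n}^K$, taking closures preserves dimensions, so pure codimension $r$ follows at once.

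Everything else is unnecessary, including the appeal to Proposition~\ref{prop:ExtendedFireworkAlgorithm} and all text after ``So it is enough to show''. That part conflates $\overline{X}_r$ with $\bigcap_q\overline{H}_q$. Whether these two coincide is the separate Proposition~\ref{prop:NoIrreducibleComponentsInBoundary}, which the paper combines with this corollary to prove Theorem~\ref{thm:XbarRepresentsPsiProduct}.

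As an argument for that other statement, your detour has two weak points.
\begin{enumerate}
\item The principle you state, that a codimension-$s$ set meets every $\rho$-dimensional cone it hits in dimension $\ge\rho-s$, is false. For example, the line $y=0$ meets the cone $\{y\ge|x|\}\subset\R^2$ only at its apex. So only your fallback through Corollary~\ref{cor:NoTooBigComponents} on the stratum is sound.
\item By truncating to the $(r-c)$-skeleton, you exclude only boundary components of codimension $<r$. The paper instead applies Corollary~\ref{cor:NoTooBigComponents} on $\M_\tau^K$ with the full $r$-skeleton. This forces $Z^\circ$ to have codimension exactly $r$ in $\M_\tau^K$, contradicting codimension $\le r-c<r$, so there are no boundary components at all.
\end{enumerate}
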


\begin{prop}\label{prop:NoIrreducibleComponentsInBoundary}
    $\bigcap_{q=1}^r\bar H_q=\bar{X}.$ That is, $\bigcap_{q=1}^r\bar H_q$ has no irreducible components contained in the boundary of $\Mbar_{0,n}^K.$
\end{prop}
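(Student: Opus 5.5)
We will show that no irreducible component of $\bigcap_{q=1}^r\bar H_q$ lies in the boundary $\Mbar_{0,n}^K\setminus\M_{0,n}^K$. Since $\bigcap_{q=1}^r \bar H_q \cap \M_{0,n}^K = X_r$, this gives $\bigcap_q \bar H_q=\bar X_r$. By Corollary \ref{cor:XbarHasCodimensionR} and Krull's principal ideal theorem (each $\bar H_q$ is a Cartier divisor on the smooth variety $\Mbar_{0,n}^K$), every component of the intersection has codimension $\le r$. So it suffices to show the following: for every boundary stratum $\M_{\tau_\infty}^K$ (with $\tau_\infty$ having $s\ge 1$ edges), the locus $Z_{\tau_\infty}:=\M_{\tau_\infty}^K\cap\bigcap_q\bar H_q$ has codimension $\ge r$ in $\M_{\tau_\infty}^K$. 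Then its codimension in $\Mbar_{0,n}^K$ is at least $r+s>r$, and it cannot contain a component of the intersection.

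To show this, we run the same argument as in Proposition \ref{prop:XHasCodimensionR}, but inside the stratum $\M_{\tau_\infty}\subseteq Y_{\tau_\infty}$. By Proposition \ref{prop:TropicalCompactificationsBasicProperties}\eqref{it:ExtendedTropicalStratum}, its closure $\Mbar_{\tau_\infty}\cong\prod_v\Mbar_{0,\deg v}$ is a tropical compactification with fan $\Star\Sigma/L_\R(\tau_\infty)$, and its tropicalization is the stratum $\M_{\tau_\infty}^\trop$ of $\Mbar_{0,n}^\trop$. Its boundary strata are again irreducible boundary strata of $\Mbar_{0,n}$, so Assumption \ref{ass:MSigmaNice} holds. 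The extended tropicalization of $Z_{\tau_\infty}$ is contained in $\M^\trop_{\tau_\infty}\cap\bigcap_q\Trop(\bar H_q)$. By Proposition \ref{prop:ExtendedFireworkAlgorithm}, its intersection with the $r$-skeleton $\Sigma_{\tau_\infty,r}$ is finite and lies in $\relint(\Sigma_{\tau_\infty,r})$. This is Condition \ref{cond:GlobalIntersectionCondition} for $Z_{\tau_\infty}\subseteq\M_{\tau_\infty}^K$, except that $Z_{\tau_\infty}$ is cut out by the restrictions $\bar H_q|_{\M_{\tau_\infty}}$. These restrictions are Cartier divisors, and since the $\bar H_q$ have no components in the boundary (Proposition \ref{prop:HypersurfaceHasNoComponentsInBoundary}), each restriction is a proper divisor or empty. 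Hence every component of $Z_{\tau_\infty}$ has codimension $\le r$, so the hypothesis of Notation \ref{not:DegenerationSetup} holds. Corollary \ref{cor:NoTooBigComponents} then gives that $Z_{\tau_\infty}$ has pure codimension $r$ (or is empty), which is the needed bound.

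The main obstacle is a degenerate case. When $\dim\M_{\tau_\infty}<r$, the $r$-skeleton is the whole fan and the argument must show $Z_{\tau_\infty}=\emptyset$. Here we would use that the modified Condition $(*)$ forces exactly $r$ finite edges, which is impossible when the stratum has fewer than $r$ finite-edge slots. So the tropical intersection is empty, and by Proposition \ref{prop:Set-Theoretic}-type reasoning (a $K$-point of $Z_{\tau_\infty}$ would tropicalize into it), $Z_{\tau_\infty}=\emptyset$.

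A second subtlety is that some $\bar H_q$ may restrict to all of $\M_{\tau_\infty}$. This can happen when every coordinate $\underline d_\ell$ becomes infinite except along a single branch, and then the minimum in the extended tropical Kapranov description is attained by forced ties. We would handle this by checking that the extended description of $\Trop(\bar H_q)$ remains a genuine codimension-one condition on $\M^\trop_{\tau_\infty}$. This is what the finiteness in Proposition \ref{prop:ExtendedFireworkAlgorithm} implicitly guarantees: if some $\bar H_q$ contained $\M_{\tau_\infty}$, the intersection with the skeleton could not be finite. So we would invoke that finiteness together with the dimension count of Corollary \ref{cor:NoTooBigComponents} to rule it out.
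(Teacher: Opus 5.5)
Your proposal is correct and follows essentially the same route as the paper: restrict to a boundary stratum $\M_{\tau}$, use Proposition \ref{prop:ExtendedFireworkAlgorithm} to verify Condition \ref{cond:GlobalIntersectionCondition} there, and invoke Corollary \ref{cor:NoTooBigComponents} to get codimension $r$ inside the stratum, which is too much for a component of $\bigcap_q\bar H_q$. The paper applies the corollary to the dense open part $Z^\circ$ of a hypothetical boundary component rather than to all of $\M_\tau\cap\bigcap_q\bar H_q$, which is an inessential difference.

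One small correction: your claim that each $\bar H_q|_{\M_{\tau}}$ is a proper divisor does not follow merely from $\bar H_q$ having no boundary components, since a divisor can contain a deeper stratum. You do not need the claim, however. Krull's theorem already bounds the codimension of every component by $r$, and Corollary \ref{cor:NoTooBigComponents} does not care whether some $\bar H_q$ contains the stratum, so your ``second subtlety'' needs no separate treatment either.
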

\begin{proof}
Suppose $\bigcap_{q=1}^r\bar H_q$ has an irreducible component $Z$ contained in the boundary of $\Mbar_{0,n}^K.$ Then $Z$ has codimension $\le r$ in $\Mbar_{0,n}^K$, hence has codimension $\le r-1$ in the boundary of $\Mbar_{0,n}^K.$ Then there exists a unique locally closed boundary stratum $\M_{\tau}^K$ of $\Mbar_{0,n}^K$, corresponding to some tree $\tau$, such that $\M_\tau^K$ contains a dense open subset of $Z^\circ$ of $Z$.

Since $Z^{\circ}\subseteq \bigcap_{q=1}^r\bar{H_q}\cap\M_\tau$, we have $\Trop(Z^{\circ}) \subseteq
\bigcap_{q=1}^r\Trop(\bar{H_q})\cap\Trop(\M_\tau).$ In particular, \[
\Sigma_{\tau, r} \cap \Trop(Z^{\circ}) \subseteq
\Sigma_{\tau, r} \cap
\bigcap_{q=1}^r\Trop(\bar{H_q})\cap\Trop(\M_\tau)\] By Proposition \ref{prop:ExtendedFireworkAlgorithm}, the latter is finite and contained in the relative interior of $\Sigma_{\tau,r}$. Thus the same is true for $\Sigma_{\tau,r} \cap \Trop(Z^{\circ})$. By Corollary \ref{cor:NoTooBigComponents}, $Z^\circ$ has pure codimension $r$ in $\M_\tau^K$, a contradiction.
\end{proof}

\begin{proof}[Proof of Theorem \ref{thm:XbarRepresentsPsiProduct}]
    By Corollary \ref{cor:XbarHasCodimensionR} and Proposition \ref{prop:NoIrreducibleComponentsInBoundary}, $\bar{X}_r$ is the scheme-theoretic intersection of the hypersurfaces $\bar H_q$, and has the expected dimension. Thus $[\bar{X}_r]=\prod_{q=1}^r[\bar H_q].$
\end{proof}

\subsection{Proof of Theorem \ref{thm:LimitCycleFW}}

We use the purely tropical statements of Proposition \ref{prop:RelativeInteriors2} to apply \cite[Theorems 1.2 and 5.1.3]{OssermanPayne2013}, treating $M_{0, n}$ as the ambient space. We show that locally near $\Gamma \in \TropInt$, intersection in $M_{0, n}$ commutes with tropicalization for the first $r$ hypersurfaces.

\begin{prop}\label{prop:intersectionandtropicalization}
    Let $\Gamma\in \TropInt$, and let $\sigma$ denote the $r$-dimensional cone of $\M_{0,n}^{\trop}$ whose relative interior contains $\Gamma$. Then for $q=1,\ldots, r$, we have:
    \begin{enumerate}
        \item As sets, $\Trop(X_q)=\Trop(H_1)\cap\cdots\cap \Trop(H_q)$ locally near $\Gamma$, and \label{it:intersectioncommutes}
        \item If $\zeta$ is a maximal cone of $\M_{0,n}^{\trop}$ that contains $\sigma$ as a face, then near $\Gamma$ on $\zeta,$ $\Trop(X_q)$ is an affine-linear codimension-$q$ space with weight $1$ that intersects $\zeta^\circ$. \label{it:Xkmultone}
    \end{enumerate}
\end{prop}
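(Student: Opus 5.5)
We argue by induction on $q$, with $X_0 = \M_{0,n}^K$, treating $\M_{0,n}$ (Cohen--Macaulay, with $\Trop(\M_{0,n})=\M_{0,n}^{\trop}$ having all maximal cones of weight $1$) as the ambient variety for Osserman--Payne's results. The tropical input is Proposition \ref{prop:RelativeInteriors2}. Fix $\zeta \supseteq \sigma$ maximal. Near $\Gamma$ on $\zeta$, $\Trop(H_{q})$ is the hyperplane $W_{q}$ of weight $1$ (Proposition \ref{prop:facetsofHtrophaveweightone}). The spaces $V_q=W_1\cap\cdots\cap W_q$ are transverse and meet $\zeta^\circ$, and $i(V_{q+1},V_q\cdot W_{q+1};\zeta)=1$.

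\textbf{Inductive step.} Assume that near $\Gamma$ we have $\Trop(X_q)\cap\zeta = V_q$ as a weight-$1$ affine space meeting $\zeta^\circ$, for every maximal $\zeta\supseteq\sigma$. Then $X_{q+1}=X_q\cap H_{q+1}$.

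\emph{Intersection in the interior.} At points of $V_q\cap W_{q+1}\cap \zeta^\circ$ near $\Gamma$, which are simple points of $\Trop(\M_{0,n})$, the intersection $\Trop(X_q)\cap\Trop(H_{q+1})$ is tropically transverse in $\Trop(\M_{0,n})$. By \cite[Thm. 1.2]{OssermanPayne2013}, these points lie in $\Trop(X_{q+1})$. By \cite[Cor. 5.1.3]{OssermanPayne2013}, the weight there is
\[
m_{X_q}\cdot m_{H_{q+1}}\cdot i = 1\cdot1\cdot1 = 1.
\]
Cohen--Macaulayness is needed here. $H_{q+1}$ is a hypersurface in $\M_{0,n}$, and $X_q$ is Cohen--Macaulay as a complete intersection of the expected codimension in the smooth variety $\M_{0,n}^K$. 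The expected-codimension statement holds near the relevant points by the previous step, globally by Proposition \ref{prop:XHasCodimensionR} and Corollary \ref{cor:NoTooBigComponents} applied to $X_q$.

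\emph{Taking closures.} Since $\Trop(X_{q+1})$ is closed, it contains the closure of these interior points. That closure is $V_{q+1}\cap\zeta$ near $\Gamma$, because $V_{q+1}$ meets $\zeta^\circ$ and is an affine space through $\Gamma$.

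\emph{Reverse inclusion.} The inclusion $\Trop(X_{q+1})\subseteq \Trop(X_q)\cap\Trop(H_{q+1})$ is automatic. Near $\Gamma$, the right-hand side is $\bigcup_\zeta (V_{q+1}\cap\zeta)$ by the inductive hypothesis and Proposition \ref{prop:RelativeInteriors2}\eqref{it:WqDef}. Taking the union over $\zeta$ gives (1) for $q+1$. Statement (2) follows because the weight of a facet is determined at interior points, and each facet near $\Gamma$ meets some $\zeta^\circ$.

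\textbf{Main obstacle.} The delicate point is that near $\Gamma$ the ambient $\M_{0,n}^{\trop}$ is not locally linear, since $\Gamma$ lies in $\Sigma_r$. Osserman--Payne therefore applies only on the open parts $\zeta^\circ$, and the lower-dimensional cones through $\Gamma$ have to be recovered by closure. We must check two things:
\begin{itemize}
\item every point of $\bigcap_{a\le q}\Trop(H_a)$ near $\Gamma$ lies in some $\zeta$ containing $\sigma$, which holds because cones near a point of $\relint(\sigma)$ all contain $\sigma$;
\item $V_q\cap\zeta^\circ$ is dense in $V_q\cap\zeta$ near $\Gamma$.
\end{itemize}
The second requires more than Proposition \ref{prop:RelativeInteriors2}\eqref{it:VqDef}, which only says that $V_q$ meets $\zeta^\circ$. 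I would get it from the triangular form \eqref{eq:ZetaPathMatrixEquation}: $V_q$ can be parametrized by free coordinates $\len(e_{q+1}),\dots,\len(e_{n-3})$, which can be taken small and positive near $\Gamma$, while $\len(e_1),\dots,\len(e_q)$ stay near their positive values.
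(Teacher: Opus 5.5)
Your proof is correct and follows the paper's argument. The paper likewise inducts on $q$, applies Osserman--Payne's lifting theorem at the simple points in $\zeta^\circ$, passes to the closure to recover $V_{q+1}\cap\zeta$ near $\Gamma$, notes that the reverse inclusion is automatic, and gets weight $1$ from Osserman--Payne's multiplicity result using Cohen--Macaulayness of $X_q$ and $H_{q+1}$. Your extra justifications fill in points the paper leaves implicit: why $X_q$ is Cohen--Macaulay, and why $V_q\cap\zeta^\circ$ is dense in $V_q\cap\zeta$ near $\Gamma$. The density point needs no extra work, though. It follows from convexity of $\zeta$ once the affine space $V_q$ meets $\zeta^\circ$, which is exactly what your own closure step already uses.
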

\begin{proof}
    We induct on $q$. For $q=1$, \eqref{it:intersectioncommutes} is trivially true, and \eqref{it:Xkmultone} reduces to Proposition \ref{prop:RelativeInteriors2}\eqref{it:WqDef} together with Proposition \ref{prop:facetsofHtrophaveweightone}. Now, assume that the Proposition holds for some $q$. 
    
    Let $\zeta$ be a maximal cone of $\M_{0,n}^{\trop}$ that has $\sigma$ as a face. We refer to the notation from Proposition \ref{prop:RelativeInteriors2}:
    \begin{itemize}
    \item $W_{q+1}\subset\zeta$ is the codimension-$1$ affine-linear space that locally agrees with $H_{q+1}$ near $\Gamma$, 
    \item $V_q\subset\zeta$ is the affine-linear space that locally agrees with $\Trop(H_1)\cap\cdots\cap\Trop(H_q)$, and
    \item $V_{q+1} \subset \zeta$ is the affine-linear space that locally agrees with $\Trop(H_1)\cap\cdots\cap\Trop(H_{q+1})$. 
    \end{itemize}
    By Proposition \ref{prop:RelativeInteriors2}\eqref{it:VqDef}, $V_{q+1}$ has codimension $q+1$ in $\zeta$ and intersects $\zeta^\circ.$ By definition, $X_{q+1}= X_q\cap H_{q+1}$.
    By the inductive hypothesis, near $\Gamma$ on $\zeta$,
    \begin{equation*}
        \Trop(X_q) = \Trop(H_1)\cap\cdots\cap\Trop(H_q)=V_q \text{ with weight $1$.}
    \end{equation*}
    We thus have:
    \begin{equation*}
        \Trop(X_q) \cap \Trop(H_{q+1})=V_q\cap W_{q+1}=V_{q+1}.
    \end{equation*}
    By \cite[Thm. 1.2]{OssermanPayne2013}, near $\Gamma$ on $\zeta^\circ$, $\Trop(X_{q+1})=V_{q+1}$ as sets. (We have used that $\M_{0,n}^{\trop}$ has weight $1$ along $\zeta$ \cite[Cor. 4.3.12]{MaclaganSturmfels2015}, so every point of $\zeta^{\circ}$ is a simple point of $\M_{0,n}^{\trop}$ in the sense of Section \ref{sec:BackgroundTropicalIntersectionTheory}.) Since $\Trop(X_{q+1})\subseteq\Trop(X_q)\cap\Trop(H_{q+1}),$ and the latter locally agrees with $V_{q+1}$ on all of $\zeta$, we conclude by taking closures that $\Trop(X_{q+1})=V_{q+1}$ near $\Gamma$ on $\zeta.$

    Iterating over all choices of $\zeta$, we conclude that $\Trop(X_{q+1})=\Trop(H_1)\cap\cdots\cap\Trop(H_{q+1})$ near $\Gamma$ on $\M_{0,n}^{\trop},$ i.e. \eqref{it:intersectioncommutes} holds for $\Trop(X_{q+1})$.

    We now compute the weight of $\Trop(X_{q+1})$ along $V_{q+1}$. By Proposition \ref{prop:facetsofHtrophaveweightone}, $\Trop(H_{q+1})$ has weight 1 along $W_{q+1}.$ By the inductive hypothesis, $\Trop(X_q)$ has weight 1 along $V_q.$ By Proposition \ref{prop:RelativeInteriors2}\eqref{it:VkWMult1}, the tropical intersection multiplicity of $V_q$ and $W_{q+1}$ along $V_{q+1}$ is 1. By \cite[Thm. 5.1.3]{OssermanPayne2013}, and the fact that $H_{q+1}$ and $X_q$ are Cohen-Macaulay, we conclude that $\Trop(X_{q+1})$ has weight 1 along $V_{q+1}$. (Again, we have used the fact that $\zeta$ is a weight-1 facet of $\M_{0,n}^{\trop}.$) This proves \eqref{it:Xkmultone}.
\end{proof}

\begin{cor}\label{prop:FWTropX}
$\TropInt=\Sigma_r\cap{\Trop(X_r)}$.
\end{cor}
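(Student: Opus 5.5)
The statement $\TropInt=\Sigma_r\cap\Trop(X_r)$ is an equality of finite sets, and I plan to prove the two inclusions separately. Everything needed is already in Proposition \ref{prop:intersectionandtropicalization} together with the elementary containment $\Trop(X_r)\subseteq\bigcap_{q=1}^r\Trop(H_q)$.

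For the inclusion $\supseteq$, note that $X_r=\bigcap_{q=1}^r H_q\subseteq H_q$ for each $q$. Tropicalization is monotone under inclusion of subschemes, so $\Trop(X_r)\subseteq\Trop(H_q)$ for every $q$. Intersecting with $\Sigma_r$ gives $\Sigma_r\cap\Trop(X_r)\subseteq\TropInt$. This is the same containment already used in the proof of Proposition \ref{prop:XHasCodimensionR}.

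For the inclusion $\subseteq$, let $\Gamma\in\TropInt$ and apply Proposition \ref{prop:intersectionandtropicalization}\eqref{it:intersectioncommutes} with $q=r$. It says that, as sets, $\Trop(X_r)$ agrees with $\Trop(H_1)\cap\cdots\cap\Trop(H_r)$ in a neighborhood $U$ of $\Gamma$. Since $\Gamma$ lies in the right-hand side, it lies in $\Trop(X_r)\cap U$, and $\Gamma\in\Sigma_r$ by assumption.

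The only point that needs care is that ``locally near $\Gamma$'' really includes $\Gamma$ itself, rather than only points of open cones near $\Gamma$. I will handle this through Proposition \ref{prop:intersectionandtropicalization}\eqref{it:Xkmultone}. On each maximal cone $\zeta\supseteq\sigma$, near $\Gamma$, $\Trop(X_r)$ equals the affine-linear space $V_r$, and $V_r$ passes through $\Gamma$ by Proposition \ref{prop:RelativeInteriors2}. If needed, I can also argue from closedness: $\Trop(X_r)$ is closed, and $\Gamma$ is a limit of points of $V_r\cap\zeta^\circ$, which lie in $\Trop(X_r)$. Either way $\Gamma\in\Trop(X_r)$, which gives the reverse inclusion and completes the proof.
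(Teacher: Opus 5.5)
Your proposal is correct and follows the paper's route: the paper gives no separate proof and treats this as an immediate consequence of Proposition \ref{prop:intersectionandtropicalization}\eqref{it:intersectioncommutes} with $q=r$, together with the containment $\Trop(X_r)\subseteq\bigcap_{q=1}^r\Trop(H_q)$ already used in Proposition \ref{prop:XHasCodimensionR}. Your care about ``locally near $\Gamma$'' including $\Gamma$ itself matches the closure step already carried out inside the proof of that proposition, where the equality is extended from $\zeta^\circ$ to all of $\zeta$ near $\Gamma$.
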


\begin{cor}\label{cor:TropMultIs1}
    The tropical intersection multiplicity $\mult_{\Gamma}(\Trop(X_r),\sigma;\Sigma)$ of $\Trop(X_r)$ and $\Sigma_r$ at $\Gamma\in\TropInt$ is 1.
\end{cor}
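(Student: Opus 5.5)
The plan is to compute $\mult_{\Gamma}(\Trop(X_r),\sigma;\Sigma)$ directly from Definition \ref{def:MultIndependentOfChoice}, using the local description of $\Trop(X_r)$ near $\Gamma$ supplied by Proposition \ref{prop:intersectionandtropicalization}. First, I check that the definition applies. By Corollary \ref{prop:FWTropX}, $\Gamma\in\Sigma_r\cap\Trop(X_r)$. By Proposition \ref{prop:RelativeInteriors2}\eqref{it:SigmaRInterior2}--\eqref{it:GammaIsolated}, $\Gamma$ lies in the relative interior of an $r$-dimensional cone $\sigma$ and is isolated. By Proposition \ref{prop:XHasCodimensionR} together with Corollary \ref{cor:NoTooBigComponents}, Condition \ref{cond:SigmaGamma} holds at $\Gamma$. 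So $\mult_\Gamma$ is well defined, and by Theorem \ref{thm:MultIndependentOfChoice} I am free to choose any facet $\zeta\ni\Gamma$ and any generic $v_\zeta$.

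Next, I fix a maximal cone $\zeta$ of $\M_{0,n}^{\trop}=\Sigma$ containing $\sigma$ as a face. Any maximal cone containing $\Gamma$ must contain $\sigma$, since $\Gamma\in\relint(\sigma)$. Two facts about $\zeta$ are needed:
\begin{itemize}
\item $m_{\Trop(\M);\zeta}=1$, by \cite[Cor. 4.3.12]{MaclaganSturmfels2015};
\item near $\Gamma$ on $\zeta$, $\Trop(X_r)$ is the affine-linear space $V_r$ with weight $1$, by Proposition \ref{prop:intersectionandtropicalization}\eqref{it:Xkmultone} with $q=r$, combined with part \eqref{it:intersectioncommutes} and the notation of Proposition \ref{prop:RelativeInteriors2}.
\end{itemize}
Since the multiplicity is computed after passing to $\Star_\Gamma$, the only part of $\Star_\Gamma\Trop(X_r)$ lying in $\Star_\Gamma\zeta$ is $\Star_\Gamma V_r$. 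Choosing $v_\zeta$ generic and small, Proposition \ref{prop:RelativeInteriors2}\eqref{it:VrsigmaMult1} says $(v_\zeta+\sigma)\cdot V_r$, computed inside $\zeta$, is a single point of weight $1$. That computation treats $V_r$ with weight $1$. Here the weight of $\Trop(X_r)$ along $V_r$ is indeed $1$, so the stable intersection $(v_\zeta+\Star_\Gamma\sigma)\cdot\Star_\Gamma\Trop(X_r)$ has degree $1\cdot 1=1$. Dividing by $m_{\Trop(\M);\zeta}=1$ gives $\mult_\Gamma=1$.

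The main obstacle is making sure the stable intersection in Definition \ref{def:MultIndependentOfChoice} really only sees $V_r$. Two issues arise:
\begin{itemize}
\item \emph{Other pieces of $\Trop(X_r)$.} The translate $v_\zeta+\Star_\Gamma\sigma$ lies in the interior of $\Star_\Gamma\zeta$ for generic $v_\zeta$, because $\sigma$ is in the lineality space of the star. So it can meet only the part of $\Star_\Gamma\Trop(X_r)$ inside $\Star_\Gamma\zeta$. That part is $\Star_\Gamma V_r$, because Proposition \ref{prop:intersectionandtropicalization}\eqref{it:intersectioncommutes} identifies $\Trop(X_r)$ with $\bigcap_q\Trop(H_q)$ near $\Gamma$ as sets, and this intersection is $V_r$ on $\zeta$.
\item \emph{Simple-point hypothesis.} The intersection point must be a simple point, so that the lattice-index formula \eqref{eq:TropIntMultDef2} applies. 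This holds because $\zeta$ has weight $1$.
\end{itemize}
With these checks in place, the unimodularity of the matrix in \eqref{eq:VrSigmaIntersect} (determinant $1$) yields lattice index $1$, which gives the claim.
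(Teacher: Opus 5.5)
Your proposal is correct and follows the paper's own argument. Like the paper, you identify $\Trop(X_r)$ near $\Gamma$ on a maximal cone $\zeta$ with the weight-$1$ affine space $V_r$ via Proposition \ref{prop:intersectionandtropicalization}, and then invoke Proposition \ref{prop:RelativeInteriors2}\eqref{it:VrsigmaMult1} together with $m_{\Trop(\M);\zeta}=1$. Your extra checks are sound and simply make explicit what the paper leaves implicit: that Condition \ref{cond:SigmaGamma} holds, that $v_\zeta+\Star_\Gamma\sigma$ meets only the $V_r$ piece, and that the intersection point is a simple point.
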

\begin{proof}
    Let $\zeta$ be a maximal cone of $\M_{0,n}^{\trop}$ containing $\Gamma.$ By Proposition \ref{prop:intersectionandtropicalization}\eqref{it:intersectioncommutes}, locally near $\Gamma$ in $\zeta$, $\Trop(X_r)$ is (set-theoretically) the codimension-$r$ affine-linear space $V_r.$ By Proposition \ref{prop:intersectionandtropicalization}\eqref{it:Xkmultone}, $\Trop(X_r)$ has weight 1 along $V_r$. The statement now follows from Proposition \ref{prop:RelativeInteriors2}\eqref{it:VrsigmaMult1}. 
\end{proof}

Finally, we prove Theorem \ref{thm:LimitCycleFW}.
\begin{proof}[Proof of Theorem \ref{thm:LimitCycleFW}]

By Proposition \ref{prop:FWTropX}, Theorem \ref{thm:mainthm}, and the fact that $\Gamma$ is in the relative interiors of $\Trop(X)$ and $\Sigma_r$, we have
\begin{align*}
    [(\bar{X})_0]&=\sum_{\substack{\Gamma\in\TropInt\\ \tree \text{ tree of } \Gamma}}\mult_{\Gamma}(\Trop(X),\sigma;\Sigma)\cdot[\Mbar_{\tree}]\\
    &=\sum_{\substack{\Gamma\in\TropInt\\ \tree \text{ tree of } \Gamma}}[\Mbar_{\tree}],
\end{align*}
where the second equality is Corollary \ref{cor:TropMultIs1}.
\end{proof}

\begin{remark}
    Analogues of Theorems \ref{thm:XbarRepresentsPsiProduct} and \ref{thm:LimitCycleFW} and Corollary \ref{cor:FWRepresentsPsiProduct} likewise hold for limit cycles of intersections of moving $\psi$-class hypersurfaces \emph{with a boundary stratum},
    \[\Mbar_{\tau} \cap \bigcap_{q=1}^r \overline{H}_q.\]
    These follow straightforwardly using the extended tropicalizations of the $\overline{H}_q$, as outlined in Section \ref{sec:Extended Tropicalization}.
\end{remark}

\begin{remark}
    One can set up this argument differently, using Theorem \ref{thm:XTilde2-OP} (with $Y=\bfT$ and $\widetilde X=\bigcap_{q=1}^r \widetilde H_q$) to compute $\mult_\Gamma(\Trop(X),\sigma;\Sigma)$ instead of computing it directly in $M_{0, n}^\trop$ as we have done. In doing so, one circumvents the full power of Theorem \ref{thm:mainthm}, instead using only the special case of Theorem \ref{thm:mainthm} that is \cite[Thm. 10.1]{Katz2009}, see Remark \ref{rem:discussion-of-thm-E}\eqref{it:UseKatz}. As discussed there, the conclusion of Theorem \ref{thm:mainthm} is then an expression for $[(\bar{\widetilde X})_0]$ as a sum of toric boundary strata in $Y_\Sigma$, from which an additional intersection-theoretic calculation yields the desired formula for $[\bar{X}_0]$.
\end{remark}

\section{Recursion and the Firework algorithm}\label{sec:RecursionandFirework}

As discussed in Section \ref{sec:pathmatrix}, a tuple $(\tree, \vec e, \vec k, \vec \ell)$ satisfying condition $(*)$ allows us to construct a metric tree $\Gamma$ with underlying tree $\tau$, with some but not all of the properties necessary to have $\Gamma \in \TropInt$ (see Proposition \ref{prop:MetricSatisfiesPathEquations}). We end our paper by refining this condition to give a recursive procedure, which we call the \emph{firework algorithm}, to precisely generate $\TropInt$. The algorithm comes from examining the recursive properties of condition $(*)$.
  
\subsection{Recursive nature of condition \texorpdfstring{$(*)$}{(*)}: edge contraction}

We first consider the effect of contracting the last edge in condition $(*)$. Let $(\tau', \vec{e}\,', \vec{k}\,', \vec{\ell}\,')$ satisfy $(*)$, where $\tau'$ is a tree with $r+1$ edges. Let $\tau$ be the tree obtained from $\tau'$ by contracting $e'_{r+1}$, and let $\vec e,\vec k,\vec\ell$ be obtained from $\vec e\,',\vec k\,',\vec \ell\,'$ by deleting the last entry. The following is immediate from Definition \ref{def:ConditionStar}:    
\begin{prop}\label{prop:ContractStar}
    The tuple $(\tau,\vec e,\vec k,\vec \ell)$ satisfies $(*)$.
\end{prop}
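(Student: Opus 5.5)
The plan is to check the two clauses of Definition \ref{def:ConditionStar} for each $q\in[r]$ directly. The key fact is that contracting an edge of a tree does not change which legs lie in which part of the tree in any way that matters here: for any set of legs, the convex hull in $\tau$ is the image of the convex hull in $\tau'$ under the contraction $\tau'\to\tau$ of $e'_{r+1}$.

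First I will check that $\tau$ is an $[n]$-marked stable tree with exactly $r$ edges. Contracting an edge of a stable tree gives a stable tree: the merged vertex has valence at least $3+3-2\geq 3$. Its edges are $e_1,\dots,e_r$, and they keep the ordering inherited from $\vec e\,'$. The indices $k_q<\ell_q\in S_q\setminus\{i_q,j_q\}$ are unchanged for $q\le r$.

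Next, clause (1). For $q\le r$, the convex hull of $i_q,j_q,k_q,\ell_q$ in $\tau'$ has the shape \eqref{eq:ilkjPath}, with a path $P'_q$ running between the attachment vertex of $\{i_q,\ell_q\}$ and that of $\{k_q,j_q\}$. Contracting $e'_{r+1}$ maps this hull onto the corresponding hull in $\tau$ and sends $P'_q$ to a path $P_q$. The only way the shape could fail is if $P_q$ collapsed to a single vertex, since then $\ell_q$ and $k_q$ would attach at the same vertex. This cannot happen, because $P'_q$ contains $e'_q$ with $q\le r$, and that edge survives the contraction. So in $\tau$ the hull still has two distinct attachment vertices joined by $P_q$, whose edges are exactly the edges of $P'_q$ other than $e'_{r+1}$.

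Then clause (2) is immediate from this edge description. $P_q$ contains $e_q=e'_q$, and it contains no $e_{q'}$ with $q'<q$, because $P'_q$ contained none of these.

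I expect no real obstacle; the only point requiring care is the non-degeneracy in clause (1), that the path does not shrink to a point, and it is handled by the survival of $e'_q$.
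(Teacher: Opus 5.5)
Your proposal is correct and is the direct verification the paper has in mind; the paper gives no separate argument and only states that the claim is immediate from Definition \ref{def:ConditionStar}. Your key observation is the whole content of that claim: contracting $e'_{r+1}$ sends $P'_q$ to the path $P_q$ made of the edges of $P'_q$ other than $e'_{r+1}$, and this path is nondegenerate because $e'_q$ survives.
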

In particular, we may define path matrices, edge-length vectors and metric trees $\Gamma'$ and $\Gamma$ for $\tree'$ and $\tree$, as described in Lemma \ref{lem:ConditionStarImpliesBoundOnEdgeLengths} and Proposition \ref{prop:MetricSatisfiesPathEquations}. We will say {\bf $\Gamma$ is obtained from $\Gamma'$ by contracting the shortest edge and readjusting edge lengths}. (Recall that, by Proposition \ref{prop:MetricSatisfiesPathEquations}\eqref{it:ConditionStarGivesEdgeOrderingByLength}, $e'_{r+1}$ is the shortest edge of $\Gamma'$.)

We have the following comparison result:
\begin{lem}\label{lem:ContractRelationshipBetweenMetrics}
    The metrics on $\Gamma'$ and $\Gamma$ are related by, for each $q \leq r$,
    \[
    \abs{\len(e'_q) - \len(e_q)} < n \cdot 2^n \cdot B^{n-3-(r+1)}.
    \]
    \end{lem}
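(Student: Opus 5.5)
Write $y'_q=\len(e'_q)$ for $q\le r+1$ and $y_q=\len(e_q)$ for $q\le r$. The plan is to compare the two upper-triangular systems $A_{\tree'}\vec y\,'=\vec L_{\tree'}$ and $A_\tree\vec y=\vec L_\tree$ directly and then solve for the difference by back-substitution.

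First I identify the path matrices. For $q\le r$, the path $P_q$ in $\tree$ is the image of the path $P'_q$ in $\tree'$ after contracting $e'_{r+1}$. Condition $(*)$ for both tuples (Proposition \ref{prop:ContractStar}) shows the convex hull of $i_q,j_q,k_q,\ell_q$ has the same shape, so $e'_{q'}\in P'_q$ if and only if $e_{q'}\in P_q$ for $q'\le r$. Hence $A_\tree$ is the upper-left $r\times r$ block of $A_{\tree'}$. The path-length entries agree as well: $(\vec L_{\tree'})_q=(\vec L_\tree)_q=(\ell_q-k_q)B^{n-3-q}$ for $q\le r$, because the definition involves only $q$, $k_q$, $\ell_q$ and not the number of edges. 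Subtracting the first $r$ rows of the two systems then gives
\[
A_\tree(\vec y\,'_{\le r}-\vec y)=-\,y'_{r+1}\,\vec c,
\]
where $\vec c\in\{0,1\}^r$ is the last column of $A_{\tree'}$ restricted to the first $r$ rows.

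Set $\delta=\vec y\,'_{\le r}-\vec y$. Since $A_\tree$ is upper unitriangular with $0/1$ entries, back-substitution gives
\[
\delta_q=-y'_{r+1}c_q-\sum_{q'>q}a_{q,q'}\delta_{q'}.
\]
Induction downward from $q=r$ gives $|\delta_q|\le 2^{r-q}y'_{r+1}\le 2^{n}y'_{r+1}$. Finally, Proposition \ref{prop:MetricSatisfiesPathEquations}\eqref{it:ConditionStarGivesEdgeOrderingByLength} applied to $\Gamma'$ gives $y'_{r+1}\le nB^{n-3-(r+1)}$, so $|\delta_q|\le n\,2^{r-q}B^{n-3-(r+1)}$.

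The main obstacle is getting strict inequality. Since $r\le n-4$ (because $\tree'$ has $r+1\le n-3$ edges), we have $2^{r-q}<2^n$, which yields the stated bound.
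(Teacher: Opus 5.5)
Your proof is correct and follows the paper's argument. The paper also uses the block form of $A_{\tree'}$ to obtain $\len(e'_q)-\len(e_q)=c_q(\ell_{r+1}-k_{r+1})B^{n-3-(r+1)}$ with $\vec c=-A_\tree^{-1}E$, and then proves $\abs{c_q}\le 2^{r-q}$; the only cosmetic difference is that the paper first bounds the entries of $A_\tree^{-1}$ by $2^{q'-q-1}$ and sums them, whereas you run the downward back-substitution induction directly on the difference vector $\delta$.
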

    \begin{remark} \label{rmk:rounding-lengths}
        Edge lengths in $\Gamma$ are all integer multiples of $B^{n-3-r}$. For $B \gg 0$ ($B > n\cdot 2^{n+1}$ suffices), the bound in Lemma \ref{lem:ContractRelationshipBetweenMetrics} implies
        \begin{align}
            \abs{\len(e_q')-\len(e_q)}<\tfrac{1}{2}B^{n-3-r},
        \end{align}
        so we may think of $\Gamma$ as being obtained from $\Gamma'$ by contracting the shortest edge (whose length was of order $B^{n-3-(r+1)}$) and rounding each edge length to the nearest integer multiple of $B^{n-3-r}$. If $B > n^2 2^{n+1}$, we even see that every path length in $\Gamma$ is obtained by rounding the corresponding path length in $\Gamma'$, since there are at most $n$ edges.
    \end{remark}
    \begin{proof}
        The path matrices of $\tree'$ and $\tree$, and their inverses, are related as follows \begin{align*}
            A_{\tree'}&=\left(\begin{array}{@{}c|c@{}}
  \begin{matrix}
   && && \\
   && A_{\tree} &&\\
   &&&&
  \end{matrix}
  & E \\
\hline
  \mathbf{0} &
  1
\end{array}\right)
&&\text{and}&
A_{\tree'}^{-1}&=\left(\begin{array}{@{}c|c@{}}
  \begin{matrix}
   && && \\
   && A_{\tree}^{-1} &&\\
   &&&&
  \end{matrix}
  & -A_{\tree}^{-1}E \\
\hline
  \mathbf{0} &
  1
\end{array}\right),
        \end{align*}
        where $E$ is a length-$r$ vector of 0s and 1s.

Let $\vec L\,'$ and $\vec L$ denote the path-length vectors associated to $(\tau',\vec e\,',\vec k\,',\vec \ell\,')$ and $(\tau,\vec e,\vec k,\vec \ell)$, respectively and $\vec y_{\tree'}$ and $\vec y_\tree$ the corresponding edge-length vectors, so $\len(e_q) = y_{\tree, q}$ and $\len(e_q') = y_{\tree', q}$ for each $q$. By definition, we have $\vec y_\tree=A_\tree^{-1}\vec L$ and $\vec y_{\tree'}=A_{\tree'}^{-1}\vec L\,'$. For $q\in [r]$, then by the above block decomposition of $A_{\tree'}^{-1}$ we have \begin{align}\label{eq:DifferenceInMetrics}
    y_{\tree',q}-y_{\tree,q}=c_q(\ell_{r+1}-k_{r+1})B^{n-3-(r+1)},
\end{align} where $c_q$ is the $q$-th entry of the vector $-A_{\tree}^{-1}E$. 

We claim that $\abs{c_q}\le2^{r-q}$. Denote by $d_{q,q'}$ the entries of $A_{\tree}^{-1}$. Since $E$ is a vector of 0s and 1s, it is sufficient to show $\sum_{q'\ge q}\abs{d_{q,q'}}\le 2^{r-q}$ for $q\in[r]$. Let $q\in[r],$ and let $q'>q.$ By definition, $\sum_{\beta=q}^ra_{q,\beta}d_{\beta,q'}=0$. Since $a_{q,q}=1,$ we have  
\[\abs{d_{q,q'}}=\abs{\sum_{\beta=q+1}^{q'}a_{q,\beta}d_{\beta,q'}}\le\sum_{\beta=q+1}^{q'}\abs{a_{q,\beta}}\abs{d_{\beta,q'}}\le\sum_{\beta=q+1}^{q'}\abs{d_{\beta,q'}}.\]
Induction, using $d_{q',q'}=1$, yields $\abs{d_{q,q'}}\le2^{q'-q-1}.$ (This particular bound was already known, see e.g. \cite{SpeyerMOPost2011}.) Thus 
\[\abs{c_q}\le\sum_{q'=q}^r\abs{d_{q,q'}}=1+\sum_{q'=q+1}^r\abs{d_{q,q'}}\le1+\sum_{q'=q+1}^r2^{q'-q-1}=2^{r-q}\]
as desired.

Finally, using \eqref{eq:DifferenceInMetrics} and the fact that $\ell_{r+1}, k_{r+1}, r, q \in [n]$, we obtain
\begin{align*}
    \abs{y_{\tree',q}-y_{\tree,q}}&\leq2^{r-q}(\ell_{r+1}-k_{r+1})B^{n-3-(r+1)} \\
    &\leq 2^n n B^{n-3-(r+1)}. \qedhere
\end{align*}
    \end{proof}

\subsection{Recursive nature of \texorpdfstring{$\TropInt$}{\TropIntPDFString}, part 1: edge contraction}

    We now examine how contracting edges and readjusting edge lengths affects containment in the tropical $\psi$-hypersurfaces $\Trop(H_q)$.
    
    Let $(\tau', \vec{e}\,', \vec{k}\,', \vec{\ell}\,')$ and $(\tau,\vec e,\vec k,\vec \ell)$ be as in the previous subsection, where $\tau'$ has $r+1$ edges and $\tau$ has $r$ edges. Let $\Gamma'$ and $\Gamma$ be the corresponding metric trees, with $\Gamma$ obtained from $\Gamma'$ by contracting the shortest edge and readjusting edge lengths. We show:
    
\begin{prop}\label{prop:EdgeContractionFromTropInt2}
    Let $q \in [r]$. Then $\Gamma' \in \Trop(H_q)$ if and only if $\Gamma \in \Trop(H_q)$.
\end{prop}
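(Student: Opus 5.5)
Since $q\le r$, both $\Gamma'$ and $\Gamma$ satisfy the path equation for index $q$, by Proposition \ref{prop:MetricSatisfiesPathEquations}\eqref{it:PathsHaveCorrectLength}. That is, in each tree the $k_q$ and $\ell_q$ terms of \eqref{eq:Min} are equal. So $\Gamma'\in\Trop(H_q)$ exactly when this common value is the minimum of \eqref{eq:Min} for $\Gamma'$, and similarly for $\Gamma$. I will therefore show, for each $\alpha\in S_q\setminus\{i_q,j_q,k_q,\ell_q\}$, that the sign of the difference
\[
D_\alpha(\Gamma) := \big(\underline d^q_\alpha+\alpha B^{n-3-q}\big)-\big(\underline d^q_{k_q}+k_qB^{n-3-q}\big)
\]
is the same for $\Gamma$ and $\Gamma'$. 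More precisely, I will show that $D_\alpha(\Gamma)\ge 0$ if and only if $D_\alpha(\Gamma')\ge 0$.

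The key steps are as follows.

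\begin{enumerate}
\item The quantities $\underline d^q_\alpha-\underline d^q_{k_q}$ are signed lengths of paths along the convex hull of $i_q$ and $j_q$. In $\Gamma'$ such a path is a sum of lengths of edges of $\tau'$. In $\Gamma$ it is the same combination with $e'_{r+1}$ deleted, since contracting $e'_{r+1}$ does not change which of the other edges lie on which paths. The sign pattern is also unchanged, because it records which side of the other leg's attachment point each point lies on.

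\item Lemma \ref{lem:ContractRelationshipBetweenMetrics} bounds the change in each edge length $e_1,\dots,e_r$ by $n2^nB^{n-3-(r+1)}$. The deleted edge has length at most $nB^{n-3-(r+1)}$. Since a path uses at most $n$ edges, this gives
\[
|D_\alpha(\Gamma')-D_\alpha(\Gamma)|\le n^2 2^{n+1}B^{n-3-(r+1)}.
\]
This bound is strictly less than $\tfrac12 B^{n-3-r}\le \tfrac12 B^{n-3-q}$ once $B$ is large; compare Remark \ref{rmk:rounding-lengths}, and note we may enlarge $B$ as allowed in Notation \ref{not:Sqiqjq}.

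\item I then invoke the gap statement, Proposition \ref{prop:MetricSatisfiesPathEquations}\eqref{it:ConditionStarGivesMininequality}, for both $(\tau,\dots)$ and $(\tau',\dots)$. For each tree, $D_\alpha$ is either negative or at least $\tfrac12 B^{n-3-q}$.

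\item Suppose $D_\alpha(\Gamma)\ge 0$. Then $D_\alpha(\Gamma)\ge\tfrac12B^{n-3-q}$, so $D_\alpha(\Gamma')>0$ by step 2. Conversely, if $D_\alpha(\Gamma')\ge0$, the same argument gives $D_\alpha(\Gamma)>0$. Since neither trichotomy allows small values, the signs agree for every $\alpha$. Hence the common $k_q/\ell_q$ value is minimal in one tree if and only if it is minimal in the other, which proves the equivalence.
\end{enumerate}

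The main obstacle is step 3: Proposition \ref{prop:MetricSatisfiesPathEquations}\eqref{it:ConditionStarGivesMininequality} only controls $D_\alpha$ when it is $\ge 0$. A negative $D_\alpha$ could a priori be tiny in absolute value, and then a small perturbation could flip its sign. This is handled exactly by applying the gap statement on both sides, as in step 4, so no lower bound on negative values is needed.

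One also has to check carefully the claim in step 1, that path-membership and signs are unchanged for edges other than $e'_{r+1}$. This is immediate, because contraction preserves the tree structure on the remaining edges.
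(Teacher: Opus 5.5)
Your steps 1--4 are essentially the paper's first argument. You compare $\Gamma$ and $\Gamma'$ through the closeness of their metrics (Lemma \ref{lem:ContractRelationshipBetweenMetrics}, Remark \ref{rmk:rounding-lengths}) and apply the gap of Proposition \ref{prop:MetricSatisfiesPathEquations}\eqref{it:ConditionStarGivesMininequality} to both trees. Applying the gap on both sides is exactly what prevents the sign flip you worry about, and that part is correct. It only works for $B$ exponentially large in $n$, because of the $2^n$ in Lemma \ref{lem:ContractRelationshipBetweenMetrics}. The paper also gives a second argument under the running assumption $B\ge 2n+1$. There it shows that whichever of the two differences is nonnegative is dominated by a single positive term: either a long edge $e_{q'}$ with $q'<q$ on the path, or else $(\alpha-m)B^{n-3-q}$. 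It then checks that this term has the same sign for $\Gamma$ and $\Gamma'$.

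The genuine gap is your opening reduction. It is not true that $\Gamma\in\Trop(H_q)$ exactly when the common $k_q/\ell_q$ value is the minimum of \eqref{eq:Min}. Membership only requires the minimum to be attained twice, by any pair, and condition $(*)$ does not force $k_q,\ell_q$ to be minimizers (cf. Remark \ref{rmk:ConditionStarNotEnough}). What your steps actually establish is the weaker statement: $k_q$ and $\ell_q$ attain the minimum for $\Gamma'$ if and only if they do for $\Gamma$. The paper's proof makes the same reduction and proves only this. That weaker statement is all that is used later. In Corollary \ref{cor:ContractingSendsTropIntToTropInt} and Proposition \ref{prop:tropintafteredgeinsertion}\eqref{it:MapToTropIntR+1}, the $k_q,\ell_q$ come from Lemma \ref{lem:GammaInTropIntCombinatorics}, so they are genuine minimizers on the side already known to lie in the intersection.

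The unrestricted equivalence is false. Take $n=7$, with $S_1=[7]$, $(i_1,j_1,k_1,\ell_1)=(6,7,3,4)$, and $S_2=\{1,3,6,7\}$, $(i_2,j_2,k_2,\ell_2)=(3,7,1,6)$. Let $\tau'$ be the caterpillar whose vertices carry legs $\{2,4,6\}$, $\{3\}$, $\{1,5,7\}$, joined first by $e_1$ and then by $e_2$. Then $(*)$ holds with identity path matrix, so $\len(e_1')=\len(e_1)=B^3$ and $\len(e_2')=5B^2$. For $q=1$ and $\alpha=1,\dots,5$, the terms of \eqref{eq:Min} are:
\begin{itemize}
\item for $\Gamma$: $2B^3,2B^3,4B^3,4B^3,6B^3$;
\item for $\Gamma'$: $2B^3+5B^2,\,2B^3,\,4B^3,\,4B^3,\,6B^3+5B^2$.
\end{itemize}
So $\Gamma\in\Trop(H_1)$ but $\Gamma'\notin\Trop(H_1)$. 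You should therefore state and prove the $k_q/\ell_q$ version, or equivalently add the hypothesis that $k_q,\ell_q$ minimize \eqref{eq:Min} for one of the two trees.
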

\begin{proof}
    Let $(\underline d_\alpha')$ and $(\underline d_\alpha)$ denote the images of $\Gamma'$ and $\Gamma$ under the tropical Kapranov map $\Psi_{S_q,i_q \rel j_q}^\trop$. By Proposition \ref{prop:MetricSatisfiesPathEquations} and Remark \ref{rmk:ConditionStarNotEnough}, the $k_q$ and $\ell_q$ parts of Equation \eqref{eq:Min} for $\Gamma'$ are equal; likewise for \eqref{eq:Min} for $\Gamma'$. We show that $k_q$ and $\ell_q$ achieve the minimum for $\Gamma'$ if and only if they achieve the minimum for $\Gamma$.

    Let $\alpha \in S \setminus \{i_q, j_q, k_q, \ell_q\}$ be arbitrary. Proceeding similarly to the proof of Proposition \ref{prop:MetricSatisfiesPathEquations}\eqref{it:ConditionStarGivesMininequality}, let $m \in \{k_q, \ell_q\}$ be the unique choice such that the path along the convex hull of $i_q$ and $j_q$ from the leg $\alpha$ to the leg $m$ does not contain $e_q$. Let $P'$ denote this path in $\Gamma'$ and $P$ the corresponding path in $\Gamma$. We note that $P'$ differs from $P$ only in that the contracted edge $e_{r+1}'$ may be contained in $P'$ (and absent in $P$), and that the other edge lengths are readjusted. We consider the quantities
    \begin{align}\label{eq:comparison-lower-bound-1}
    (\underline d_\alpha' + \alpha B^{n-3-q}) -
    (\underline d_m' + m B^{n-3-q})
    &=
    \underbrace{(\underline d_\alpha' - \underline d_m')}_{\pm \len(P')} + (\alpha - m)B^{n-3-q}, \\
    \label{eq:comparison-lower-bound-2}
    (\underline d_\alpha + \alpha B^{n-3-q}) -
    (\underline d_m + m B^{n-3-q})
    &=
    \underbrace{(\underline d_\alpha - \underline d_m)}_{\pm \len(P)} + (\alpha - m)B^{n-3-q}.
    \end{align}
    We wish to show that \eqref{eq:comparison-lower-bound-1} is nonnegative if and only if \eqref{eq:comparison-lower-bound-2} is nonnegative. By Proposition \ref{prop:MetricSatisfiesPathEquations}\eqref{it:ConditionStarGivesMininequality}, the one that is nonnegative is actually positive and bounded below by $\tfrac{1}{2}B^{n-3-q}$. We note also that when $P$ is nonempty, $\len(P')$ has the same sign in \eqref{eq:comparison-lower-bound-1} as $\len(P)$ in \eqref{eq:comparison-lower-bound-2}.

    If $B > n^2\cdot 2^{n+1}$, we may deduce the positivity of both equations from the fact that the metrics on $\Gamma'$ and $\Gamma$ are nearly identical (see Remark \ref{rmk:rounding-lengths}). In particular, equations \eqref{eq:comparison-lower-bound-1} and \eqref{eq:comparison-lower-bound-2} then differ by at most
    \[|\len(P) - \len(P')| < \tfrac{1}{2}B^{n-3-r} \leq \tfrac{1}{2}B^{n-3-q}.\]
    
    Alternatively, with our weaker running assumption $B \geq 2n+1$, one can instead argue as in the proof of Proposition \ref{prop:MetricSatisfiesPathEquations}\eqref{it:ConditionStarGivesMininequality}, that the positivity of \eqref{eq:comparison-lower-bound-1} and \eqref{eq:comparison-lower-bound-2} is caused by a single, necessarily positive, dominant term. That term therefore occurs positively in both equations, since the same combinatorial edges (except $e_{r+1}'$, which never dominates) contribute to both and with approximately the same lengths.

    Concretely, suppose an edge $e_{q'}$ with $q' < q$ is contained in $P$ (and so $e'_{q'}$ is contained in $P'$). This edge has length $\geq \tfrac{1}{2} B^{n-3-{q'}}$ in both $\Gamma'$ and $\Gamma$ by Proposition \ref{prop:MetricSatisfiesPathEquations}\eqref{it:ConditionStarGivesEdgeOrderingByLength}, so it must occur with positive sign in whichever of \eqref{eq:comparison-lower-bound-1} and \eqref{eq:comparison-lower-bound-2} is positive; cf. Equation \eqref{eq:minequality-3}. Then both $\len(P)$ and $\len(P')$ occur with positive sign, so
    \[
    \underbrace{(\underline d_\alpha - \underline d_m)}_{\len(P)} + (\alpha - m)B^{n-3-q} \geq \tfrac{1}{2}B^{n-3-q+1} - n B^{n-3-q} = (\tfrac{1}{2}B-n) B^{n-3-q} \geq \tfrac{1}{2}B^{n-3-q},
    \]
    and similarly for $\underline d'_\alpha$. 
    
    If no such edge occurs, then the $(\alpha-m)B^{n-3-q}$ term dominates and is positive (in one of \eqref{eq:comparison-lower-bound-1} and \eqref{eq:comparison-lower-bound-2}), so $\alpha > m$; cf. Equation \eqref{eq:minequality-4}. Then, by Proposition \ref{prop:MetricSatisfiesPathEquations}\eqref{it:ConditionStarGivesEdgeOrderingByLength} again, 
    \[
    \underbrace{(\underline d_\alpha - \underline d_m)}_{\pm \len(P)} + (\alpha - m)B^{n-3-q} \geq B^{n-3-q} - \sum_{a=q+1}^r \len(e_a) \geq (1-\tfrac{1}{2}) B^{n-3-q} = \tfrac{1}{2} B^{n-3-q},
    \]
    and similarly for $\underline d'_\alpha$.
\end{proof}

\begin{cor}\label{cor:ContractingSendsTropIntToTropInt}
    If $\Gamma'\in\Sigma_{r+1}\cap\bigcap_{q=1}^{r+1}\Trop(H_q)$, and $\Gamma$ is obtained from $\Gamma'$ by contracting the shortest edge and readjusting edge lengths, then $\Gamma\in\TropInt$.
\end{cor}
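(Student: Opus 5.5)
The argument combines the structural results for $\Sigma_{r+1}$ with Proposition \ref{prop:EdgeContractionFromTropInt2}. Let $\Gamma'\in\Sigma_{r+1}\cap\bigcap_{q=1}^{r+1}\Trop(H_q)$ with underlying tree $\tau'$.

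First, Lemma \ref{lem:GammaInTropIntCombinatorics} and Corollary \ref{cor:TropIntGivesConditionStar}, applied with $r+1$ in place of $r$, say the following. $\tau'$ has exactly $r+1$ edges $e'_1,\ldots,e'_{r+1}$ with strictly decreasing lengths. The minimizing indices $k'_q<\ell'_q$ are uniquely determined, since the minimum is achieved exactly twice. Finally, $(\tau',\vec e\,',\vec k\,',\vec\ell\,')$ satisfies $(*)$. The shortest edge of $\Gamma'$ is therefore $e'_{r+1}$, so ``contracting the shortest edge and readjusting edge lengths'' is well-defined.

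Next, Proposition \ref{prop:TropHqCutOutByPathMatrixEquation} (again with $r+1$) says the edge lengths of $\Gamma'$ solve $A_{\tau'}\vec y=\vec L_{\tau'}$. Thus $\Gamma'$ is exactly the metric tree attached to this tuple in Section \ref{sec:pathmatrix}. This identification is what lets us invoke the preceding subsection, whose hypotheses are stated for metric trees built from tuples satisfying $(*)$.

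By Proposition \ref{prop:ContractStar}, the truncated tuple $(\tau,\vec e,\vec k,\vec\ell)$ satisfies $(*)$. Let $\Gamma$ be its metric tree, as in Proposition \ref{prop:MetricSatisfiesPathEquations}. Then $\tau$ has exactly $r$ edges, and by Lemma \ref{lem:ConditionStarImpliesBoundOnEdgeLengths} all its edge lengths are positive. Hence $\Gamma$ lies in the relative interior of an $r$-dimensional cone, and in particular $\Gamma\in\Sigma_r$.

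Finally, for each $q\le r$ we have $\Gamma'\in\Trop(H_q)$ by hypothesis. Proposition \ref{prop:EdgeContractionFromTropInt2} then gives $\Gamma\in\Trop(H_q)$. Intersecting over $q=1,\ldots,r$ yields $\Gamma\in\TropInt$.

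The only step needing care is the identification of $\Gamma'$ with the metric tree of its tuple. We must check that the data $\vec e\,',\vec k\,',\vec\ell\,'$ extracted from $\Gamma'$ via Lemma \ref{lem:GammaInTropIntCombinatorics} are the ones for which Proposition \ref{prop:EdgeContractionFromTropInt2} applies. This follows from the uniqueness statements in that lemma: the edges are determined by their lengths, and the minimizing pair is unique. Everything else is direct citation.
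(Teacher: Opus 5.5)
Your proof is correct and follows the paper's route. The paper states this corollary without proof, as an immediate consequence of Proposition \ref{prop:EdgeContractionFromTropInt2}; you supply the implicit bookkeeping. You identify $\Gamma'$ with the metric tree of a tuple satisfying $(*)$ via Corollary \ref{cor:TropIntGivesConditionStar} and Proposition \ref{prop:TropHqCutOutByPathMatrixEquation} at level $r+1$. You place $\Gamma$ in $\Sigma_r$ via Proposition \ref{prop:ContractStar} and Lemma \ref{lem:ConditionStarImpliesBoundOnEdgeLengths}. You then apply the forward direction of Proposition \ref{prop:EdgeContractionFromTropInt2} for each $q\le r$.
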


The converse almost holds --- that is, 
\[\text{If } \Gamma \in \TropInt, \text{ then } \Gamma' \in \Sigma_{r+1} \cap \bigcap_{q=1}^r\Trop(H_q) \text{ \quad(with the intersection only up to $r$)}.\]
To formulate a recursive algorithm to produce all possible $\Gamma'$ (assuming we have found all possible $\Gamma$), it then remains to explain how to insert edges into $\Gamma$ while satisfying the final condition $\Gamma' \in \Trop(H_{r+1})$.

\subsection{Edge-insertion into stable trees}
 We introduce some terminology related to $S$-marked trees.

\begin{definition}\label{def:Branches}
Let $\tree$ be a stable $S$-marked tree, and let $v$ be a vertex of $\tree.$ Let $\mathcal B(v)$ denote the set of connected components of the topological space $\tree\setminus v$. We refer to the elements of $\mathcal B(v)$ as the \emph{branches} of $\tree$ at $v$. (Note $\abs{\mathcal B(v)}=\val(v)$.) Recording the marked points on each branch gives the \emph{partition of $S$ induced by $v$}, which has $\abs{\mathcal B(v)}$ nonempty parts.
\end{definition}
\begin{definition}\label{def:EdgeInsertion}
    Let $\tree$ be a stable $S$-marked tree, and let $v$ be a vertex of $\tree.$ We define an operation called \emph{edge-insertion at $v$}. Given $\mathcal B(v)=\mathcal B_1\sqcup\mathcal B_2$ a partition with $\abs{\mathcal{B}_1},\abs{\mathcal{B}_2}\ge2,$ we define a stable $S$-marked tree $\EI(\tree,v,\mathcal B_1\sqcup\mathcal B_2)$ with vertices $(V(\Gamma)\setminus\{v\})\cup\{v_1,v_2\}$, such that the two vertices $v_1,v_2$ are connected by an edge $e$, the branches in $\mathcal B_1$ are attached to $v_1$, and the branches in $\mathcal B_2$ are attached to $v_2$.
\end{definition}
\begin{definition}\label{def:S-Stable}
    Let $\tree$ be a stable $S$-marked tree, and let $v$ be a vertex of $\tree.$ For $S'\subseteq S,$ we call $v$ \emph{$S'$-stable} if at least three branches at $v$ contain an element of $S'$.
\end{definition}
Observe that the $S'$-stable vertices of $\tree$ are precisely the images of vertices under the map $\pi_{S'}^{\trop}(\tree)\into\tree.$ 

\begin{ex}
Consider the following tree $\tau$ and its decomposition into branches $\mathcal{B}(v)$:
\begin{center}
\begin{tikzpicture}[scale=.75]
% first tree
            \draw (0,0)--(1.5,0);
            \draw (0,0) node {$\bullet$};
            \draw (0,0) node {$\bullet$} node[below] {$w$};
            \draw (1.5,0) node {$\bullet$} node[below] {$v$};
            \draw (0,0)--++(180:.8) node[left] {1};
            \draw (0,0)--++(90:.7) node[above] {3};
            \draw (1.5,0)--++(120:.8) node[above] {4};
            \draw (1.5,0)--++(60:.8) node[above] {5};
            \draw (1.5,0)--++(0:.8) node[right] {2};
% branches
\begin{scope}[shift = {(7, 0)}]
            \draw (0,0)--(1.2,0);
            \draw (0,0) node {$\bullet$};
            \draw (0,0)--++(180:.8) node[left] {1};
            \draw (0,0)--++(90:.7) node[above] {3};
            \draw (0,0) node {$\bullet$} node[below] {$w$};
            \draw (1.5,0)++(120:.3)--++(120:.8) node[above] {4};
            \draw (1.5,0)++(60:.3)--++(60:.8) node[above] {5};
            \draw (1.5,0)++(0:.3)--++(0:.8) node[right] {2};
\end{scope}
        \end{tikzpicture}
\end{center}
Thus $v$ induces the set partition $[5]=\{1,3\}\sqcup\{2\}\sqcup\{4\}\sqcup\{5\}$ into branches $\mathcal{B}(v)$. Letting $\mathcal{B}_1 = \{1,3\} \sqcup \{2\}$ and $\mathcal{B}_2 = \{4\} \sqcup \{5\}$, we obtain the tree $\EI(\tree,v,\mathcal B_1\sqcup\mathcal B_2)$:
    \begin{center}
    $\EI(\tree,v,\mathcal B_1\sqcup\mathcal B_2)$\quad$=$\quad
    \raisebox{-0.5\height}{
        \begin{tikzpicture}[scale=.75]
            \draw (0,0)--(1.5,0);
            \draw (0,0) node {$\bullet$};
            \draw (0,0) node {$\bullet$} node[below] {$w$};
            \draw (1.5,0) node {$\bullet$} node[below] {$v_1$};
            \draw (3,0) node {$\bullet$} node[below] {$v_2$};
            \draw (0,0)--++(180:.8) node[left] {1};
        \draw (0,0)--++(90:.8) node[above] {3};
            \draw (1.5,0)--++(90:.8) node[above] {2};
            \draw (3,0)--++(90:.8) node[above] {5};
            \draw (3,0)--++(0:.8) node[right] {4};
            \draw[very thick] (1.5,0)--(3,0);
            \draw (2.25,0) node[below] {$e$};
        \end{tikzpicture}
        }
    \end{center}
In $\EI(\tree,v,\mathcal B_1\sqcup\mathcal B_2)$, the vertices $w$ and $v_1$ are $\{1,2,3,4\}$-stable, and the vertices $w$ and $v_2$ are $\{1,3,4,5\}$-stable.
\end{ex}

\subsection{Recursive nature of \texorpdfstring{$\TropInt$}{\TropIntPDFString} part 2: edge insertion}\label{sec:edgeinsertion}

Fix $\Gamma\in\TropInt$. Let $\tau$ denote the underlying tree of $\Gamma$, and for $q\in[r],$ let $e_q,k_q,\ell_q$ as in Lemma \ref{lem:GammaInTropIntCombinatorics}. We describe an algorithm to obtain ``nearby" points of $\Gamma'\in\Sigma_{r+1}\cap\bigcap_{q=1}^{r+1}\Trop(H_{q})$. We define the data:
        \begin{itemize}
            \item Let $\underline v$ denote the vertex of $\pi_{S_{r+1}}^{\trop}(\tree)$ marked by $i_{r+1}$, and let $v$ be the image of $\underline{v}$ under the natural inclusion $\pi_{S_{r+1}}^{\trop}(\tree)\into\tree$, an $S_{r+1}$-stable vertex.
            \item Let $I$ be the branch of $\Gamma$ at $v$ that contains $i_{r+1}$. Note that by construction, $I$ contains no other elements of $S_{r+1}$.
            \item Let $J$ be the branch of $\Gamma$ at $v$ that contains $j_{r+1}$. If $J=I$, then the output of the algorithm for this $\Gamma$ is empty. If $J\neq I$, then continue.
            \item Let $k_{r+1}$ be the minimal element of $S_{r+1}\setminus (I\cup J)$, which is nonempty since $v$ is $S_{r+1}$-stable. Let $K$ be the branch of $\Gamma$ at $v$ containing $k_{r+1}$.
        \end{itemize}
        \begin{definition}\label{def:edgeinsertionfromtropint}
         Suppose $\mathcal B(v)=\mathcal B_1\sqcup\mathcal B_2$ is a set partition satisfying:
         \begin{itemize}
             \item $I\in\mathcal B_1$, and $J,K\in\mathcal B_2$, and
             \item there exists a branch in $B_1\setminus \{I\}$ containing an element of $S_{r+1}$. 
         \end{itemize}
            Given such a set partition, we define $\tree'$ to be $\EI(\tree,v,\mathcal B_1\sqcup\mathcal B_2))$. The edges of $\tree'$ are canonically identified with $e_1,\ldots,e_{r}$, plus one new edge, which we denote $e_{r+1}$. Let $\ell_{r+1}$ be the minimal element of $S_{r+1}\setminus I$ such that the branch $L$ containing $\ell_{r+1}$ is in $\mathcal B_1$. We write
            \begin{align*}
                \vec e\,'&=(e_1>\cdots>e_{r+1}),&\vec k\,'&=(k_1,\ldots,k_{r+1}),&&\text{and}&\vec \ell\,'&=(\ell_1,\ldots,\ell_{r+1}).
            \end{align*}
    \end{definition}

\begin{lem}\label{lem:conditionstarafteredgeinsertion}

Any $(\tau', \vec e\,', \vec k\,', \vec\ell\,')$ obtained from $\Gamma$ according to Definition \ref{def:edgeinsertionfromtropint} satisfies condition $(*)$. 
    
\end{lem}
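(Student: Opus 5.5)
We have to verify two things. First, that $(\tau',\vec e\,',\vec k\,',\vec\ell\,')$ still satisfies condition $(*)$ for each $q\le r$. Second, that it satisfies $(*)$ for the new index $q=r+1$. The general principle is that inserting an edge at $v$ leaves the paths of $\tau$ intact up to possibly lengthening them by the new edge $e_{r+1}$. Convex hulls of leg sets in $\tau'$ contract to the corresponding hulls in $\tau$ when $e_{r+1}$ is contracted.

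\emph{Indices $q\le r$.} By Corollary~\ref{cor:TropIntGivesConditionStar}, $(\tau,\vec e,\vec k,\vec\ell)$ satisfies $(*)$. The convex hull of $i_q,j_q,k_q,\ell_q$ in $\tau'$ maps onto the hull in $\tau$ under contracting $e_{r+1}$. That hull in $\tau$ has the shape \eqref{eq:ilkjPath} with $P_q$ containing $e_q$. Since $e_q$ separates $\{i_q,\ell_q\}$ from $\{k_q,j_q\}$ in $\tau$, it does so in $\tau'$ as well, so the shape \eqref{eq:ilkjPath} persists. The new path $P_q'$ is $P_q$, possibly with $e_{r+1}$ inserted. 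Therefore $P_q'$ contains $e_q$ and none of $e_{q'}$ with $q'<q$; in the $\tau'$ ordering, $e_{r+1}$ is the last edge, so it is harmless. This handles the first $r$ rows.

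\emph{The new index $q=r+1$.} We must show three things:
\begin{enumerate}
\item $k_{r+1}<\ell_{r+1}$, both lying in $S_{r+1}\setminus\{i_{r+1},j_{r+1}\}$;
\item the hull of $i_{r+1},j_{r+1},k_{r+1},\ell_{r+1}$ in $\tau'$ has the form \eqref{eq:ilkjPath}, i.e.\ $\ell_{r+1}$ attaches on the $i$-side and $k_{r+1}$ on the $j$-side of a path $P_{r+1}$;
\item $P_{r+1}$ contains $e_{r+1}$ and no $e_{q'}$ with $q'\le r$.
\end{enumerate}

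For (1), $\ell_{r+1}$ lies in a branch $L\in\mathcal B_1\setminus\{I\}$, which exists by hypothesis, and $L\ne J,K$ since $J,K\in\mathcal B_2$. So $\ell_{r+1}\notin I\cup J$, and by minimality of $k_{r+1}$ in $S_{r+1}\setminus(I\cup J)$ we get $k_{r+1}\le\ell_{r+1}$. Equality is impossible since they lie in different branches. Also $\ell_{r+1}\ne j_{r+1}$ since $L\ne J$.

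For (2) and (3), note that $I,L$ are attached to $v_1$ and $J,K$ to $v_2$. In $\tau'$, the paths from $i_{r+1}$ and from $\ell_{r+1}$ toward $v_1$ enter through distinct branches, so their meeting point is $v_1$. Likewise $j_{r+1}$ and $k_{r+1}$ meet at $v_2$. Hence the hull has the shape \eqref{eq:ilkjPath} with $P_{r+1}=e_{r+1}$, a single edge. If $J=K$, the attachment point of $k_{r+1}$ might a priori lie inside $J$, but it must still be at $v_2$ since $k_{r+1}$ projects to $v_2$ along the $i$–$j$ path. Therefore $P_{r+1}$ contains $e_{r+1}$ and no older edge.

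\emph{Expected obstacle.} The subtle point is the case $J=K$. One must check that the nearest point of $k_{r+1}$ on the $i_{r+1}$–$j_{r+1}$ path is exactly $v_2$, rather than a vertex deeper in $J$. It then has to be reconciled with the shape \eqref{eq:ilkjPath}, which depicts $k_q$ and $j_q$ at a common vertex. I would first try to interpret \eqref{eq:ilkjPath} as a statement about projections onto the $i$–$j$ path, which is what $(*)$ is really used for via $\underline d$. If the literal shape is required, I would instead argue that $v$ being the $S_{r+1}$-vertex marked by $i_{r+1}$ forces $k_{r+1}$ to sit in its own branch.
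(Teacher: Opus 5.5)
Your proof is correct and is essentially the paper's argument: for $q\le r$ the edge $e_q$ still separates $\{i_q,\ell_q\}$ from $\{j_q,k_q\}$ in $\tau'$, so $P_q'$ is $P_q$ plus possibly the harmless edge $e_{r+1}$, and for $q=r+1$ the legs $i_{r+1},\ell_{r+1},k_{r+1},j_{r+1}$ lie in four distinct branches at $v$, which forces $P_{r+1}=e_{r+1}$. Your ``expected obstacle'' is vacuous: $k_{r+1}$ is chosen in $S_{r+1}\setminus(I\cup J)$, so $K\neq J$ (this is exactly what justifies your ``likewise $j_{r+1}$ and $k_{r+1}$ meet at $v_2$''), and you should delete that paragraph, since its fallback claim for $J=K$ would actually be false (the projection of $k_{r+1}$ would then lie strictly inside $J$).
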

\begin{proof}
     For $q\le r,$ we have $\ell_{q}>k_{q}$ by Lemma \ref{lem:GammaInTropIntCombinatorics}. Since $\ell_{r+1}\not\in I,J,K$, we have $\ell_{r+1}>k_{r+1}$ by definition of $k_{r+1}$. Thus the first part of $(*)$ is satisfied. Since $\tree'$ is obtained from $\tree$ by edge insertion, the edge $e_q$ still separates $i_q$ and $\ell_q$ from $j_q$ and $k_q$. Thus the convex hull of $i_q,\ell_q,j_q,k_q$ in $\tree'$ is of the form \eqref{eq:ilkjPath}. By construction, the resulting path $P_q'\subseteq\tree'$ contains all edges corresponding to those of $P_q$, and possibly also contains $e_{r+1};$ both cases satisfy $(*)$. 
     
     It remains to check the case $q={r+1}$. By construction, $e_{r+1}$ separates $i_{r+1}$ and $\ell_{r+1}$ from $j_{r+1}$ and $k_{r+1}$; this gives the first part of condition $(*)$. Finally, since $i_{r+1}$, $j_{r+1}$, $k_{r+1}$ and $\ell_{r+1}$ were in different branches of $v$ in $\tree$, the path $P_{r+1}$ in $\tau'$ consists of the single edge $e_{r+1}$. 
     
     We conclude that $(\tree',\vec e\,',\vec k\,',\vec \ell\,')$ satisfies $(*)$.
\end{proof}

We may therefore define a metric tree $\Gamma'$ on $\tree'$ via Proposition \ref{prop:MetricSatisfiesPathEquations}, using the tuple $(\tau', \vec e\,', \vec k\,', \vec\ell\,')$. We say {\bf $\Gamma'$ is obtained from $\Gamma$ by inserting an allowable edge and readjusting edge lengths}.

\begin{prop}\label{prop:tropintafteredgeinsertion}
Fix $\Gamma \in \TropInt$.
\begin{enumerate}
    \item If $\Gamma'$ is obtained from $\Gamma$ by inserting an allowable edge and readjusting edge lengths, then
$\Gamma'\in\Sigma_{r+1}\cap\bigcap_{q=1}^{r+1}\Trop(H_{q})$.\label{it:MapToTropIntR+1}
\item  For any such $\Gamma'$, contracting the shortest edge and readjusting edge lengths recovers $\Gamma$.\label{it:MapToTropIntR+1LandsInsideFiber}

\item Let $\Gamma'' \in\Sigma_{r+1}\cap\bigcap_{q=1}^{r+1}\Trop(H_{q})$ be such that contracting the shortest edge and readjusting edge lengths gives $\Gamma$. Then $\Gamma''$ can be obtained from $\Gamma$ by inserting an allowable edge and readjusting edge lengths.\label{it:MapToTropIntR+1SurjectiveOnFibers}
\end{enumerate}
\end{prop}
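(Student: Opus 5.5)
Part \eqref{it:MapToTropIntR+1LandsInsideFiber} is essentially formal, so I would dispose of it first. Contracting $e_{r+1}$ in $\tree'=\EI(\tree,v,\mathcal B_1\sqcup\mathcal B_2)$ returns $\tree$. Deleting the last entries of $\vec e\,',\vec k\,',\vec\ell\,'$ returns the data $(\vec e,\vec k,\vec\ell)$ of $\Gamma$. By Proposition \ref{prop:MetricSatisfiesPathEquations}\eqref{it:ConditionStarGivesEdgeOrderingByLength}, $e_{r+1}$ is the shortest edge of $\Gamma'$. Since the metric produced by Proposition \ref{prop:MetricSatisfiesPathEquations} is determined by $A_\tree\vec y=\vec L_\tree$, and by Proposition \ref{prop:TropHqCutOutByPathMatrixEquation} this is exactly the metric of $\Gamma$, the readjusted tree is $\Gamma$.

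For \eqref{it:MapToTropIntR+1}, $\Gamma'$ has $r+1$ edges, so $\Gamma'\in\Sigma_{r+1}$. By Lemma \ref{lem:conditionstarafteredgeinsertion} and Proposition \ref{prop:ContractStar}, the pair $(\Gamma',\Gamma)$ is related by contraction and readjustment. Proposition \ref{prop:EdgeContractionFromTropInt2} then gives $\Gamma'\in\Trop(H_q)$ for $q\le r$, because $\Gamma\in\Trop(H_q)$.

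It remains to show $\Gamma'\in\Trop(H_{r+1})$. By Proposition \ref{prop:MetricSatisfiesPathEquations}\eqref{it:PathsHaveCorrectLength}, the $k_{r+1}$ and $\ell_{r+1}$ terms of \eqref{eq:Min} agree, so I would show that every other $\alpha\in S_{r+1}\setminus\{i_{r+1},j_{r+1}\}$ gives a value at least as large. Work in $\pi^\trop_{S_{r+1}}(\Gamma')$ and the convex hull of $i_{r+1}$ and $j_{r+1}$. The vertex $v_1$ is where $I$ attaches, so after normalizing $\underline d_{\ell_{r+1}}=0$ and $\underline d_{k_{r+1}}=\len(e_{r+1})$, the legs fall into three cases.
\begin{enumerate}
\item If $\alpha$ lies in a branch of $\mathcal B_1$, then $\underline d_\alpha=0$. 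Minimality of $\ell_{r+1}$ gives $\alpha\ge\ell_{r+1}$, so its term is at least as large.
\item If $\alpha$ lies in a branch of $\mathcal B_2$ other than $J$, then $\underline d_\alpha=\len(e_{r+1})$. Minimality of $k_{r+1}$ gives $\alpha>k_{r+1}$ (for $\alpha\neq k_{r+1}$).
\item If $\alpha\in J$, then $\underline d_\alpha\ge\len(e_{r+1})$ plus the distance along $J$, which is a length $\ge\len(e_{r+1})$. So the term is at least $\len(e_{r+1})+\alpha B^{n-4-r}$ with $\alpha$ arbitrary. I must compare this with $\len(e_{r+1})+k_{r+1}B^{n-4-r}$. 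If $\alpha<k_{r+1}$, the $J$-path contains some edge of $\Gamma$, of length $\ge\frac12B^{n-3-r}$, which exceeds $nB^{n-4-r}$ when $B\ge 2n+1$. If the path is empty, $\alpha$ lies in the branch $J$ at $v$, where it is the only leg of $S_{r+1}$ attached at $v_2$. The $J$-path is nonempty whenever $\alpha\ne j_{r+1}$ lies in $J$ off the hull near $v$, so this case only needs care at the boundary.
\end{enumerate}
This verifies that the minimum is attained by $k_{r+1}$ and $\ell_{r+1}$.

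For \eqref{it:MapToTropIntR+1SurjectiveOnFibers}, let $\Gamma''$ have data $(\tree'',\vec e\,'',\vec k\,'',\vec\ell\,'')$ from Lemma \ref{lem:GammaInTropIntCombinatorics}. Contracting $e''_{r+1}$ gives $\tree$, so $\tree''=\EI(\tree,w,\mathcal B_1\sqcup\mathcal B_2)$ for some vertex $w$. By Lemma \ref{lem:GammaInTropIntCombinatorics}\eqref{it:PqContainsEq}, the path $P_{r+1}$ contains $e''_{r+1}$ and no earlier edge, so $P_{r+1}=\{e_{r+1}''\}$. Hence $i_{r+1},\ell_{r+1}''$ attach at one endpoint of $e''_{r+1}$ and $j_{r+1},k''_{r+1}$ at the other. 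This forces $w$ to be $S_{r+1}$-stable, and to be exactly the vertex $v$ of $\pi^\trop_{S_{r+1}}(\tree)$ carrying $i_{r+1}$, with $I\in\mathcal B_1$ and $J\in\mathcal B_2$, $J\ne I$. Minimality of the $k''_{r+1}$ and $\ell''_{r+1}$ terms in \eqref{eq:Min}, read through the same three-case comparison, forces $k''_{r+1}$ to be the least element of $S_{r+1}\setminus(I\cup J)$, so $K\in\mathcal B_2$. It likewise forces $\ell''_{r+1}$ to be the least element on the $\mathcal B_1$ side, and shows that some other branch of $\mathcal B_1$ meets $S_{r+1}$. Thus the partition is allowable in the sense of Definition \ref{def:edgeinsertionfromtropint}. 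The data agree with those of Definition \ref{def:edgeinsertionfromtropint}, so uniqueness of solutions of the path-matrix equation gives $\Gamma''=\Gamma'$.

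I expect the main obstacle to be the case analysis in \eqref{it:MapToTropIntR+1} and its converse for legs on the $J$ side. There the ordering by marking index competes with path lengths of several scales, and I will have to use the magnitude bounds of Lemma \ref{lem:ConditionStarImpliesBoundOnEdgeLengths} carefully.
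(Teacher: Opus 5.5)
Your route is the paper's: (2) holds because the inserted edge is the shortest one and the path-matrix equation pins down the metric. (1) uses Proposition~\ref{prop:EdgeContractionFromTropInt2} for $q\le r$, plus a case split on where $p_\alpha$ sits relative to $e_{r+1}$ for $q=r+1$. (3) recovers $k_{r+1},\ell_{r+1}$ as the unique minimizers. Your part (2) and the first two cases of (1) are fine, but two steps are not established as written.

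\emph{Case 3 of (1).} You make the existence of an edge of $\Gamma$ on the $J$-side depend on $\alpha<k_{r+1}$, which does not explain why such an edge is there. You then leave an ``empty path'' case to ``care at the boundary''. That case cannot occur, for a structural reason you should state. The branch $J$ of $\tree$ at $v$ is either the bare leg $j_{r+1}$, which contains no other $\alpha$, or it begins with an edge $e_q$ ($q\le r$) incident to $v$. This edge lies on the $i_{r+1}$--$j_{r+1}$ path and separates $v_2$ from $p_\alpha$ for every $\alpha\in (J\cap S_{r+1})\setminus\{j_{r+1}\}$. Hence $\underline d_\alpha-\underline d_{k_{r+1}}\ge\len(e_q)\ge\tfrac12B^{n-3-r}>nB^{n-4-r}$ unconditionally, whatever the size of $\alpha$. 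This uses Proposition~\ref{prop:MetricSatisfiesPathEquations}(2) for $\Gamma'$, which is available by Lemma~\ref{lem:conditionstarafteredgeinsertion}. It is exactly the paper's ``otherwise'' case.

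\emph{The identification $w=v$ in (3).} The shape $P_{r+1}=\{e''_{r+1}\}$ only makes the contracted vertex $w$ $S_{r+1}$-stable; it does not force $w=v$. For instance, suppose $\tree$ has a vertex $u$ carrying $i_{r+1}$ and some $\beta\in S_{r+1}$, joined by an edge $f$ to a vertex $w$ carrying $\ell,k,j_{r+1}$. Splitting $w$ as $\{f,\ell\}\,|\,\{k,j_{r+1}\}$ gives $P_{r+1}=\{e''_{r+1}\}$, yet $v=u\neq w$.

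What excludes this is minimality, via the $I$-side analogue of case 3. Suppose the branch at $w$ containing $i_{r+1}$ contained another $\beta\in S_{r+1}$. That branch starts with an edge $f=e''_q$, $q\le r$, so $\underline d_\beta\le\underline d_{\ell''_{r+1}}-\len(f)\le\underline d_{\ell''_{r+1}}-\tfrac12B^{n-3-r}$. Then the $\beta$-term of \eqref{eq:Min} would undercut the $\ell''_{r+1}$-term, a contradiction.

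With that argument inserted, the rest of your (3) is correct: identifying $k''_{r+1}$ and $\ell''_{r+1}$ as the minimal elements, allowability of the partition, and uniqueness of the path-matrix solution. The paper's own proof of (3) is equally terse here, folding this point into ``has the required form''.
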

\begin{proof}
Statement (2) holds because the newly inserted edge is the shortest edge, by Proposition \ref{prop:MetricSatisfiesPathEquations}\eqref{it:ConditionStarGivesEdgeOrderingByLength}.

For (1), if $q \in [r]$, then $\Gamma' \in \Trop(H_q)$ by Proposition \ref{prop:EdgeContractionFromTropInt2}. For $q=r+1$, the fact that $k_{r+1}$ and $\ell_{r+1}$ achieve the minimum in Equation \eqref{eq:Min} uses the minimality in the choices of $k_{r+1}$ and $\ell_{r+1}$ in Definition \ref{def:edgeinsertionfromtropint}, as follows. Let $\alpha \in S^{r+1} \setminus \{i_{r+1}, j_{r+1}, k_{r+1}, \ell_{r+1}\}$. Since $\alpha \notin S^{r+1}$, $\alpha$ is not on the $I$ branch, so $\underline d_\alpha \geq \underline d_{\ell_{r+1}}$.  If $\underline d_\alpha = \underline d_{\ell_{r+1}}$, then $\underline d_\alpha + \alpha B^{n-3-(r+1)}$ exceeds the minimum since $\alpha > \ell_{r+1}$ (by the choice of $\ell_{r+1}$ in Definition \ref{def:edgeinsertionfromtropint}). Similar logic holds if $\underline d_\alpha = \underline d_{k_{r+1}}$. Otherwise, $\underline d_\alpha$ is separated from $\underline d_{k_{r+1}}$ by an edge of length $\geq \tfrac{1}{2}B^{n-3-r}$, and so $\underline d_\alpha + \alpha B^{n-3-(r+1)}$ exceeds the minimum by at least $\tfrac{1}{2}B^{n-3-(r+1)}$.

For (3), we recover $k_{r+1}$ and $\ell_{r+1}$ as the unique minimizers of Equation \eqref{eq:Min} for $\Gamma''$ with $q=r+1$, by Lemma \ref{lem:GammaInTropIntCombinatorics}\eqref{it:ExactlyTwice}. The shortest edge of $\Gamma''$ then determines the set partition $\mathcal{B}_1 \sqcup \mathcal{B}_2$. Reasoning as above, we see that $k_{r+1}$ and $\ell_{r+1}$ are necessarily the minimal elements described in Definition \ref{def:edgeinsertionfromtropint} and that $\mathcal{B}_1 \sqcup \mathcal{B}_2$ has the required form.
\end{proof}
\begin{cor}\label{cor:TropIntRecursion}
     $\Sigma_{r+1}\cap\bigcap_{q=1}^{r+1}\Trop(H_q)$ is precisely the set of metric trees obtained from elements of $\TropInt$ by inserting allowable edges and readjusting edge lengths.
\end{cor}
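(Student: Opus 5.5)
The corollary follows by combining the two inclusions already established in Proposition \ref{prop:tropintafteredgeinsertion} with Corollary \ref{cor:ContractingSendsTropIntToTropInt}. I will show the two sets are equal by proving each is contained in the other.

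For the inclusion ``$\supseteq$'', take any $\Gamma\in\TropInt$ and any $\Gamma'$ obtained from it by inserting an allowable edge and readjusting edge lengths. Lemma \ref{lem:conditionstarafteredgeinsertion} guarantees that $\Gamma'$ is well defined: the tuple $(\tau',\vec e\,',\vec k\,',\vec\ell\,')$ satisfies $(*)$, so the metric exists by Proposition \ref{prop:MetricSatisfiesPathEquations}. Proposition \ref{prop:tropintafteredgeinsertion}\eqref{it:MapToTropIntR+1} then says directly that $\Gamma'\in\Sigma_{r+1}\cap\bigcap_{q=1}^{r+1}\Trop(H_q)$.

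For the inclusion ``$\subseteq$'', take $\Gamma''\in\Sigma_{r+1}\cap\bigcap_{q=1}^{r+1}\Trop(H_q)$. Applying Lemma \ref{lem:GammaInTropIntCombinatorics} and Corollary \ref{cor:TropIntGivesConditionStar} with $r+1$ in place of $r$, the tree $\Gamma''$ has exactly $r+1$ edges and comes with data $(\tau'',\vec e,\vec k,\vec\ell)$ satisfying $(*)$. By Proposition \ref{prop:TropHqCutOutByPathMatrixEquation}, its metric is the one given by the path matrix. So contracting the shortest edge and readjusting edge lengths makes sense, and it produces some $\Gamma$. Corollary \ref{cor:ContractingSendsTropIntToTropInt} shows $\Gamma\in\TropInt$. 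Proposition \ref{prop:tropintafteredgeinsertion}\eqref{it:MapToTropIntR+1SurjectiveOnFibers} then shows $\Gamma''$ is obtained from this $\Gamma$ by inserting an allowable edge and readjusting edge lengths.

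The only point needing care is in the second inclusion. One must check that the ``contract and readjust'' operation applied to $\Gamma''$ uses the same $(*)$-data as in the hypothesis of Proposition \ref{prop:tropintafteredgeinsertion}\eqref{it:MapToTropIntR+1SurjectiveOnFibers}, which also determines the resulting $\Gamma$. This holds because the $(*)$-data of $\Gamma''$ is intrinsic: by Lemma \ref{lem:GammaInTropIntCombinatorics}, the edges $e_q$ are determined by their lengths and the pairs $k_q,\ell_q$ are the unique minimizers.
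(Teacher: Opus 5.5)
Your proposal is correct and follows the paper's proof: the inclusion $\supseteq$ comes from Proposition \ref{prop:tropintafteredgeinsertion}\eqref{it:MapToTropIntR+1}, and the inclusion $\subseteq$ comes from contracting the shortest edge, applying Corollary \ref{cor:ContractingSendsTropIntToTropInt}, and then Proposition \ref{prop:tropintafteredgeinsertion}\eqref{it:MapToTropIntR+1SurjectiveOnFibers}. Your extra remarks are accurate and make explicit what the paper uses implicitly. Lemma \ref{lem:GammaInTropIntCombinatorics} and Corollary \ref{cor:TropIntGivesConditionStar} at level $r+1$ supply the $(*)$-data, and Proposition \ref{prop:TropHqCutOutByPathMatrixEquation} shows the metric is the path-matrix one.
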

\begin{proof}
    Proposition \ref{prop:tropintafteredgeinsertion}\eqref{it:MapToTropIntR+1} implies that the set of metric trees obtained from an element of $\TropInt$ by inserting an allowable edge and readjusting edge lengths is a subset of $\Sigma_{r+1}\cap\bigcap_{q=1}^{r+1}\Trop(H_q)$. For the other inclusion, let $\Gamma'\in\Sigma_{r+1}\cap\bigcap_{q=1}^{r+1}\Trop(H_q)$, and let $\Gamma$ be the metric tree obtained by contracting the shortest edge and readjusting edge lengths. By Corollary \ref{cor:ContractingSendsTropIntToTropInt}, $\Gamma\in\TropInt.$ By Proposition \ref{prop:tropintafteredgeinsertion}\eqref{it:MapToTropIntR+1SurjectiveOnFibers}, $\Gamma'$ can be obtained from $\Gamma$ by inserting an allowable edge and readjusting edge lengths. This proves the other inclusion.
\end{proof}

\begin{cor}\label{cor:InjectivityForMetricTrees}
    Each $\Gamma'\in\Sigma_{r+1}\cap\bigcap_{q=1}^{r+1}\Trop(H_q)$ is obtained from a \emph{unique} $\Gamma\in\TropInt$ by inserting an allowable edge and readjusting edge lengths.
\end{cor}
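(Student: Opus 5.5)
Existence comes from Corollary~\ref{cor:TropIntRecursion}, so the content of the statement is uniqueness. Fix $\Gamma'\in\Sigma_{r+1}\cap\bigcap_{q=1}^{r+1}\Trop(H_q)$ and suppose $\Gamma'$ is obtained from some $\Gamma\in\TropInt$ by inserting an allowable edge and readjusting edge lengths. The plan is to show that $\Gamma$ can be recovered from $\Gamma'$ alone, namely as the result of contracting the shortest edge of $\Gamma'$ and readjusting edge lengths. Then any two such $\Gamma$ coincide.

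The recovery is Proposition~\ref{prop:tropintafteredgeinsertion}\eqref{it:MapToTropIntR+1LandsInsideFiber}: contracting the shortest edge of $\Gamma'$ and readjusting edge lengths returns $\Gamma$. To apply it we must check that this operation depends only on $\Gamma'$, and not on the auxiliary data $(\vec e\,',\vec k\,',\vec\ell\,')$ produced by the insertion from $\Gamma$. We argue this in three steps.

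First, Lemma~\ref{lem:GammaInTropIntCombinatorics}, applied with $r+1$ in place of $r$, shows that the ordering $\vec e\,'$ of the edges of $\Gamma'$ is determined by the metric. Indeed, $e'_q$ is the unique edge whose length lies in $[\tfrac12B^{n-3-q},nB^{n-3-q}]$, so $e'_{r+1}$ is the shortest edge.

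Second, the indices $k'_q,\ell'_q$ are determined by $\Gamma'$. By Lemma~\ref{lem:GammaInTropIntCombinatorics}\eqref{it:ExactlyTwice}, they are the unique pair achieving the minimum of \eqref{eq:Min}, ordered so that $k'_q<\ell'_q$.

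Third, once the underlying tree of $\Gamma'$ and these data are fixed, the contracted tree $\tau$ and the truncated data $(\vec e,\vec k,\vec\ell)$ are fixed as well. The readjusted metric is then the unique solution $\vec y_\tau=A_\tau^{-1}\vec L_\tau$ from Definition~\ref{def:PathMatrix}. Hence the output is a function of $\Gamma'$ alone.

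The one point needing care is that the data the insertion procedure attaches to $\Gamma'$ in Definition~\ref{def:edgeinsertionfromtropint} agree with the data that Lemma~\ref{lem:GammaInTropIntCombinatorics} intrinsically attaches to $\Gamma'$. For $q\le r$, Definition~\ref{def:edgeinsertionfromtropint} keeps $k_q,\ell_q$ from $\Gamma$; for $q=r+1$ it sets them to the minimal elements chosen there. The proof of Proposition~\ref{prop:tropintafteredgeinsertion}\eqref{it:MapToTropIntR+1} together with Proposition~\ref{prop:EdgeContractionFromTropInt2} shows that these indices are exactly the minimizers of \eqref{eq:Min} for $\Gamma'$. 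By uniqueness of the minimizers, the two choices of data coincide. The edge ordering also agrees, by Proposition~\ref{prop:MetricSatisfiesPathEquations}\eqref{it:ConditionStarGivesEdgeOrderingByLength}. This matching step is the main place where something could go wrong, and checking it carefully is the expected obstacle.

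Given this, if $\Gamma_1,\Gamma_2\in\TropInt$ both yield $\Gamma'$, then each equals the intrinsic contraction of $\Gamma'$, so $\Gamma_1=\Gamma_2$.
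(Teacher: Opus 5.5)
Your proposal is correct and follows the paper's route. The paper's proof is the one-line observation that Proposition~\ref{prop:tropintafteredgeinsertion}\eqref{it:MapToTropIntR+1LandsInsideFiber} recovers $\Gamma$ from $\Gamma'$. Your extra matching step makes explicit a well-definedness point the paper leaves implicit: the edge ordering and the indices $k_q,\ell_q$ produced by the insertion coincide with those that Lemma~\ref{lem:GammaInTropIntCombinatorics} attaches intrinsically to $\Gamma'$.
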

\begin{proof}
    Immediate from Proposition \ref{prop:tropintafteredgeinsertion}\eqref{it:MapToTropIntR+1LandsInsideFiber}.
\end{proof}

\subsection{The firework algorithm}
\label{subsec:firework}
We may phrase the above as an algorithm, as follows. Let $\FW_0$ denote the set containing the cone point of $\M_{0,n}^{\trop}.$ Trivially, we have $$\FW_0=\Sigma_{0}\cap\bigcap_{q=1}^{0}\Trop(H_q).$$ For each $r=1,\ldots,m$, let $\FW_r\subset\M_{0,n}^{\trop}$ be the finite set obtained by inserting an allowable edge all possible ways, then readjusting edge lengths, to all elements of $\FW_{r-1}.$ By Corollary \ref{cor:TropIntRecursion}, we conclude $\FW_r=\TropInt.$ Theorem \ref{thm:LimitCycleFW} yields:
\begin{cor}
    We have an equality of \emph{cycles} on $\Mbar_{0,n}$
\begin{equation}\label{eq:FWClassExpansion2}
    [(\bar{X})_0]=\sum_{\substack{\Gamma\in\FW_r \\ \tree \text{ tree of } \Gamma}}[\Mbar_{\tree}].
\end{equation}
\end{cor}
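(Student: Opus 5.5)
The statement follows by combining results already in place, so the plan is a short chain of substitutions. First I identify $\FW_r$ with $\TropInt$, by induction on $r$. The base case $r=0$ is the trivial identity $\FW_0=\Sigma_0\cap\bigcap_{q=1}^0\Trop(H_q)$: both sides are the cone point. For the inductive step, assume $\FW_{r-1}=\Sigma_{r-1}\cap\bigcap_{q=1}^{r-1}\Trop(H_q)$. By definition, $\FW_r$ is the set of metric trees obtained from elements of $\FW_{r-1}$ by inserting an allowable edge in all possible ways and readjusting edge lengths. Corollary \ref{cor:TropIntRecursion}, applied with $r-1$ in place of $r$, says this set is exactly $\Sigma_r\cap\bigcap_{q=1}^r\Trop(H_q)$. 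Hence $\FW_r=\TropInt$ for every $r\le m$.

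Next I substitute this identity into Theorem \ref{thm:LimitCycleFW}. Here $X$ means $X_r=\bigcap_{q=1}^rH_q$, and the theorem gives
\[
[(\bar X_r)_0]=\sum_{\Gamma\in\TropInt}[\Mbar_{\tree(\Gamma)}].
\]
Replacing the index set $\TropInt$ by $\FW_r$ yields the claimed equality \eqref{eq:FWClassExpansion2}.

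The only point that needs care is that the sum is a genuine sum of cycles, with each stratum appearing the correct number of times. Theorem \ref{thm:LimitCycleFW} is already indexed by points $\Gamma$ of the intersection, not by trees, and each such $\Gamma$ carries multiplicity $1$ by Corollary \ref{cor:TropMultIs1}. So no further injectivity statement is needed. Corollary \ref{cor:InjectivityForMetricTrees} also shows that the recursion produces each element of $\FW_r$ exactly once, so $\FW_r$ is a set without repetition and the identification above matches multiplicities.

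I expect no real obstacle here. The substantive work is in Corollary \ref{cor:TropIntRecursion} and Theorem \ref{thm:LimitCycleFW}, and the only thing to check is that the indices line up in the induction.
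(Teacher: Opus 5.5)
Your proposal is correct and follows the paper's argument: the paper obtains $\FW_r=\TropInt$ by applying Corollary \ref{cor:TropIntRecursion} inductively from $\FW_0$, and then reads off the cycle from Theorem \ref{thm:LimitCycleFW}. Your appeal to Corollary \ref{cor:InjectivityForMetricTrees} is harmless but unnecessary, since $\FW_r$ is defined as a set and the multiplicity-one statement is already part of Theorem \ref{thm:LimitCycleFW}.
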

\begin{remark}
    By Corollary \ref{cor:InjectivityForMetricTrees}, the above algorithm is ``tree-like'', as depicted in Figure \ref{fig:FireworkPicture}.
\end{remark}

In fact, more is true: all elements of $\FW_r$ lie in \emph{distinct cones} of $\M_{0,n}^{\trop}$:
\begin{prop}\label{prop:Injective}
    The underlying trees of any two elements $\Gamma,\Gamma'\in \FW_r$ are distinct.
\end{prop}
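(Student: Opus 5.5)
The plan is to prove that the underlying trees of elements of $\FW_r$ are distinct by showing that the metric is recoverable from the combinatorial tree alone. By Lemma \ref{lem:GammaInTropIntCombinatorics}, $\Gamma\in\FW_r=\TropInt$ has exactly $r$ edges, and the data $(\vec e,\vec k,\vec\ell)$ is determined by $\Gamma$. By Proposition \ref{prop:TropHqCutOutByPathMatrixEquation}, the metric of $\Gamma$ is the unique solution $\vec y_\tau$ of $A_\tau\vec y=\vec L_\tau$. So it suffices to show that the tree $\tau$ determines the ordering $\vec e$ and the indices $\vec k,\vec\ell$. Once that is done, two elements with the same tree have the same path matrix and the same path-length vector, hence the same metric, and so coincide.

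I would prove this by induction on $r$, using the recursion. Suppose $\Gamma,\Gamma'\in\FW_r$ have the same underlying tree $\tau$. The main step is to show that $\tau$ determines which edge is $e_r$, the shortest one. Here is how I would identify it:
\begin{itemize}
\item By construction (Definition \ref{def:edgeinsertionfromtropint} and Lemma \ref{lem:conditionstarafteredgeinsertion}), $e_r$ is the unique edge forming the path $P_r$ between $\{i_r,\ell_r\}$ and $\{j_r,k_r\}$ in $\pi_{S_r}(\tau)$.
\item Equivalently, $e_r$ is the edge that becomes the single internal edge of $\pi_{S_r}^{\trop}$ restricted to the four legs $i_r,j_r,k_r,\ell_r$ after contraction.
\item Intrinsically: in $\tau$, contract $e_r$ to a vertex $v$. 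Then $i_r,j_r,k_r,\ell_r$ lie in four distinct branches at $v$, with $I,L$ on one side of $e_r$ and $J,K$ on the other.
\end{itemize}
I would characterize $e_r$ as the unique edge $e$ of $\tau$ with the following property. Its two endpoints are both $S_r$-stable. The legs $i_r$ and $j_r$ lie on opposite sides of $e$. Also, $k_r$ is the minimal element of $S_r$ in branches at the $j_r$-side endpoint other than the branches containing $j_r$ and $e$, and $\ell_r$ is the analogous minimum on the $i_r$-side.

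I expect this uniqueness to be the main obstacle. Another edge $e_q$ with $q<r$ might also separate $i_r$ from $j_r$ with $S_r$-stable endpoints. To rule this out, I would argue instead via the metric estimates: compare $\Gamma$ and $\Gamma'$ directly. Contract the shortest edge of each, which is $e_r$, respectively $e'_r$. If these are the same edge of $\tau$, the contracted trees coincide. By induction, the contracted metric trees in $\FW_{r-1}$ are then equal, and Proposition \ref{prop:tropintafteredgeinsertion}(3) gives that $\Gamma$ and $\Gamma'$ arise from the same $\Gamma_0$ by inserting an allowable edge along the same bipartition. The edge insertion data is therefore determined, hence so are $k_r,\ell_r$ and then the metric.

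It remains to treat the case where the shortest edges $e\neq e'$ of $\tau$ differ. Contracting them gives distinct trees $\tau_0$ and $\tau_0'$, each underlying an element of $\FW_{r-1}$. I would derive a contradiction from $\Trop(H_r)$. In $\Gamma$, the edge $e$ is the unique edge on $P_r$ not on earlier paths and has length about $B^{n-3-r}$, while $e'$ has length at least $\tfrac12B^{n-3-(r-1)}$. In $\Gamma'$ the roles are swapped. Both trees have the same combinatorial minimizers for \eqref{eq:Min} up to the dominant terms, and I would track which edges separate $i_r$ from $j_r$: the minimizing pair must be split by an edge lying on no earlier path $P_q$, and I would argue this edge is combinatorially forced by $\tau$ together with $k_{<r},\ell_{<r}$. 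Those indices are themselves fixed by induction after contracting down to the common contraction of both $e$ and $e'$. This forces $e=e'$. If this combinatorial forcing fails, I would fall back on arguing that the set of edges lying on no earlier path $P_q$ ($q<r$) can contain at most one edge, by counting: there are $r$ edges, and each $e_q$ for $q<r$ is already accounted for by Lemma \ref{lem:GammaInTropIntCombinatorics}(3).
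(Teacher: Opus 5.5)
Your reduction (the tree determines $(\vec e,\vec k,\vec\ell)$, hence the metric via $A_\tau\vec y=\vec L_\tau$) is sound. So is your treatment of the case where the shortest edges of $\Gamma$ and $\Gamma'$ are the same edge $e$ of $\tau$: contract, use Corollary \ref{cor:ContractingSendsTropIntToTropInt} and induction, then Proposition \ref{prop:tropintafteredgeinsertion}(3). That matches the paper's handling of the case where the contractions agree.

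The gap is the remaining case $e\neq e'$, which you flag as the main obstacle but do not resolve. Your claim that the minimizing pair for $\Trop(H_r)$ must be split by an edge lying on no earlier path $P_q$ is false. Condition $(*)$ only forbids $P_q$ from containing the earlier \emph{edges} $e_{q'}$ with $q'<q$, and later edges routinely lie on earlier paths. In the paper's worked example (the tree with edge lengths $180,19,1$), $P_1$ contains all three edges and $P_2$ contains the edges of lengths $19$ and $1$. So no edge avoids all the earlier paths, and your counting fallback fails for the same reason.

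There are two further problems. All these paths are defined relative to the labels $\vec k,\vec\ell$ attached separately to $\Gamma$ and to $\Gamma'$, and whether those agree is exactly what is at stake. Also, contracting both $e$ and $e'$ is not two steps of the shortest-edge recursion unless $e'$ is the second-shortest edge of $\Gamma$, so the induction hypothesis does not apply to that double contraction.

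The missing idea is that the shortest edge is determined by $\tau$ alone. Namely, $e_r$ is the first edge along the path from $i_r$ to $j_r$ in the convex hull of $S_r$: the edge leaving, toward $j_r$, the vertex where leg $i_r$ attaches to $\pi_{S_r}^{\trop}(\tau)$. This follows from Definition \ref{def:edgeinsertionfromtropint}, since every element of $\FW_r$ arises by an allowable edge insertion (Proposition \ref{prop:tropintafteredgeinsertion}(3)). The new vertex $v_1$ carries the branch $I$, which contains no element of $S_r$ other than $i_r$, together with the branch $L$. The edge $e_r$ then leads to $v_2$, which carries $J$ and $K$.

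Your list of properties (both endpoints $S_r$-stable, $i_r$ and $j_r$ on opposite sides) omits exactly this condition: the branch at the $i_r$-side endpoint that contains $i_r$ contains no other element of $S_r$. With that condition the edge is unique, the case $e\neq e'$ cannot occur, and your argument for $e=e'$ completes the induction. That is the paper's proof.
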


\begin{proof}
    First, we observe the following. Let $\tau'$ be a combinatorial tree with $r+1$ edges. Suppose $\Gamma'\in\FW_{r+1}$ has underlying tree $\tau'$. By definition of $\FW_{r+1}$, $\Gamma'$ is obtained from some $\Gamma\in\FW_r$ by inserting an allowable edge and readjusting edge lengths. The new edge $e_{r+1}$ is, by construction, the first edge along the path from $i_{r+1}$ to $j_{r+1}$ in the convex hull of $S_{r+1}$. In particular, it is entirely determined by the combinatorial tree $\tau'$. In other words, we can tell which edge of $\Gamma'$ is shortest, just from the combinatorics of $\tau'.$

    We conclude that given a tree $\tau'$ with $r+1$ edges, then there is \emph{at most one} tree $\tau$ (obtained by contracting the above edge $e_{r+1}$, if it exists) such that any $\Gamma'\in\FW_{r+1}$ with underlying tree $\tau'$ is obtained by inserting an allowable edge and readjusting edge lengths from an element $\Gamma\in\FW_r$ with underlying tree $\tau.$

    We now proceed by induction on $r$. If $r=0$ the statement is obvious. Suppose the statement holds up to a given $r$. Let $\Gamma_1',\Gamma_2'\in\FW_{r+1}$ be distinct. Let $\Gamma_1,\Gamma_2\in\FW_r$ be obtained from $\Gamma_1'$ and $\Gamma_2'$ respectively, by contracting the shortest edge and readjusting edge lengths.

    \medskip

    \noindent\textbf{Case 1: $\Gamma_1=\Gamma_2$.} Let $\Gamma=\Gamma_1=\Gamma_2$. By Proposition \ref{prop:tropintafteredgeinsertion}\eqref{it:MapToTropIntR+1SurjectiveOnFibers}, $\Gamma_1'$ and $\Gamma_2'$ are obtained from $\Gamma$ by two different allowable edge insertions. It is clear from the definition of allowable edge insertion that $\Gamma_1'$ and $\Gamma_2'$ then have different underlying combinatorial trees.

    \medskip

    \noindent\textbf{Case 2: $\Gamma_1\ne\Gamma_2$.} In this case, by the inductive hypothesis, the underlying combinatorial trees of $\Gamma_1$ and $\Gamma_2$ are distinct. By our observation above, the underlying combinatorial trees of $\Gamma_1'$ and $\Gamma_2'$ must be distinct.
\end{proof}

\begin{cor}\label{cor:InjectivityCorollary}
    In the expression \eqref{eq:FWClassExpansion2} for $[(\bar{X})_0]$, each boundary stratum $\Mbar_{\tau}$ of $\Mbar_{0,n}$ appears with coefficient either 0 or 1.
\end{cor}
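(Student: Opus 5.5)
This follows directly by combining the limit-cycle expression \eqref{eq:FWClassExpansion2} with Proposition \ref{prop:Injective}.

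\textbf{Step 1: coefficients are sums over $\FW_r$.} By \eqref{eq:FWClassExpansion2}, obtained from Theorem \ref{thm:LimitCycleFW} together with the identity $\FW_r=\TropInt$ of Corollary \ref{cor:TropIntRecursion}, we have
\[
[(\bar X)_0]=\sum_{\Gamma\in\FW_r}[\Mbar_{\tau(\Gamma)}],
\]
where $\tau(\Gamma)$ denotes the underlying tree of $\Gamma$. Each summand already has multiplicity $1$; this is Corollary \ref{cor:TropMultIs1}. So for a fixed tree $\tau$, the coefficient of $[\Mbar_\tau]$ is the number of $\Gamma\in\FW_r$ with $\tau(\Gamma)=\tau$.

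\textbf{Step 2: the sum has no repeated strata.} If $\tau$ has fewer or more than $r$ edges, no $\Gamma\in\FW_r$ has tree $\tau$, by Lemma \ref{lem:GammaInTropIntCombinatorics}\eqref{it:REdges}. So its coefficient is $0$. If $\tau$ has exactly $r$ edges, Proposition \ref{prop:Injective} says the map $\Gamma\mapsto\tau(\Gamma)$ on $\FW_r$ is injective. Hence at most one $\Gamma$ contributes, and the coefficient is $0$ or $1$.

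\textbf{Step 3: distinct trees give distinct cycles.} The strata $\Mbar_\tau$ for distinct trees $\tau$ are distinct irreducible subvarieties, since they correspond bijectively to cones of $\M_{0,n}^\trop$. Therefore the formal sum over $\FW_r$ is the expansion of the cycle in distinct components, and its coefficients are well defined.

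There is no real obstacle. All the substantive work sits in Proposition \ref{prop:Injective} and the multiplicity-one statement, Corollary \ref{cor:TropMultIs1}, both of which are already established.
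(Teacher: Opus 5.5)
Your argument is correct and is essentially the paper's: the corollary is stated without a separate proof, as an immediate consequence of Proposition~\ref{prop:Injective} applied to the expansion \eqref{eq:FWClassExpansion2}, in which each $\Gamma\in\FW_r$ already contributes its stratum with coefficient $1$. Your Steps~2 and~3 just make explicit that $\Gamma\mapsto\tau(\Gamma)$ is injective on $\FW_r$ and that distinct trees index distinct strata.
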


\subsection{Example of the firework algorithm}

We now carry out the firework algorithm in an example. Following Notation \ref{not:Sqiqjq}, we set $n=6$ and
\begin{alignat*}{3}
    S_1&=[6], & \quad i_1&=2, & \quad j_1&=4, \\
    S_2&=\{1,3,4,6\}, & \quad i_2&=3, & \quad j_2&=1,\\
    S_3&= \{1,2,4,5,6\}, & \quad i_3&=5, & \quad j_3&=4.
\end{alignat*}
We take $B=10$ for illustrative purposes (it suffices for this example, even though we usually required $B \geq 2n+1$). In the $0$-th step, we define $\FW_0$ to be the set containing the unique stable metric tree $\Gamma_0$ with one vertex and $6$ legs.

In performing the first step, we note that (in the notation of Section \ref{sec:edgeinsertion}) $k_1=1$, $I=\{2\}$, $J = \{4\}$, $K=\{1\}$. There are $7$ metric trees obtainable from $\Gamma_0$ by inserting an allowable edge and readjusting edge lengths, shown below. In each case, $\ell_1$ and $k_1$ are indicated, and the new edge has length $(\ell_1 - k_1)\cdot 100$:

\def\edgelength{0.85}
\begin{center}
    \begin{tikzpicture}[scale=.75]
    \draw (0,0) node {$\bullet$} -- (1.5,0) node {$\bullet$};
    \draw (0,0)--++(150:\edgelength) node[left] {2};
    \draw (0,0)--++(210:\edgelength) node[left] {$\ell_1=3$};
    \draw (1.5,0)--++(55:\edgelength) node[right] {$1 = k_1$};
    \draw (1.5,0)--++(20:\edgelength) node[right] {4};
    \draw (1.5,0)--++(-20:\edgelength) node[right] {5};
    \draw (1.5,0)--++(-55:\edgelength) node[right] {6};
    \draw (.75,0) node[above] {$200$};
    
    \begin{scope}[shift = {(7, 0)}]
    \draw (0,0) node {$\bullet$} -- (1.5,0) node {$\bullet$};
    \draw (0,0)--++(150:\edgelength) node[left] {2};
    \draw (0,0)--++(210:\edgelength) node[left] {$\ell_1=5$};
    \draw (1.5,0)--++(55:\edgelength) node[right] {$1 = k_1$};
    \draw (1.5,0)--++(20:\edgelength) node[right] {3};
    \draw (1.5,0)--++(-20:\edgelength) node[right] {4};
    \draw (1.5,0)--++(-55:\edgelength) node[right] {6};
    \draw (.75,0) node[above] {$400$};
    \end{scope}
    
    \begin{scope}[shift = {(14, 0)}]
    \draw (0,0) node {$\bullet$}-- (1.5,0) node {$\bullet$};
    \draw (0,0)--++(150:\edgelength) node[left] {2};
    \draw (0,0)--++(210:\edgelength) node[left] {$\ell_1=6$};
    \draw (1.5,0)--++(55:\edgelength) node[right] {$1 = k_1$};
    \draw (1.5,0)--++(20:\edgelength) node[right] {3};
    \draw (1.5,0)--++(-20:\edgelength) node[right] {4};
    \draw (1.5,0)--++(-55:\edgelength) node[right] {5};
    \draw (.75,0) node[above] {$500$};
    \end{scope}
    
    \begin{scope}[shift = {(0, -3)}]
    \draw (0,0) node {$\bullet$} -- (1.5,0) node {$\bullet$};
    \draw (0,0)--++(140:\edgelength) node[left] {2};
    \draw (0,0)--++(180:\edgelength) node[left] {$\ell_1=5$};
    \draw (0,0)--++(220:\edgelength) node[left] {6};
    \draw (1.5,0)--++(40:\edgelength) node[right] {$1 = k_1$};
    \draw (1.5,0)--++(0:\edgelength) node[right] {3};
    \draw (1.5,0)--++(-40:\edgelength) node[right] {4};
    \draw (.75,0) node[above] {$400$};
    \end{scope}
    \begin{scope}[shift = {(7, -3)}]
    \draw (0,0) node {$\bullet$} -- (1.5,0) node {$\bullet$};
    \draw (0,0)--++(140:\edgelength) node[left] {2};
    \draw (0,0)--++(180:\edgelength) node[left] {$\ell_1=3$};
    \draw (0,0)--++(220:\edgelength) node[left] {6};
    \draw (1.5,0)--++(40:\edgelength) node[right] {$1 = k_1$};
    \draw (1.5,0)--++(0:\edgelength) node[right] {4};
    \draw (1.5,0)--++(-40:\edgelength) node[right] {5};
    \draw (.75,0) node[above] {$200$};
    \end{scope}
    
    \begin{scope}[shift = {(14, -3)}]
    \draw (0,0) node {$\bullet$} -- (1.5,0) node {$\bullet$};
    \draw (0,0)--++(140:\edgelength) node[left] {2};
    \draw (0,0)--++(180:\edgelength) node[left] {$\ell_1=3$};
    \draw (0,0)--++(220:\edgelength) node[left] {5};
    \draw (1.5,0)--++(40:\edgelength) node[right] {$1 = k_1$};
    \draw (1.5,0)--++(0:\edgelength) node[right] {4};
    \draw (1.5,0)--++(-40:\edgelength) node[right] {6};
    \draw (.75,0) node[above] {$200$};
    \end{scope}

    \begin{scope}[shift = {(0, -6)}]
    \draw (0,0) node {$\bullet$} -- (1.5,0) node {$\bullet$};
    \draw (0,0)--++(125:\edgelength) node[left] {2};
    \draw (0,0)--++(160:\edgelength) node[left] {$\ell_1=3$};
    \draw (0,0)--++(200:\edgelength) node[left] {5};
    \draw (0,0)--++(235:\edgelength) node[left] {6};
    \draw (1.5,0)--++(30:\edgelength) node[right] {$1 = k_1$};
    \draw (1.5,0)--++(-30:\edgelength) node[right] {4};
    \draw (.75,0) node[above] {$200$};
    \end{scope}
    \end{tikzpicture}
\end{center}

We perform the second step on the first metric tree listed above. (More metric trees are produced from the others, which we do not show.) We have $I = \{3\},J=\{1\}$, $k_2=4$, and the algorithm produces the following 2 metric trees $T_1$ and $T_2$. The new edge has length $(\ell_2 - k_2) \cdot 10$ and the other edge length is readjusted:

\begin{center}
    \begin{tikzpicture}[scale=.75]
    \draw[red] (0,0)--(2.25,0);
    \draw (0,0)--++(150:\edgelength) node[left] {2};
    \draw[red] (0,0)--++(210:\edgelength) node[left] {3};
    \draw[red] (2.25,0)--++(55:\edgelength) node[right] {1};
    \draw[red] (2.25,0)--++(20:\edgelength) node[right] {4};
    \draw (2.25,0)--++(-20:\edgelength) node[right] {5};
    \draw[red] (2.25,0)--++(-55:\edgelength) node[right] {6};
    \draw (2.25,0) node {$\bullet$} node[below] {$\underline{v}$};
    \draw (1.125,0) node[above] {$200$};
    \draw (0,0) node {$\bullet$};
    \draw (4,0) node {$\leadsto$};
    
    \begin{scope}[shift = {(7.25, 0)}]
    \draw (-2,0) node {$T_1=$};
    \draw (0,0) node {$\bullet$} -- (1.5, 0) node {$\bullet$} -- (2.5,0) node {$\bullet$};
    \draw (0,0)--++(150:\edgelength) node[left] {2};
    \draw (0,0)--++(210:\edgelength) node[left] {3};
    \draw (2.5,0)--++(40:\edgelength) node[right] {1};
    \draw (2.5,0)--++(0:\edgelength) node[right] {$4 = k_2$};
    \draw (2.5,0)--++(-40:\edgelength) node[right] {5};
    \draw (1.5,0)--++(-90:\edgelength*0.85) node[below,xshift=-.4cm] {$\ell_2=6$};
    \draw (.75,0) node[above] {$180$};
    \draw (2,0) node[above] {$20$};
    \end{scope}
    
    \begin{scope}[shift = {(15.5, 0)}]
    \draw (-2,0) node {$T_2=$};
    \draw (0,0) node {$\bullet$} -- (1.5, 0) node {$\bullet$} -- (2.5,0) node {$\bullet$};
    \draw (0,0)--++(150:\edgelength) node[left] {2};
    \draw (0,0)--++(210:\edgelength) node[left] {3};
    \draw (2.5,0)--++(30:\edgelength) node[right] {1};
    \draw (2.5,0)--++(-30:\edgelength) node[right] {$4 = k_2$};
    \draw (1.5,0)--++(-60:\edgelength) node[below,xshift=.4cm] {$6=\ell_2$};
    \draw (1.5,0)--++(-120:\edgelength) node[below] {5};
    \draw (.75,0) node[above] {$180$};
    \draw (2,0) node[above] {$20$};
    \end{scope}
    \end{tikzpicture}
\end{center}

We perform the $3$rd step with $S_3 = \{1,2,4,5,6\}$, $i_3 = 5$, $j_3 = 4$. For the metric tree $T_1$, we have $I=\{5\}$, $J=\{4\}$, and $k_3=1$:
\begin{center}
    \begin{tikzpicture}[scale=.75] 
    \draw[red] (0,0)--(2.75,0);
    \draw[red] (0,0)--++(150:\edgelength) node[left] {2};
    \draw (0,0)--++(210:\edgelength) node[left] {3};
    \draw[red] (2.75,0)--++(40:\edgelength) node[right] {1};
    \draw[red] (2.75,0)--++(0:\edgelength) node[right] {4};
    \draw[red] (2.75,0)--++(-40:\edgelength) node[right] {5};
    \draw[red] (1.6,0) node {$\bullet$} --++(-90:\edgelength*0.85) node[below] {6};
    \draw (.8,0) node[above] {$180$};
    \draw (2.175,0) node[above] {$20$};
    \draw (2.75,0) node {$\bullet$} node[below] {$\underline{v}$};
    \draw (0,0) node {$\bullet$};
    \draw (1.6,0) node {$\bullet$};
    \draw (4.75,0) node {$\leadsto$};
    
    \begin{scope}[shift = {(8, 0)}]
    \draw (0,0) node {$\bullet$} -- (1.5, 0) node {$\bullet$} -- (2.5, 0) node {$\bullet$} -- (3,0) node {$\bullet$};
    \draw (0,0)--++(150:\edgelength) node[left] {$\ell_3=2$};
    \draw (0,0)--++(210:\edgelength) node[left] {3};
    \draw (3,0)--++(30:\edgelength) node[right] {1};
    \draw (3,0)--++(-30:\edgelength) node[right] {4};
    \draw (2.5,0)--++(-90:\edgelength*0.85) node[below] {5};
    \draw (1.5,0)--++(-90:\edgelength*0.85) node[below] {6};
    \draw (.75,0) node[above] {$180$};
    \draw (2,0) node[above] {$19$};
    \draw (2.75,0) node[above] {$1$};
    \end{scope}
    \end{tikzpicture}
\end{center}

In performing the $3$rd step on $T_2$ above, then $I = \{5\}$, $J=\{1,4\}$ $k_3=2$ and the algorithm produces:

\begin{center}
    \begin{tikzpicture}[scale=.75] 
    \draw[red] (0,0)--(2.75,0);
    \draw[red] (0,0)--++(150:\edgelength) node[left] {2};
    \draw (0,0)--++(210:\edgelength) node[left] {3};
    \draw[red] (2.75,0)--++(30:\edgelength) node[right] {1};
    \draw[red] (2.75,0)--++(-30:\edgelength) node[right] {4};
    \draw[red] (1.5,0)--++(-60:\edgelength) node[below] {6};
    \draw[red] (1.5,0)--++(-120:\edgelength) node[below] {5};
    \draw (.75,0) node[above] {$180$};
    \draw (2.125,0) node[above] {$20$};
    \draw (1.5,0) node {$\bullet$} node[above] {$\underline{v}$};
    \draw (0,0) node {$\bullet$};
    \draw (2.75,0) node {$\bullet$};
    \draw (4.75,0) node {$\leadsto$};
    
    \begin{scope}[shift = {(7, 0)}]
    \draw (0,0) node {$\bullet$} -- (1.5,0) node {$\bullet$} -- (2.75,0) node {$\bullet$};
    \draw (0,0)--++(150:\edgelength) node[left] {2};
    \draw (0,0)--++(210:\edgelength) node[left] {3};
    \draw (2.75,0)--++(30:\edgelength) node[right] {1};
    \draw (2.75,0)--++(-30:\edgelength) node[right] {4};
    \draw (1.5,0)--(1.5,-0.6) node {$\bullet$};
    \draw (1.5,-.6)--++(-60:\edgelength) node[below,xshift=.4cm] {$6=\ell_3$};
    \draw (1.5,-.6)--++(-120:\edgelength) node[below] {5};
    \draw (1.5,-.27) node[right,xshift=-0.05cm] {$4$};
    \draw (.75,0) node[above] {$180$};
    \draw (2.125,0) node[above] {$20$};
    \end{scope}
    \end{tikzpicture}
\end{center}
For the first tree, the vector in Equation \eqref{eq:Min} for each of the three tropical Kapranov maps has the form
\begin{alignat*}{2}
    \kap_{[6], 2 \rel 4}(T) &+ 100 \vec\alpha &&= (d_1^{q=1}, d_3^{q=1}, d_5^{q=1}, d_6^{q=1}) + 100\cdot(1,3,5,6) \\
    &&&= (200, 0, 199, 180) + (100, 300, 500, 600) = (300, 300, 699, 780), \\
    \kap_{\{1, 3, 4, 6\}, 3 \rel 1}(T) &+ 10\vec\alpha &&= (d_4^{q=2}, d_6^{q=2}) + 10\cdot(4,6) \\
    &&&= (20, 0) + (40, 60) = (60, 60), \\
    \kap_{\{1, 2, 4, 5, 6\}, 5 \rel 4}(T) &+ 1 \cdot\vec\alpha &&= (d_1^{q=3}, d_2^{q=3}, d_6^{q=3}) + (1,2,6) \\
    &&&= (1, 0, 0) + (1, 2, 6) = (2, 2, 6).
\end{alignat*}
We see that the minimum is achieved exactly twice in each case. Similarly, for the second tree,
\begin{alignat*}{3}
    \kap_{[6], 2 \rel 4}(T) &+ 100 \vec\alpha
    &&= (d_1^{q=1}, d_3^{q=1}, d_5^{q=1}, d_6^{q=1}) + 100\cdot(1,3,5,6) \\
    &&&= (200, 0, 180, 180) + (100, 300, 500, 600) = (300, 300, 680, 780), \\
    \kap_{\{1, 3, 4, 6\}, 3 \rel 1}(T) &+ 10\vec\alpha
    &&= (d_4^{q=2}, d_6^{q=2}) + 10\cdot(4,6)\\
    &&&= (20, 0) + (40, 60) = (60, 60), \\
    \kap_{\{1, 2, 4, 5, 6\}, 5 \rel 4}(T) &+ 1 \vec\alpha
    &&= (d_1^{q=3}, d_2^{q=3}, d_6^{q=3}) + (1,2,6) \\
    &&&= (24, 4, 0) + (1, 2, 6) = (25, 6, 6).
\end{alignat*}

\bibliography{Tropical_Psi_Pullbacks.bbl}
\bibliographystyle{amsalpha}
\end{document}